\documentclass[reqno,11pt]{amsart}
\usepackage{amsfonts}
\usepackage{a4wide}
\usepackage{color}
\usepackage{mathrsfs}
\usepackage{mathtools}
\usepackage{amsmath}
\usepackage{amssymb}
\usepackage{bbm}
\usepackage{esint}
\usepackage{nicefrac}
\numberwithin{equation}{section}
\usepackage[colorlinks,citecolor=green,linkcolor=blue]{hyperref}

\usepackage[latin1]{inputenc}
\input xy
\xyoption{all}

\newtheorem{theorem}{Theorem}[section]

\newtheorem{Th}[theorem]{Theorem}
\newtheorem{Lm}[theorem]{Lemma}
\newtheorem{Prop}[theorem]{Proposition}
\newtheorem{Def}[theorem]{Definition}

\newtheorem{Remark}[theorem]{Remark}
\newtheorem{Problem}[theorem]{Problem}
\DeclareMathOperator*{\essinf}{ess\,inf}

\title{Descriptions of traces of weighted Sobolev spaces to Ahlfors--David regular sets in the case $p=1$}
\author{Alexander I. Tyulenev}
\address{Steklov Mathematical Institute of Russian Academy of Sciences}
\email{tyulenev-math@yandex.ru, tyulenev@mi-ras.ru}
\begin{document}
\allowdisplaybreaks
\keywords{Sobolev spaces, traces, extensions}
\subjclass[2010]{53C23, 46E35}
\begin{abstract}
Given $n \in \mathbb{N}$, an Ahlors--David $n$-regular set $S \subset \mathbb{R}^{n+1}$, and a weight $\gamma$ satisfying the
local Muckenhoupt $A_{1}$-condition, we present a complete intrinsic description of the trace-space $W_{1}^{1}(\mathbb{R}^{n+1},\gamma)|_{S}$
of the weighted first-order Sobolev space $W_{1}^{1}(\mathbb{R}^{n+1},\gamma)$ to $S$. Furthermore, we construct a new family of
nonlinear bounded extension operators acting from $W_{1}^{1}(\mathbb{R}^{n+1},\gamma)|_{S}$ to $W_{1}^{1}(\mathbb{R}^{n+1},\gamma)$.
Finally, we find conditions on $\gamma$ that sufficient for the existence of a bounded linear extension operator from $W_{1}^{1}(\mathbb{R}^{n+1},\gamma)|_{S}$ to $W_{1}^{1}(\mathbb{R}^{n+1},\gamma)$.
\end{abstract}
\maketitle
\tableofcontents
\markright{Traces of Sobolev Spaces}
%

%
\section*{Introduction}
\noindent

The problem of complete intrinsic descriptions of trace-spaces of weighted Sobolev spaces $W_{p}^{1}(\mathbb{R}^{d},\gamma)$, $p \in [1,\infty]$,
to different sets $S \subset \mathbb{R}^{d}$ attracted a lot of attention over the past 60 years. Whereas
the case $p > 1$ has been extensively studied by many mathematicians, there are very few results available in the literature related to
the limiting integrability exponent $p=1$. Furthermore, all the investigations available so far dealt with the cases when either sets $S$ or weights $\gamma$
satisfy extra regularity assumptions. The aim of this paper is to make a first step to the filling in this gap by considering the case when both $S$ and $\gamma$ are complicated.

\subsection{The statement of the problem} In order to pose the trace problem in details, we need to introduce some notation.

By a \textit{weight} on $\mathbb{R}^{d}$, $d \in \mathbb{N}$, we always mean a locally integrable
function $\gamma: \mathbb{R}^{d} \to [-\infty,+\infty]$ which is positive almost everywhere with respect to the classical Lebesgue measure $\mathcal{L}^{d}$
on $\mathbb{R}^{d}$.
Given $d \in \mathbb{N}$, $p \in [1,\infty)$, and a weight $\gamma$ on $\mathbb{R}^{d}$, define the weighted Sobolev space $W_{p}^{1}(\mathbb{R}^{d},\gamma)$
as the linear space consisting of all $F \in W^{1,\rm loc}_{1}(\mathbb{R}^{d})$ satisfying
\begin{equation}
\label{eqq.intr_Sobolev}
\|F\|_{W_{p}^{1}(\mathbb{R}^{d},\gamma)}:=\|\gamma^{\frac{1}{p}}F\|_{L_{p}(\mathbb{R}^{d})}+\|\gamma^{\frac{1}{p}}\nabla F\|_{L_{p}(\mathbb{R}^{d})} < +\infty,
\end{equation}
where $\nabla F$ denotes the distributional gradient of $F$.

Without any additional assumptions on $\gamma$ the study of $W_{p}^{1}(\mathbb{R}^{d},\gamma)$ is very difficult. In this paper
we focus on weights satisfying the \textit{local Muckenhoupt} conditions.
Given $p \in (1,\infty)$, by $A_{p}^{\rm loc}(\mathbb{R}^{d})$ we denote the set of all weights on $\mathbb{R}^{d}$ such that
\begin{equation}
\label{eqq.A_p_weights}
\operatorname{C}_{\gamma,p}:=\sup\limits_{l(Q) \le 1}\Bigl(\frac{1}{(l(Q))^{n}}\int\limits_{Q}\gamma(y)\,dy\Bigr)\Bigl(\frac{1}{(l(Q))^{n}}\int\limits_{Q}\gamma^{-\frac{p'}{p}}(y)\,dy\Bigr)^{\frac{p}{p'}} < +\infty,
\end{equation}
where $Q$ denotes a cube in $\mathbb{R}^{d}$ with sides parallel to the coordinate axis and $l(Q)$ its side length.
By $A_{1}^{\rm loc}(\mathbb{R}^{d})$ we denote the set of all weights on $\mathbb{R}^{d}$ such that
\begin{equation}
\label{eqq.A_1_loc_weights}
\operatorname{C}_{\gamma,1}:=\sup\limits_{l(Q) \le 1}\Bigl(\frac{1}{(l(Q))^{n}}\int\limits_{Q}\gamma(y)\,dy\Bigr)\Bigl(\essinf\limits_{y \in Q}\gamma(y)\Bigr)^{-1} < +\infty.
\end{equation}
The classes $A_{p}^{\rm loc}(\mathbb{R}^{d})$, $p \in [1,\infty)$, have been introduced by V.~Rychkov in \cite{Ry} to develop a fruitful theory of weighted Besov and Lizorkin--Triebel spaces.
Those classes give rise to natural generalizations of the famous Muckenhoupt classes $A_{p}$, $p \in [1,\infty)$, (see ch.5 in \cite{Stein} for the details) and include weights with exponential growth at infinity.

We assume that the reader is familiar with the notion of the Hausdorff measures $\mathcal{H}^{t}$ on $\mathbb{R}^{d}$ (see Section 2.2 for details).
Recall (see for instance Section 4.8 in \cite{Evans}) that, given $F \in W_{1}^{1, \rm loc}(\mathbb{R}^{n+1})$, $n \in \mathbb{N}$, there is a set $E_{F} \subset \mathbb{R}^{n+1}$ with $\mathcal{H}^{n}(E_{F})=0$ such that
\begin{equation}
\label{eqq.sharp_representative}
\overline{F}(x):=\varlimsup\limits_{r \to +0}\fint\limits_{Q_{r}(x)}F(y)\,dy \in \mathbb{R} \quad \hbox{for each} \quad x \in \mathbb{R}^{n+1} \setminus E_{F},
\end{equation}
and, furthermore, each point $x \in \mathbb{R}^{n+1} \setminus E_{F}$ is a Lebesgue point of $\overline{F}$. In what follows, we refer to the function
$\overline{F}$ defined in \eqref{eqq.sharp_representative} as the \textit{sharp representative of} $F$.

Given $S \subset \mathbb{R}^{n+1}$ with $\mathcal{H}^{n}(S) > 0$,
we define for each $F \in W_{1}^{1, \rm loc}(\mathbb{R}^{n+1})$ the sharp trace $F|_{S}$ of $F$ to $S$ as an equivalence class (modulo coincidence $\mathcal{H}^{n}$-a.e. on $S$)
of the pointwise restriction of $\overline{F}$ to $S$. Given a Borel function $f:S \to \mathbb{R}$, we call $F \in W_{1}^{1, \rm loc}(\mathbb{R}^{n+1})$ a sharp extension of $f$ if $F|_{S}=[f]$, where
$[f]$ means an equivalence class of $f$ modulo coincidence $\mathcal{H}^{n}$-a.e. on $S$. In what follows we will always \textit{identify} any Borel function $f$ with its equivalence class $[f]$ on $S$.
We define the trace space $W_{p}^{1}(\mathbb{R}^{n+1},\gamma)|_{S}$  as the linear space of sharp traces
$F|_{S}$ of all elements $F \in W_{p}^{1}(\mathbb{R}^{n+1},\gamma)$. We equip this space with the corresponding
quotient-space norm, that is, given $f \in W_{p}^{1}(\mathbb{R}^{n+1},\gamma)|_{S}$, we put
\begin{equation}
\label{eqq.quotient_trace_norm}
\|f\|_{W_{p}^{1}(\mathbb{R}^{n+1},\gamma)|_{S}}:=\inf\{\|F\|_{W_{p}^{1}(\mathbb{R}^{n+1},\gamma)}:f=F|_{S}\}.
\end{equation}
One can naturally define the trace operator $\operatorname{Tr}_{S}:W_{p}^{1}(\mathbb{R}^{n+1},\gamma) \to W_{p}^{1}(\mathbb{R}^{n+1},\gamma)|_{S}$ which takes
$F$ and returns $F|_{S}$. Finally, we refer to a map $\operatorname{Ext}_{S}:W_{p}^{1}(\mathbb{R}^{n+1},\gamma)|_{S} \to W_{p}^{1}(\mathbb{R}^{n+1},\gamma)$ as an \textit{extension operator for $S$}
if:

\begin{itemize}

\item[\((EXT1)\)] there is $C > 0$ such that $\|\operatorname{Ext}_{S}(f)\|_{W_{p}^{1}(\mathbb{R}^{n+1},\gamma)} \le C\|f\|_{W_{p}^{1}(\mathbb{R}^{n+1},\gamma)|_{S}}$ for all $f \in W_{p}^{1}(\mathbb{R}^{n+1},\gamma)|_{S}$;

\item[\((EXT2)\)] $\operatorname{Tr}_{S} \circ \operatorname{Ext}_{S} = \operatorname{Id}$ on the space $W_{p}^{1}(\mathbb{R}^{n+1},\gamma)|_{S}$.

\end{itemize}


\begin{Problem}
\label{Trace_Problem}
Let $n \in \mathbb{N}$, $p \in [1,\infty)$, and $\gamma \in A_{p}^{\rm loc}(\mathbb{R}^{n+1})$. Let $S \subset \mathbb{R}^{n+1}$ be an arbitrary closed set
with $\mathcal{H}^{n}(S) > 0$.

\begin{itemize}

\item[\((\textbf{Q1})\)] Given a Borel function $f:S \to \mathbb{R}$, find necessary and sufficient conditions
for the existence of a sharp extension $F \in W_{p}^{1}(\mathbb{R}^{n+1},\gamma)$ of the function $f$.

\item[\((\textbf{Q2})\)] Using only the geometry of the set $S$ and the values of a function $f \in W_{p}^{1}(\mathbb{R}^{n+1},\gamma)|_{S}$, compute the norm
$\|f\|_{W_{p}^{1}(\mathbb{R}^{n+1},\gamma)|_{S}}$ up to some universal constants.

\item[\((\textbf{Q3})\)] Does there exist an extension operator $\operatorname{Ext}_{S}:W_{p}^{1}(\mathbb{R}^{n+1},\gamma)|_{S} \to W_{p}^{1}(\mathbb{R}^{n+1},\gamma)$ for $S$?
Can $\operatorname{Ext}_{S}$ be linear?

\end{itemize}
\end{Problem}

At the beginning of June 2016 the author presented his results related to Problem \ref{Trace_Problem} (those results were published in \cite{T1}) at the 9th Whitney Problems Workshop in Haifa.
Right after the talk Michal Wojciechowski posed the following problem.

\begin{Problem}
\label{Michal's_Problem}
Given a weight  $\gamma \in A_{1}^{\rm loc}(\mathbb{R}^{n+1})$, find necessary and sufficient conditions
for the existence of a linear extension operator $\operatorname{Ext}_{\mathbb{R}^{n}}:W^{1}_{1}(\mathbb{R}^{n+1},\gamma)|_{\mathbb{R}^{n}} \to W^{1}_{1}(\mathbb{R}^{n+1},\gamma)$.
\end{Problem}

\subsection{Previously known results}
The literature devoted to Problem A is huge. However, the majority of studies were focused on very special particular cases. Now we briefly
describe only the most powerful results available in the literature, the reader can find all necessary historical background in the corresponding papers mentioned below.

We recall that, given $d \in \mathbb{N}$ and $t \in (0,d]$, a closed set $S \subset \mathbb{R}^{d}$ is said to be Ahlfors--David $t$-regular, if there are
constants $C^{S}_{1},C^{S}_{2} > 0$ such that (for each $x \in \mathbb{R}^{n+1}$ and $r > 0$, we set $Q_{r}(x):=\prod_{i=1}^{n+1}[x_{i}-r,x_{i}+r]$)
\begin{equation}
\label{eqq.Ahlfors_regularity}
C^{S}_{1}r^{t} \le \mathcal{H}^{t}(Q_{r}(x) \cap S) \le C^{S}_{2}r^{t} \quad \hbox{for all} \quad (x,r) \in S\times (0,1].
\end{equation}

\begin{itemize}

\item[\((\textbf{R1})\)] In the special case when $S=\mathbb{R}^{n}\times\{0\}$ and $\gamma$ depends only on the last variable $x_{n+1}$ Problem \ref{Trace_Problem} was completely
solved for $p > 1$ even in the context of more general Liouville-type spaces by G.A.~Kalyabin \cite{K} for the case of periodic functions and then
by B.V.~Tandit \cite{Tan} for the nonperiodic case. Based on that machinery, A.S.~Ginzburg completely solved Problem \ref{Trace_Problem} (under similar assumptions on $\gamma$) in the case $p=1$ \cite{Gin}.

\item[\((\textbf{R2})\)] In the case when $S=\mathbb{R}^{n}\times\{0\}$ and $\gamma \in A_{p}^{\rm loc}(\mathbb{R}^{n+1})$ is a generic weight depending on all variables, Problem \ref{Trace_Problem}
was completely solved by the author \cite{T4,T1}. In \cite{T4} the case $p > 1$ was considered,
the corresponding trace space was characterized in terms of some new modifications of Besov-type spaces of variable smoothness developed by the author. Furthermore,
 a  new linear extension operator was constructed. The more delicate case $p=1$ was studied in \cite{T1} and called for an introduction
of a new function space. Furthermore, a new nonlinear extension operator $\operatorname{Ext}_{S,\gamma}$ was constructed.

\item[\((\textbf{R3})\)] There are results related to the generalization of Problem \ref{Trace_Problem} in the
context of metric measure spaces $\operatorname{X}=(\operatorname{X},\operatorname{d},\mu)$ with doubling measures $\mu$
satisfying the so-called Poncar\'e-type inequalities.
If $S \subset \operatorname{X}$ is lower content regular, $p \in (1,\infty)$, and $\gamma \equiv 1$,
the corresponding analog of Problem \ref{Trace_Problem} was solved by the author in \cite{T5}. The case when $S \subset \operatorname{X}$ is
Ahlfors--David $n$-regular, $p=1$ and $\gamma \equiv 1$ was considered  by L.~Maly in \cite{Maly}.

\end{itemize}

While metric measure spaces considered in \cite{Maly} include finite dimensional Euclidean spaces $\mathbb{R}^{d}$ equipped with a weighted
Lebesgue measure $\gamma\mathcal{L}^{d}$ with $\gamma \in A_{1}^{\rm loc}(\mathbb{R}^{d})$, the results from \cite{Maly} \textit{do not cover} results obtained in \cite{T1}.
The point is that while the hyperplane $\mathbb{R}^{n} \times \{0\}$ is an Ahlfors--David $n$-regular set considered in
$(\mathbb{R}^{n+1},\|\cdot\|,\mathcal{L}^{n+1})$, it can fail to satisfy Ahlfors--David codimension $1$ regularity condition (this concept was used in \cite{Maly}) in metric measure spaces
$(\mathbb{R}^{n+1},\|\cdot\|,\gamma\mathcal{L}^{n+1})$ for some weights $\gamma \in A_{1}^{\rm loc}(\mathbb{R}^{n+1})$. The simplest example for $n=2$ is given by $\gamma(x)=\chi_{B_{1}(0)}(x)\|x\|^{-1}+\chi_{\mathbb{R}^{2}\setminus B_{1}(0)}(x)$, $x \in \mathbb{R}^{2}$.
As a result, the study of Problem \ref{Trace_Problem} for generic sets and weights is an actual complicated problem requiring
new methods and tools. In the present paper we focus on the case $p=1$.

The facts of nonlinearity of the operator $\operatorname{Ext}_{S,\gamma}$ constructed in \cite{T1} and its dependence on $\gamma$ look quite reasonable and reflect the essence of the matter.
Indeed, if $\gamma \equiv 1$ (the nonweighted case), then according to the famous result of J.~Peetre \cite{P}, the linear extension operator
does not exists (see also \cite{KW} for an alternative proof). At the same time, for some special weights $\gamma$ depending only on $x_{n+1}$ and tending to $+\infty$ as $x_{n+1} \to +0$ fast enough it is
possible to construct a linear extension operator \cite{Gin}.

Unfortunately, the methods introduced in \cite{T1} were essentially based on the Euclidean structure of the ``trace set'' $S=\mathbb{R}^{n}\times \{0\} \subset \mathbb{R}^{n+1}$ and
it is difficult to adopt such a machinery to the case of generic sets $S$. Furthermore, the characterization of the trace-space given in \cite{T1} was not fully intrinsic.
Indeed, the expression for the norm of the function in the corresponding function space contains the infimum over infinite number of admissible tilings of $\mathbb{R}^{n}$ which is hardly
ever to calculate in applications. In this paper we eliminate all the mentioned drawbacks by introducing new methods and new extension operator.
Our new characterization will be essentially simpler than that of given in \cite{T1}.

\subsection{Main results} Given a weight $\gamma \in A_{1}^{\rm loc}(\mathbb{R}^{n+1})$, for each $r > 0$ and $x \in \mathbb{R}^{n+1}$, we
put $\underline{\gamma}_{r}(x):=\essinf_{y \in Q_{r}(x)}\gamma(y)$ for brevity.
Given $q > 1$ and a nonempty set $S \subset \mathbb{R}^{n+1}$, for each $k \in \mathbb{Z}$, we put $S^{k}(q):=\{x \in S: \varlimsup\limits_{r \to +0}\underline{\gamma}_{r}(x) > q^{k}\}.$
Finally, given $k \in \mathbb{Z}$ and $x \in S^{k}(q)$, we put $r_{k}(x):=\sup\{r \in (0,1]: \underline{\gamma}_{5r}(x) \geq q^{k}\}.$
For each $k \in \mathbb{Z}$, for every $x \in S^{k}(q)$,
\begin{equation}
\notag
\mathcal{E}^{S}_{k}[f](x):=\inf\limits_{c \in \mathbb{R}}\frac{1}{\mathcal{H}^{n}(Q_{r_{k}}(x) \cap S)}\int\limits_{Q_{r_{k}}(x)\cap S}|f(y)-c|\,d\mathcal{H}^{n}(y)
\end{equation}
and define the \textit{Besov-type functional} by letting, for each $f \in L_{1}^{\rm loc}(\mathcal{H}^{n}\lfloor_{S})$,
\begin{equation}
\notag
\mathcal{BN}_{\gamma,q}[f]:=\sum\limits_{k \in \mathbb{Z}}q^{k}\int\limits_{S^{k}(q)}\mathcal{E}^{S}_{k}[f](x)\,d\mathcal{H}^{n}(x).
\end{equation}
Furthermore, we introduce the \textit{Lebesgue-type functional} by letting, for each $f \in L_{1}^{\rm loc}(\mathcal{H}^{n}\lfloor_{S})$,
\begin{equation}
\notag
\mathcal{LN}_{\gamma}[f]:=\int\limits_{S}\gamma_{1}(x)|f(x)|\,d\mathcal{H}^{n}(x).
\end{equation}

Now we are ready to formulate the \textit{first main} result of the present paper. This gives a complete solution to Problem \ref{Trace_Problem} in the case of
Ahlfors--David regular sets.
\begin{Th}
\label{Th.first_main}
Let $n \in \mathbb{N}$ and $\gamma \in A^{\rm loc}_{1}(\mathbb{R}^{n+1})$. Let $S \subset \mathbb{R}^{n+1}$ be a closed Ahlfors--David $n$-regular set.
Then there is a constant $\underline{q} \geq 1$ such that, for each $q > \underline{q}$, the following holds:

\begin{itemize}

\item[\((\textbf{1})\)] A Borel function $f:S \to \mathbb{R}$ belongs to the trace-space $W_{1}^{1}(\mathbb{R}^{n+1},\gamma)|_{S}$ if and only
if $\mathcal{N}_{\gamma,q}[f]:=\mathcal{BN}_{\gamma,q}[f]+\mathcal{LN}_{\gamma}[f] < +\infty$. Furthermore, there is a constant $C > 0$ depending only on $n$, $\operatorname{C}_{\gamma,1}$, $q$ and
$C^{S}_{1},C^{S}_{2}$ such that
\begin{equation}
\label{eqq.norm_equivalence}
\frac{1}{C}\mathcal{N}_{\gamma,q}[f] \le \|f\|_{W_{1}^{1}(\mathbb{R}^{n+1},\gamma)|_{S}} \le C \mathcal{N}_{\gamma,q}[f].
\end{equation}

\item[\((\textbf{2})\)] There exists a nonlinear extension operator $\operatorname{Ext}_{S,\gamma}:W_{1}^{1}(\mathbb{R}^{n+1},\gamma)|_{S} \to W_{1}^{1}(\mathbb{R}^{n+1},\gamma)$.

\end{itemize}

\end{Th}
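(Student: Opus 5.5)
The plan is to prove the two inequalities in \eqref{eqq.norm_equivalence} separately. The upper bound will come with an explicit, necessarily nonlinear, extension construction, which gives part \((\textbf{2})\) at the same time.

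\emph{Necessity (the left inequality).} Take $F \in W_{1}^{1}(\mathbb{R}^{n+1},\gamma)$ with $F|_{S}=f$.

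For $\mathcal{LN}_{\gamma}[f]$, I will use the standard trace inequality on cubes for Ahlfors--David $n$-regular sets. For a cube $Q=Q_{r}(x)$ with $x\in S$ and $r\le 1$,
\[
\int_{Q\cap S}|\overline{F}|\,d\mathcal{H}^{n} \lesssim r^{-1}\int_{Q}|F|+\int_{Q}|\nabla F|.
\]
This follows by integrating the pointwise bound $|\overline F(y)-F_{Q}|\lesssim \sum_j 2^{-j}r\,\fint_{Q_{2^{-j}r}(y)}|\nabla F|$, using Fubini, and then using the upper regularity estimate $\mathcal{H}^{n}(Q_{\rho}(z)\cap S)\le C^{S}_{2}\rho^{n}$. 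Covering $S$ by a bounded-overlap family of unit cubes and using $\gamma_{1}\le \essinf_{Q}\gamma$ on each cube gives $\mathcal{LN}_{\gamma}[f]\lesssim\|F\|_{W_{1}^{1}(\mathbb{R}^{n+1},\gamma)}$.

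For $\mathcal{BN}_{\gamma,q}[f]$, fix $k$ and $x\in S^{k}(q)$, and take $c=F_{Q_{r_k}(x)}$. Applying the same Poincar\'e-type trace estimate on $Q_{r_k}(x)$ gives
\[
\mathcal{E}^S_k[f](x)\lesssim r_k^{1-n}\int_{Q_{r_k}(x)}|\nabla F|\,dy.
\]
Hence
\[
q^k\int_{S^k(q)}\mathcal{E}^S_k[f]\,d\mathcal{H}^n\lesssim \int_{\mathbb{R}^{n+1}}|\nabla F(y)|\,q^k\,\mathcal{H}^n\bigl(\{x\in S^k(q): |x-y|_\infty\le r_k(x)\}\bigr)\,r_k^{1-n}\,dy .
\]
The key is to sum this in $k$ and show that the sum of the inner weights is $\lesssim \gamma(y)$. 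Three facts should give this: the definition of $r_k$ (namely $\underline{\gamma}_{5r_k}(x)\ge q^k$); the local $A_1$ condition, which forces $\underline{\gamma}_{5r}$ to be comparable on comparable cubes; and the choice $q>\underline{q}$ large. With these, the radii $r_k(x)$ decrease geometrically in $k$ for $x$ near $y$. A Vitali covering of the relevant part of $S$ at scale $r_k$ then bounds the measure term by $\lesssim r_k^{n}$, giving a $k$-th summand of order $q^k r_k$. Summing in $k$, the condition $y\in Q_{5r_k}(x)$ gives $q^k\le\gamma(y)$, and the geometric series in $q^k$ is dominated by its largest term, which is $\lesssim \gamma(y)$.

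\emph{Sufficiency and the extension operator.} Given $f$ with $\mathcal{N}_{\gamma,q}[f]<\infty$, I construct $F$ by a Whitney-type scheme adapted to the levels of $\gamma$. For each $k$ I choose a maximal $r_k$-separated net in $S^k(q)$ and build the associated bounded-overlap cubes $Q_{r_k}(x_{k,i})$ with a partition of unity $\varphi_{k,i}$ whose gradients satisfy $|\nabla\varphi_{k,i}|\lesssim r_k^{-1}$. On each such cube I choose a near-optimal constant $c_{k,i}$ realizing $\mathcal{E}^S_k[f]$; this choice is what makes the operator nonlinear, and I would use medians or near-minimizers.

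I then set $F=\sum_k\sum_i \psi_k\,\varphi_{k,i}\,c_{k,i}$. Here $\psi_k$ are cutoffs localizing to the region where $\gamma\approx q^k$, at distance $\approx r_k$ from $S$. This follows \cite{T1}, but with the nets replacing dyadic tilings. The gradient estimate splits into two kinds of terms. Differences $|c_{k,i}-c_{k,j}|$ for neighbouring cubes at the same level are bounded by averages of $|f-c|$ using lower Ahlfors regularity, and differences across consecutive levels $k,k+1$ are controlled the same way since the cubes are nested up to constants. Either way they are bounded by $q^k\int\mathcal{E}^S_k[f]$. Terms near points where $\underline\gamma$ stays bounded, i.e.\ scale $\approx 1$, are controlled by $\mathcal{LN}_\gamma[f]$.

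Finally, I must verify $F|_S=f$ $\mathcal{H}^n$-a.e. This uses Lebesgue differentiation on $S$ with respect to $\mathcal{H}^n\lfloor_S$, which is doubling. There are two cases. Where $r_k(x)\to0$ as $k\to\infty$, the constants $c_{k,i}$ converge to $f(x)$. Where $\gamma$ stays bounded near $x$, a standard Whitney extension at dyadic scales $2^{-j}$ is used instead, and its cost is absorbed because the finiteness of $\mathcal{BN}$ forces the local oscillations to be summable.

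I expect the main obstacle to be the combinatorial heart of both directions: controlling the overlap across different levels $k$ of the cubes $Q_{r_k}(x)$ relative to $\gamma$. This requires a quantitative lemma, derived from the local $A_1$ condition, saying that $r_{k+1}(x)\le r_k(x)/2$ whenever $q$ exceeds some $\underline{q}$ depending on $\operatorname{C}_{\gamma,1}$ and $n$. I would try to prove this first, via $q^{k+1}\le\underline{\gamma}_{5r_{k+1}}\le \operatorname{C}_{\gamma,1}\fint_{Q_{5r_k}}\gamma$ combined with a comparison of essential infima on dilated cubes.
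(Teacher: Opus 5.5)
Your necessity half is sound, and in fact more direct than the paper's route. The paper first passes to the green-cube discretization (Proposition \ref{Prop.discretization_of_Besov_1}) and then sums over the sets $U_k$. Two corrections. First, the trace--Poincar\'e bound gives $\mathcal{E}^S_k[f](x)\lesssim r_k(x)^{-n}\int_{Q_{r_k(x)}(x)}\|\nabla F\|\,dy$, not $r_k^{1-n}$. Second, the Fubini step does not need a decay lemma. It needs the fact that $r_k(x')\ge\frac35 r_k(x)$ whenever $Q_{r_k(x)}(x)\cap Q_{r_k(x')}(x')\neq\emptyset$, which follows from monotonicity of $r\mapsto\underline{\gamma}_{5r}$ alone. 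It also needs $\gamma\ge q^k$ a.e. on $\bigcup_{x\in S^k}Q_{r_k(x)}(x)$. With these, the $k$-th weight at $y$ is $\lesssim q^k$, and $\sum_{q^k\le\gamma(y)}q^k\lesssim\gamma(y)$.

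The genuine gap is in the sufficiency half, at points $x\in S$ with $\lim_{r\to+0}\underline{\gamma}_r(x)<+\infty$.
\begin{itemize}
\item Such an $x$ lies in only finitely many $S^k(q)$. So $\mathcal{N}_{\gamma,q}[f]$ says nothing about $f$ near $x$ below the scale $r_K(x)$, where $K$ is the last such level.
\item Your claim that finiteness of $\mathcal{BN}_{\gamma,q}[f]$ makes the fine-scale oscillations summable, so that a standard Whitney extension over all dyadic scales is affordable, is false.
\item Take $\gamma\equiv1$. Then $S^k=S$ with $r_k\equiv1$ for $k<0$, and $S^k=\emptyset$ for $k\ge0$, so $\mathcal{N}_{\gamma,q}[f]\approx\|f\|_{L_1(S)}$.
\item An extension using every dyadic scale with cube-wise constants $c_Q$ costs about $\sum_{k\ge0}\sum_{Q\in\mathcal{D}_k(S)}(l(Q))^n|c_Q-c_{\widehat{Q}}|$, where $\widehat{Q}$ is the dyadic parent.
\item For $S=\mathbb{R}^n\times\{0\}$ and $f=\chi_{[0,1]^n}\sum_{i\ge1}i^{-1}\rho_i(x_1)$, with $\rho_i$ Rademacher functions, this cost is $\approx\sum_k k^{-1}=\infty$, although $f\in L_1(S)$. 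This holds with averages and medians alike.
\end{itemize}
So choosing medians is not where the nonlinearity can come from. It must come from an $f$-dependent choice of \emph{which scales} carry new constants, with the constants frozen in between (Gagliardo's device).

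The missing ingredient is Theorem \ref{Th.nonlinear}. The level sums $\sum_{Q\in\widetilde{\mathcal{D}}_k(S)}\frac{\gamma_Q}{l(Q)}\mathcal{E}^S_{4Q}[f]$ are not summable in $k$, but they do tend to $0$. Cubes whose smallest containing green cube has level $|l|>k_\varepsilon$, or lies far out, are controlled by tails of $\widetilde{\mathcal{BN}}_{\gamma,q}[f]$. The remaining cubes carry a bounded weight over a compact piece of $S$, where $L_1$-continuity of $f$ applies. One then picks sparse levels $k_s$ as in \eqref{eqq.red_cubes} and adds only those levels (the red cubes) to the green skeleton. It remains to check that the union is still regular, admissible and an extension family, so the trace is $f$. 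This is exactly why the operator is nonlinear in general but linear under \eqref{eqq.criterion}.

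You also need a removability step (Proposition \ref{Prop.removability}). It rules out a singular part of $\nabla F$ on $S$, which is required before the sharp trace makes sense.
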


The second main result of this paper gives a partial answer to Wojciechowski's question.

\begin{Th}
\label{Th.Michal's_problem}
Let $S \subset \mathbb{R}^{n+1}$ be a closed Ahlfors--David $n$-regular set. If a weight $\gamma \in A_{1}^{\rm loc}(\mathbb{R}^{n+1})$ is such that
\begin{equation}
\label{eqq.criterion}
\varlimsup\limits_{r \to +0}\underline{\gamma}_{r}(x) = +\infty \quad \hbox{for all} \quad x \in S,
\end{equation}
then, there exists a linear extension operator $\operatorname{Ext}_{S,\gamma}: W_{1}^{1}(\mathbb{R}^{n+1},\gamma)|_{S} \to W_{1}^{1}(\mathbb{R}^{n+1},\gamma)$.
\end{Th}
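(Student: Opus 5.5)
The plan is to build the extension operator from a Whitney-type decomposition of $\mathbb{R}^{n+1}\setminus S$. The key point is that, under \eqref{eqq.criterion}, the nonlinear ingredients of the operator in Theorem \ref{Th.first_main}, namely the choice of optimal constants $c$ in $\mathcal{E}^{S}_{k}[f]$ and of the scales at which to ``switch off'' the extension, can be replaced by linear averaging. I will use Theorem \ref{Th.first_main}(1) as a black box: $\|f\|_{W_{1}^{1}(\mathbb{R}^{n+1},\gamma)|_{S}}\approx \mathcal{N}_{\gamma,q}[f]$. It then suffices to construct a linear map $f\mapsto F$ with $F|_{S}=f$ and $\|F\|_{W_{1}^{1}(\mathbb{R}^{n+1},\gamma)}\le C\mathcal{N}_{\gamma,q}[f]$.

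\textbf{Construction.} Take a Whitney family $\{Q_{j}\}$ of cubes for $\mathbb{R}^{n+1}\setminus S$ with side lengths at most $1$, together with a smooth partition of unity $\{\varphi_{j}\}$ subordinate to $\{\tfrac{9}{8}Q_{j}\}$. To each $Q_{j}$ attach a point $x_{j}\in S$ with $\operatorname{dist}(x_{j},Q_{j})\approx l(Q_{j})$, and define
\begin{equation}
\notag
F:=\sum_{j}\varphi_{j}\,\fint_{Q_{l(Q_{j})}(x_{j})\cap S}f\,d\mathcal{H}^{n}\cdot\psi,
\end{equation}
where $\psi$ is a fixed cutoff localizing to $\operatorname{dist}(\cdot,S)\le 1$. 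This map is linear. The trace identity $F|_{S}=f$ $\mathcal{H}^{n}$-a.e. follows from the Lebesgue differentiation theorem on the Ahlfors--David regular set $S$ together with the standard fact that averages over adjacent Whitney cubes converge to $f(x)$.

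\textbf{Estimating $\nabla F$.} On $Q_{j}$, $|\nabla F|$ is bounded by $l(Q_{j})^{-1}$ times the oscillation of averages over neighbouring $S$-balls. That oscillation is controlled by the mean oscillation of $f$ over $Q_{Cl(Q_{j})}(x_{j})\cap S$. Integrating against $\gamma$ over $Q_{j}$ and using the $A_{1}^{\rm loc}$ condition gives $\int_{Q_{j}}\gamma\lesssim l(Q_{j})^{n+1}\underline{\gamma}_{Cl(Q_{j})}(x_{j})$, so the contribution of $Q_{j}$ is
\[
\lesssim l(Q_{j})^{n}\,\underline{\gamma}_{Cl(Q_{j})}(x_{j})\,\operatorname{osc}_{Cl(Q_{j})}(f).
\]
Next group the cubes by the dyadic level $k$ with $\underline{\gamma}_{Cl(Q_{j})}(x_{j})\approx q^{k}$, i.e.\ $Cl(Q_{j})\approx r_{k}$ up to the monotonicity of $r\mapsto\underline{\gamma}_{r}$. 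Summing over $j$ and using Ahlfors regularity to convert the sum $\sum l(Q_{j})^{n}$ into integrals over $S^{k}(q)$, the total should be bounded by $\sum_{k}q^{k}\int_{S^{k}(q)}\mathcal{E}^{S}_{k}[f]\,d\mathcal{H}^{n}=\mathcal{BN}_{\gamma,q}[f]$. The zero-order term and the terms where $\nabla\psi\neq0$ are bounded by $\mathcal{LN}_{\gamma}[f]$ in the same way.

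\textbf{Main obstacle.} The difficulty is that each scale $r=l(Q_{j})$ must be charged to some level $k$ with $r\le r_{k}(x)$, and the functional only sees the single scale $r_{k}(x)$ for each $k$. In the unweighted case this mismatch is exactly what forbids linearity (Peetre): a genuine sum over all dyadic scales produces a Besov $B^{0}_{1,1}$-type norm, which is strictly larger than the trace norm. Hypothesis \eqref{eqq.criterion} says that every $x\in S$ belongs to $S^{k}(q)$ for all large $k$, and that $r_{k}(x)\downarrow 0$. Hence the dyadic scales $r\in(0,1]$ at $x$ are covered by the intervals $(r_{k+1}(x),r_{k}(x)]$. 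I will try to show that on each such interval $\underline{\gamma}_{r}(x)\approx q^{k}$, and that the oscillations at scales inside one interval are dominated by $\mathcal{E}^{S}_{k}$ at the top scale times the $A_{1}^{\rm loc}$ growth factor. This is the step that needs care: a naive estimate loses a factor equal to the number of dyadic scales in $(r_{k+1},r_{k}]$. To avoid that loss, I would modify the construction so that on the layer $\{r_{k+1}(x)<\operatorname{dist}(\cdot,S)\le r_{k}(x)\}$ the average is taken at the fixed scale $r_{k}(x_{j})$, not at $l(Q_{j})$. This is still linear, since the scales depend only on $\gamma$ and not on $f$. With this choice the gradient on the layer comes only from the transition between consecutive $k$ and from the spatial variation of $x_{j}$, and both are bounded by $q^{k}\mathcal{E}^{S}_{k}$ terms.
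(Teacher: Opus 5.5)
You have the right mechanism: freeze the averaging scale on layers determined by $\gamma$ alone, so the operator stays linear, with \eqref{eqq.criterion} ensuring that these layers exhaust all small scales at every $x\in S$. This is also the paper's idea. Under \eqref{eqq.criterion} the green cubes $\mathcal{G}_{\gamma}(q)$ form an admissible regular extension family (Theorem \ref{Th.traffic_light}(1)) depending only on $S$ and $\gamma$, so $\operatorname{Ext}_{\mathcal{G}}$ is linear. However, the step you flag as ``needing care'' is the entire content of the theorem, and your modification does not settle it.

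In your construction the coefficient $a_{j}=\fint_{Q_{r_{k}(x_{j})}(x_{j})\cap S}f\,d\mathcal{H}^{n}$ still depends on the Whitney cube through its centre $x_{j}$ and radius $r_{k}(x_{j})$. So $F$ is not constant on a layer, and $\nabla F\neq 0$ on essentially every Whitney cube in it. For adjacent cubes of side $l$, the only general bound is $|a_{i}-a_{j}|\lesssim\mathcal{E}^{S}_{Q}[f]$ for a cube $Q$ of side comparable to $r_{k}$. Each cube therefore contributes about $q^{k}l^{n}$ times an oscillation at scale $r_{k}$. Over $S\cap Q_{r_{k}}(x)$ there are about $(r_{k}/l)^{n}$ Whitney cubes of side $l$ for \emph{every} dyadic $l\in(r_{k+1},r_{k}]$, so you recover exactly the factor $\log(r_{k}/r_{k+1})$ you wanted to remove.

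To avoid this you would need $|a_{i}-a_{j}|$ to decay in $l/r_{k}$. That means smallness of $\mathcal{H}^{n}$ of $S$ inside the thin sets $Q_{r_{k}(x_{i})}(x_{i})\triangle Q_{r_{k}(x_{j})}(x_{j})$, uniformly near wherever $f$ oscillates. Nothing in the proposal provides this, and it fails in general. If $S$ contains a piece of a face of the averaging cubes and $r_{k}(\cdot)$ crosses the corresponding level, $|a_{i}-a_{j}|$ can be of the order of the full oscillation at scale $r_{k}$. So your claim that the ``spatial variation of $x_{j}$'' is bounded by $q^{k}\mathcal{E}^{S}_{k}$-terms is unsupported.

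The missing idea is to freeze the centre as well as the scale, and to do so dyadically. The paper assigns a single coefficient $M^{S}_{\theta Q}[f]$ to all pieces $\psi_{Q'}$ with $Q'\in\mathcal{IC}_{\mathcal{G}}(Q)$. Here $Q$ is a maximal dyadic cube with $\underline{\gamma}_{5Q}\in[q^{k},q^{k+1})$. Consequently $\nabla F$ lives only on the contacting families between different icebergs.

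The absence of a logarithmic loss then comes from Proposition \ref{Prop.main_geometric_ingredient}: $\sum_{Q'\in\mathcal{C}_{\mathcal{G}}(Q_{1},Q_{2})}(l(Q'))^{n}\lesssim(\min\{l(Q_{1}),l(Q_{2})\})^{n}$. It is proved by showing that each contacting piece either occupies a portion of $\partial Q_{2}$ of $\mathcal{H}^{n}$-measure $\gtrsim(l(Q'))^{n}$, with bounded overlap, or is a neighbour of $Q_{2}$. This needs the dyadic nesting. It also needs regularity and admissibility of $\mathcal{G}$ (Theorem \ref{Th.green_is_regular}, Lemma \ref{Lm.forgotten}, Proposition \ref{Prop.interaction_multiplicity}) to compare weights on interacting cubes and to control overlaps. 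Your layers $\{r_{k+1}(x)<\operatorname{dist}(\cdot,S)\le r_{k}(x)\}$ with moving centres have none of this structure.

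Two smaller points. Since $\mathcal{H}^{n}(S)>0$, you must also show that $F\in W_{1}^{1,\rm loc}(\mathbb{R}^{n+1})$ across $S$ (cf.\ Proposition \ref{Prop.removability}). And since $r_{k_{j}}(x_{j})$ may be much larger than $l(Q_{j})$, the trace identity needs $r_{k_{j}}(x_{j})\to 0$ as $Q_{j}$ shrinks to $x$. This does follow from \eqref{eqq.criterion}, but it is not the standard Whitney fact you cite.
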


\textbf{Acknowledgements.} I am grateful to the organizers of the special volume dedicated to Professor Hans Triebel for their kind invitation. While I had very few personal
contacts with Professor Tribel, I deeply felt the influence of his ideas to my mathematical formation. I would like to express my special gratitude to Professor Luigi Ambrosio who proposed to use the Eilenberg inequality
for the proof of Proposition \ref{Prop.removability}. Finally, I thank my student Alexey Chikalov, who found some typos and inaccuracies in the preliminary version of the paper.


\section{Preliminaries}

The goal of this section is to recall some background related to geometric measure theory, weighted Sobolev spaces and to set the
terminology that we shall adopt in the paper.

The symbol $C$ always denotes a positive constant which depends only on the fixed parameters and probably on auxiliary functions, unless otherwise stated;
its value may vary from line to
line. The meaning of $A \lesssim B$ is given by that there exists a positive constant $C \in (0,+\infty)$ such that
$A \le C B$. The symbol $A \approx B$ will be used as an abbreviation of $A \lesssim B \lesssim A$.

The symbols $\mathbb{Z}$ and $\mathbb{N}_{0}$ denote the set of all integer numbers
and the set of all nonnegative integer numbers, respectively.
Throughout the paper, given $d \in \mathbb{N}$, the symbol $\mathbb{R}^{d}$ means the linear space of all strings $x=(x_{1},...,x_{d})$ of real numbers equipped
with the $l_{\infty}$-norm, i.e. $\|x\|:=\|x\|_{\infty}:=\max\{|x_{1}|,...,|x_{d}|\}$. Given a set $E \subset \mathbb{R}^{d}$, we denote by
$\operatorname{cl}E$, $\operatorname{int}E$ and $E^{c}$ the closure, the interior, and the complement of $E$ in $\mathbb{R}^{d}$,
respectively. Given a set $E \subset \mathbb{R}^{d}$, we denote by $\chi_{E}$ the characteristic function of
$E$ and by $\#E \in \mathbb{N}_{0} \cup \{+\infty\}$ the cardinality of $E$. Given two nonempty sets $E_{1},E_{2} \in \mathbb{R}^{d}$, the symbol
$\operatorname{dist}(E_{1},E_{2})$ denotes the \textit{distance}  between $E_{1}$ and $E_{2}$ in metric generated by $\|\cdot\|_{\infty}$-norm.
If $S \subset \mathbb{R}^{d}$ is a nonempty set, we define, for each $\delta > 0$, the $\delta$-neighborhood of $S$ by the equality
\begin{equation}
\label{eqq.neighborhood}
U_{\delta}(S):=\{y \in \mathbb{R}^{d}:\operatorname{dist}(x,S) < \delta\}.
\end{equation}

\subsection{Cubes}
Throughout the paper a \textit{cube} $Q$ in $\mathbb{R}^{d}$, $d \in \mathbb{N}$, is a \textit{closed} cube with sides \textit{parallel to the coordinate axis}, i.e. the ball in $l_{\infty}$-norm.
If $Q = \prod_{i=1}^{d}[x_{i}-r,x_{i}+r]$, then we usually write $Q = Q_{r}(x)$ and call $x$ its \textit{center} and $l(Q):=2r$ its \textit{side length}, respectively.
If $Q=Q_{r}(x)$, then for every $c > 0$ we put $cQ:=Q_{cr}(x)$.

\textbf{Important notation.} Throughout the paper,
by a \textit{dyadic cube} in $\mathbb{R}^{d}$ we always mean a cube $Q_{k,m}:=\prod_{i=1}^{d}[\frac{m_{i}}{2^{k}},\frac{m_{i}+1}{2^{k}}]$ with $k \in \mathbb{Z}$ and
$m \in \mathbb{Z}^{d}$. If $S \subset \mathbb{R}^{d}$ is a closed nonempty set and $c > 1$, then, given $k \in \mathbb{Z}$, we put $\mathcal{D}_{k}(S,c):=\{Q_{k,m}:cQ_{k,m} \cap S \neq \emptyset\}.$
Furthermore, $\mathcal{D}(S,c):=\bigcup_{k \in \mathbb{Z}}\mathcal{D}_{k}(S,c), \mathcal{D}_{+}(S,c):=\bigcup_{k \in \mathbb{N}_{0}}\mathcal{D}_{k}(S,c).$
If either $S = \mathbb{R}^{d}$ or $c=1$, then we omit it from the corresponding notation.

\subsection{Some geometric measure theory background}

By a \textit{measure on} $\mathbb{R}^{d}$, $d \in \mathbb{N}$, (or just measure) we always mean an outer measure, i.e. a
monotone countably subadditive function $\mathfrak{m}:2^{\mathbb{R}^{d}} \to [0,+\infty]$ with $\mathfrak{m}(\emptyset) = 0$. We say that a measure $\mathfrak{m}$
is locally finite if $\mathfrak{m}(Q) < +\infty$ for every cube in $\mathbb{R}^{d}$. Given a closed set $S \subset \mathbb{R}^{d}$ and a measure $\mathfrak{m}$ on $\mathbb{R}^{d}$, by
$\mathfrak{m}\lfloor_{S}$ we always denote the \textit{restriction of} $\mathfrak{m}$ to $S$, i.e. a measure defined by the equality $\mathfrak{m}\lfloor_{S}(E):=\mathfrak{m}(S \cap E)$ for 
each set $E \subset \mathbb{R}^{d}$.

A family $\mathcal{F}$ of subset of $\mathbb{R}^{d}$ is said to be \textit{locally finite}, if
for every $x \in \mathbb{R}^{d}$ there is $\delta > 0$ such that
the number of different sets from $\mathcal{F}$ having nonempty intersection with $Q_{\delta}(x)$ is finite.
We say that $\mathcal{F}$ is 
\textit{noneoverlapping} if different sets in $\mathcal{F}$ have disjoint interiors.
Furthermore, $\mathcal{F}$ is said to be \textit{disjoint} if different sets in $\mathcal{F}$ have an empty intersection. Finally, by $\operatorname{MULT}(\mathcal{F})$ we denote the \textit{covering multiplicity} of $\mathcal{F}$,
i.e., the minimal $M' \in \mathbb{N} \cup \{+\infty\}$ such that every point $x \in \mathbb{R}^{d}$ belongs to at most $M'$
sets from $\mathcal{F}$.

Given a Borel nonempty set $S \subset \mathbb{R}^{d}$, the symbol $\mathfrak{B}(S)$ stands for the set of all Borel functions $f:S \to \mathbb{R}$.

\begin{Remark}
\label{Rem.dyadic_lattice_multiplicity}
Recall that all cubes considered in this paper are assumed to be closed. Fix $d \in \mathbb{N}$ and $c \geq 1$. Given $k \in \mathbb{Z}$ and $Q_{1},Q_{2} \in \mathcal{D}_{k}$ in $\mathbb{R}^{d}$, $d \in \mathbb{N}$,
assume that $Q_{1} \cap cQ_{2} \neq \emptyset$ for some $c \geq 1$, then $Q_{1} \cap [c]Q_{2} \neq \emptyset$. Consequently, $Q_{2} \cap [c]Q_{1} \neq \emptyset$ and
$Q_{2} \subset ([c] +2)Q_{1}$. Hence, elementary volume observations lead to
\begin{equation}
\label{eqq.dyadic_multiplicity}
\operatorname{MULT}(\{cQ: Q \in \mathcal{D}_{k}\}) \le ([c]+2)^{d}.
\end{equation}
\end{Remark}

Given a locally finite measure $\mathfrak{m}$ on $\mathbb{R}^{d}$ and a bounded set $G \subset \mathbb{R}^{d}$, for each $f \in L_{1}^{\rm loc}(\mathfrak{m})$,
\begin{equation}
\notag
\fint\limits_{G}f(y)\,d\mathfrak{m}(y):=
\begin{cases}
&\frac{1}{\mathfrak{m}(G)}\int\limits_{G}f(y)\,d\mathfrak{m}(y) \quad \hbox{if} \quad \mathfrak{m}(G) > 0;\\
&0 \quad \hbox{if} \quad  \mathfrak{m}(G)=0.
\end{cases}
\end{equation}
We define
the \textit{averaging map} $M^{\mathfrak{m}}_{G}:=L_{1}^{\rm loc}(\mathfrak{m}) \to \mathbb{R}$, by the equality
\begin{equation}
\label{eqq.maximal_function}
M^{\mathfrak{m}}_{G}[f]:=\fint\limits_{G}f(y)\,d\mathfrak{m}(y), \qquad f \in L_{1}^{\rm loc}(\mathfrak{m}).
\end{equation}
We say that $x \in \operatorname{supp}\mathfrak{m}$ is an $\mathfrak{m}$-\textit{Lebesgue point} of $f \in L_{1}^{\rm loc}(\mathfrak{m})$ if 
\begin{equation}
\lim\limits_{r \to 0}\fint\limits_{B_{r}(x)}|f(x)-f(y)|\,d\mathfrak{m}(y)=0.
\end{equation} 
Furthermore, we introduce \textit{the averaged local best approximation} of $f$ by constants on $G$ by
\begin{equation}
\label{eqq.local_best_approximation}
\mathcal{E}^{\mathfrak{m}}_{G}[f]:=\inf\limits_{c \in \mathbb{R}}\fint\limits_{G}|f(y)-c|\,d\mathfrak{m}(y), \qquad f \in L_{1}^{\rm loc}(\mathfrak{m}).
\end{equation}
\begin{Remark}
\label{Rem.the_best_constant}
It is clear that the infimum in \eqref{eqq.local_best_approximation} is achieved. We denote the corresponding constant by $\operatorname{c}(G,\mathfrak{m},f)$.
\end{Remark}
Finally, \textit{the averaged difference map} $\mathcal{A}^{\mathfrak{m}}_{G}:L_{1}^{\rm loc}(\mathfrak{m}) \to [0,+\infty)$ is given by
\begin{equation}
\notag
\mathcal{A}^{\mathfrak{m}}_{G}[f]:=
\fint\limits_{G}\fint\limits_{G}|f(y)-f(z)|\,d\mathfrak{m}(y)d\mathfrak{m}(z), \qquad f \in L_{1}^{\rm loc}(\mathfrak{m}).
\end{equation}

\begin{Remark}
\label{Rem.different_averagings}
Given $f \in L_{1}^{\rm loc}(\mathfrak{m})$, for each bounded set $G \subset \mathbb{R}^{d}$, we have
\begin{equation}
\label{eqq.different_averagings}
\mathcal{A}^{\mathfrak{m}}_{G}[f] \le 2\mathcal{E}^{\mathfrak{m}}_{G}[f] \le 2\fint\limits_{G}\Bigl|f(y)-\fint\limits_{G}f(z)\,d\mathfrak{m}(z)\Bigr|\,d\mathfrak{m}(y) \le 2\mathcal{A}^{\mathfrak{m}}_{G}[f](x).
\end{equation}
\end{Remark}

The following elementary assertion is a folklore and will be commonly used later. Probably, the first proof (even in a more general context) was given in \cite{Br}.
\begin{Lm}
\label{Lm.subadditivity_best_approximation}
Let $\mathfrak{m}$ be a locally finite measure on $\mathbb{R}^{d}$ and let $\{\Omega_{i}\}_{i=1}^{N}$, $N \in \mathbb{N}$, be a family of bounded subsets of $\mathbb{R}^{d}$ such that
$\mathfrak{m}(\bigcap_{i=1}^{N}\Omega_{i}) \geq \delta\mathfrak{m}(\bigcup_{i=1}^{N}\Omega_{i}) > 0$ for some $\delta > 0$. Then, for every $f \in L_{1}^{\rm loc}(\mathfrak{m})$, the following properties hold
(we set $\Omega=\bigcup_{i=1}^{N}\Omega_{i}$):
\begin{equation}
\label{eqq.2.4'}
\max\limits_{1 \le i \le N}\mathcal{E}^{\mathfrak{m}}_{\Omega_{i}}[f] \le \frac{2}{\delta}\mathcal{E}^{\mathfrak{m}}_{\Omega}[f], \qquad \mathcal{E}^{\mathfrak{m}}_{\Omega}[f] \le \frac{2}{\delta}\sum\limits_{i=1}^{N}\mathcal{E}^{\mathfrak{m}}_{\Omega_{i}}[f].
\end{equation}
\end{Lm}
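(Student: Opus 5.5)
Both inequalities will follow by comparing each $f$ with a single constant, namely the optimal constant $c:=\operatorname{c}(\Omega,\mathfrak{m},f)$ from Remark \ref{Rem.the_best_constant}. Set $\Omega_{0}:=\bigcap_{i=1}^{N}\Omega_{i}$. The hypothesis $\mathfrak{m}(\Omega_{0}) \ge \delta\mathfrak{m}(\Omega) > 0$, together with $\Omega_{0} \subset \Omega_{i} \subset \Omega$, gives $\mathfrak{m}(\Omega_{i}) \ge \delta\mathfrak{m}(\Omega)$ for every $i$. Hence each average over $\Omega_{i}$ or $\Omega_{0}$ is at most $\delta^{-1}$ times the corresponding average over $\Omega$, provided the integrand is nonnegative.

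\emph{First inequality.} For each $i$, take $c$ as the competitor in the infimum defining $\mathcal{E}^{\mathfrak{m}}_{\Omega_{i}}[f]$. Then
\begin{equation}
\notag
\mathcal{E}^{\mathfrak{m}}_{\Omega_{i}}[f] \le \frac{1}{\mathfrak{m}(\Omega_{i})}\int\limits_{\Omega_{i}}|f-c|\,d\mathfrak{m} \le \frac{1}{\delta\mathfrak{m}(\Omega)}\int\limits_{\Omega}|f-c|\,d\mathfrak{m} = \frac{1}{\delta}\mathcal{E}^{\mathfrak{m}}_{\Omega}[f].
\end{equation}
This is even better than the claimed bound $2/\delta$.

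\emph{Second inequality.} Let $c_{i}:=\operatorname{c}(\Omega_{i},\mathfrak{m},f)$. Since $\Omega=\bigcup_{i}\Omega_{i}$, for any constant $c'$ we have
\begin{equation}
\notag
\int\limits_{\Omega}|f-c'|\,d\mathfrak{m} \le \sum_{i}\int\limits_{\Omega_{i}}|f-c_{i}|\,d\mathfrak{m} + \sum_{i}\mathfrak{m}(\Omega_{i})|c_{i}-c'| .
\end{equation}
Choose $c':=c_{1}$. Each constant difference is controlled by averaging over the common part $\Omega_{0}$:
\begin{equation}
\notag
|c_{i}-c_{1}| \le \fint\limits_{\Omega_{0}}|f-c_{i}|\,d\mathfrak{m}+\fint\limits_{\Omega_{0}}|f-c_{1}|\,d\mathfrak{m} \le \frac{1}{\delta}\bigl(\mathcal{E}^{\mathfrak{m}}_{\Omega_{i}}[f]+\mathcal{E}^{\mathfrak{m}}_{\Omega_{1}}[f]\bigr).
\end{equation}
The second step uses $\mathfrak{m}(\Omega_{0}) \ge \delta\mathfrak{m}(\Omega_{i})$.

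The main obstacle is the resulting $\mathcal{E}^{\mathfrak{m}}_{\Omega_{1}}[f]$ term. It appears once for every $i$, and after dividing by $\mathfrak{m}(\Omega)$ its weight is $\sum_{i}\mathfrak{m}(\Omega_{i})/\mathfrak{m}(\Omega)$, which can be as large as $N$. The constant would then depend on $N$ instead of being the stated $2/\delta$. To avoid this, I would bound the constant differences through $\Omega_{0}$ directly rather than through the integrals over $\Omega_{i}$. Take $c':=\operatorname{c}(\Omega_{0},\mathfrak{m},f)$ and write $A:=\int_{\Omega_{0}}|f-c'|\,d\mathfrak{m}$; note $A \le \int_{\Omega_{0}}|f-c_{1}|\,d\mathfrak{m} \le \mathfrak{m}(\Omega_{1})\mathcal{E}^{\mathfrak{m}}_{\Omega_{1}}[f]$. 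Then split $\Omega$ into the disjoint pieces $\Omega_{0}$ and $\Omega_{i}\setminus\bigcup_{j<i}\Omega_{j}$, rather than summing over the overlapping sets $\Omega_{i}$. On the piece coming from $\Omega_{i}$ I use $|f-c'| \le |f-c_{i}|+|c_{i}-c'|$, where $|c_{i}-c'| \le \fint_{\Omega_{0}}|f-c_{i}|\,d\mathfrak{m}+A/\mathfrak{m}(\Omega_{0})$. Now the total measure multiplying the constant differences is at most $\mathfrak{m}(\Omega)$, so the term $A/\mathfrak{m}(\Omega_{0})$ is counted only once in total. With $\mathfrak{m}(\Omega)/\mathfrak{m}(\Omega_{0}) \le 1/\delta$ and $\fint_{\Omega_{0}}|f-c_{i}|\,d\mathfrak{m} \le \delta^{-1}\mathcal{E}^{\mathfrak{m}}_{\Omega_{i}}[f]$, this gives
\begin{equation}
\notag
\mathcal{E}^{\mathfrak{m}}_{\Omega}[f] \le \frac{1}{\mathfrak{m}(\Omega)}\int\limits_{\Omega}|f-c'|\,d\mathfrak{m} \le \frac{1}{\delta}\sum_{i}\mathcal{E}^{\mathfrak{m}}_{\Omega_{i}}[f] + \frac{1}{\delta}\mathcal{E}^{\mathfrak{m}}_{\Omega_{1}}[f]+\dots \le \frac{2}{\delta}\sum_{i=1}^{N}\mathcal{E}^{\mathfrak{m}}_{\Omega_{i}}[f].
\end{equation}
In the last step the extra $\mathcal{E}^{\mathfrak{m}}_{\Omega_{1}}[f]$ is absorbed into the sum. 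The part I expect to need the most care is the bookkeeping for the terms $\int_{\text{piece}_{i}}|f-c_{i}|\,d\mathfrak{m}$ and $\mathfrak{m}(\text{piece}_{i})\fint_{\Omega_{0}}|f-c_{i}|\,d\mathfrak{m}$, so that the total stays within the factor $2/\delta$.
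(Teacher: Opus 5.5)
Your proof is correct in approach but incomplete at the final step; the paper proves the second inequality by a shorter route.

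\textbf{First inequality.} Your argument is correct and more direct than what the paper indicates. Testing $\mathcal{E}^{\mathfrak{m}}_{\Omega_{i}}[f]$ with $\operatorname{c}(\Omega,\mathfrak{m},f)$ even gives $1/\delta$.

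\textbf{Second inequality: what needs fixing.} Your route works, but the last chain does not yet justify $2/\delta$, and the pieces need a small correction. Since $\Omega_{0}\subset\Omega_{1}$, take $P_{1}:=\Omega_{1}\setminus\Omega_{0}$ and $P_{i}:=\Omega_{i}\setminus\bigcup_{j<i}\Omega_{j}$ for $i\ge 2$. Suppose the two $P_{i}$-terms are estimated separately using only $\mathfrak{m}(P_{i})\le\mathfrak{m}(\Omega)$, namely $\frac{1}{\mathfrak{m}(\Omega)}\int_{P_{i}}|f-c_{i}|\,d\mathfrak{m}\le\mathcal{E}^{\mathfrak{m}}_{\Omega_{i}}[f]$ and $\frac{\mathfrak{m}(P_{i})}{\mathfrak{m}(\Omega)}\fint_{\Omega_{0}}|f-c_{i}|\,d\mathfrak{m}\le\frac{1}{\delta}\mathcal{E}^{\mathfrak{m}}_{\Omega_{i}}[f]$. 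Adding the $\frac{1}{\delta}\mathcal{E}^{\mathfrak{m}}_{\Omega_{1}}[f]$ coming from $A$, the coefficient of $\mathcal{E}^{\mathfrak{m}}_{\Omega_{1}}[f]$ becomes $1+\frac{2}{\delta}$, so the claimed absorption fails.

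What closes it is the sharper bound $\mathfrak{m}(\Omega_{0})+\mathfrak{m}(P_{i})\le\mathfrak{m}(\Omega)$, since these are disjoint subsets of $\Omega$. It lets you combine the two terms:
\[
\frac{1}{\mathfrak{m}(\Omega)}\Bigl(\int_{P_{i}}|f-c_{i}|\,d\mathfrak{m}+\mathfrak{m}(P_{i})\fint_{\Omega_{0}}|f-c_{i}|\,d\mathfrak{m}\Bigr)\le\frac{\mathfrak{m}(\Omega_{i})}{\mathfrak{m}(\Omega)}\cdot\frac{\mathfrak{m}(\Omega_{0})+\mathfrak{m}(P_{i})}{\mathfrak{m}(\Omega_{0})}\,\mathcal{E}^{\mathfrak{m}}_{\Omega_{i}}[f]\le\frac{1}{\delta}\mathcal{E}^{\mathfrak{m}}_{\Omega_{i}}[f].
\]
The $A$-terms sum to at most $A\,\mathfrak{m}(\Omega)/\mathfrak{m}(\Omega_{0})$ and contribute $\frac{1}{\delta}\mathcal{E}^{\mathfrak{m}}_{\Omega_{1}}[f]$. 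The total is therefore $\frac{1}{\delta}\sum_{i}\mathcal{E}^{\mathfrak{m}}_{\Omega_{i}}[f]+\frac{1}{\delta}\mathcal{E}^{\mathfrak{m}}_{\Omega_{1}}[f]$, with nothing behind your ``$+\dots$''.

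\textbf{Comparison with the paper.} The paper compares $f$ on $\Omega$ with the mean $\fint_{\Omega_{0}}f\,d\mathfrak{m}$ and uses the overlapping cover directly, $\fint_{\Omega}\le\sum_{i}\fint_{\Omega_{i}}$. Then, because $\Omega_{0}\subset\Omega_{i}$ for every $i$,
\[
\fint_{\Omega_{i}}\Bigl|f(x)-\fint_{\Omega_{0}}f\,d\mathfrak{m}\Bigr|\,d\mathfrak{m}(x)\le\frac{\mathfrak{m}(\Omega_{i})}{\mathfrak{m}(\Omega_{0})}\mathcal{A}^{\mathfrak{m}}_{\Omega_{i}}[f]\le\frac{1}{\delta}\mathcal{A}^{\mathfrak{m}}_{\Omega_{i}}[f]\le\frac{2}{\delta}\mathcal{E}^{\mathfrak{m}}_{\Omega_{i}}[f],
\]
by Remark~\ref{Rem.different_averagings}. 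The common comparison constant is controlled by the oscillation on each $\Omega_{i}$ separately, so no distinguished set appears. The $N$-dependence you ran into is an artifact of tying the common constant to $\Omega_{1}$; once that is avoided, no disjoint decomposition is needed. In the paper the factor $2$ comes from $\mathcal{A}\le 2\mathcal{E}$; in yours it comes from the extra $\frac{1}{\delta}\mathcal{E}^{\mathfrak{m}}_{\Omega_{1}}[f]$. Your route stays with best constants and never uses the double average, at the cost of the measure bookkeeping above. For the applications in the paper, any constant depending only on $\delta$ would suffice.
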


\begin{proof}
We put $\underline{\Omega}:= \bigcap_{i=1}^{N}\Omega_{i}$ and $\operatorname{c}(\Omega):=\operatorname{c}(\Omega,\mathfrak{m},f)$ for brevity.
We proof only the second inequality in \eqref{eqq.2.4'} since the first inequality can be established by similar arguments.
Taking into account Remarks \ref{Rem.the_best_constant} and \ref{Rem.different_averagings} we have
\begin{equation}
\label{eqq.2.5'}
\begin{split}
&\mathcal{E}^{\mathfrak{m}}_{\Omega}[f]=\fint\limits_{\Omega}|f(x)-\operatorname{c}(\Omega)|\,d\mathfrak{m}(x) \le \fint\limits_{\Omega}\Bigl|f(x)-\fint\limits_{\underline{\Omega}}f(y)\,d\mathfrak{m}(y)\Bigr|\,d\mathfrak{m}(x)\\
&\le \sum\limits_{i=1}^{N}\fint\limits_{\Omega_{i}}\Bigl|f(x)-\fint\limits_{\underline{\Omega}}f(y)\,d\mathfrak{m}(y)\Bigr|\,d\mathfrak{m}(x) \le \frac{1}{\delta}\sum\limits_{i=1}^{N}\mathcal{A}^{\mathfrak{m}}_{\Omega_{i}}[f] \le \frac{2}{\delta}\sum\limits_{i=1}^{N}\mathcal{E}^{\mathfrak{m}}_{\Omega_{i}}[f].
\end{split}
\end{equation}
\end{proof}

Given $d \in \mathbb{N}$, by $\mathcal{L}^{d}$ we denote the classical \textit{Lebesgue measure} on $\mathbb{R}^{d}$. We also recall the classical concept
of the $t$-Hausdorff measure for $t \in (0,d]$. Given a set $E \subset \mathbb{R}^{d}$, for each $\delta > 0$,
$\mathcal{H}^{t}_{\delta}(E):=\inf\sum_{i=1}^{\infty} (\operatorname{diam}G_{i})^{t}$,
where the infimum is taken over all countable coverings $\{G_{i}\}_{i \in \mathcal{I}}$ of $E$
such that $\operatorname{diam}G_{i} < \delta$ for all  $i\in \mathcal{I}$.
Finally, we define the \textit{$t$-Hausdorff measure} of $E$ by the equality
$\mathcal{H}^{t}(E):=\lim_{\delta \to +0}\mathcal{H}^{t}_{\delta}(E).$ Given a closed set $S \subset \mathbb{R}^{d}$, we shall
usually consider the \textit{restriction} of $\mathcal{H}^{t}$ to $S$. This is a measure on $\mathbb{R}^{d}$ which is denoted by $\mathcal{H}^{t}\lfloor_{S}$ and
defined by $\mathcal{H}^{t}\lfloor_{S}(E):=\mathcal{H}^{t}(S \cap E)$ for every set $E \subset \mathbb{R}^{d}$.

\begin{Def}
\label{Def.Ahlfors_David}
Given $d \in \mathbb{N}$ and $t \in (0,d]$, a closed set $S \subset \mathbb{R}^{d}$ is said to be Ahlfors--David $t$-regular if there are $C^{S}_{1},C^{S}_{2} > 0$ such that
\begin{equation}
\label{eqq.Ahlfors_regular}
C^{S}_{1} r^{t} \le \mathcal{H}^{t}(Q_{r}(x)\cap S) \le C^{S}_{2}r^{t} \quad \hbox{for all} \quad (x,r) \in S \times (0,1].
\end{equation}
By $\mathcal{ARD}^{t}(\mathbb{R}^{d})$ we denote the class of all Ahlfors--David $t$-regular sets in $\mathbb{R}^{d}$.
\end{Def}

Now we summarize some elementary geometric properties of the Ahlfors--David regular sets. 
In fact, such properties are folklore and can be found in many papers. 
For example, the interesting
reader can easily deduce them from Proposition 1.3 in \cite{T2}.
\begin{Prop}
\label{Prop.porous_property}
Let $t \in (0,d)$, $S \in \mathcal{ARD}^{t}(\mathbb{R}^{d})$ and $c \geq 1$. Then:
\begin{itemize}

\item[\((\text{Por.1})\)] $\mathcal{H}^{d}(S) = 0$;

\item[\((\text{Por.2})\)] there are parameters $\underline{j}(S,c) \in \mathbb{N}$ and $\varkappa_{S} > 0$ such that, for each $k \in \mathbb{N}$ and $j \geq \underline{j}(S,c)$,
\begin{equation}
\notag
\mathcal{L}^{d}\Bigl(Q \setminus \bigcup_{Q' \in \mathcal{D}_{k+j}(S,c)}cQ'\Bigr) \geq \varkappa_{S} \mathcal{L}^{d}(Q) \quad \hbox{for every} \quad Q \in \mathcal{D}_{k}.
\end{equation}
\end{itemize}
\end{Prop}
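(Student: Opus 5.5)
The strategy is to reduce both properties to the upper and lower Ahlfors--David bounds \eqref{eqq.Ahlfors_regular}, using a packing/covering count at dyadic scales.

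For (Por.1) I would fix a cube $Q$ of side length $\le 1$ meeting $S$ and, for large $j$, cover $S\cap Q$ by the cubes $Q'\in\mathcal{D}_{j}(S,c)$ that meet $Q$. Bounding the number $N_j$ of such cubes is the key counting step. For each such $Q'$ pick a point $x_{Q'}\in S\cap cQ'$. By \eqref{eqq.dyadic_multiplicity}, the cubes $Q_{c2^{-j}}(x_{Q'})\supset$ (a fixed multiple of) $Q'$ have bounded overlap, each carries $\mathcal{H}^{t}$-mass $\ge C^{S}_{1}(c2^{-j})^{t}$, and all lie in a fixed enlargement $\widetilde Q$ of $Q$. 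Since $\mathcal{H}^{t}(\widetilde Q\cap S)\lesssim C^S_2 l(Q)^t$ (cover $\widetilde Q$ by boundedly many cubes centered on $S$), we get $N_j\lesssim (l(Q)2^{j})^{t}$. Hence $\mathcal{L}^{d}(S\cap Q)\le N_j (c2^{-j})^{d}\lesssim 2^{-j(d-t)}\to 0$, because $t<d$. Exhausting $\mathbb{R}^d$ by countably many such cubes gives $\mathcal{H}^d(S)=0$.

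For (Por.2) I would fix $Q\in\mathcal{D}_k$ and apply the same count at the finer scale $k+j$. Let $Q''$ range over the cubes of $\mathcal{D}_{k+j}(S,c)$ with $cQ''\cap Q\neq\emptyset$. Their points $x_{Q''}\in S$ lie in $(1+2c)Q$ or so, so the same packing argument gives
\[
\#\{Q''\}\le C(d,c,C^S_1,C^S_2)\,2^{jt},
\]
valid since $2^{-k}\le 1$ for $k\in\mathbb{N}$. Therefore
\[
\mathcal{L}^d\Bigl(Q\cap\bigcup cQ''\Bigr)\le C\,2^{jt}(c2^{-k-j})^{d}=C c^{d}\,2^{-j(d-t)}\mathcal{L}^d(Q).
\]
I then choose $\underline{j}(S,c)$ so that $Cc^d2^{-\underline{j}(d-t)}\le 1/2$, which yields the claim with $\varkappa_S=1/2$.

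The main obstacle is the packing count, specifically controlling the overlap and the mass of the enlarged region by $C^S_2$. The issue is that \eqref{eqq.Ahlfors_regular} is stated only for cubes centered on $S$ of radius $\le1$. I would handle this by covering the enlarged cube with $O(c^d)$ cubes centered at points of $S$ of radius comparable to $2^{-k}$, reducing the radius and adding more cubes if needed to stay at most $1$. Constants must depend only on $d,c,C^S_1,C^S_2$.
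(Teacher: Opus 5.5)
Your argument is correct. The paper gives no proof of this proposition: it calls both items folklore and refers to Proposition~1.3 in \cite{T2}.

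The usual way to get (Por.2) from such a reference is via porosity. Inside each $Q\in\mathcal{D}_k$ one finds a single subcube of side comparable to $l(Q)$ at distance $\gtrsim l(Q)$ from $S$. Once $c2^{-k-j}$ is small relative to $l(Q)$, no $cQ'$ with $Q'\in\mathcal{D}_{k+j}(S,c)$ reaches its core, so $\varkappa_S$ is of order $\tau^d$ for the porosity constant $\tau$.

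You instead run a self-contained packing count at scale $2^{-k-j}$:
\begin{itemize}
\item the cubes $Q_{c2^{-k-j}}(x_{Q''})\subset 3cQ''$ have overlap at most $([3c]+2)^d$ by \eqref{eqq.dyadic_multiplicity};
\item each has $\mathcal{H}^t\lfloor_S$-mass at least $C^S_1(c2^{-k-j})^t$;
\item all lie in a fixed enlargement such as $(1+6c)Q$, whose mass is at most a constant times $C^S_2 2^{-kt}$.
\end{itemize}
This gives $\#\{Q''\}\lesssim 2^{jt}$ and a covered fraction of $Q$ of order $O(2^{-j(d-t)})$. What this buys is quantitative control: $\varkappa_S$ can be taken arbitrarily close to $1$ and independent of $c$, with $\underline{j}$ explicit in $d,t,c,C^S_1,C^S_2$. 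Remember to also impose $2^{\underline{j}}\ge c$, so that the lower Ahlfors bound is only used for radii $\le 1$.

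For (Por.1) your count works. Covering $S\cap Q$ by the cubes $Q'$ themselves bounds $\mathcal{H}^d_\delta$ directly, so you need not pass through $\mathcal{L}^d$. But the count is more than needed. The upper bound alone gives $\mathcal{H}^t(S\cap Q_1(x))\le C^S_2<+\infty$. Since $\mathcal{H}^d_\delta\le\delta^{d-t}\mathcal{H}^t_\delta$ for $t<d$, this already forces $\mathcal{H}^d(S\cap Q_1(x))=0$.
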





\textbf{Important notation.} If $S \in \mathcal{ADR}^{t}(\mathbb{R}^{d})$ for some $t \in (0,d]$ and $\mathfrak{m}=\mathcal{H}^{t}\lfloor_{S}$, then
we set $\mathcal{E}^{S}_{Q}[f]:=\mathcal{E}^{\mathcal{H}^{t}\lfloor_{S}}_{Q}[f]$
and $\mathcal{A}_{Q}^{S}[f]:=\mathcal{A}_{Q}^{\mathcal{H}^{t}\lfloor_{S}}[f]$ for each cube $Q$ and every $f \in L_{1}^{\rm loc}(\mathcal{H}^{t}\lfloor_{S})$.

\subsection{Weights}

In this subsection we fix $d \in \mathbb{N}$, $d \geq 2$.
By \textit{a weight on $\mathbb{R}^{d}$} we always mean a locally integrable nonnegative on $\mathbb{R}^{d}$ Borel function $\gamma$ which is positive $\mathcal{L}^{d}$-almost everywhere.

\textbf{Important notation.} Given a weight $\gamma$ on $\mathbb{R}^{d}$, for each cube $Q$ in $\mathbb{R}^{d}$, we set
\begin{equation}
\label{eqq.weight_notation}
\gamma_{Q}:=\int\limits_{Q}\gamma(y)\,dy, \quad \underline{\gamma}_{Q}:=\essinf\limits_{y \in Q}\gamma(y), \quad \overline{\gamma}_{Q}:=\fint\limits_{Q}\gamma(y)\,dy.
\end{equation}
Given $x \in \mathbb{R}^{d}$ and $r > 0$, we also put
\begin{equation}
\label{eqq.weight_notation'}
\gamma_{r}(x):=\gamma_{Q_{r}(x)}, \quad \underline{\gamma}_{r}(x):=\underline{\gamma}_{Q_{r}(x)}, \quad \overline{\gamma}_{r}(x):=\overline{\gamma}_{Q_{r}(x)}.
\end{equation}

The concept of weights satisfying the \textit{local Muckenhoupt} condition was introduced by V.~Rychkov in \cite{Ry}. We use only a particular case of his definition related
to the case $p=1$.
\begin{Def}
\label{Def.A_1_weight}
By $A_{1}^{\rm loc}(\mathbb{R}^{d})$ we mean the set of all weights $\gamma$ on $\mathbb{R}^{d}$ such that
\begin{equation}
\label{eqq.A_1_weight}
\operatorname{C}_{\gamma}:=\sup\limits_{(x,r) \in \mathbb{R}^{d} \times (0,1]} \frac{\overline{\gamma}_{r}(x)}{\underline{\gamma}_{r}(x)} < +\infty.
\end{equation}
\end{Def}
Now we show that the measure $\gamma \mathcal{L}^{d}$ have some sort of locally uniformly doubling properties.
\begin{Lm}
\label{Lm.doubling}
For each $\gamma \in A_{1}^{\rm loc}(\mathbb{R}^{d})$ the following properties hold:
\begin{itemize}

\item[\((\text{D1})\)] if cubes $Q,Q'$ in $\mathbb{R}^{d}$ are such that $l(Q) \le 2$,  $l(Q')=2^{-1}l(Q)$ and $Q' \subset Q$, then
\begin{equation}
\label{eqq.doubling_1}
\underline{\gamma}_{Q} \le \underline{\gamma}_{Q'} \le 2^{d}\operatorname{C}_{\gamma}\underline{\gamma}_{Q}, \quad \overline{\gamma}_{Q'} \le 2^{d}\overline{\gamma}_{Q} \le 2^{d}\operatorname{C}_{\gamma}\overline{\gamma}_{Q'};
\end{equation}

\item[\((\text{D2})\)] if cubes $Q_{1}$, $Q_{2}$ have nonempty intersection and $l(Q_{1})=l(Q_{2}) \le 1$, then
\begin{equation}
\label{eqq.doubling_2}
\underline{\gamma}_{Q_{1}} \le 2^{d}\operatorname{C}_{\gamma}\underline{\gamma}_{Q_{2}}, \quad \overline{\gamma}_{Q_{1}} \le 2^{d}(\operatorname{C}_{\gamma})^{2}\overline{\gamma}_{Q_{2}};
\end{equation}

\item[\((\text{D3})\)] for each $c > 1$,
\begin{equation}
\label{eqq.A_1_weight'}
\operatorname{C}_{\gamma,c}:=\sup\limits_{(x,r) \in \mathbb{R}^{d} \times (0,c]} \frac{\overline{\gamma}_{r}(x)}{\underline{\gamma}_{r}(x)} < +\infty.
\end{equation}

\item[\((\text{D4})\)] if $E_{\gamma} \subset \mathbb{R}^{d}$ is the set of Lebesgue points of $\gamma$, then
\begin{equation}
\label{eqq.maximal_weigh_estimate}
\int\limits_{Q_{1}(x)}\frac{\gamma(y)}{\|x-y\|^{d-1}}\,dy \le 2^{d}\operatorname{C}_{\gamma}\gamma(x) \qquad \hbox{for all} \quad x \in E_{\gamma}.
\end{equation}
\end{itemize}
\end{Lm}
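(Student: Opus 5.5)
The plan is to derive all four properties from the single inequality $\overline{\gamma}_{r}(x) \le \operatorname{C}_{\gamma}\underline{\gamma}_{r}(x)$ for $r \le 1$, together with two elementary facts: the essential infimum is monotone under inclusion, and integrals over nested cubes are ordered. Throughout, "$l(Q)\le 2$" corresponds exactly to radius $r\le 1$, which is the range in which \eqref{eqq.A_1_weight} applies.

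\textbf{(D1).} Since $Q'\subset Q$, monotonicity of $\operatorname{ess\,inf}$ gives $\underline{\gamma}_{Q}\le\underline{\gamma}_{Q'}$. Since $\gamma_{Q'}\le\gamma_{Q}$ and $\mathcal{L}^{d}(Q)=2^{d}\mathcal{L}^{d}(Q')$, we get $\overline{\gamma}_{Q'}\le 2^{d}\overline{\gamma}_{Q}$. The remaining two inequalities follow from the chains
\begin{equation}
\notag
\underline{\gamma}_{Q'}\le\overline{\gamma}_{Q'}\le 2^{d}\overline{\gamma}_{Q}\le 2^{d}\operatorname{C}_{\gamma}\underline{\gamma}_{Q}
\quad\hbox{and}\quad
2^{d}\overline{\gamma}_{Q}\le 2^{d}\operatorname{C}_{\gamma}\underline{\gamma}_{Q}\le 2^{d}\operatorname{C}_{\gamma}\underline{\gamma}_{Q'}\le 2^{d}\operatorname{C}_{\gamma}\overline{\gamma}_{Q'}.
\end{equation}

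\textbf{(D2).} If $Q_{1}\cap Q_{2}\neq\emptyset$ and $l(Q_{1})=l(Q_{2})=l\le 1$, choose a cube $Q\supset Q_{1}\cup Q_{2}$ with $l(Q)=2l\le 2$; for instance, the cube centred at the centre of $Q_{1}$ with double radius. Arguing as in (D1),
\begin{equation}
\notag
\underline{\gamma}_{Q_{1}}\le\overline{\gamma}_{Q_{1}}\le 2^{d}\overline{\gamma}_{Q}\le 2^{d}\operatorname{C}_{\gamma}\underline{\gamma}_{Q}\le 2^{d}\operatorname{C}_{\gamma}\underline{\gamma}_{Q_{2}}\le 2^{d}\operatorname{C}_{\gamma}\overline{\gamma}_{Q_{2}}.
\end{equation}
This yields both inequalities in \eqref{eqq.doubling_2}; the second even holds with $\operatorname{C}_{\gamma}$ in place of $(\operatorname{C}_{\gamma})^{2}$.

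\textbf{(D3).} Fix $r\in(1,c]$ and cover $Q_{r}(x)$ by a grid of $N\le(2[c]+2)^{d}$ cubes $P_{1},\dots,P_{N}$ of radius $1/2$ (so $l(P_i)=1$). Any two of them can be joined by a chain of at most $N$ grid cubes in which consecutive cubes intersect, so iterating (D2) gives $\overline{\gamma}_{P_{i}}\le(2^{d}\operatorname{C}_{\gamma})^{N}\underline{\gamma}_{P_{j}}$ for all $i,j$. Now $\underline{\gamma}_{r}(x)=\min_{j}\underline{\gamma}_{P_{j}\cap Q_r(x)}$ up to null sets. We need this comparison only for subcubes meeting $Q_r(x)$ in positive measure, where $\underline{\gamma}_{P_j\cap Q_r(x)}\ge\underline{\gamma}_{P_j}$. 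Summing the averages then gives $\overline{\gamma}_{r}(x)\le C(d,c)(2^{d}\operatorname{C}_{\gamma})^{N}\underline{\gamma}_{r}(x)$, which bounds the supremum in \eqref{eqq.A_1_weight'}.

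\textbf{(D4).} Here I would use the layer-cake identity $\|x-y\|^{1-d}=1+\int_{\|x-y\|}^{1}(d-1)s^{-d}\,ds$ for $y\in Q_{1}(x)$. By Fubini,
\begin{equation}
\notag
\int\limits_{Q_{1}(x)}\frac{\gamma(y)}{\|x-y\|^{d-1}}\,dy=\gamma_{1}(x)+(d-1)\int\limits_{0}^{1}s^{-d}\gamma_{s}(x)\,ds .
\end{equation}
At a Lebesgue point $x$ we have $\underline{\gamma}_{s}(x)\le\overline{\gamma}_{s}(x)\to\gamma(x)$ as $s\to+0$, and $\underline{\gamma}_{s}(x)$ is nonincreasing in $s$. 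Hence $\underline{\gamma}_{s}(x)\le\gamma(x)$, and so $\gamma_{s}(x)=(2s)^{d}\overline{\gamma}_{s}(x)\le 2^{d}s^{d}\operatorname{C}_{\gamma}\gamma(x)$. Substituting this bound yields
\begin{equation}
\notag
\int\limits_{Q_{1}(x)}\frac{\gamma(y)}{\|x-y\|^{d-1}}\,dy\le 2^{d}\operatorname{C}_{\gamma}\gamma(x)\bigl(1+(d-1)\bigr)=d\,2^{d}\operatorname{C}_{\gamma}\gamma(x).
\end{equation}

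This last step is the main obstacle, because the factor $d$ cannot be removed. For $\gamma\equiv1$ we have $\operatorname{C}_{\gamma}=1$, and the same computation is an equality: $\int_{Q_{1}(0)}\|y\|^{1-d}\,dy=\int_{0}^{1}r^{1-d}\,d\bigl((2r)^{d}\bigr)=d\,2^{d}$. This exceeds $2^{d}$ for $d\ge 2$. So \eqref{eqq.maximal_weigh_estimate} as printed, with constant $2^{d}\operatorname{C}_{\gamma}$, is false, and the correct sharp form is $d\,2^{d}\operatorname{C}_{\gamma}\gamma(x)$. I would prove it in that form and note that later uses only need some constant $C(d)\operatorname{C}_{\gamma}$.
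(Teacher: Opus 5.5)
Your arguments for (D1)--(D3) are essentially the paper's. You use the same chains $\underline{\gamma}\le\overline{\gamma}\le 2^{d}\overline{\gamma}\le 2^{d}\operatorname{C}_{\gamma}\underline{\gamma}$. For (D3) you use the same chaining of (D2) through unit cubes covering $Q_{r}(x)$, followed by $\underline{\gamma}_{r}(x)\ge\min_{j}\underline{\gamma}_{P_{j}}$. Your (D2) chain even gives $\operatorname{C}_{\gamma}$ in place of $(\operatorname{C}_{\gamma})^{2}$.

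There is one slip in (D2): the cube with the centre of $Q_{1}$ and double radius need not contain $Q_{2}$. Write $Q_{i}=Q_{r}(x_{i})$. Then $Q_{1}\cap Q_{2}\neq\emptyset$ only gives $\|x_{1}-x_{2}\|\le 2r$, whereas $Q_{r}(x_{2})\subset Q_{2r}(x_{1})$ requires $\|x_{1}-x_{2}\|\le r$. Use $Q:=Q_{2r}\bigl(\frac{x_{1}+x_{2}}{2}\bigr)$ instead. It contains both cubes and has $l(Q)=2l(Q_{1})\le 2$, so the rest of your chain goes through unchanged.

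For (D4) your route is genuinely different. The paper cuts $Q_{1}(x)$ into shells $Q_{2^{-k}}(x)\setminus Q_{2^{-k-1}}(x)$. It bounds the kernel by $2^{(k+1)(d-1)}$ on the $k$-th shell and the shell integral by $\gamma_{2^{-k}}(x)$, then sums a geometric series. This invokes the $A_{1}^{\rm loc}$ condition only at the radii $2^{-k}$, but it loses a dimensional factor because each shell is replaced by the full cube. Your layer-cake identity is exact, uses the condition at every $s\in(0,1]$, and gives the sharp constant $d\,2^{d}\operatorname{C}_{\gamma}$.

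Your counterexample $\gamma\equiv 1$ is correct, so the printed constant $2^{d}\operatorname{C}_{\gamma}$ is indeed false. The paper reaches it through an arithmetic slip in its second inequality. Since $\mathcal{L}^{d}(Q_{2^{-k}}(x))=2^{d}2^{-kd}$, the $k$-th term is $2^{2d-1-k}\overline{\gamma}_{2^{-k}}(x)$, not $2^{d-1-k}\overline{\gamma}_{2^{-k}}(x)$. Done correctly, the shell argument gives $2^{2d}\operatorname{C}_{\gamma}\gamma(x)$.

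As you note, the only later use of (D4) is the bound for $\operatorname{J}_{1}$ in the proof of Theorem \ref{Th.inverse_trace_estimate}. That needs just some constant $C(d)\operatorname{C}_{\gamma}$, so either version suffices there.
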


\begin{proof}
Note that \eqref{eqq.doubling_1} follows directly from \eqref{eqq.A_1_weight}. Indeed, by direct computations we get
\begin{equation}
\label{eqq.doubling_weight}
\underline{\gamma}_{Q} \le \underline{\gamma}_{Q'} \le \overline{\gamma}_{Q'} \le 2^{d}\overline{\gamma}_{Q} \le 2^{d}\operatorname{C}_{\gamma}\underline{\gamma}_{Q}.
\end{equation}

If cubes $Q_{1}$ and $Q_{2}$ have the same side length $\le 1$ and common boundary points, then taking a cube $Q$ with $l(Q) \le 2$ containing both of them and applying
\eqref{eqq.A_1_weight}, \eqref{eqq.doubling_weight} we get
a chain of inequalities proving \eqref{eqq.doubling_2}
\begin{equation}
\label{eqq.doubling_weight'}
\overline{\gamma}_{Q_{1}} \le \operatorname{C}_{\gamma}\underline{\gamma}_{Q_{1}} \le 2^{d}(\operatorname{C}_{\gamma})^{2}\underline{\gamma}_{Q} \le 2^{d}(\operatorname{C}_{\gamma})^{2}\underline{\gamma}_{Q_{2}} \le 2^{d}(\operatorname{C}_{\gamma})^{2}\overline{\gamma}_{Q_{2}}.
\end{equation}

To prove \eqref{eqq.A_1_weight'} fix an arbitrary point $x \in \mathbb{R}^{d}$ and a parameter $r \in (1,c]$. Let $\mathcal{F}$ be a family of all cubes from $\mathcal{D}_{0}$ that have nonempty intersections
with $Q=Q_{r}(x)$. Clearly for every $Q',Q'' \in \mathcal{F}$ there is a noneoverlapping family $\{Q_{i}\}_{i=1}^{N} \subset \mathcal{F}$ such that: $Q'=Q_{1}$, $Q''=Q_{N}$,
$\partial Q_{i} \cap \partial Q_{i+1} \neq \emptyset$ for each $i \in \{1,...,N-1\}$, and $N \le 2(r+1)$. Hence, by \eqref{eqq.doubling_2}
we have $\underline{\gamma}_{Q'} \le (2^{d}\operatorname{C}_{\gamma})^{2(r+1)}\underline{\gamma}_{Q''}$.
As a result, since $\#\mathcal{F} \le 2^{d}(r+1)^{d}$, we deduce the required estimate
\begin{equation}
\notag
\begin{split}
&\overline{\gamma}_{Q} \le \sum_{Q' \in \mathcal{F}}\frac{1}{(2r)^{d}}\overline{\gamma}_{Q'} \le \operatorname{C}_{\gamma}\sum_{Q' \in \mathcal{F}}\frac{1}{(2r)^{d}}\underline{\gamma}_{Q'}\\
&\le 2^{d}\operatorname{C}_{\gamma}(2^{d}\operatorname{C}_{\gamma})^{2(r+1)}\min\{\underline{\gamma}_{Q'}:Q'\in\mathcal{F}\} \le
(2^{d}\operatorname{C}_{\gamma})^{2r+3}\underline{\gamma}_{Q}.
\end{split}
\end{equation}

To verify $(\text{D4})$, given $x \in E_{\gamma}$, we consider the layers $\widetilde{Q}_{k}:=Q_{2^{-k}}(x) \setminus Q_{2^{-k-1}}(x)$, $k \in \mathbb{N}_{0}$.
Direct computations give
\begin{equation}
\begin{split}
&\int\limits_{Q_{1}(x)}\frac{\gamma(y)}{\|x-y\|^{d-1}}\,dy \le \sum\limits_{k=0}^{\infty} 2^{(k+1)(d-1)}\int\limits_{\widetilde{Q}_{k}}\gamma(y)\,dy \le \sum\limits_{k=0}^{\infty}\frac{2^{d}}{2^{k+1}}\fint\limits_{Q_{2^{-k}}(x)}\gamma(y)\,dy\\
&\le 2^{d}\operatorname{C}_{\gamma}\sum\limits_{k=0}^{\infty}2^{-k}\essinf\limits_{y \in Q_{2^{-k}}(x)}\gamma(y) \le 2^{d}\operatorname{C}_{\gamma}\gamma(x).
\end{split}
\end{equation}

The proof is complete.
\end{proof}


\subsection{Weighted Sobolev spaces}
Given an open set $\Omega \subset \mathbb{R}^{d}$, by $W^{1, \rm loc}_{1}(\Omega)$ we denote the linear space of (equivalence classes of) locally integrable on $\Omega$ functions
whose first order distributional partial derivatives are locally integrable (with respect to $\mathcal{L}^{d}$) on $\Omega$.
Given $\gamma \in A_{1}^{\rm loc}(\mathbb{R}^{d})$, we define the \textit{weighted Sobolev space} $W^{1}_{1}(\Omega,\gamma)$ as the linear space
$$
W^{1}_{1}(\Omega,\gamma):=\{F \in W^{1, \rm loc}_{1}(\Omega:\|F\|_{W^{1}_{1}(\Omega,\gamma)} < +\infty\},
$$
equipped with the norm
\begin{equation}
\label{eqq.Sobolev_norm}
\|F\|_{W^{1}_{1}(\Omega,\gamma)}:=\int\limits_{\Omega}\gamma(y)|F(y)|\,dy+\int\limits_{\Omega}\gamma(y)\|\nabla F(y)\|\,dy,
\end{equation}
where $\nabla F$ is the distributional gradient of $F$.

\begin{Remark}
\label{Rem.Sobolev_space_complete}
It is well-known and easy to show that $W^{1}_{1}(\Omega,\gamma)$ is a Banach space.
\end{Remark}

We recall (see Section 4.8 in \cite{Evans} for the details) that, for each $F \in W_{1}^{1, \rm loc}(\mathbb{R}^{d})$, there is a Borel set $E_{F} \subset \mathbb{R}^{d}$ such that $\mathcal{H}^{d-1}(E_{F})=0$ and
\begin{equation}
\label{eqq.sharp_representative}
\overline{F}(x):=\varlimsup\limits_{r \to +0}\fint\limits_{Q_{r}(x)}F(y)\,dy \in \mathbb{R} \quad \hbox{for all} \quad x \in \mathbb{R}^{d}\setminus E_{F},
\end{equation}
and, moreover, each $x \in \mathbb{R}^{d}\setminus E_{F}$ is a Lebesgue point of $\overline{F}$, i.e.
\begin{equation}
\label{eqq.Lebesgue_points}
\lim\limits_{r \to +0}\fint\limits_{Q_{r}(x)}|\overline{F}(x)-F(y)|\,dy = 0 \quad \hbox{for all} \quad x \in \mathbb{R}^{d}\setminus E_{F}.
\end{equation}
We refer $\overline{F}$ as the \textit{sharp representative} of $F$.

\begin{Def}
\label{Def.trace_space}
Given a Borel set $S \subset \mathbb{R}^{d}$ with $\mathcal{H}^{d-1}(S) > 0$, for each $F \in W^{1}_{1}(\mathbb{R}^{d},\gamma)$, the trace $F|_{S}$ of $F$ to $S$ is an $\mathcal{H}^{d-1}$-equivalence
class of the pointwise restriction of $\overline{F}$ to $S$, i.e.
\begin{equation}
F|_{S}:=\{f \in \mathfrak{B}(S): \mathcal{H}^{d-1}(\{x \in S:f(x) \neq \overline{F}(x)\})=0\}.
\end{equation}
The trace space of the weighted Sobolev space $W^{1}_{1}(\mathbb{R}^{d},\gamma)$ to $S$ is defined by the equality
\begin{equation}
W^{1}_{1}(\mathbb{R}^{d},\gamma):=\{F|_{S}:F \in W^{1}_{1}(\mathbb{R}^{d},\gamma)\}
\end{equation}
and it is equipped with the quotient-space norm, i.e., for each $f \in W^{1}_{1}(\mathbb{R}^{d},\gamma)|_{S}$,
\begin{equation}
\label{eqq.2.23}
\|f\|_{W^{1}_{1}(\mathbb{R}^{d},\gamma)|_{S}}:=\inf\{\|F\|_{W^{1}_{1}(\mathbb{R}^{d},\gamma)}: f=F|_{S}\}.
\end{equation}
\end{Def}

\begin{Remark}
\label{Rem.trace_space_complete}
Taking into account Remark \ref{Rem.Sobolev_space_complete} it is easy to show that the space $W^{1}_{1}(\mathbb{R}^{d},\gamma)|_{S}$ is a Banach space.
\end{Remark}

The following assertion will be important in proving the so-called ``direct trace theorem''.
\begin{Prop}
\label{Prop.Ziemer_estimate}
Let $S \in \mathcal{ADR}^{n}(\mathbb{R}^{n+1})$ for some $n \in \mathbb{N}$ and $c_{1},c_{2} > 0$.
Then, for each cube $Q$ in $\mathbb{R}^{n+1}$ with $l(Q) \le c_{1}$ and $\mathcal{H}^{n}(Q \cap S) \geq c_{2}(l(Q))^{n}$,
\begin{equation}
\label{eq.Ziemer_estimate'}
\fint\limits_{Q \cap S}\Bigl|F|_{S}(x)-\fint\limits_{Q}F(y)\,dy\Bigr|\,d\mathcal{H}^{n}(x) \lesssim l(Q)\fint\limits_{Q}\|\nabla F(y)\|\,dy;
\end{equation}
In particular,
\begin{equation}
\label{eq.Ziemer_estimate}
\mathcal{E}^{S}_{Q}[F|_{S}] \lesssim  l(Q)\fint\limits_{Q}\|\nabla F(y)\|\,dy.
\end{equation}
The corresponding constants in \eqref{eq.Ziemer_estimate'} and \eqref{eq.Ziemer_estimate} depend on $c_{1}$, $c_{2}$, $C_{S}^{1}$, $C_{S}^{2}$ and $n$ but do not depend neither $Q$, nor $F$.
\end{Prop}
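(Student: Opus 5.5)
The plan is to reduce the estimate to the Meyers--Ziemer trace inequality for measures with upper $n$-dimensional growth. The enlargement of the cube will be removed by a reflection argument, so that only $\int_{Q}\|\nabla F\|$ enters. The naive pointwise potential bound $|\overline{F}(x)-F_Q|\lesssim\int_Q\|\nabla F(y)\|\,\|x-y\|^{-n}\,dy$ does not work here. After Fubini it produces $\int_{Q\cap S}\|x-y\|^{-n}\,d\mathcal{H}^n(x)$, which diverges logarithmically for an $n$-regular set. So the argument goes through the coarea formula instead.

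\textbf{Step 1 (reflection).} Put $F_Q:=\fint_Q F$ and $G:=F-F_Q$ on $Q$. Extend $G$ to $3Q$ by successive even reflections across the faces of $Q$, obtaining $\widetilde G\in W^{1,\rm loc}_1(\operatorname{int}3Q)$. Then
\begin{equation}
\notag
\int_{3Q}\|\nabla\widetilde G\|\le 3^{n+1}\int_Q\|\nabla F\|, \qquad \int_{3Q}|\widetilde G|\le 3^{n+1}\int_Q|F-F_Q|\lesssim l(Q)\int_Q\|\nabla F\|,
\end{equation}
where the last bound is the $L_1$-Poincar\'e inequality on the cube $Q$.

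\textbf{Step 2 (cutoff).} Take a Lipschitz $\varphi$ with $\varphi\equiv1$ on $Q$, $\operatorname{supp}\varphi\subset 2Q$ and $\|\nabla\varphi\|\lesssim 1/l(Q)$. Set $u:=\varphi\widetilde G$, extended by zero. Then $u\in W^1_1(\mathbb{R}^{n+1})$ has compact support, and by Step 1
\begin{equation}
\notag
\int\|\nabla u\|\lesssim\int_Q\|\nabla F\|+l(Q)^{-1}\int_{3Q}|\widetilde G|\lesssim\int_Q\|\nabla F\|.
\end{equation}

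\textbf{Step 3 (trace inequality).} Let $\mu:=\mathcal{H}^n\lfloor_S$ restricted to $2Q$. For cubes of side at most a constant times $c_1$ we have $\mu(Q_r(z))\lesssim r^n$ for every $z\in\mathbb{R}^{n+1}$. Indeed, re-center at a point of $S$, apply the upper bound in \eqref{eqq.Ahlfors_regular}, and handle $r>1$ by covering with boundedly many unit cubes, with constants depending on $c_1$. The Meyers--Ziemer inequality then gives $\int|\overline{u}|\,d\mu\lesssim\int\|\nabla u\|$. Its proof applies the coarea formula to $|u|$. Each superlevel set $\{|u|>t\}$ is covered by the boxing inequality with cubes $Q_{r_i}$ satisfying $\sum r_i^n\lesssim P(\{|u|>t\})$. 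The growth bound controls $\mu$ of this cover, and integrating in $t$ finishes it.

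\textbf{Step 4 (identification on $Q\cap S$).} I need $\overline{u}=\overline{F}-F_Q$ for $\mathcal{H}^n$-a.e. $x\in Q\cap S$. For $x\in\operatorname{int}Q$ this is immediate, since $u=F-F_Q$ near $x$. For $x\in\partial Q$, $\mathcal{H}^n$-a.e. on a face, the inner trace of $F$ equals the outer trace, because a $W^{1}_{1,\rm loc}$ function has no jump across a hyperplane. Hence $\overline{F}(x)$ equals the inner trace. By construction $\widetilde G$ also has inner and outer traces both equal to the inner trace of $G$. So $\overline{u}(x)=\overline{F}(x)-F_Q$ there as well. The remaining set, edges and lower-dimensional faces, is $\mathcal{H}^n$-null.

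\textbf{Conclusion.} Combining Steps 2--4 with $\mathcal{H}^n(Q\cap S)\ge c_2 l(Q)^n$ gives
\begin{equation}
\notag
\fint_{Q\cap S}|F|_S-F_Q|\,d\mathcal{H}^n\lesssim l(Q)^{-n}\int_Q\|\nabla F\|\approx l(Q)\fint_Q\|\nabla F\|.
\end{equation}
This is \eqref{eq.Ziemer_estimate'}. Estimate \eqref{eq.Ziemer_estimate} follows by taking $c=F_Q$ in the definition of $\mathcal{E}^S_Q$.

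The main obstacle is Step 4: one must check carefully that the sharp representative of $F$, which is defined through averages over full cubes and therefore sees values of $F$ outside $Q$, agrees $\mathcal{H}^n$-a.e. on $\partial Q\cap S$ with the reflected function, whose behaviour outside $Q$ is built from values inside $Q$ only. This is exactly the point where the bound is kept on $Q$ rather than on an enlarged cube.
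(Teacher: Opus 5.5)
Your proof is correct, but it localizes the estimate differently from the paper.

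\textbf{The paper's route.} It invokes Ziemer's Poincar\'e-type inequality with a measure (Theorem 5.12.7 in Ziemer's book) for $\mu=\mathcal{H}^{n}\lfloor_{S\cap Q}$ on the enlarged open cubes $\Omega_{\varepsilon}=(1+\varepsilon)\operatorname{int}Q$, and then lets $\varepsilon\to+0$. Because $Q\cap S$ lies inside $\Omega_{\varepsilon}$, the precise representative used in Ziemer's theorem is automatically that of $F$, and $\int_{\Omega_{\varepsilon}}\|\nabla F\|\to\int_{Q}\|\nabla F\|$ removes the enlargement. The price is that the uniformity of the constant in $\varepsilon$ is only asserted (``analysis of the proof \dots\ homothety arguments''), not proved.

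\textbf{Your route.} You use only the whole-space Meyers--Ziemer inequality. You build the localized function by even reflection from $Q$ itself plus a cutoff, so every constant is explicit: the factor $3^{n+1}$, the scale-invariant Poincar\'e constant of a cube, and a growth constant $M(n,C^{S}_{2},c_{1})$. The cost is that you trade the limiting argument for your Step~4.

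\textbf{On Step~4.} That step is right, and it can be done more directly than through one-sided traces. Let $x$ be an $\mathcal{L}^{n+1}$-Lebesgue point of $F$ in the relative interior of a face of $Q$, and let $r$ be small. Then the part of $Q_{r}(x)$ outside $Q$ is the mirror image of $Q_{r}(x)\cap Q$ under the first reflection. So with $c=\overline{F}(x)-\fint_{Q}F$,
\[
\fint_{Q_{r}(x)}|\widetilde{G}(y)-c|\,dy=\fint_{Q_{r}(x)\cap Q}|F(y)-\overline{F}(x)|\,dy\le 2\fint_{Q_{r}(x)}|F(y)-\overline{F}(x)|\,dy\to 0 .
\]
Continuity of $\varphi$ with $\varphi(x)=1$ passes this to $u$. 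Since $\mathcal{H}^{n}$-a.e.\ point of $\partial Q$ is such a point, $\overline{u}=\overline{F}-\fint_{Q}F$ holds $\mathcal{H}^{n}$-a.e.\ on $Q\cap S$.

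Your remark that the Riesz-potential route diverges logarithmically on an $n$-regular set, which forces the coarea/boxing argument, is also accurate.
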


\begin{proof}
Estimate \eqref{eq.Ziemer_estimate} follows from \eqref{eq.Ziemer_estimate'} by the triangle inequality in combination with Remark \ref{Rem.different_averagings}.
Estimate \eqref{eq.Ziemer_estimate} is an easy consequence of Theorem 5.12.7 in \cite{Zi}. Indeed, using notation adopted in \cite{Zi}
we set $\mu = \mathcal{H}^{n}\lfloor_{S \cap Q}$. For every $\varepsilon  \in (0,1)$ we take $\Omega_{\varepsilon}:=(1+\varepsilon)\operatorname{int}Q$.
Since $l(Q_{1}) \le c_{1}$ and $S \in \mathcal{ADR}^{n}(\mathbb{R}^{n+1})$, it is easy to conclude that $\mu$ satisfies assumptions of Theorem 5.12.7 in \cite{Zi}. 
This gives 
\begin{equation}
\label{eq.Ziemer_estimate'}
\fint\limits_{Q \cap S}\Bigl|F|_{S}(x)-\fint\limits_{Q}F(y)\,dy\Bigr|\,d\mathcal{H}^{n}(x) \le C (1+\varepsilon)l(Q)\fint\limits_{\Omega_{\varepsilon}}\|\nabla F(y)\|\,dy.
\end{equation}
Analysis of the proof of the theorem shows in combination with homothety arguments that $C > 0$ does not depend on $F$, $Q$ and $\varepsilon$. 
Hence, passing to the limit in \eqref{eq.Ziemer_estimate'} as $\varepsilon \to +0$, we deduce
\eqref{eq.Ziemer_estimate'}.
This completes the proof. 
\end{proof}


\section{Special families of cubes}

We recall Proposition \ref{Prop.porous_property}. Throughout the section we fix the following data:
\begin{itemize}

\item[\((\text{D.3.1})\)] a number $n \in \mathbb{N}$ and a weight $\gamma \in A_{1}^{\rm loc}(\mathbb{R}^{n+1})$;

\item[\((\text{D.3.2})\)] a set $S \in \mathcal{ADR}^{n}(\mathbb{R}^{n+1})$;

\item[\((\text{D.3.3})\)] a parameter $j \geq \max\{\underline{j}(S,2),5\}$.

\end{itemize}

\textbf{Important notation.} The following notation will be commonly used in the current and forthcoming paragraphs.
We put
\begin{equation}
\label{eqq.theta}
\theta:=\theta(j):=1+\frac{2}{2^{j}}+\frac{2}{2^{2j}}, \quad \underline{\theta}:=\underline{\theta}(j):=\theta(j)-\frac{2}{2^{4j}}.
\end{equation}
Now we set
\begin{equation}
\label{eqq.dyadic_S}
\widetilde{\mathcal{D}}_{k}(S):=\mathcal{D}_{jk}(S,\underline{\theta}) \quad \hbox{for each} \quad k \in \mathbb{Z}.
\end{equation}
Finally,
\begin{equation}
\label{eqq.dyadic_S_2}
\widetilde{\mathcal{D}}(S):=\bigcup_{k \in \mathbb{Z}}\widetilde{\mathcal{D}}_{k}(S), \quad \widetilde{\mathcal{D}}_{+}(S):=\bigcup_{k \in \mathbb{N}_{0}}\widetilde{\mathcal{D}}_{k}(S).
\end{equation}
We also use the abbreviation $\widetilde{\mathcal{D}}_{k}:=\widetilde{\mathcal{D}}_{k}(\mathbb{R}^{n+1})$, $\widetilde{\mathcal{D}}:=\widetilde{\mathcal{D}}(\mathbb{R}^{n+1})$  and $\widetilde{\mathcal{D}}_{+}:=\widetilde{\mathcal{D}}_{+}(\mathbb{R}^{n+1})$.
\begin{Remark}
\label{Rem.massive_part}
It is clear from Definition \ref{Def.Ahlfors_David} and \eqref{eqq.theta} that $\mathcal{H}^{n}(\theta Q \cap S) \approx (l(Q))^{n}$ for each $Q \in \widetilde{\mathcal{D}}_{+}(S)$.
The corresponding constants depend only on $n$, $j$ and $C^{S}_{1}$, $C^{S}_{2}$,
\end{Remark}

The following lemma is elementary. We present the proof for the completeness.
\begin{Lm}
\label{Lm.covering_multiplicity}
Let $\mathcal{F} \subset \widetilde{\mathcal{D}}$ be such that $2^{-k^{\ast}j} \le \frac{l(Q_{1})}{l(Q_{2})} \le 2^{k^{\ast}j}$ for some $k^{\ast} \in \mathbb{N}_{0}$
and every $Q_{1},Q_{2} \in \mathcal{F}$ with $cQ_{1} \cap cQ_{2} \neq \emptyset$.
Then,
\begin{equation}
\operatorname{MULT}(\{cQ:Q \in \mathcal{F}\}) \le (2k^{\ast}+1)([c]+2)^{n+1}.
\end{equation}
\end{Lm}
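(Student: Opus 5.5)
Fix a point $x \in \mathbb{R}^{n+1}$ and let $\mathcal{F}_{x}:=\{Q \in \mathcal{F}: x \in cQ\}$. The plan is to bound $\#\mathcal{F}_{x}$ by splitting $\mathcal{F}_{x}$ according to the side lengths of its cubes. Each cube counted here lies in $\widetilde{\mathcal{D}}$, which by \eqref{eqq.dyadic_S} consists of dyadic cubes of generations $jk$ with $k \in \mathbb{Z}$. Hence every $Q \in \mathcal{F}_{x}$ has $l(Q)=2^{-jk(Q)}$ for some $k(Q) \in \mathbb{Z}$. We will show that only few values of $k(Q)$ occur, and that each value contributes few cubes.

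\textbf{Step 1: at most $2k^{\ast}+1$ generations occur.} Any two cubes $Q_{1},Q_{2} \in \mathcal{F}_{x}$ satisfy $x \in cQ_{1}\cap cQ_{2}$, so $cQ_{1}\cap cQ_{2} \neq \emptyset$. By hypothesis this gives
$$2^{-k^{\ast}j} \le \frac{l(Q_{1})}{l(Q_{2})} = 2^{j(k(Q_{2})-k(Q_{1}))} \le 2^{k^{\ast}j},$$
that is, $|k(Q_{1})-k(Q_{2})| \le k^{\ast}$. If $\mathcal{F}_{x}$ is empty there is nothing to prove. Otherwise fix one $Q_{0} \in \mathcal{F}_{x}$. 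Every other index $k(Q)$ then lies in the integer interval $[k(Q_{0})-k^{\ast},\,k(Q_{0})+k^{\ast}]$, which contains $2k^{\ast}+1$ integers.

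\textbf{Step 2: each generation contributes at most $([c]+2)^{n+1}$ cubes.} Fix one such $k$. The cubes of $\mathcal{F}_{x}$ with $k(Q)=k$ all belong to the single dyadic generation $\mathcal{D}_{jk}$ and all contain $x$ after dilation by $c$. Remark \ref{Rem.dyadic_lattice_multiplicity}, applied with $d=n+1$, gives $\operatorname{MULT}(\{cQ:Q\in\mathcal{D}_{jk}\}) \le ([c]+2)^{n+1}$. So at most $([c]+2)^{n+1}$ such cubes exist.

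Multiplying the two bounds gives $\#\mathcal{F}_{x} \le (2k^{\ast}+1)([c]+2)^{n+1}$ for every $x$, which is the claim. The only point needing care, and it is minor, is Step 1. There one must fix a reference cube instead of comparing all pairs, so that the window of admissible indices has length $2k^{\ast}+1$ and not something larger. One must also note that the hypothesis is applied only to pairs whose dilates meet, and this is guaranteed because both dilates contain $x$.
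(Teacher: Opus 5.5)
Your proof is correct and follows the same route as the paper: fix a reference cube whose $c$-dilate contains $x$, use the side-length hypothesis to confine all relevant generations to a window of $2k^{\ast}+1$ indices, and bound each generation by Remark~\ref{Rem.dyadic_lattice_multiplicity}. Your write-up is also slightly cleaner than the paper's, since it avoids reusing $j$ as a summation index.
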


\begin{proof}
Fix an arbitrary $Q \in \mathcal{F} \cap \widetilde{\mathcal{D}}_{k}$ for some $k \in \mathbb{Z}$. If $Q' \in \mathcal{F}$ is such that $cQ' \cap cQ \neq \emptyset$, then
$Q' \in \widetilde{\mathcal{D}}_{k+j}$ for some $j \in \{-k^{\ast},...,k^{\ast}\}$. Hence, given $x \in cQ$, by Remark \ref{Rem.dyadic_lattice_multiplicity} we have
\begin{equation}
\notag
\#\{K \in \mathcal{F}:cK \ni x\}) \le \sum\limits_{j=k-k^{\ast}}^{k+k^{\ast}}\operatorname{MULT}\{cQ': Q' \in \widetilde{\mathcal{D}}_{j}(S)\} \le  (2k^{\ast}+1)([c]+2)^{n+1}.
\end{equation}
The proof is complete.
\end{proof}

\begin{Def}
\label{Def.neighboring_cubes}
Given $Q \in \widetilde{\mathcal{D}}$, a cube $Q'\in \widetilde{\mathcal{D}}$ is \textit{neighboring} for $Q$ if $Q \cap Q' \neq \emptyset$ and
\begin{equation}
\label{eqq.neighborin_cubes}
\frac{1}{2^{j}} \le \frac{l(Q)}{l(Q')} \le 2^{j}.
\end{equation}
By $\operatorname{n}(Q)$ we denote the family of all cubes $Q'$ neighboring for $Q$ and put $\operatorname{n}_{S}(Q):=\operatorname{n}(Q) \cap \widetilde{\mathcal{D}}(S)$.
\end{Def}

Besides neighboring cubes we will deal with cubes having ``less restrictive connections''.
\begin{Def}
\label{Def.relative_cubes}
Given $Q_{1} \in \widetilde{\mathcal{D}}$ we say that a dyadic cube $Q_{2}$ is relative for $Q_{1}$ if:
\begin{itemize}

\item[\((\text{Rel.1})\)] $Q_{2} \in \widetilde{\mathcal{D}}$ and $5Q_{2} \subset 5Q_{1}$;

\item[\((\text{Rel.2})\)] if $K \in \widetilde{\mathcal{D}}$ is such that $K \supset Q_{2}$ and $l(K) > l(Q_{2})$, then $5K$ is not contained in $5Q_{1}$.

\end{itemize}
Let $\mathcal{REL}(Q_{1})$ denote the family of all relatives for $Q_{1}$ cubes, $\mathcal{REL}_{S}(Q_{1}):=\mathcal{REL}(Q_{1})\cap \widetilde{\mathcal{D}}(S)$.
\end{Def}

The following elementary observation will be useful in proving important Proposition \ref{Prop.shadow_cubes_comparable} and Theorem \ref{Th.green_is_regular} below.
\begin{Lm}
\label{Lm.relative_cubes}
Let $Q_{1},Q_{2} \in \widetilde{\mathcal{D}}$ be such that $l(Q_{2}) < 2^{-j}l(Q_{1})$ and $\operatorname{dist}(Q_{1},Q_{2}) \le \frac{7}{4}l(Q_{1})$. Then, there
exists a cube $K \in \mathcal{REL}(Q_{1})$ such that $K \supset Q_{2}$ and $l(K)=2^{-j}l(Q_{1})$.
\end{Lm}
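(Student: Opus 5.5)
The plan is to take for $K$ the unique dyadic ancestor of $Q_{2}$ of side length $2^{-j}l(Q_{1})$. I would then check (Rel.1) by an elementary $\ell_{\infty}$-distance computation, and check (Rel.2) by a side-length argument that uses only the lattice structure of $\widetilde{\mathcal{D}}$.

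\emph{Step 1: construction of $K$.} Since $Q_{1},Q_{2}\in\widetilde{\mathcal{D}}$, we have $l(Q_{1})=2^{-jk_{1}}$ and $l(Q_{2})=2^{-jk_{2}}$ for some integers $k_{1},k_{2}$. The assumption $l(Q_{2})<2^{-j}l(Q_{1})$ then forces $k_{2}\ge k_{1}+2$. Because dyadic cubes are nested, there is a unique cube $K\in\mathcal{D}_{j(k_{1}+1)}\subset\widetilde{\mathcal{D}}$ with $K\supset Q_{2}$. By construction $l(K)=2^{-j}l(Q_{1})$ and $K\supset Q_{2}$, as required.

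\emph{Step 2: (Rel.1), i.e. $5K\subset 5Q_{1}$.} Write $Q_{1}=Q_{r}(c)$ with $r=l(Q_{1})/2$, and put $s:=l(K)=2^{1-j}r$. Since $K\supset Q_{2}$,
\[
\operatorname{dist}(K,Q_{1})\le\operatorname{dist}(Q_{2},Q_{1})\le \tfrac{7}{4}l(Q_{1})=\tfrac{7}{2}r .
\]
Hence some point of $K$ lies in $Q_{r+7r/2}(c)$, and so the center of $K$ lies in $Q_{9r/2+s/2}(c)$. It follows that
\[
5K\subset Q_{9r/2+s/2+5s/2}(c)=Q_{9r/2+3s}(c).
\]
We have $9r/2+3s=r\bigl(\tfrac92+6\cdot 2^{-j}\bigr)\le 5r$ as soon as $2^{j}\ge 12$, and this holds by the standing assumption $j\ge 5$ in (D.3.3). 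Therefore $5K\subset Q_{5r}(c)=5Q_{1}$.

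\emph{Step 3: (Rel.2).} Let $K'\in\widetilde{\mathcal{D}}$ with $K'\supset K$ and $l(K')>l(K)$. Side lengths in $\widetilde{\mathcal{D}}$ are powers of $2^{-j}$, so $l(K')\ge 2^{j}l(K)=l(Q_{1})$. Consequently $5K'$ is a cube of side length at least $5l(Q_{1})$. If $5K'\subset 5Q_{1}$, both cubes would have the same side length, hence $5K'=5Q_{1}$ and so $K'=Q_{1}$. Thus $5K'\not\subset 5Q_{1}$ for every such $K'\ne Q_{1}$.

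\emph{Main obstacle.} The only delicate point is the degenerate case $K\subset Q_{1}$ in Step 3, where $K'=Q_{1}$ itself is a dyadic ancestor of $K$ with $5K'=5Q_{1}$. This situation really occurs when $Q_{2}\subset Q_{1}$, which the hypothesis $\operatorname{dist}(Q_{1},Q_{2})\le\frac74 l(Q_{1})$ permits. In that case no cube strictly inside $Q_{1}$ could ever be relative for $Q_{1}$ if the definition were read literally, so I would read (Rel.2) as quantifying over $K\neq Q_{1}$, which is clearly the intended meaning. With this reading, Steps 1--3 give $K\in\mathcal{REL}(Q_{1})$ with $K\supset Q_{2}$ and $l(K)=2^{-j}l(Q_{1})$. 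The remaining numerical check is the inequality $6\cdot 2^{-j}\le\frac12$ used in Step 2, which is where the assumption $j\ge5$ enters.
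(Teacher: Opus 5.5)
Your proof is correct and follows the paper's route. The paper takes the unique dyadic ancestor $K$ of $Q_{2}$ with $l(K)=2^{-j}l(Q_{1})$ and deduces $5K\subset 5Q_{1}$ from $\operatorname{dist}(K,Q_{1})\le\operatorname{dist}(Q_{2},Q_{1})\le\frac{7}{4}l(Q_{1})$; your computation using $2^{j}\ge 12$ just makes that step explicit.

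The paper stops there and never checks (Rel.2), so your Step 3 is a genuine addition. So is your remark that (Rel.2) fails literally when $Q_{2}\subset Q_{1}$, because $Q_{1}$ is then itself a forbidden ancestor of $K$. An equally good repair is to assume that $Q_{1}$ and $Q_{2}$ have disjoint interiors. That holds everywhere the lemma is used: in the proofs of (SH.1) in Proposition \ref{Prop.shadow_cubes_comparable}, of (R.2) in Theorem \ref{Th.green_is_regular}, and of Proposition \ref{Prop.green_multiplicity}.
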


\begin{proof}
By \eqref{eqq.dyadic_S}, \eqref{eqq.dyadic_S_2} it follows that in fact $l(Q_{2}) \le 2^{-2j}l(Q_{1})$. Let $K \supset Q_{2}$ be the unique dyadic cube
with $l(K)=2^{-j}l(Q_{1})$. Since $\operatorname{dist}(Q_{1},K) \le \operatorname{dist}(Q_{1},Q_{2}) \le \frac{7}{4}l(Q_{1})$, we conclude that $5K \subset 5Q_{1}$ completing
the proof.
\end{proof}

Given $Q \in \widetilde{\mathcal{D}}(S)$ we define the \textit{box family} of $Q$ by
\begin{equation}
\label{eqq.box}
\mathcal{B}(Q):=\{Q' \in \widetilde{\mathcal{D}}(S): Q' \subset Q\}.
\end{equation}

The following concept is crucial.

\begin{Def}
\label{Def.covering_cube}
Given a family $\mathcal{Q} \subset \mathcal{D}$ and a cube $Q \in \mathcal{D}$, we say that a cube $\overline{Q}_{\mathcal{Q}} \in \mathcal{Q}$ is \textit{covering} for $Q$
with respect to $\mathcal{Q}$ if:
\begin{itemize}

\item[\((\text{C.1})\)] $Q \subset \overline{Q}_{\mathcal{Q}}$ and $l(\overline{Q}_{\mathcal{Q}}) > l(Q)$;

\item[\((\text{C.2})\)] if $Q' \in \mathcal{Q}$ is such that $Q \subset Q' \subset \overline{Q}_{\mathcal{Q}}$, then either $Q'=Q$ or $Q' = \overline{Q}_{\mathcal{Q}}$.

\end{itemize}

\end{Def}

Having at our disposal Definition \ref{Def.covering_cube} we are ready to
define a special family of dyadic cubes which will play a role of a skeleton for the extension operator.

\begin{Def}
\label{Def.regular_family}
We say that a family $\mathcal{Q} \subset \widetilde{\mathcal{D}}_{+}(S)$ is regular for $S$ provided that:
\begin{itemize}

\item[\((\text{R.1})\)] $\mathcal{D}_{0}(S) \subset \mathcal{Q}$;

\item[\((\text{R.2})\)] If $Q \in \mathcal{Q}$ and $Q' \in \mathcal{REL}_{S}(Q)$, then there is a cube $K \in \mathcal{Q}\setminus\{\overline{Q}_{\mathcal{Q}}\}$ such that
$K \supset Q'$ and $l(K) \le 2^{j}l(Q)$;

\item[\((\text{R.3})\)] for each $Q \in \mathcal{Q}$ with $l(Q) < 1$ there is $\overline{Q}_{\mathcal{Q}}$.

\end{itemize}
The class of all regular for $S$ families will be denoted by $\mathfrak{R}(S)$.
\end{Def}

While the following result follows immediately from Definition \ref{Def.regular_family}, it will be important for the nonlinear algorithm in Section 6.

\begin{Prop}
\label{Prop.union_of_regular_families}
If $\mathcal{Q}_{1},\mathcal{Q}_{2} \in \mathfrak{R}(S)$, then $\mathcal{Q}=\mathcal{Q}_{1} \cup \mathcal{Q}_{2} \in \mathfrak{R}(S)$.
\end{Prop}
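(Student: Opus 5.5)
We verify the three conditions of Definition~\ref{Def.regular_family} for $\mathcal{Q}=\mathcal{Q}_{1}\cup\mathcal{Q}_{2}$ directly. Clearly $\mathcal{Q}\subset\widetilde{\mathcal{D}}_{+}(S)$, and (R.1) holds because $\mathcal{D}_{0}(S)\subset\mathcal{Q}_{1}\subset\mathcal{Q}$. Everything else rests on one structural fact about covering cubes. For $Q\in\mathcal{Q}$, the cubes of $\mathcal{Q}$ that strictly contain $Q$ are dyadic, so they form a chain ordered by inclusion. If this chain is nonempty, $\overline{Q}_{\mathcal{Q}}$ is its smallest element, which exists since side lengths are bounded below by $l(Q)$. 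Now suppose $Q\in\mathcal{Q}_{i}$ and $\overline{Q}_{\mathcal{Q}_{i}}$ exists. Then $\overline{Q}_{\mathcal{Q}}$ also exists, and
\[
Q\subsetneq\overline{Q}_{\mathcal{Q}}\subset\overline{Q}_{\mathcal{Q}_{i}}.
\]
In other words, the covering cube with respect to the union is the smaller of the available covering cubes.

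For (R.3), take $Q\in\mathcal{Q}$ with $l(Q)<1$, say $Q\in\mathcal{Q}_{i}$. Property (R.3) for $\mathcal{Q}_{i}$ gives $\overline{Q}_{\mathcal{Q}_{i}}$. By the observation above, $\overline{Q}_{\mathcal{Q}}$ exists.

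For (R.2), take $Q\in\mathcal{Q}$ and $Q'\in\mathcal{REL}_{S}(Q)$, with $Q\in\mathcal{Q}_{i}$. Property (R.2) for $\mathcal{Q}_{i}$ gives $K\in\mathcal{Q}_{i}\setminus\{\overline{Q}_{\mathcal{Q}_{i}}\}$ with $K\supset Q'$ and $l(K)\le 2^{j}l(Q)$. Certainly $K\in\mathcal{Q}$. The only issue is that $K$ might coincide with the new covering cube $\overline{Q}_{\mathcal{Q}}$, which can be strictly smaller than $\overline{Q}_{\mathcal{Q}_{i}}$ when it comes from the other family. This is the one point that needs care.

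So suppose $K=\overline{Q}_{\mathcal{Q}}$, so that $K\supsetneq Q$. Since $K\in\mathcal{Q}_{i}$ contains $Q$ strictly, $\overline{Q}_{\mathcal{Q}_{i}}$ exists. Both $K$ and $\overline{Q}_{\mathcal{Q}_{i}}$ are cubes of $\mathcal{Q}_{i}$ strictly containing $Q$, and $\overline{Q}_{\mathcal{Q}_{i}}$ is the minimal such cube, so $\overline{Q}_{\mathcal{Q}_{i}}\subset K$. On the other hand, $K=\overline{Q}_{\mathcal{Q}}\subset\overline{Q}_{\mathcal{Q}_{i}}$ by the observation above. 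Hence $K=\overline{Q}_{\mathcal{Q}_{i}}$, which contradicts the choice of $K$. Therefore $K\neq\overline{Q}_{\mathcal{Q}}$, and (R.2) holds for $\mathcal{Q}$.

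The main step to verify is the chain and minimality description of $\overline{Q}_{\mathcal{Q}}$ via (C.1)--(C.2). It uses only the nestedness of dyadic cubes: any two dyadic cubes containing $Q$ are comparable by inclusion.
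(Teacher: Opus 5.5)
Your proof is correct and is the direct verification from Definition~\ref{Def.regular_family} that the paper only asserts; the paper says the proposition ``follows immediately'' and gives no argument. Two points in your write-up are exactly what the paper leaves implicit: the characterization of $\overline{Q}_{\mathcal{Q}}$ as the smallest cube of $\mathcal{Q}$ strictly containing $Q$, and the check that the $K$ supplied by $\mathcal{Q}_{i}$ cannot coincide with the possibly smaller cube $\overline{Q}_{\mathcal{Q}}$ (strictly, that smallest cube exists because dyadic side lengths form a discrete set, not merely because they are bounded below).
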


Now we construct elementary building blocks for a special partition of unity

\begin{Def}
\label{Def.shadow_iceberg}
Let $\mathcal{Q} \in \mathfrak{R}(S)$. For each $Q \in \mathcal{Q}$, we define the shadow of $Q$ by
\begin{equation}
\label{eqq.shadow}
\mathcal{SH}_{\mathcal{Q}}(Q):=\{Q' \in \mathcal{Q}: \overline{Q'}_{\mathcal{Q}} = Q\},
\end{equation}
and, furthermore, the iceberg of $Q$ with respect to $\mathcal{Q}$ by
\begin{equation}
\label{eqq.iceberg}
\mathcal{IC}_{\mathcal{Q}}(Q):=\mathcal{B}(Q) \setminus \bigcup\limits_{Q' \in \mathcal{SH}_{\mathcal{Q}}(Q)}\mathcal{B}(Q').
\end{equation}
\end{Def}

The following assertion is an immediate consequence of Definitions \ref{Def.covering_cube} and \ref{Def.shadow_iceberg}.

\begin{Lm}
\label{Lm.shadow_nonoverlapping}
For each $Q \in \mathcal{Q}$,
the family  $\mathcal{IC}_{\mathcal{Q}}(Q)$ is noneoverlapping.
\end{Lm}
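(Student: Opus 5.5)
The plan is to reduce everything to the dichotomy for dyadic cubes. Any two dyadic cubes $K_{1},K_{2}\in\widetilde{\mathcal{D}}$ satisfy exactly one of two alternatives: either $\operatorname{int}K_{1}\cap\operatorname{int}K_{2}=\emptyset$, or one of them contains the other. Hence, to prove that $\mathcal{IC}_{\mathcal{Q}}(Q)$ is noneoverlapping, it suffices to show the following: if $K_{1},K_{2}\in\mathcal{IC}_{\mathcal{Q}}(Q)$ with $K_{1}\subset K_{2}$, then $K_{1}=K_{2}$.

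So assume $K_{1}\subsetneq K_{2}$, both lying in $\mathcal{B}(Q)$ and neither lying in $\mathcal{B}(Q')$ for any $Q'\in\mathcal{SH}_{\mathcal{Q}}(Q)$. I would run along the dyadic chain from $K_{1}$ up to $Q$ and look at the cubes of $\mathcal{Q}$ on it. Let $Q''\in\mathcal{Q}$ be the maximal cube of $\mathcal{Q}$ on this chain that is strictly contained in $Q$, if such a cube exists. By the maximality of $Q''$ together with (C.1)--(C.2) of Definition \ref{Def.covering_cube}, we have $\overline{Q''}_{\mathcal{Q}}=Q$, that is, $Q''\in\mathcal{SH}_{\mathcal{Q}}(Q)$. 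Since $K_{1}\notin\mathcal{B}(Q'')$, we must have $K_{1}\not\subset Q''$, which forces no cube of $\mathcal{Q}$ to lie between $K_{1}$ and $Q$ on the chain. The same argument applied to $K_{2}$ gives the same conclusion. So both $K_{1}$ and $K_{2}$ lie in the ``gap'' between $Q$ and the shadow cubes.

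The main obstacle is the next step. I would try to use (R.2) and (R.3) of Definition \ref{Def.regular_family}, applied to relatives of $Q$ via Lemma \ref{Lm.relative_cubes}, to force every cube of $\widetilde{\mathcal{D}}(S)$ that is strictly smaller than $Q$ and lies in $Q$ to be covered by some shadow cube. Granting this, no two strictly nested cubes can survive in the gap, and the noneoverlapping property follows.

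I expect this step to be delicate, and one point must be handled honestly. The cube $Q$ itself belongs to $\mathcal{B}(Q)$ and to no $\mathcal{B}(Q')$ with $Q'$ in the shadow, because every shadow cube is strictly smaller than $Q$. Taken literally, then, $Q$ lies in $\mathcal{IC}_{\mathcal{Q}}(Q)$ and overlaps any other member. The lemma therefore has to be read with $Q$ itself discounted, or equivalently with $\mathcal{IC}_{\mathcal{Q}}(Q)$ understood as its maximal proper subcubes. With that reading, the argument of the previous two paragraphs shows that the remaining cubes are pairwise non-nested, and hence, by the dyadic dichotomy, pairwise noneoverlapping.
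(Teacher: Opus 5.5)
\textbf{There is a genuine gap.} Your second paragraph is correct: a cube $K\in\mathcal{B}(Q)$ lies in $\mathcal{IC}_{\mathcal{Q}}(Q)$ exactly when $Q$ is the smallest cube of $\mathcal{Q}$ containing $K$. The failure is the step you ``grant''. It is false, and (R.2)--(R.3) cannot give it, because (R.2) can be satisfied with $K=Q$ itself and so never forces a cube of $\mathcal{Q}$ strictly inside $Q$.

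Here is a concrete counterexample. Take $\gamma\equiv 1$. Then $\underline{\gamma}_{5Q}=1$ for every $Q$, so $\mathcal{G}=\widetilde{\mathcal{D}}_{0}(S)$ and this family is regular. Every shadow is empty, and $\mathcal{IC}_{\mathcal{G}}(Q)=\mathcal{B}(Q)$ contains chains of nested cubes of all generations.

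Your fallback reading does not rescue the claim. Discounting $Q$ leaves those chains intact, and ``maximal proper subcubes'' is a different family, not an equivalent reading. Your argument is also internally inconsistent: if the granted claim held, the iceberg would be $\{Q\}$ and there would be nothing left to discount.

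More generally, by (IC.1)--(IC.2) the icebergs partition $\widetilde{\mathcal{D}}_{+}(S)$. So $\mathcal{IC}_{\mathcal{Q}}(Q)$ contains $Q$ together with every intermediate generation between $Q$ and its shadow cubes. The paper relies on this: Step 3 of Lemma \ref{Lm.auxiliary} treats cubes of $\widetilde{\mathcal{D}}_{1}(S)\cup\widetilde{\mathcal{D}}_{2}(S)$ lying in the iceberg of a cube of $\widetilde{\mathcal{D}}_{0}(S)$. Hence the statement, read literally for $\mathcal{IC}_{\mathcal{Q}}(Q)$, holds for all $Q\in\mathcal{Q}$ only when $\mathcal{Q}=\widetilde{\mathcal{D}}_{+}(S)$. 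No argument can prove it in general. You were right to be suspicious, but your proposed repair is not valid.

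What the paper's one-line justification (``an immediate consequence of Definitions \ref{Def.covering_cube} and \ref{Def.shadow_iceberg}'') and the lemma's label \texttt{Lm.shadow\_nonoverlapping} actually support is that the shadow $\mathcal{SH}_{\mathcal{Q}}(Q)$ is noneoverlapping. Your dyadic-dichotomy reduction proves this in one line. Suppose $Q_{1},Q_{2}\in\mathcal{SH}_{\mathcal{Q}}(Q)$ with $Q_{1}\subsetneq Q_{2}$. Then $Q_{1}\subset Q_{2}\subset Q=\overline{Q_{1}}_{\mathcal{Q}}$ with $Q_{2}\in\mathcal{Q}$, so (C.2) forces $Q_{2}\in\{Q_{1},Q\}$. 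This is impossible, since $Q_{2}\neq Q_{1}$ and $l(Q_{2})<l(\overline{Q_{2}}_{\mathcal{Q}})=l(Q)$ by (C.1). That is the argument you should give, applied to shadow cubes rather than iceberg cubes.
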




The following assertion is an easy consequence of Definition \ref{Def.shadow_iceberg}.

\begin{Lm}
\label{Lm.disjoint_iceberg}
If $\mathcal{Q} \in \mathfrak{R}(S)$, then the family $\mathcal{ICF}(\mathcal{Q}):=\{\mathcal{IC}_{\mathcal{Q}}(Q):Q \in \mathcal{Q}\}$ has the following properties:
\begin{itemize}

\item[\((\text{IC.1})\)] $\bigcup\limits_{Q \in \mathcal{Q}}\mathcal{IC}_{\mathcal{Q}}(Q)=\widetilde{\mathcal{D}}_{+}(S)$;

\item[\((\text{IC.2})\)] the family $\mathcal{ICF}(\mathcal{Q})$ is disjoint.

\end{itemize}
\end{Lm}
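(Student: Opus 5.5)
The plan is to show that every $K \in \widetilde{\mathcal{D}}_{+}(S)$ lies in $\mathcal{IC}_{\mathcal{Q}}(Q)$ for exactly one $Q \in \mathcal{Q}$. Given $K$, we take as $Q$ the \emph{smallest} cube of $\mathcal{Q}$ containing $K$; then we check that $K$ lies in the iceberg of this cube and in no other iceberg. Throughout we use that dyadic cubes are nested or nonoverlapping. In particular, the cubes of $\mathcal{Q}$ containing a fixed $K$ form a chain.

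\emph{Existence, (IC.1).} Let $K \in \widetilde{\mathcal{D}}_{+}(S)$, so $l(K) \le 1$. The unique dyadic cube $Q_{0} \supset K$ with $l(Q_{0})=1$ satisfies $\underline{\theta}Q_{0} \supset \underline{\theta}K$, which meets $S$; hence $Q_{0} \in \mathcal{D}_{0}(S) \subset \mathcal{Q}$ by (R.1). The chain $\{Q \in \mathcal{Q} : Q \supset K\}$ is therefore nonempty. It has only finitely many possible side lengths between $l(K)$ and $1$, so it has a smallest element $Q$, and $K \in \mathcal{B}(Q)$.

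Suppose $K \in \mathcal{B}(Q')$ for some $Q' \in \mathcal{SH}_{\mathcal{Q}}(Q)$. Then $Q' \in \mathcal{Q}$, $Q' \supset K$, and $Q' \subsetneq Q$ by (C.1). This contradicts the minimality of $Q$, so $K \in \mathcal{IC}_{\mathcal{Q}}(Q)$. The inclusion $\bigcup_{Q} \mathcal{IC}_{\mathcal{Q}}(Q) \subset \widetilde{\mathcal{D}}_{+}(S)$ is immediate from $\mathcal{B}(Q) \subset \widetilde{\mathcal{D}}(S)$ and $l(Q) \le 1$ for $Q \in \mathcal{Q}$.

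\emph{Disjointness, (IC.2).} Suppose $K \in \mathcal{IC}_{\mathcal{Q}}(Q_{1}) \cap \mathcal{IC}_{\mathcal{Q}}(Q_{2})$ with $Q_{1} \neq Q_{2}$. Both contain $K$, so they are nested; say $Q_{2} \subsetneq Q_{1}$. Consider the cubes of $\mathcal{Q}$ lying between $Q_{2}$ and $Q_{1}$. This is a finite chain $Q_{2}=P_{0} \subsetneq P_{1} \subsetneq \dots \subsetneq P_{m}=Q_{1}$ of consecutive elements. By Definition \ref{Def.covering_cube}, $\overline{(P_{i})}_{\mathcal{Q}} = P_{i+1}$: condition (C.1) holds, and (C.2) holds because any intermediate cube of $\mathcal{Q}$ would sit in the chain between $P_{i}$ and $P_{i+1}$. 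Since $l(P_{m-1}) < l(Q_{1}) \le 1$, condition (R.3) guarantees that the covering cube exists. Its uniqueness follows from the chain structure, which makes the notation $\overline{Q}_{\mathcal{Q}}$ unambiguous. Hence $P_{m-1} \in \mathcal{SH}_{\mathcal{Q}}(Q_{1})$. But $K \subset Q_{2} \subset P_{m-1}$ and $K \in \widetilde{\mathcal{D}}(S)$, so $K \in \mathcal{B}(P_{m-1})$, and thus $K \notin \mathcal{IC}_{\mathcal{Q}}(Q_{1})$, a contradiction.

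\emph{Main obstacle.} There is no real obstacle here. The only points needing care are the finiteness of these chains, which follows because the side lengths are powers of $2^{-j}$ lying between $l(K)$ and $1$, and the well-definedness and uniqueness of the covering cube, which relies on the nested structure of dyadic cubes.
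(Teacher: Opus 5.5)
Your argument is correct, and it is exactly the unpacking of what the paper dismisses as an easy consequence of Definition \ref{Def.shadow_iceberg}: each $K\in\widetilde{\mathcal{D}}_{+}(S)$ lies precisely in the iceberg of the smallest cube of $\mathcal{Q}$ containing it, and uniqueness of covering cubes comes from the chain structure of dyadic ancestors. Two small remarks: for $Q_{0}$ you actually verify, and need, membership in $\widetilde{\mathcal{D}}_{0}(S)$, so you are reading (R.1) as $\widetilde{\mathcal{D}}_{0}(S)\subset\mathcal{Q}$ (as the paper itself does, e.g.\ in Lemma \ref{Lm.auxiliary}); and in (IC.2) the appeal to (R.3) is superfluous, since you exhibit $P_{m}$ as the covering cube of $P_{m-1}$ directly.
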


Now we clarify some combinatorial structure of shadows of cubes.

\begin{Prop}
\label{Prop.shadow_cubes_comparable}
Let $\mathcal{Q} \in \mathfrak{R}(S)$, $Q \in \mathcal{Q}$. For every pair of different cubes  $Q_{1},Q_{2} \in \mathcal{SH}_{\mathcal{Q}}(Q)$ the following properties hold:

\begin{itemize}

\item[\((\text{SH.1})\)] if $l(Q_{2}) < 2^{-j}l(Q_{1})$, then $\operatorname{dist}(Q_{1},Q_{2}) > \frac{7}{4}l(Q_{1})$;

\item[\((\text{SH.2})\)] if $4Q_{1} \cap 4Q_{2} \neq \emptyset$, then $2^{-j} \le \frac{l(Q_{1})}{l(Q_{2})} \le 2^{j}$.

\end{itemize}

\end{Prop}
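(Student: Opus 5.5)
We prove (SH.1) by contradiction using (R.2) and Lemma \ref{Lm.relative_cubes}, and then deduce (SH.2) from (SH.1). Fix $\mathcal{Q}\in\mathfrak{R}(S)$, $Q\in\mathcal{Q}$ and distinct $Q_{1},Q_{2}\in\mathcal{SH}_{\mathcal{Q}}(Q)$. Thus $\overline{(Q_{1})}_{\mathcal{Q}}=\overline{(Q_{2})}_{\mathcal{Q}}=Q$, and both $Q_{1},Q_{2}\subset Q$ are strictly smaller than $Q$.

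\textbf{(SH.1).} Suppose $l(Q_{2})<2^{-j}l(Q_{1})$ but $\operatorname{dist}(Q_{1},Q_{2})\le \frac{7}{4}l(Q_{1})$.

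By Lemma \ref{Lm.relative_cubes} there is $K\in\mathcal{REL}(Q_{1})$ with $K\supset Q_{2}$ and $l(K)=2^{-j}l(Q_{1})$. We need $K\in\widetilde{\mathcal{D}}(S)$, so that $K\in\mathcal{REL}_{S}(Q_{1})$. Since $Q_{2}\subset K$ and $\underline{\theta}Q_{2}\cap S\neq\emptyset$, it suffices to check $\underline{\theta}Q_{2}\subset\underline{\theta}K$, which holds because the enlargement of $Q_{2}$ is tiny compared with $l(K)$.

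Now apply (R.2) to $Q_{1}\in\mathcal{Q}$ and $K$. This gives $K'\in\mathcal{Q}\setminus\{\overline{(Q_{1})}_{\mathcal{Q}}\}=\mathcal{Q}\setminus\{Q\}$ with $K'\supset K\supset Q_{2}$ and $l(K')\le 2^{j}l(K)=l(Q_{1})<l(Q)$.

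Since dyadic cubes are nested or nonoverlapping and $K'$ meets $Q$ while being smaller, we get $Q_{2}\subset K'\subsetneq Q$. Also $K'\neq Q_{2}$, since $l(K')\ge l(K)>l(Q_{2})$. This contradicts (C.2) for the covering cube $\overline{(Q_{2})}_{\mathcal{Q}}=Q$, because $K'\in\mathcal{Q}$ lies strictly between $Q_{2}$ and $Q$.

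\textbf{(SH.2).} Assume $4Q_{1}\cap4Q_{2}\neq\emptyset$ and, by symmetry, $l(Q_{2})\le l(Q_{1})$. Then $\operatorname{dist}(Q_{1},Q_{2})\le \frac{3}{2}(l(Q_{1})+l(Q_{2}))$.

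If $l(Q_{2})<2^{-j}l(Q_{1})$, then $l(Q_{2})\le 2^{-j}l(Q_{1})$ with $j\ge5$. Hence $\operatorname{dist}(Q_{1},Q_{2})\le \frac{3}{2}(1+2^{-5})l(Q_{1})<\frac{7}{4}l(Q_{1})$, contradicting (SH.1). Therefore $l(Q_{2})\ge 2^{-j}l(Q_{1})$, which is (SH.2).

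\textbf{Main obstacle.} The delicate point is checking that $K$ actually belongs to $\widetilde{\mathcal{D}}(S)$. This requires the correct accounting of the $\underline{\theta}$-enlargement. One also has to use that side lengths in $\widetilde{\mathcal{D}}$ are powers of $2^{j}$, so that $K$ (side $2^{-j}l(Q_{1})$) lies in the right generation and $l(Q_{2})\le 2^{-2j}l(Q_{1})$. With the margin $\underline{\theta}-1\approx 2^{1-j}$ and $l(Q_{2})\le 2^{-j}l(K)$, the inclusion $\underline{\theta}Q_{2}\subset\underline{\theta}K$ follows directly.
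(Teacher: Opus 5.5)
Your proof is correct and follows the paper's route. For (SH.1) you use Lemma~\ref{Lm.relative_cubes} and (R.2) to produce a cube of $\mathcal{Q}$ strictly between $Q_2$ and its covering cube $Q$, and (SH.2) follows from (SH.1) via $\operatorname{dist}(Q_1,Q_2)\le\frac32(l(Q_1)+l(Q_2))$, exactly as in the paper; you also make explicit the check $K\in\widetilde{\mathcal{D}}(S)$, which the paper leaves implicit.

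One slip: (R.2) gives $l(K')\le 2^{j}l(Q_1)$, not $2^{j}l(K)$, so you only get $l(K')\le l(Q)$ (using $l(Q)\ge 2^{j}l(Q_1)$). Combined with $K'\neq Q$, which you already recorded, this still yields $K'\subsetneq Q$, so the argument goes through.
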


\begin{proof}

To prove $(\text{SH.1})$ we assume that $\operatorname{dist}(Q_{1},Q_{2}) \le \frac{7}{4}l(Q_{1})$. Since $Q_{2} \in \widetilde{\mathcal{D}}_{+}(S)$, by Lemma \ref{Lm.relative_cubes}
we get existence of a cube $Q'_{2} \in \mathcal{REL}_{S}(Q_{1})$ such that $Q'_{2} \supset Q_{2}$ and $l(Q'_{2}) \geq 2^{j}l(Q_{2})$.
By Definition  \ref{Def.regular_family}
this implies existence of $K \supset Q'_{2}$ such that $K \in \mathcal{Q}$ and $K$ is strictly contained in $Q$. Hence, the cube $Q$ is not covering for $Q_{2}$. As a result, $Q_{2} \notin \mathcal{SH}_{\mathcal{Q}}(Q_{1})$.
We get a contradiction and complete the proof.

To prove $(\text{SH.2})$ we assume the contrary. Suppose that $l(Q_{2}) < 2^{-j}l(Q_{2})$.
Using $(\text{SH.1})$ and taking into account ($\text{D.3.3}$) it is easy to see that $4Q_{1} \cap 4Q_{2} = \emptyset$. This gives a contradiction.

The proof is complete.
\end{proof}

As in easy consequence we derive the following important observation.

\begin{Prop}
\label{Prop.covering_multiplicity_shadow}
If $\mathcal{Q} \in \mathfrak{R}(S)$ and $Q \in \mathcal{Q}$, then
\begin{equation}
\notag
\operatorname{MULT}(\{4 Q': Q' \in \mathcal{SH}_{\mathcal{Q}}(Q)\}) \le 5^{n+2}.
\end{equation}
\end{Prop}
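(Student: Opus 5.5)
The plan is to combine (SH.2) of Proposition \ref{Prop.shadow_cubes_comparable} with Lemma \ref{Lm.covering_multiplicity}, applied to $\mathcal{F}:=\mathcal{SH}_{\mathcal{Q}}(Q)$, $c=4$ and $k^{\ast}=1$.

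First I will check the hypothesis of Lemma \ref{Lm.covering_multiplicity}. The family $\mathcal{SH}_{\mathcal{Q}}(Q) \subset \mathcal{Q} \subset \widetilde{\mathcal{D}}_{+}(S) \subset \widetilde{\mathcal{D}}$. Take $Q_{1},Q_{2} \in \mathcal{SH}_{\mathcal{Q}}(Q)$ with $4Q_{1} \cap 4Q_{2} \neq \emptyset$.
\begin{itemize}
\item[\(\cdot\)] If $Q_{1} \neq Q_{2}$, then (SH.2) gives $2^{-j} \le l(Q_{1})/l(Q_{2}) \le 2^{j}$.
\item[\(\cdot\)] If $Q_{1}=Q_{2}$, the ratio equals $1$ and the bound is trivial.
\end{itemize}
So the hypothesis holds with $k^{\ast}=1$, since cubes in $\widetilde{\mathcal{D}}$ have side lengths in $2^{-j\mathbb{Z}}$.

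Next I apply the lemma. It yields
\begin{equation}
\notag
\operatorname{MULT}(\{4Q':Q'\in\mathcal{SH}_{\mathcal{Q}}(Q)\}) \le (2\cdot 1+1)([4]+2)^{n+1}=3\cdot 6^{n+1}.
\end{equation}

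The stated bound is $5^{n+2}$, and $3\cdot 6^{n+1} \le 5^{n+2}$ fails for large $n$. So the constant has to be sharpened, and this is the only real obstacle. I see two ways to do it.

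\emph{Sharpening the dyadic count.} Revisit Remark \ref{Rem.dyadic_lattice_multiplicity} for integer $c=4$. For dyadic cubes of a fixed generation $k$, the point $x$ lies in $4Q'$ only if the center of $Q'$ is within $4\cdot 2^{-k}/2$ of $x$ in each coordinate. This confines the index $m_{i}$ of $Q'$ in each coordinate to an interval of length about $4$. That allows at most $5$ values per coordinate, hence at most $5^{n+1}$ cubes per generation.

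\emph{Sharpening the generation count.} If $x$ lies in $4Q'$ for cubes from three distinct generations $k-1,k,k+1$ (in $j$-steps), then the extreme pair has intersecting $4$-dilates but side ratio $2^{2j}$. This contradicts (SH.2). Hence at most two consecutive generations occur, and the bound becomes $2\cdot 5^{n+1} \le 5^{n+2}$.

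Either refinement gives the claim. The plan is to carry out the direct per-point count for a fixed $x$, combining the per-generation count $5^{n+1}$ with the $\le 2$ (or, crudely, $\le 3 < 5$) admissible generations.
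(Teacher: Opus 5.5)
Your proposal is correct and follows the paper's route: (SH.2) of Proposition \ref{Prop.shadow_cubes_comparable} confines cubes of $\mathcal{SH}_{\mathcal{Q}}(Q)$ with meeting $4$-dilates to adjacent generations, and then Lemma \ref{Lm.covering_multiplicity} is applied with $k^{\ast}=1$, $c=4$. You are also right that this lemma, as stated, only yields $3\cdot 6^{n+1}$, which already exceeds $5^{n+2}$ for $n\ge 2$; the paper's proof does not address this, and your count of at most $5$ integers $m_{i}$ per coordinate in a closed interval of length $4$ repairs it, giving $3\cdot 5^{n+1}\le 5^{n+2}$ (or $2\cdot 5^{n+1}$). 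Note, however, that your claim that ``either refinement'' suffices is slightly off: the generation refinement by itself gives only $2\cdot 6^{n+1}$, which is larger than $5^{n+2}$ for $n\ge 5$, so the per-generation sharpening is the essential step.
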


\begin{proof}
Fix an arbitrary $Q' \in \mathcal{SH}_{\mathcal{Q}}(Q)$. Clearly $Q \in \widetilde{\mathcal{D}}_{k}(S)$ for some $k \in \mathbb{N}_{0}$.
By Proposition \ref{Prop.shadow_cubes_comparable}, if $Q'' \cap Q' \neq \emptyset$ for some $Q'' \in \mathcal{SH}_{\mathcal{Q}}(Q)$, then
$Q'' \in \widetilde{\mathcal{D}}_{j}(S)$ for some $j \in \{k-1,k,k+1\}$. Hence, by Lemma \ref{Lm.covering_multiplicity} applied with $k^{\ast}=1$ and $c=4$
the proposition follows.
\end{proof}

For the further development we need to relate abstract regular families with the behaviour of a given weight. This motivate us to introduce the following notion.
\begin{Def}
\label{Def.admissible_for_weight}
Given a weight $\gamma \in A^{\rm loc}_{1}(\mathbb{R}^{n+1})$, a family of cubes $\mathcal{Q}$ is said to be admissible for $\gamma$ if there is $\operatorname{C}_{\operatorname{A}} > 0$ such that:
$\underline{\gamma}_{Q} \le \operatorname{C}_{\operatorname{A}}\underline{\gamma}_{K}$ for $K=\overline{Q}_{\mathcal{Q}}$.
\end{Def}


\section{Analysis of special functionals}
The aim of this paragraph is the careful analysis of the Besov-type functional and the Lebesgue-type functional that we briefly mentioned
in the introduction. Throughout the whole section we fix the following data:

\begin{itemize}

\item[\((\text{D.4.1})\)] a number $n \in \mathbb{N}$ and a weight $\gamma \in A_{1}^{\rm loc}(\mathbb{R}^{n+1})$;

\item[\((\text{D.4.2})\)] a set $S \in \mathcal{ADR}^{n}(\mathbb{R}^{n+1})$;

\item[\((\text{D.4.3})\)] a parameter $j \geq \max\{\underline{j}(S,2),5\}$;

\item[\((\text{D.4.4})\)] a parameter $q > (2^{n+1}\operatorname{C}_{\gamma})^{2j}$.

\end{itemize}

We recall notation \eqref{eqq.weight_notation} and make an elementary observation.
\begin{Lm}
\label{Lm.oscillation_on_neighboring_cubes}
Let $Q_{1},Q_{2} \in \widetilde{\mathcal{D}}_{+}(S)$ be such that $Q_{2} \in \operatorname{n}(Q_{1})$. Then
\begin{equation}
\label{eqq.oscillation_of_neighboring_cubes}
\underline{\gamma}_{Q_{2}} \le \frac{q}{2}\underline{\gamma}_{Q_{1}}.
\end{equation}
\end{Lm}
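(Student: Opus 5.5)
We derive the estimate from the doubling properties of Lemma \ref{Lm.doubling}, passing through an intermediate cube that contains both $Q_{1}$ and $Q_{2}$. Since $Q_{1},Q_{2} \in \widetilde{\mathcal{D}}_{+}(S)$, both are dyadic with $l(Q_{i}) \le 1$, and $Q_{2} \in \operatorname{n}(Q_{1})$ gives $Q_{1} \cap Q_{2} \neq \emptyset$ and $2^{-j} \le l(Q_{1})/l(Q_{2}) \le 2^{j}$.

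We consider two cases.

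\textbf{Case $l(Q_{2}) \le l(Q_{1})$.} Take a cube $Q$ of side $2l(Q_{1})$ containing $Q_{1}\cup Q_{2}$; this is possible because $Q_{2}$ touches $Q_{1}$ and is not larger. Then $\underline{\gamma}_{Q} \le \underline{\gamma}_{Q_{2}}$ and $\underline{\gamma}_{Q_{1}} \ge \underline{\gamma}_{Q}$, which is the wrong direction. So instead we bound $\underline{\gamma}_{Q_{2}}$ from above. Choose a cube $Q_{2}'$ with $l(Q_{2}')=l(Q_{1})$, $Q_{2}' \supset Q_{2}$ and $Q_{2}' \cap Q_{1} \neq \emptyset$ (for instance the dyadic ancestor of $Q_{2}$ of that size, which meets $Q_{1}$). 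Then
$\underline{\gamma}_{Q_{2}} \le \overline{\gamma}_{Q_{2}}$, and iterating $\overline{\gamma}_{Q'} \le 2^{n+1}\overline{\gamma}_{Q}$ from (D1) at most $j$ times gives $\overline{\gamma}_{Q_{2}} \le 2^{(n+1)j}\overline{\gamma}_{Q_{2}'}$. Note that (D1) needs halving; dyadic ancestors are exactly halving chains, so this is legitimate. Next, $\overline{\gamma}_{Q_{2}'} \le \operatorname{C}_{\gamma}\underline{\gamma}_{Q_{2}'}$, and by (D2) with $l \le 1$ we have $\underline{\gamma}_{Q_{2}'} \le 2^{n+1}\operatorname{C}_{\gamma}\underline{\gamma}_{Q_{1}}$. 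Altogether
\[
\underline{\gamma}_{Q_{2}} \le 2^{(n+1)(j+1)}\operatorname{C}_{\gamma}^{2}\,\underline{\gamma}_{Q_{1}}.
\]

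\textbf{Case $l(Q_{2}) > l(Q_{1})$.} Symmetrically, let $Q_{1}'$ be the dyadic ancestor of $Q_{1}$ with $l(Q_{1}')=l(Q_{2})$. It meets $Q_{2}$. Since $Q_{1}' \supset Q_{1}$, we have $\underline{\gamma}_{Q_{1}'} \le \underline{\gamma}_{Q_{1}}$. By (D2), $\underline{\gamma}_{Q_{2}} \le 2^{n+1}\operatorname{C}_{\gamma}\underline{\gamma}_{Q_{1}'} \le 2^{n+1}\operatorname{C}_{\gamma}\underline{\gamma}_{Q_{1}}$. This case is easier.

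Finally, compare the constants with (D.4.4). We need $2^{(n+1)(j+1)}\operatorname{C}_{\gamma}^{2} \le q/2$. Since $\operatorname{C}_{\gamma} \ge 1$ and $j \ge 5$, we have $(2^{n+1}\operatorname{C}_{\gamma})^{2j} \ge 2\cdot 2^{(n+1)(j+1)}\operatorname{C}_{\gamma}^{2}$, because $2(n+1)j \ge (n+1)(j+1)+1$ and $2j \ge 2$. Hence $q > (2^{n+1}\operatorname{C}_{\gamma})^{2j}$ suffices.

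The only delicate point is applying (D2) at side lengths $\le 1$. Cubes in $\widetilde{\mathcal{D}}_{+}$ have $l \le 1$, so this is fine. One should also check that the dyadic ancestor of the smaller cube at the larger scale still intersects the larger cube. It does, since it contains the smaller cube, which meets the larger one.
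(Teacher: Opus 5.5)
Your proof is correct and is essentially the paper's argument. The paper assumes $l(Q_{2})\le l(Q_{1})$, notes $Q_{1}\subset 2^{j+2}Q_{2}$ and applies Lemma~\ref{Lm.doubling}; your route through the dyadic ancestor and (D2) additionally treats both size orderings explicitly, which is needed since the inequality is not symmetric in $Q_{1},Q_{2}$, and it keeps every cube at side length $\le 1$, so only $\operatorname{C}_{\gamma}$ enters and your check against (D.4.4) goes through, whereas $2^{j+2}Q_{2}$ can have side length larger than $2$ and would require (D3). Incidentally, the route you abandoned at the start does work: for a cube $Q\supset Q_{1}\cup Q_{2}$ of side $2l(Q_{1})$ one has $\underline{\gamma}_{Q_{2}}\le\overline{\gamma}_{Q_{2}}\le 2^{(n+1)(j+1)}\overline{\gamma}_{Q}\le 2^{(n+1)(j+1)}\operatorname{C}_{\gamma}\underline{\gamma}_{Q}\le 2^{(n+1)(j+1)}\operatorname{C}_{\gamma}\underline{\gamma}_{Q_{1}}$, which is the paper's idea with a smaller enclosing cube.
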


\begin{proof}
Without loss of generality we may assume that $2^{-j}l(Q_{1}) \le l(Q_{2}) \le l(Q_{1})$. Since $Q_{2} \cap Q_{1} \neq \emptyset$
we clearly have $Q_{1} \subset 2^{j+2}Q_{2}$. Application of Lemma \ref{Lm.doubling} gives \eqref{eqq.oscillation_of_neighboring_cubes}.
\end{proof}

\subsection{Green cubes} Given $k \in \mathbb{Z}$, we define an auxiliary family
$$
\widetilde{\mathcal{G}}^{k}_{\gamma}(q):=\{Q \in \widetilde{\mathcal{D}}_{+}(S):\underline{\gamma}_{5Q} \in [q^{k},q^{k+1})\}.
$$
Now let $\mathcal{G}^{k}_{\gamma}(q) \subset \widetilde{\mathcal{G}}^{k}_{\gamma}(q)$ denote the corresponding \textit{maximal} cubes.
In other words, $Q$ belongs to $\mathcal{G}^{k}$ if and only if it belongs to $\widetilde{\mathcal{G}}^{k}$
and there are no other cubes from $\widetilde{\mathcal{G}}^{k}$ containing it. Finally, we set
$\mathcal{G}_{\gamma}(q):=\bigcup_{k \in \mathbb{Z}}\mathcal{G}^{k}_{\gamma}(q)$. In what follows we refer to the family $\mathcal{G}_{\gamma}(q)$ as the
family of \textit{green cubes}, and paint each cube from $\mathcal{G}_{\gamma}(q)$ \textit{green}.
The deep sense of this family will be explained in Section 6 below.

Since $\gamma$ and $q$ are assumed to be fixed in this section, we omit them from the corresponding notation, i.e., we set
$\widetilde{\mathcal{G}}^{k}:=\widetilde{\mathcal{G}}^{k}_{\gamma}(q)$, $\mathcal{G}^{k}:=\mathcal{G}^{k}_{\gamma}(q)$, and $\mathcal{G}:=\mathcal{G}_{\gamma}(q)$.
\begin{Lm}
\label{Lm.green_noneoverlapping}
For each $k \in \mathbb{Z}$, the following holds:

\begin{itemize}

\item[\((1)\)] the family $\mathcal{G}^{k}$ is noneoverlapping;

\item[\((2)\)] $\mathcal{G}^{k} \cap \mathcal{G}^{k+1} = \emptyset$;

\item[\((3)\)] $\underline{\gamma}_{25Q} < q^{k+1}$ for each $Q \in \mathcal{G}^{k}$ with $l(Q) < 1$.

\end{itemize}
\end{Lm}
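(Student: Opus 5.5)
The plan is to handle the three items separately, using only the definitions of $\widetilde{\mathcal{G}}^{k}$ and $\mathcal{G}^{k}$, the nested structure of $\widetilde{\mathcal{D}}_{+}(S)$, and the doubling estimates of Lemma \ref{Lm.doubling}.

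\textbf{Item (1).} All cubes in $\widetilde{\mathcal{D}}$ are dyadic cubes of side lengths $2^{-jk}$. Hence any two of them are either nested or have disjoint interiors. Suppose $Q,Q'\in\mathcal{G}^{k}$ are distinct and have overlapping interiors. Then one of them, say $Q'$, strictly contains the other. Since $Q'\in\widetilde{\mathcal{G}}^{k}$, this contradicts the maximality of $Q$ in $\widetilde{\mathcal{G}}^{k}$.

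\textbf{Item (2).} This is immediate from the definitions. A cube $Q$ in both families would satisfy $\underline{\gamma}_{5Q}\in[q^{k},q^{k+1})\cap[q^{k+1},q^{k+2})=\emptyset$, which is impossible. In fact the sets $\widetilde{\mathcal{G}}^{k}$ are pairwise disjoint for all $k$.

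\textbf{Item (3).} This is the only step with content, and I would argue by using the parent cube. Let $Q\in\mathcal{G}^{k}$ with $l(Q)<1$, so $l(Q)\le 2^{-j}$.
\begin{itemize}
\item[(a)] Let $K$ be the unique dyadic cube with $K\supset Q$ and $l(K)=2^{j}l(Q)\le 1$. Then $K\in\widetilde{\mathcal{D}}_{k'}$ for the appropriate index with $k'\ge 0$.
\item[(b)] Check that $K\in\widetilde{\mathcal{D}}_{+}(S)$. Indeed $\underline{\theta}Q\cap S\neq\emptyset$, and $\underline{\theta}Q\subset\underline{\theta}K$ because $Q\subset K$ and the dilation factor is at least $1$.
\item[(c)] Use maximality. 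Since $Q$ is maximal in $\widetilde{\mathcal{G}}^{k}$, the cube $K$ is not in $\widetilde{\mathcal{G}}^{k}$, so $\underline{\gamma}_{5K}\notin[q^{k},q^{k+1})$. Because $5Q\subset 5K$, we have $\underline{\gamma}_{5K}\le\underline{\gamma}_{5Q}<q^{k+1}$. Therefore $\underline{\gamma}_{5K}<q^{k}$.
\item[(d)] Compare $25Q$ with $5K$. Since $l(K)=2^{j}l(Q)$ with $j\ge 5$, we get $25Q\subset 5K$. Indeed, the center of $Q$ lies in $K$, and $25Q$ has half-side $\tfrac{25}{2}l(Q)\le\tfrac{5}{2}l(K)$.
\item[(e)] Conclude: $\underline{\gamma}_{25Q}\le\underline{\gamma}_{5K}<q^{k}<q^{k+1}$.
\end{itemize}

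This gives a bound even stronger than claimed, with no use of the size condition (D.4.4) on $q$. If the containment in (d) were to fail for some edge configuration, the fallback would be to chain through intermediate cubes using Lemma \ref{Lm.doubling}(D1). Since each such step loses a factor $2^{n+1}\operatorname{C}_{\gamma}$, this chaining would then require the assumption on $q$ from (D.4.4).

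I expect the only genuine obstacle to be the membership $K\in\widetilde{\mathcal{D}}_{+}(S)$ and the restriction $l(K)\le 1$. Both are guaranteed by $l(Q)<1$, which forces $l(Q)\le 2^{-j}$ in the lattice $\widetilde{\mathcal{D}}_{+}$.
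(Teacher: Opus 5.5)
Your arguments for (1) and (2) are correct. Item (1) is the paper's argument. For (2), your remark that the families $\widetilde{\mathcal{G}}^{k}$ are already pairwise disjoint is a cleaner justification than the paper's appeal to maximality.

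Your proof of (3), however, fails at step (e). The essential infimum is \emph{decreasing} under inclusion, so $25Q\subset 5K$ gives $\underline{\gamma}_{25Q}\ge\underline{\gamma}_{5K}$, not $\underline{\gamma}_{25Q}\le\underline{\gamma}_{5K}$. You used the correct direction in step (c), where $5Q\subset 5K$ yields $\underline{\gamma}_{5K}\le\underline{\gamma}_{5Q}$.

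The ``stronger'' bound $\underline{\gamma}_{25Q}<q^{k}$ is in fact false. Take $k=0$, $S=\mathbb{R}^{n}\times\{0\}$, $Q=[0,2^{-j}]^{n+1}\in\widetilde{\mathcal{D}}_{1}(S)$, and $\gamma=1$ on $25Q$, $\gamma=\frac12$ elsewhere, so that $\operatorname{C}_{\gamma}\le 2$. Then $\underline{\gamma}_{5Q}=1\in[1,q)$. The only strict ancestor of $Q$ in $\widetilde{\mathcal{D}}_{+}(S)$ is $K=[0,1]^{n+1}$, and it has $\underline{\gamma}_{5K}=\frac12<1$. Hence $Q\in\mathcal{G}^{0}$, yet $\underline{\gamma}_{25Q}=1=q^{0}$.

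The repair is one line, using only what you already wrote in (c). Since $5Q\subset 25Q$ and $Q\in\widetilde{\mathcal{G}}^{k}$, we get $\underline{\gamma}_{25Q}\le\underline{\gamma}_{5Q}<q^{k+1}$. So (3), as stated, follows directly from the definition of $\widetilde{\mathcal{G}}^{k}$. It needs neither the parent cube, nor $l(Q)<1$, nor $(\text{D.4.4})$.

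The paper instead argues by contradiction. Assuming $\underline{\gamma}_{25Q}\ge q^{k+1}$, it uses Lemma \ref{Lm.doubling} and $(\text{D.4.4})$ to get $\underline{\gamma}_{5K}>q^{k}$ for the parent $K$, which contradicts the maximality of $Q$. This is valid but roundabout, since that hypothesis already contradicts $\underline{\gamma}_{5Q}<q^{k+1}$.
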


\begin{proof}
Fix to the end of the proof an arbitrary $k \in \mathbb{Z}$.

To prove (1) assume the contrary. Since $\mathcal{G}^{k} \subset \widetilde{\mathcal{D}}(S)$ this implies existence of two different
cubes $Q_{1},Q_{2} \in \mathcal{G}^{k}$ such that one of them is strictly contained in the other one. This gives a contradiction with the maximality
of cubes in the family $\mathcal{G}^{k}$.

Property (2) follows from the maximality of the corresponding cubes.

To prove (3)  assume on the contrary that $\underline{\gamma}_{25Q} \geq q^{k+1}$ for some $Q \in \mathcal{G}^{k}$ with $l(Q) < 1$. Clearly, $Q \in \widetilde{\mathcal{D}}_{l}(S)$
for some $l \in \mathbb{N}$. By $(\text{D.4.4})$ and Lemma \ref{Lm.doubling} this leads to the inequality $\underline{\gamma}_{5K} > q^{k}$ for the unique cube $K \in \widetilde{\mathcal{D}}_{l-1}(S)$
containing $Q$. This gives a contradiction with the maximality of $Q$.

The proof is complete.
\end{proof}

Now we show that the family of green cubes possesses nice properties. 
\begin{Th}
\label{Th.green_is_regular}
The family $\mathcal{G}$ is regular for $S$ and admissible for $\gamma$.
\end{Th}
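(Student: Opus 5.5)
Our plan is to verify the three conditions $(\text{R.1})$--$(\text{R.3})$ of Definition \ref{Def.regular_family} one at a time, and then the admissibility condition of Definition \ref{Def.admissible_for_weight}. The single tool throughout is the monotonicity of $\underline{\gamma}$: if $5Q' \subset 5Q$, then $\underline{\gamma}_{5Q} \le \underline{\gamma}_{5Q'}$. This is combined with the doubling bounds of Lemma \ref{Lm.doubling} and the choice $q > (2^{n+1}\operatorname{C}_{\gamma})^{2j}$ in $(\text{D.4.4})$.

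For $(\text{R.1})$ we use that $\widetilde{\mathcal{D}}_{+}(S)$ contains no cube with $l(Q) > 1$. Hence each $Q \in \mathcal{D}_{0}(S)\cap\widetilde{\mathcal{D}}_{+}(S)$ is maximal in $\widetilde{\mathcal{G}}^{k}$ for the unique $k$ with $\underline{\gamma}_{5Q} \in [q^{k},q^{k+1})$. (We read $(\text{R.1})$ with the paper's convention $\mathcal{D}_{0}(S)\leftrightarrow\widetilde{\mathcal{D}}_{0}(S)$.)

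For $(\text{R.3})$, fix $Q \in \mathcal{G}^{k}$ with $l(Q)<1$ and consider the chain of ancestors $Q=K_{0}\subset K_{1}\subset\dots\subset K_{m}\in\widetilde{\mathcal{D}}_{0}(S)$; all of these lie in $\widetilde{\mathcal{D}}_{+}(S)$ because $5K_{i}\supset 5Q$. The function $i\mapsto\underline{\gamma}_{5K_{i}}$ is nonincreasing. Let $k_{i}$ be the level of $K_{i}$, so that $k_{i}\le k$, and $k_{1}<k$ by the maximality of $Q$. Take $i\ge1$ minimal such that $K_{i}$ is maximal in its own level $\widetilde{\mathcal{G}}^{k_{i}}$. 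Such an $i$ exists because $K_{m}$ is always green. Then $K_{i}$ is green and strictly contains $Q$. Among all green cubes strictly containing $Q$ we take the smallest one; this is exactly $\overline{Q}_{\mathcal{G}}$, and $(\text{C.2})$ holds by construction.

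Admissibility is the statement $\underline{\gamma}_{Q}\lesssim\underline{\gamma}_{\overline{Q}}$. The idea is that the cube $K_{1}$ is in $\widetilde{\mathcal{G}}^{k'}$ with $k'<k$. By Lemma \ref{Lm.doubling}, $\underline{\gamma}_{5K_{1}}\ge(2^{n+1}\operatorname{C}_{\gamma})^{-j}\underline{\gamma}_{5Q}\ge q^{k-1}$, and therefore $k'=k-1$. Define $K^{\ast}$ as the maximal cube of $\widetilde{\mathcal{G}}^{k-1}$ containing $K_{1}$; it is green. Any green cube $G$ with $Q\subsetneq G\subsetneq K^{\ast}$ has level between $k-1$ and $k$. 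A level-$k$ cube $G$ would contradict the maximality of $Q$, and a level-$(k-1)$ cube $G$ would be contained in the maximal $K^{\ast}$. Hence $\overline{Q}=K^{\ast}$ and $\underline{\gamma}_{5\overline{Q}}\ge q^{k-1}>q^{-2}\underline{\gamma}_{5Q}$. Passing from $5Q$ to $Q$ costs a factor $2^{n+1}\operatorname{C}_{\gamma}$ by Lemma \ref{Lm.doubling}, which gives $\operatorname{C}_{\operatorname{A}}\approx q^{2}\cdot\mathrm{const}$.

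The main obstacle is $(\text{R.2})$. Let $Q\in\mathcal{G}^{k}$ and $Q'\in\mathcal{REL}_{S}(Q)$, so that $5Q'\subset5Q$, $l(Q')\le l(Q)$, and $Q'$ is maximal with this property. We want a green $K\supset Q'$ with $l(K)\le2^{j}l(Q)$ and $K\neq\overline{Q}$. Since $5Q'\subset5Q$, we have $\underline{\gamma}_{5Q'}\ge\underline{\gamma}_{5Q}\ge q^{k}$. By the maximality clause $(\text{Rel.2})$ the parent $\widehat{Q'}$ satisfies $5\widehat{Q'}\subset 25Q$, say. Then Lemma \ref{Lm.green_noneoverlapping}(3) (or its proof) gives $\underline{\gamma}_{5\widehat{Q'}}\ge\underline{\gamma}_{25Q}$, which controls the level of the parent from below. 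Climbing the ancestors of $Q'$ as in $(\text{R.3})$, the level drops by at most one per step. Consequently we reach a green cube $K\supset Q'$ of level $\ge k$, or of level $k-1$ when $l(K)\le l(Q)$, before the side length exceeds $2^{j}l(Q)$. This is the step we expect to be hardest. The case analysis must show two things. First, if $K$ is forced to size $2^{j}l(Q)$, then $K\neq\overline{Q}$: this holds when $Q'\not\subset Q$, and when $Q'\subset Q$ we take $K$ inside $Q$, using that $Q$ itself is green. Second, the case $l(Q)=1$ must be treated separately using $(\text{R.1})$.
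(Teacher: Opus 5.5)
Your arguments for (R.1), (R.3) and admissibility are sound. In particular, you identify $\overline{Q}_{\mathcal G}$ as the maximal cube of $\widetilde{\mathcal G}^{k-1}$ containing the $\widetilde{\mathcal D}$-parent of $Q$, which is more explicit than the paper's ``by the construction''.

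The gap is in (R.2), at exactly the step you flag as hardest. Climbing the ancestors of $Q'$ with Lemma \ref{Lm.doubling} only gives \emph{lower} bounds on their levels: at most one level is lost per $\widetilde{\mathcal D}$-step. The inequality $\underline{\gamma}_{5\widehat{Q'}}\ge\underline{\gamma}_{25Q}$ is again a lower bound, and Lemma \ref{Lm.green_noneoverlapping}(3) only says $\underline{\gamma}_{25Q}<q^{k+1}$, which does not help here. Levels are nonincreasing along the ancestor chain. Hence the first green cube containing $Q'$ is the maximal cube $K$ of $\widetilde{\mathcal G}^{k'}$ containing $Q'$, where $k'\ge k$ is the level of $Q'$. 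To bound $l(K)$ you need an \emph{upper} bound on the levels of the large ancestors of $Q'$, that is, a reason why the level must fall below $k'$ before the side length reaches $2^{2j}l(Q)$. Your sketch gives no such reason, so the claim ``consequently we reach a green cube \dots\ before the side length exceeds $2^{j}l(Q)$'' is unsupported.

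The missing idea, which is the paper's, is to use the maximality of $Q$ itself through Lemma \ref{Lm.relative_cubes}. Suppose $l(K)>2^{j}l(Q)$. Then $\operatorname{dist}(K,Q)\le\operatorname{dist}(Q',Q)\le 2l(Q)\le\frac{7}{4}l(K)$, so there is $\widetilde{Q}\in\mathcal{REL}(K)$ with $\widetilde{Q}\supset Q$ and $l(\widetilde{Q})=2^{-j}l(K)>l(Q)$. This cube lies in $\widetilde{\mathcal D}_{+}(S)$ because it contains $Q$. From $5Q\subset5\widetilde{Q}\subset5K$ you get $q^{k}\le q^{k'}\le\underline{\gamma}_{5K}\le\underline{\gamma}_{5\widetilde{Q}}\le\underline{\gamma}_{5Q}<q^{k+1}$. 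So $\widetilde{Q}\in\widetilde{\mathcal G}^{k}$ strictly contains $Q$, contradicting $Q\in\mathcal G^{k}$. (The paper writes $Q'\in\widetilde{\mathcal G}^{k}$; in general $Q'$ only has level $k'\ge k$, but the argument is unchanged.)

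The property $K\ne\overline{Q}_{\mathcal G}$ then follows from levels, not position. No cube of level $\ge k$ can strictly contain $Q$, while $\overline{Q}_{\mathcal G}\in\mathcal G^{k-1}$ by your own admissibility argument. Your position-based reason, that $K\ne\overline{Q}_{\mathcal G}$ whenever $Q'\not\subset Q$, is false as stated. $Q'$ may lie in the $\widetilde{\mathcal D}$-parent $\widehat{Q}$ of $Q$ but outside $Q$; then the cube of side $2^{j}l(Q)$ containing $Q'$ is $\widehat{Q}$, which can be $\overline{Q}_{\mathcal G}$. Finally, $l(Q)=1$ needs no separate treatment: then $l(K)\le 1\le 2^{j}l(Q)$ trivially, and $\overline{Q}_{\mathcal G}$ does not exist.
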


\begin{proof}
Given $Q \in \widetilde{\mathcal{D}}_{0}(S)=\mathcal{D}_{0}(S)$,  $\underline{\gamma}_{5Q} \in [q^{k},q^{k+1})$ for some $k \in \mathbb{N}$.
This verifies $(\text{R.1})$.

To verify $(\text{R.2})$ we take $Q \in \mathcal{G}^{k}$, $k \in \mathbb{Z}$ and $Q' \in \mathcal{REL}_{S}(Q)$. Hence, $\gamma_{5Q'} \geq q^{k}$.
Consequently, $Q' \in \widetilde{\mathcal{G}}^{k}$. This implies existence of $K \in \mathcal{G}^{k}$ such that $K \supset Q'$.
If we assume that $l(K) > 2^{j}l(Q)$, then by Lemma \ref{Lm.relative_cubes} we deduce existence of a cube $\widetilde{Q} \in \mathcal{REL}_{S}(K)$ that
strictly contains $Q$. Hence, $\underline{\gamma}_{5\widetilde{Q}} \geq \underline{\gamma}_{5K} \geq q^{k}$. This contradicts to the maximality of $Q$.
It remains to note that $K \neq \overline{Q}_{\mathcal{G}}$ due to Lemma \ref{Lm.green_noneoverlapping}.

To verify $(\text{R.3})$ fix $Q \in \mathcal{G}^{k}$ with $l(Q) < 1$. We claim that there is $K \supset Q$ such that $K \in \widetilde{\mathcal{D}}_{+}(S)$
and $\underline{\gamma}_{5K} < q^{k}$. Indeed, otherwise we take a unique cube $Q' \in \widetilde{\mathcal{D}}_{0}(S)$ containing $Q$ with the
property $\underline{\gamma}_{5Q'} \geq q^{k}$ contradicting the maximality of $Q'$. Hence, $K \in \widetilde{\mathcal{G}}^{k-1}$. The required property
now follows easily.

Finally, the family $\mathcal{G}$ is admissible for $\gamma$ by the construction.
\end{proof}

Now we show that $\mathcal{G}$ has good combinatorial properties.
\begin{Prop}
\label{Prop.green_multiplicity}
For each $k \in \mathbb{Z}$, $\operatorname{MULT}(\{4Q:Q \in \mathcal{G}^{k}\}) \le 5^{n+2}$.
\end{Prop}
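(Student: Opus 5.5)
The plan is to reproduce, for the fixed level $k$, the argument of Proposition \ref{Prop.covering_multiplicity_shadow}. First we establish a size-comparability property for $\mathcal{G}^{k}$, analogous to $(\text{SH.2})$: if $Q_{1},Q_{2} \in \mathcal{G}^{k}$ and $4Q_{1} \cap 4Q_{2} \neq \emptyset$, then $2^{-j} \le l(Q_{1})/l(Q_{2}) \le 2^{j}$. Once this holds, both cubes lie in $\widetilde{\mathcal{D}}_{m}(S)$ with indices differing by at most one. Lemma \ref{Lm.covering_multiplicity} with $k^{\ast}=1$ and $c=4$ then gives
\[
\operatorname{MULT}(\{4Q:Q\in\mathcal{G}^{k}\}) \le 3\cdot 6^{n+1} \le 5^{n+2}.
\]
The constant bookkeeping is the same as in the proof of Proposition \ref{Prop.covering_multiplicity_shadow}; if needed one uses the slightly sharper count of dyadic cubes of one generation whose $4$-dilates contain a given point.

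To prove comparability, argue by contradiction. Suppose $Q_{1},Q_{2}\in\mathcal{G}^{k}$ have $4Q_{1}\cap 4Q_{2}\neq\emptyset$ and $l(Q_{2})<2^{-j}l(Q_{1})$. By the definition of $\widetilde{\mathcal{D}}$, in fact $l(Q_{2})\le 2^{-2j}l(Q_{1})$. The intersection of the dilates gives $\operatorname{dist}(Q_{1},Q_{2}) \le \tfrac{3}{2}l(Q_{1})+\tfrac32 l(Q_{2}) \le \tfrac74 l(Q_{1})$, using $j\ge5$. So Lemma \ref{Lm.relative_cubes} applies and yields $K\in\mathcal{REL}(Q_{1})$ with $K\supset Q_{2}$ and $l(K)=2^{-j}l(Q_{1})$. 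Since $K\supset Q_{2}$ and $Q_{2}\in\widetilde{\mathcal{D}}(S)$, we also have $K\in\widetilde{\mathcal{D}}_{+}(S)$; this uses $4Q_2\cap$ neighbourhoods of $S$, i.e.\ $\underline\theta K\supset\underline\theta Q_2$ meets $S$. Relativity gives $5K\subset 5Q_{1}$, hence
\[
\underline{\gamma}_{5K}\ge\underline{\gamma}_{5Q_{1}}\ge q^{k}.
\]

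The main step is to obtain the upper bound $\underline{\gamma}_{5K}<q^{k+1}$, which places $K$ in $\widetilde{\mathcal{G}}^{k}$. Then $K$ strictly contains $Q_{2}$, since $l(K)=2^{-j}l(Q_1)\ge 2^{j}l(Q_2)$, and this contradicts the maximality of $Q_{2}$ in $\mathcal{G}^{k}$. For the upper bound we use $5K\supset 5Q_{2}$, so $\underline{\gamma}_{5K}\le\underline{\gamma}_{5Q_{2}}<q^{k+1}$. This step relies on the monotonicity of the essential infimum under inclusion, so no doubling constants are needed. The one point to check carefully is the distance estimate feeding Lemma \ref{Lm.relative_cubes}, namely that $4$-dilates meeting forces $\operatorname{dist}\le\tfrac74 l(Q_1)$. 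If that were too tight, I would instead verify $5K\subset5Q_1$ directly from $4Q_1\cap4Q_2\ne\emptyset$ and $l(K)\le 2^{-5}l(Q_1)$, which has ample room.
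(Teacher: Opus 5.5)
Your proof follows the paper's argument. Both establish the comparability $2^{-j}\le l(Q_{1})/l(Q_{2})\le 2^{j}$ through Lemma \ref{Lm.relative_cubes} and the maximality of $Q_{2}$, then apply Lemma \ref{Lm.covering_multiplicity} with $c=4$, $k^{\ast}=1$. Your step $\underline{\gamma}_{5K}\le\underline{\gamma}_{5Q_{2}}<q^{k+1}$, which follows from $5Q_{2}\subset 5K$, is one the paper leaves out. It is exactly what places $K$ in $\widetilde{\mathcal{G}}^{k}$, because $\underline{\gamma}_{5K}\ge q^{k}$ alone does not.

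One correction is needed. The inequality $3\cdot 6^{n+1}\le 5^{n+2}$ is false for every $n\ge 2$; for $n=2$ it reads $648\le 625$. So the sharper count you mention as optional is actually required. For a dyadic cube $Q$ of side $h$ with indices $m$, one has $x\in 4Q$ if and only if each $m_{i}$ lies in the closed interval $[x_{i}/h-\tfrac52,\,x_{i}/h+\tfrac32]$. This interval has length $4$, so each generation contributes at most $5^{n+1}$ cubes whose $4$-dilates contain $x$. Summing over three generations gives $3\cdot 5^{n+1}\le 5^{n+2}$.

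The paper's bare appeal to Lemma \ref{Lm.covering_multiplicity} has the same slip, since that lemma only yields $3\cdot 6^{n+1}$.
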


\begin{proof}
Given $k \in \mathbb{Z}$, if $4Q_{1} \cap 4Q_{2} \neq \emptyset$ for some $Q_{1},Q_{2} \in \mathcal{G}^{k}$, then
\begin{equation}
\label{eqq.green_comparable}
2^{-j} \le \frac{l(Q_{1})}{l(Q_{2})} \le 2^{j}.
\end{equation}
Indeed, assume that $l(Q_{1}) > 2^{j}l(Q_{2})$. Then, $\operatorname{dist}(Q_{1},Q_{2}) \le \frac{7}{4}l(Q_{1})$.
By Lemma \ref{Lm.relative_cubes} this implies existence of a cube $K \in \mathcal{REL}_{S}(Q_{1})$ with $l(K)=2^{-j}l(Q_{1}) > l(Q_{2})$.
As a result, we have $\underline{\gamma}_{5K} \geq q^{k}$ and get a contradiction with the maximality of $Q_{2}$.
It remains to apply Lemma \ref{Lm.covering_multiplicity} with $c=4$ and $k^{\ast}=1$.
\end{proof}

\subsection{New Besov-type and Lebesgue-type functionals}
For each $k \in \mathbb{Z}$, we set
\begin{equation}
\label{eqq.set_k}
S^{k}:=\{x \in S: \varlimsup\limits_{r \to +0}\underline{\gamma}_{r}(x) > q^{k}\}.
\end{equation}
Given $k \in \mathbb{Z}$ and $x \in S^{k}$, we introduce the \textit{$k$-th stopping time} by the equality
\begin{equation}
\label{eqq.stopping_time_k}
r_{k}(x):=\sup\{r \in (0,1]: \underline{\gamma}_{5r}(x) \geq q^{k}\}.
\end{equation}
Given $k \in \mathbb{Z}$, for each $f \in L_{1}^{\rm loc}(\mathcal{H}^{n}\lfloor_{S})$, we put
$$
\mathcal{E}_{k}[f](x):=\mathcal{E}^{S}_{Q_{r_{k}(x)}(x)}[f], \quad x \in S
$$
and define the \textit{Besov-type functional} $\mathcal{BN}_{\gamma,q}:L_{1}^{\rm loc}(\mathcal{H}^{n}\lfloor_{S}) \to [0,+\infty]$, by
\begin{equation}
\label{eqq.Besov_functional}
\mathcal{BN}_{\gamma,q}[f]:=\sum\limits_{k \in \mathbb{Z}}q^{k}\int\limits_{S^{k}}\mathcal{E}_{k}[f](x)\,d\mathcal{H}^{n}(x), \quad f \in L_{1}^{\rm loc}(\mathcal{H}^{n}\lfloor_{S}).
\end{equation}
We recall \eqref{eqq.weight_notation'} and introduce the \textit{Lebesgue-type functional} $\mathcal{LN}_{\gamma}:L_{1}^{\rm loc}(\mathcal{H}^{n}\lfloor_{S}) \to [0,+\infty]$ by
\begin{equation}
\label{eqq.Lebesgue_functional}
\mathcal{LN}_{\gamma}[f]:=\int\limits_{S}\gamma_{1}(y)|f(y)|\,d\mathcal{H}^{n}(y), \quad f \in L_{1}^{\rm loc}(\mathcal{H}^{n}\lfloor_{S}).
\end{equation}
Finally, the \textit{keystone functional} $\mathcal{N}_{\gamma,q}:L_{1}^{\rm loc}(\mathcal{H}^{n}\lfloor_{S}) \to [0,+\infty]$ is given by the equality
\begin{equation}
\label{eqq.Norm_functional}
\mathcal{N}_{\gamma,q}[f]:=\mathcal{LN}_{\gamma}[f]+\mathcal{BN}_{\gamma,q}[f], \quad f \in L_{1}^{\rm loc}(\mathcal{H}^{n}\lfloor_{S}).
\end{equation}

\subsection{Discretization of the Lebesgue-type functional}
It will be also useful to have at our disposal an appropriate ``dyadic discretization'' of $\mathcal{LN}_{\gamma}[f]$. More precisely, given $c > \underline{\theta}$, for each
$f \in L_{1}^{\rm loc}(\mathcal{H}^{n}\lfloor_{S})$, we set
\begin{equation}
\label{eqq.Lebesgue_functional'}
\widetilde{\mathcal{LN}}_{\gamma,c}[f]:=\sum\limits_{Q \in \widetilde{\mathcal{D}}_{0}(S)}\gamma_{Q}\int\limits_{c Q \cap S}|f(y)|\,d\mathcal{H}^{n}(y).
\end{equation}

In the next assertion we show that the above discretization is admissible in a sense.
\begin{Prop}
\label{Prop.discretization_of_Lebesgue}
Given $c > \underline{\theta}$, for each $f \in L_{1}^{\rm loc}(\mathcal{H}^{n}\lfloor_{S})$,
\begin{equation}
\label{eqq.different_lebesgue_type}
\mathcal{LN}_{\gamma}[f] \approx \widetilde{\mathcal{LN}}_{\gamma,c}[f],
\end{equation}
where the corresponding equivalence constants do net depend on $f$.
\end{Prop}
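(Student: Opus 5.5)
The plan is to compare $\gamma_{1}(y)$ with $\gamma_{Q}$ pointwise for $y\in cQ\cap S$, and then control overlaps through the bounded multiplicity of the family $\{cQ:Q\in\widetilde{\mathcal{D}}_{0}(S)\}$. Recall that $\widetilde{\mathcal{D}}_{0}(S)=\mathcal{D}_{0}(S,\underline{\theta})$ consists of unit dyadic cubes $Q$ with $\underline{\theta}Q\cap S\neq\emptyset$, and that $\gamma_{1}(y)=\int_{Q_{1}(y)}\gamma$, where $Q_{1}(y)$ has side length $2$.

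\emph{Key comparison.} If $y\in cQ$ with $l(Q)=1$, then both $Q$ and $Q_{1}(y)$ are contained in a cube of side length comparable to $c+2$, and each contains a cube of side length at least $1$. Applying (D1)--(D3) of Lemma \ref{Lm.doubling} (with the constant $\operatorname{C}_{\gamma,c'}$, $c'\approx c+2$) gives
\[
\gamma_{Q}\approx\overline{\gamma}_{Q}\approx\underline{\gamma}_{Q_{c+2}(y)}\approx\overline{\gamma}_{1}(y)\approx\gamma_{1}(y).
\]
In more detail, $\gamma_{1}(y)\le\gamma_{Q_{c+2}(y)}\lesssim\underline{\gamma}_{Q_{c+2}(y)}\le\underline{\gamma}_{Q}\le\overline{\gamma}_{Q}=\gamma_{Q}$, and symmetrically $\gamma_{Q}\le\gamma_{Q_{c+2}(y)}\lesssim\underline{\gamma}_{Q_{c+2}(y)}\le\underline{\gamma}_{Q_{1}(y)}\lesssim\gamma_{1}(y)$. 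The constants depend only on $n$, $c$ and $\operatorname{C}_{\gamma}$.

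\emph{Upper bound for $\widetilde{\mathcal{LN}}$.} By the comparison,
\[
\widetilde{\mathcal{LN}}_{\gamma,c}[f]\lesssim\int_{S}\gamma_{1}(y)|f(y)|\sum_{Q\in\widetilde{\mathcal{D}}_{0}(S)}\chi_{cQ}(y)\,d\mathcal{H}^{n}(y).
\]
By Remark \ref{Rem.dyadic_lattice_multiplicity}, the inner sum is at most $([c]+2)^{n+1}$. This gives $\widetilde{\mathcal{LN}}_{\gamma,c}[f]\lesssim\mathcal{LN}_{\gamma}[f]$.

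\emph{Lower bound.} For each $y\in S$ there is a unit dyadic cube $Q\ni y$. Since $\underline{\theta}Q\supset Q\ni y\in S$, this cube lies in $\widetilde{\mathcal{D}}_{0}(S)$, and since $c>\underline{\theta}>1$ we have $y\in cQ$. Hence $S\subset\bigcup_{Q\in\widetilde{\mathcal{D}}_{0}(S)}cQ$. Therefore
\[
\mathcal{LN}_{\gamma}[f]\le\sum_{Q}\int_{cQ\cap S}\gamma_{1}(y)|f(y)|\,d\mathcal{H}^{n}(y)\lesssim\sum_{Q}\gamma_{Q}\int_{cQ\cap S}|f(y)|\,d\mathcal{H}^{n}(y)=\widetilde{\mathcal{LN}}_{\gamma,c}[f],
\]
where the middle step again uses the key comparison.

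The only step requiring care is the pointwise comparison of $\gamma_{1}(y)$ with $\gamma_{Q}$. It rests on (D3), the uniform $A_{1}$ control on cubes of bounded but larger-than-one size, which lets us pass through the infimum over the larger cube $Q_{c+2}(y)$. Once this is in place, both inequalities are routine, and all constants depend only on $n$, $c$ and $\operatorname{C}_{\gamma}$.
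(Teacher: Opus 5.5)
Your proof is correct and follows essentially the same route as the paper. Both arguments rest on the pointwise comparison $\gamma_{Q}\approx\gamma_{1}(y)$ for $y\in cQ$ via Lemma \ref{Lm.doubling}, then use the covering $S\subset\bigcup_{Q\in\widetilde{\mathcal{D}}_{0}(S)}cQ$ to bound $\mathcal{LN}_{\gamma}[f]$ and the multiplicity bound of Remark \ref{Rem.dyadic_lattice_multiplicity} for the reverse inequality. The only cosmetic difference is that you pass through the single enclosing cube $Q_{c+2}(y)$ and (D3), while the paper uses the inclusions $cQ\subset Q_{2c}(y)$ and $Q_{1}(y)\subset(c+2)Q$.
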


\begin{proof}
Since $c > 1$, given $Q \in \mathcal{D}_{0}$ and $y \in cQ$, we have $cQ \subset Q_{2c}(y)$ and $Q_{1}(y) \subset (c+2)Q$. Hence, application of Lemma \ref{Lm.doubling}
gives
\begin{equation}
\label{eqq.gamma_equivalence}
\gamma_{Q} \approx \gamma_{1}(y) \quad \hbox{for all} \quad y \in cQ,
\end{equation}
where the corresponding equivalence constants depend only on $n$, $\operatorname{C}_{\gamma}$ and $c$.

Since $c > \underline{\theta}$, by \eqref{eqq.dyadic_S} we have $S \subset \bigcup_{Q \in \widetilde{\mathcal{D}}_{0}(S)}cQ$. Hence, using \eqref{eqq.gamma_equivalence} and taking
into account Remark \ref{Rem.dyadic_lattice_multiplicity} we obtain
\begin{equation}
\notag
\widetilde{\mathcal{LN}}_{\gamma,c}[f] \lesssim \sum\limits_{Q \in \widetilde{\mathcal{D}}_{0}(S)}\int\limits_{c Q \cap S}\gamma_{1}(y)|f(y)|\,d\mathcal{H}^{n}(y)
\lesssim \mathcal{LN}_{\gamma}[f].
\end{equation}

To prove the opposite estimate it is sufficient to use \eqref{eqq.gamma_equivalence}. This gives
\begin{equation}
\notag
\mathcal{LN}_{\gamma}[f] \lesssim \sum\limits_{Q \in \widetilde{\mathcal{D}}_{0}(S)}\int\limits_{c Q \cap S}\gamma_{1}(y)|f(y)|\,d\mathcal{H}^{n}(y)
\lesssim \sum\limits_{Q \in \widetilde{\mathcal{D}}_{0}(S)}\gamma_{Q}\int\limits_{c Q \cap S}|f(y)|\,d\mathcal{H}^{n}(y).
\end{equation}

The proof is complete.
\end{proof}

\subsection{Discretization of the Besov-type functional}
We introduce a discrete version of the functional $\mathcal{BN}_{\gamma,q}$. More, precisely, for each $f \in L_{1}^{\rm loc}(\mathcal{H}^{n}\lfloor_{S})$,
\begin{equation}
\label{eqq.Besov_functional'}
\widetilde{\mathcal{BN}}_{\gamma,q}[f]:=\sum\limits_{k \in \mathbb{Z}}\sum\limits_{Q \in \mathcal{G}^{k}}\frac{\gamma_{Q}}{l(Q)}\mathcal{E}^{S}_{4 Q}[f].
\end{equation}
We show that in fact $\mathcal{BN}_{\gamma,q}$ and $\widetilde{\mathcal{BN}}_{\gamma,q}$ are equivalent in appropriate sense. We recall important notation at the end of Section 2.2.

\begin{Prop}
\label{Prop.discretization_of_Besov_1}
There is a constant $C > 0$ such that
\begin{equation}
\label{eqq.discretization_of_Besov_1}
\mathcal{BN}_{\gamma,q}[f] \le C \widetilde{\mathcal{BN}}_{\gamma,q}[f]  \quad \hbox{for all} \quad f \in L_{1}^{\rm loc}(\mathcal{H}^{n}\lfloor_{S}).
\end{equation}
\end{Prop}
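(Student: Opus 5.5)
We fix $k \in \mathbb{Z}$ and $x \in S^{k}$, and attach $x$ to a green cube of level $k$. Since $x \in S^{k}$, we have $\underline{\gamma}_{r}(x) > q^{k}$ for arbitrarily small $r$. So for all large $l$ the cube $Q \in \widetilde{\mathcal{D}}_{l}(S)$ with $x \in \underline{\theta}Q$ satisfies $\underline{\gamma}_{5Q} \geq q^{k}$, because $5Q$ is contained in a small cube around $x$. Our first aim is to show that, going upward, such a cube lies in $\widetilde{\mathcal{G}}^{k'}$ for some $k' \geq k$. The cube $K^{k}(x)$ which is maximal among the cubes of $\widetilde{\mathcal{D}}_{+}(S)$ containing (a cube near) $x$ with $\underline{\gamma}_{5K} \geq q^{k}$ is then contained in some cube of $\mathcal{G}^{k''}$ with $k''\le k$. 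We prefer to work with the families $\mathcal{G}^{k}$ directly. For a cube $Q \in \mathcal{G}^{k}$ with $l(Q)<1$, Lemma \ref{Lm.green_noneoverlapping}(3) gives $\underline{\gamma}_{25Q} < q^{k+1}$, and the parent cube $K \supset Q$ has $\underline{\gamma}_{5K} < q^{k}$ by maximality. Hence for $x \in \underline{\theta}Q \cap S$, the stopping time $r_{k}(x)$ is comparable to $l(Q)$: it is at least $\approx l(Q)$ because $Q_{5r}(x)\subset 5Q$ when $r \lesssim l(Q)$, and at most $\approx 2^{j}l(Q)$ because $Q_{5r}(x) \supset 5K$ once $r \gtrsim 2^{j}l(Q)$. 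So the key geometric step is the following claim: \emph{for each $x \in S^{k}$ there is $Q \in \mathcal{G}^{k'}$ with $k' \le k$, $x \in \theta Q$, and $Q_{r_{k}(x)}(x) \subset 4Q$, with $l(Q) \approx r_{k}(x)$ when $k'=k$.}

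Since $q^{k}$ carries weight $\approx \underline{\gamma}_{5Q}$ only when $k'=k$, the main obstacle is handling the levels $k' < k$. We plan to use this instead. Because $q > (2^{n+1}\operatorname{C}_{\gamma})^{2j}$, Lemma \ref{Lm.doubling} and Lemma \ref{Lm.oscillation_on_neighboring_cubes} show that $\underline{\gamma}$ cannot jump by a full factor $q$ between consecutive generations of $\widetilde{\mathcal{D}}$. Hence the values $\underline{\gamma}_{5Q}$ along the chain of ancestors of a small cube near $x$ pass through every interval $[q^{k},q^{k+1})$ between their starting value and the value at level $0$, up to one slack level. This gives a green cube of exactly level $k$ (or $k\pm1$, absorbed into the constant) containing $x$ in $\theta Q$. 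For such $Q$ we have $r_{k}(x) \approx l(Q)$ and $Q_{r_{k}(x)}(x) \subset 4Q$ after adjusting constants. When $r_{k}(x)$ is capped by $1$, we use cubes of $\widetilde{\mathcal{D}}_{0}(S)$, which are green by (R.1).

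Once the claim is established, Lemma \ref{Lm.subadditivity_best_approximation}, together with Ahlfors regularity (Remark \ref{Rem.massive_part}), gives $\mathcal{E}_{k}[f](x) \lesssim \mathcal{E}^{S}_{4Q}[f]$, since $\mathcal{H}^{n}(Q_{r_{k}(x)}(x)\cap S) \approx \mathcal{H}^{n}(4Q \cap S)$. We then estimate
\begin{equation}
\notag
q^{k}\int_{S^{k}}\mathcal{E}_{k}[f]\,d\mathcal{H}^{n} \lesssim \sum_{Q \in \mathcal{G}^{k}} q^{k}\,\mathcal{H}^{n}(\theta Q \cap S)\,\mathcal{E}^{S}_{4Q}[f] \lesssim \sum_{Q \in \mathcal{G}^{k}} \underline{\gamma}_{5Q}\,l(Q)^{n}\,\mathcal{E}^{S}_{4Q}[f].
\end{equation}
Here $\underline{\gamma}_{5Q} \le \overline{\gamma}_{5Q} \lesssim \overline{\gamma}_{Q}$ by (D1)/(D3), so $\underline{\gamma}_{5Q}\,l(Q)^{n} \lesssim \gamma_{Q}/l(Q)$. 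Summing over $k$ yields \eqref{eqq.discretization_of_Besov_1}. The covering of $S^{k}$ by the sets $\theta Q$, $Q\in\mathcal{G}^{k}$, has bounded multiplicity by Proposition \ref{Prop.green_multiplicity}. Any slack of one level ($k' = k\pm1$) only costs a factor $q$, which is allowed in $C$.
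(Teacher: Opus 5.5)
\textbf{There is a genuine gap.} It sits in the step ``$Q_{r_k(x)}(x)\subset 4Q$ after adjusting constants''.

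Your intermediate-value argument is fine. Along the nested chain of cubes of $\widetilde{\mathcal{D}}_{+}(S)$ containing $x$, $\underline{\gamma}_{5Q}$ grows by less than a factor $q$ per generation. So, unless it has generation $0$, the first chain cube $Q$ with $\underline{\gamma}_{5Q}\ge q^{k}$ lies in $\mathcal{G}^{k}$ and satisfies $\tfrac{2}{5}l(Q)\le r_k(x)<\tfrac{3}{5}2^{j}l(Q)$.

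The problem is that $4Q$ is only guaranteed to contain $Q_{\frac{3}{2}l(Q)}(x)$, and the dilation $4$ is hard-wired in $\widetilde{\mathcal{BN}}_{\gamma,q}$. Lemma~\ref{Lm.subadditivity_best_approximation} bounds $\mathcal{E}^{S}$ on a set only through a comparably massive superset, or through pieces with a massive common part. Nothing lets you pass from $4Q$ to the larger cube $Q_{r_k(x)}(x)$.

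No other green cube helps either. Take $k=1$ and $\gamma(y)=\max\{1,\|y-x\|^{-\alpha}\}$; its $A_1^{\rm loc}$ constant is bounded uniformly for $\alpha\in(0,1]$, so $q$ can be fixed once. Choose $\alpha$ with $q^{-1/\alpha}=2\cdot 2^{-jm}$. Then:
\begin{itemize}
\item $r_1(x)=\tfrac{2}{5}2^{-jm}$;
\item every cube of $\mathcal{G}^{1}$ has side at most $2^{-j(m+1)}$, since for side $2^{-jm}$ the cube $5Q$ has half-side $\tfrac52 2^{-jm}>q^{-1/\alpha}$; its $4$-dilate is therefore far too small;
\item cubes of $\mathcal{G}^{l}$, $l\ge2$, are smaller still;
\item $\mathcal{G}^{0}$ consists of unit cubes.
\end{itemize}
Hence every green $Q$ with $Q_{r_1(x)}(x)\subset 4Q$ has $l(Q)/r_1(x)\approx 2^{jm}\to\infty$. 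For $f$ concentrated in $Q_{r_1(x)}(x)\cap S$, the bound $\mathcal{E}_{1}[f](x)\lesssim\mathcal{E}^{S}_{4Q}[f]$ then fails with any uniform constant.

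Chaining several level-$k$ cubes does not rescue this in general. If $S$ is two parallel hyperplanes at distance $\tfrac12 r_1(x)$, no $4$-dilate of a green cube of level $\ge 1$ near $x$ meets both. Taking $f=0$ on one and $f=1$ on the other kills all those terms, while $\mathcal{E}_1[f](x)\approx\tfrac12$. The obstruction is the size mismatch, not the level slack $k\pm1$ you budget for.

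\textbf{How the paper gets containment, and how to close the argument.} The paper drops one level. Its family $\mathcal{F}^{k}(x)\subset\mathcal{G}^{k-1}$ consists of the maximal cubes with $\underline{\gamma}_{5Q}\ge q^{k-1}$ meeting $Q_{r_k(x)}(x)$. These have side $\ge r_k(x)$ and their $4$-dilates cover $Q_{r_k(x)}(x)$, at the cost of one factor $q$ in the weight.

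In your chain language, let $P(x)\in\mathcal{G}^{k-1}$ be the first chain cube with $\underline{\gamma}_{5P}\ge q^{k-1}$. It contains the parent of your level-$k$ cube, so $Q_{r_k(x)}(x)\subset 4P(x)$ and $q^{k}\le q\,\underline{\gamma}_{5P(x)}$.

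Since $l(P(x))$ may be much larger than $r_k(x)$ (as in the example above), you should compare after integrating in $x$, not pointwise. The pointwise use of Lemma~\ref{Lm.subadditivity_best_approximation} in the paper's Step~2 needs the same care. Concretely:
\begin{itemize}
\item Write $\mathcal{E}_k[f](x)\le\fint_{Q_{r_k(x)}(x)\cap S}|f-c_{P(x)}|\,d\mathcal{H}^{n}$ with $c_P:=\operatorname{c}(4P,\mathcal{H}^{n}\lfloor_{S},f)$.
\item Integrate over $\{x\in S^{k}:P(x)=P\}$ and use Fubini and Ahlfors regularity.
\item Use that $r_k$ is $\tfrac15$-Lipschitz, so $r_k(x)\approx r_k(y)$ whenever $y\in Q_{r_k(x)}(x)$.
\end{itemize}
This yields $\int_{\{P(x)=P\}}\mathcal{E}_k[f]\,d\mathcal{H}^{n}\lesssim\int_{4P\cap S}|f-c_P|\,d\mathcal{H}^{n}\lesssim l(P)^{n}\mathcal{E}^{S}_{4P}[f]$. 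Since $q^{k}l(P)^{n}\le q\,\gamma_{P}/l(P)$ and each $P\in\mathcal{G}^{k-1}$ serves only the index $k$, summation gives the proposition.

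One remaining case: points whose generation-$0$ chain cube already has $\underline{\gamma}_{5Q}\ge q^{k}$. There $Q\in\mathcal{G}^{k'}$ with $k'\ge k$, contrary to your ``$k'\le k$''. These are handled by a geometric series in $k$.
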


\begin{proof}
We split the proof into several steps.

\textit{Step 1.} Given $k \in \mathbb{Z}$ and $x \in S^{k}$, we define the family
\begin{equation}
\notag
\widetilde{\mathcal{F}}^{k}(x):=\{Q \in \widetilde{\mathcal{D}}_{+}(S): \underline{\theta}Q \cap Q_{r_{k}(x)}(x) \cap S \neq \emptyset \hbox{ and }\underline{\gamma}_{5Q} \geq q^{k-1}\}.
\end{equation}
The crucial observation is that $\widetilde{\mathcal{F}}^{k}(x) \subset \widetilde{\mathcal{G}}^{k-1}$. Let $\mathcal{F}^{k}(x)$ be the subfamily of $\widetilde{\mathcal{F}}_{k}(x)$
composed of the corresponding maximal cubes. Clearly, $\mathcal{F}^{k}(x) \subset \mathcal{G}^{k-1}$.

Furthermore, we claim that
\begin{equation}
\label{eqq.4.12'}
Q_{\frac{r_{k}(x)}{2}}(x) \subset 4 Q  \quad \hbox{for every} \quad Q \in \mathcal{F}^{k}(x).
\end{equation}
To prove \eqref{eqq.4.12'}, it is sufficient to note that $l(Q) \geq r_{k}(x)$ for all $Q \in \mathcal{F}^{k}(x)$. To prove this inequality assume on the contrary that
$l(K) < r_{k}(x)$ for some $K \in \mathcal{F}^{k}(x)$. Note that $\underline{\gamma}_{5K} < q^{k}$ by Lemma \ref{Lm.green_noneoverlapping}. On the other hand,
$5K \subset Q_{5r_{k}(x)}(x)$ and $\underline{\gamma}_{5K} \geq q^{k}$. This contradiction prove the required inequality.

\textit{Step 2.} Given $x \in S^{k}$, we
put $\Omega = \bigcup_{Q \in \mathcal{F}^{k}(x)}4Q$ and note that $Q_{r_{k}(x)} \subset \Omega$.
Combine \eqref{eqq.4.12'} and Lemma \ref{Lm.subadditivity_best_approximation} applied with
$\mathfrak{m}=\mathcal{H}^{n}\lfloor_{S}$ (we take into account that $S \in \mathcal{ADR}^{n}(\mathbb{R}^{n+1})$). As a result, we deduce
\begin{equation}
\label{eqq.4.13'}
\mathcal{E}_{k}^{S}[f](x) \lesssim \mathcal{E}^{S}_{\Omega}[f] \lesssim \sum\limits_{Q \in \mathcal{F}^{k}(x)}\mathcal{E}^{S}_{4 Q}[f].
\end{equation}

\textit{Step 3.} Given $k \in \mathbb{Z}$ and $x_{1},x_{2} \in S^{k}$, for each $Q_{1} \in \mathcal{F}^{k}(x_{1})$ and $Q_{2} \in \mathcal{F}^{k}(x_{2})$, either $Q_{1}$ and $Q_{2}$ have disjoint interiors or
$Q_{1}=Q_{2}$. Indeed, otherwise since all the cubes under consideration are dyadic, there are $Q_{1} \in \mathcal{F}^{k}(x_{1})$ and $Q_{2} \in \mathcal{F}^{k}(x_{2})$
such that some of them is strictly contained into the other one. Assume for example that $Q_{1} \subset Q_{2}$ and $l(Q_{2}) > l(Q_{1})$. Obviously, $Q_{2} \in \widetilde{\mathcal{F}}^{k}(x_{1})$
contradicting the maximality of $Q_{1}$.
For each $k \in \mathbb{Z}$, we put
$$
\mathcal{F}^{k}:=\bigcup_{x \in S^{k}}\mathcal{F}^{k}(x).
$$

\textit{Step 4.}
Given $k \in \mathbb{Z}$ and  $Q \in \mathcal{F}^{k}$, let $\Pi_{S}(Q):=\{x \in S^{k}:Q_{r_{k}(x)}(x) \cap \underline{\theta} Q \neq \emptyset\}$. Clearly,
$x \in \Pi_{S}(Q)$ if and only if $Q \in \mathcal{F}^{k}(x)$. Furthermore, by \eqref{eqq.4.12'} we have $\Pi_{S}(Q) \subset 4Q$.
As a result, taking into account \eqref{eqq.4.13'} we arrive at
\begin{equation}
\notag
\begin{split}
&q^{k}\int\limits_{S^{k}}\mathcal{E}_{k}^{S}[f](x)\,d\mathcal{H}^{n}(x) \lesssim q^{k}\int\limits_{S^{k}}\Bigl(\sum\limits_{Q \in \mathcal{F}_{k}(x)}\mathcal{E}^{S}_{4Q}\Bigr)\,d\mathcal{H}^{n}(x)\\
&\lesssim \sum\limits_{Q \in \mathcal{F}_{k}}\underline{\gamma}_{Q}\Bigl(\int\limits_{\Pi_{S}(Q)}\,d\mathcal{H}^{n}(x)\Bigr)\mathcal{E}^{S}_{4Q}
\lesssim \widetilde{\mathcal{BN}}_{\gamma,q}[f].
\end{split}
\end{equation}

The proof is complete.
\end{proof}

\begin{Prop}
\label{Prop.discretization_of_Lebesgue_2}
There is a constant $C > 0$ such that, for each $f \in L_{1}^{\rm loc}(\mathcal{H}^{n}\lfloor_{S})$,
\begin{equation}
\label{eqq.415}
\widetilde{\mathcal{BN}}_{\gamma,q}[f] \le C \mathcal{N}_{\gamma,q}[f].
\end{equation}
\end{Prop}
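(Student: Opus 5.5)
The plan is to split the sum defining $\widetilde{\mathcal{BN}}_{\gamma,q}[f]$ according to the side length of the green cubes. I will bound the part with $l(Q)=1$ by the Lebesgue-type functional, and the part with $l(Q)<1$ by the Besov-type functional. For the second part I intend to integrate $q^{k}\mathcal{E}_{k}[f]$ over $S\cap 4Q$, or $S\cap\theta Q$, and compare this with $\frac{\gamma_Q}{l(Q)}\mathcal{E}^S_{4Q}[f]$ cube by cube, using the bounded multiplicity of Proposition \ref{Prop.green_multiplicity} to sum.

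\emph{Step 1 (unit cubes).} Let $Q\in\mathcal{G}^k$ with $l(Q)=1$, i.e. $Q\in\widetilde{\mathcal{D}}_0(S)$. Choosing $c=0$ in the infimum gives $\mathcal{E}^S_{4Q}[f]\le \fint_{4Q\cap S}|f|\,d\mathcal{H}^n\lesssim \int_{4Q\cap S}|f|\,d\mathcal{H}^n$, by Ahlfors regularity. By Lemma \ref{Lm.green_noneoverlapping}(2), each such $Q$ lies in only one $\mathcal{G}^k$. Summing, this part is at most $\widetilde{\mathcal{LN}}_{\gamma,4}[f]$, which is $\lesssim\mathcal{LN}_\gamma[f]$ by Proposition \ref{Prop.discretization_of_Lebesgue}.

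\emph{Step 2 (small cubes: scale comparison).} Let $Q\in\mathcal{G}^k$ with $l(Q)<1$, and put $\ell:=l(Q)$. By Remark \ref{Rem.massive_part}, $\mathcal{H}^n(S\cap\theta Q)\approx \ell^n$. Also $\gamma_Q\approx \ell^{n+1}\underline{\gamma}_Q$ by $A_1^{\rm loc}$, and $\underline{\gamma}_Q\le \operatorname{C}\underline{\gamma}_{5Q}<\operatorname{C}q^{k+1}$ by Lemma \ref{Lm.doubling} and the definition of $\mathcal{G}^k$. Hence
\begin{equation}
\notag
\frac{\gamma_Q}{l(Q)}\mathcal{E}^S_{4Q}[f]\lesssim q^{k}\int_{S\cap\theta Q}\mathcal{E}^S_{4Q}[f]\,d\mathcal{H}^n(x).
\end{equation}
For $x\in S\cap\theta Q$ I want an index $k'\in\{k-1,k,k+1\}$ such that $x\in S^{k'}$ and $r_{k'}(x)$ is comparable to $\ell$, with $Q_{r_{k'}(x)}(x)\supset 4Q$. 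The containment is then automatic once $r_{k'}(x)\ge 3\ell$.
\begin{itemize}
\item \emph{Lower bound.} $Q_{15\ell}(x)\subset 30Q$, and Lemma \ref{Lm.doubling}(D1),(D2) gives $\underline{\gamma}_{30Q}\ge (2^{n+1}\operatorname{C}_\gamma)^{-3}\underline{\gamma}_{5Q}\ge q^{k-1}$, using $(\text{D.4.4})$. This yields $r_{k-1}(x)\ge 3\ell$, and also $x\in S^{k-1}$, since $\underline{\gamma}_r(x)$ is nonincreasing in $r$.
\item \emph{Upper bound.} By the maximality of $Q$, its dyadic parent $K$ in $\widetilde{\mathcal{D}}(S)$ has $l(K)=2^j\ell$ and $\underline{\gamma}_{5K}<q^k$. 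Hence $r_k(x)\lesssim 2^j\ell$.
\end{itemize}

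\emph{Step 3 (main obstacle).} The difficulty is to get an \emph{upper} bound on $r_{k-1}(x)$ of order $2^{O(j)}\ell$, or else to replace level $k-1$ by a level where both bounds hold. What I would try first is to work at level $k$, where the upper bound holds, and recover $4Q$ from the smaller cubes $Q_{r_k(x)}(x)$ with $r_k(x)\gtrsim \ell/3$. Concretely, I would cover $4Q\cap S$ by boundedly many (depending on $j,n$) cubes $Q_{r_k(x_i)}(x_i)$ with $x_i\in S$. To link them I would add the intermediate cube $4Q$ itself and apply both inequalities of Lemma \ref{Lm.subadditivity_best_approximation}, organized as a chain in which consecutive members share an $\mathcal{H}^n$-mass $\approx\ell^n$. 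This needs some care when $S$ has gaps inside $4Q$. If that fails, I would fall back on level $k-1$. For $r:=r_{k-1}(x)\gg 2^j\ell$, I would charge $\mathcal{E}^S_{4Q}$ to $\mathcal{E}_{k-1}[f](x)$ only for those cubes $Q$ whose parent-level weights force $r\lesssim 2^{2j}\ell$; for the remaining cubes I would use that some ancestor green cube of level $k-1$ has comparable size.

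\emph{Step 4 (summation).} Once $\mathcal{E}^S_{4Q}[f]\lesssim \mathcal{E}_{k'}[f](x)$ holds for $x\in S\cap\theta Q$ with $|k'-k|\le1$, I sum over $Q\in\mathcal{G}^k$. By Proposition \ref{Prop.green_multiplicity} the family $\{\theta Q:Q\in\mathcal{G}^k\}$ has bounded overlap, so
\begin{equation}
\notag
\sum_{Q\in\mathcal{G}^k}\frac{\gamma_Q}{l(Q)}\mathcal{E}^S_{4Q}[f]\lesssim \sum_{|k'-k|\le 1}q^{k'}\int_{S^{k'}}\mathcal{E}_{k'}[f]\,d\mathcal{H}^n.
\end{equation}
Summing over $k$ then gives $\lesssim \mathcal{BN}_{\gamma,q}[f]$, and together with Step 1 this proves \eqref{eqq.415}.
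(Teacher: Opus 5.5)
\paragraph{There is a genuine gap.}
Step~1 is fine. Step~3, however, is only a list of attempts, and the reduction it is meant to supply to Step~4 is false in general. That reduction is a pointwise bound $\mathcal{E}^{S}_{4Q}[f]\lesssim\mathcal{E}_{k'}[f](x)$ for $x\in S\cap\theta Q$ and some $|k'-k|\le 1$. Since it fails, neither of your fallbacks can be completed.

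\paragraph{Counterexample to the pointwise bound.}
Let $S=H_{1}\cup H_{2}$ be two parallel hyperplanes: $H_{1}$ passes through the centre of a dyadic $Q\in\widetilde{\mathcal{D}}_{m}(S)$ with $m\ge 1$, and $H_{2}$ lies at distance $\frac{9}{5}l(Q)$ from it. Let $\gamma=2q^{k}$ on $5Q$ and $\gamma=q^{k}/2$ elsewhere, so $\operatorname{C}_{\gamma}\le 4$, and let $f=\chi_{H_{2}\cap 5Q}$. Then $Q\in\mathcal{G}^{k}$ and $\mathcal{E}^{S}_{4Q}[f]=\frac{1}{2}$. On the other hand:
\begin{itemize}
\item $S^{k+1}=\emptyset$;
\item $S\cap\theta Q\subset H_{1}$ and $r_{k}\le\frac{1}{2}l(Q)$ there, so $\mathcal{E}_{k}[f]=0$ on $S\cap\theta Q$;
\item $r_{k-1}\equiv 1$, so $\mathcal{E}_{k-1}[f]\lesssim l(Q)^{n}$.
\end{itemize}
All constants are independent of $l(Q)$. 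No chain of level-$k$ cubes joins the two sheets of $S\cap 4Q$, and putting $4Q$ itself into the chain is circular. A level-$(k-1)$ ancestor does not help either: a cube of side $R$ containing $4Q$ bounds $\mathcal{E}^{S}_{4Q}[f]$ only up to the factor $(R/l(Q))^{n}$. This is also exactly the hole in the paper's own proof. After the lower bound \eqref{eqq.4.17} it asserts \eqref{eqq.4.18} via Lemma~\ref{Lm.subadditivity_best_approximation}. That step needs the upper bound $r_{k-1}\lesssim l(Q)$ you flagged, and \eqref{eqq.4.18} fails in this example.

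\paragraph{The missing idea: average at level $k-1$.}
Instead of comparing pointwise, average over base points spread at scale $r_{k-1}$. Fix $Q\in\mathcal{G}^{k}$ with $l(Q)<1$, a point $x_{Q}\in S\cap\underline{\theta}Q$, and $R:=r_{k-1}(x_{Q})\ge 3l(Q)$. This is your lower bound, with $32Q$ in place of $30Q$. Because $Q_{5r}(y)\subset Q_{5r+\|x-y\|}(x)$, the function $r_{k-1}$ is $\frac{1}{5}$-Lipschitz. Hence for $y\in Y_{Q}:=S\cap Q_{R/8}(x_{Q})$ one has $4Q\subset Q_{r_{k-1}(y)}(y)$ and $r_{k-1}(y)\le 2R$. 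Also $\mathcal{H}^{n}(Y_{Q})\gtrsim R^{n}$, so $\int_{Y_{Q}}r_{k-1}(y)^{-n}\,d\mathcal{H}^{n}(y)\gtrsim 1$.

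Now fix $y$. The cubes $4Q$ with $Q\in\mathcal{G}^{k}$ and $y\in Y_{Q}$ lie in $Q_{r_{k-1}(y)}(y)$ with multiplicity at most $5^{n+2}$ (Proposition~\ref{Prop.green_multiplicity}). Testing every $\mathcal{E}^{S}_{4Q}[f]$ against the optimal constant of $Q_{r_{k-1}(y)}(y)$ gives $\sum_{Q:\,y\in Y_{Q}}l(Q)^{n}\mathcal{E}^{S}_{4Q}[f]\lesssim r_{k-1}(y)^{n}\mathcal{E}_{k-1}[f](y)$. Insert the factor $\int_{Y_{Q}}r_{k-1}^{-n}\,d\mathcal{H}^{n}\gtrsim 1$ and apply Fubini. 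This yields $\sum_{Q\in\mathcal{G}^{k},\,l(Q)<1}l(Q)^{n}\mathcal{E}^{S}_{4Q}[f]\lesssim\int_{S^{k-1}}\mathcal{E}_{k-1}[f]\,d\mathcal{H}^{n}$. Then $\gamma_{Q}/l(Q)\lesssim q^{k+1}l(Q)^{n}$ finishes, and no upper bound on $r_{k-1}$ is needed.

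\paragraph{Caveat on the definition of $S^{k}$.}
The averaging step uses $Y_{Q}\subset S^{k-1}$. The definition of $r_{k-1}$ only yields $\varlimsup_{r\to+0}\underline{\gamma}_{r}(y)\ge q^{k-1}$ on $Y_{Q}$. With the strict inequality in \eqref{eqq.set_k}, points where $\gamma\equiv q^{k-1}$ nearby drop out of $S^{k-1}$. Combining the two-sheet configuration with the following weights then makes $\widetilde{\mathcal{BN}}_{\gamma,q}[f]/\mathcal{N}_{\gamma,q}[f]$ unbounded. The weights have exact plateaus at $q^{k-1},q^{k-2},\dots$ of large radial extent, joined by power-type transitions so that $\operatorname{C}_{\gamma}$ stays bounded. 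So $S^{k}$ must be defined with $\ge q^{k}$, consistently with \eqref{eqq.stopping_time_k}, for the statement and this argument to go through.
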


\begin{proof} Fix $k \in \mathbb{Z}$ and $Q \in \mathcal{G}^{k}$. Since $\underline{\gamma}_{5Q} \geq q^{k}$, by \eqref{eqq.set_k} we clearly have
\begin{equation}
\label{eqq.forgotten_inclusion}
4Q \cap S \subset S^{k-1}.
\end{equation}


If $Q \in \widetilde{\mathcal{D}}_{l}(S)$ for some $l \in \mathbb{N}$, then $l(Q) \le 2^{-j}$.
We claim that in this case
\begin{equation}
\label{eqq.4.17}
r_{k-1}(x) \geq 4l(Q) \quad \hbox{for every} \quad x \in 4Q\cap S.
\end{equation}
Indeed, otherwise if we take $r=\frac{r_{k-1}(x)}{2^{7}}$, then $\underline{\gamma}_{5r}(x) < q^{k}$.
To prove this inequality assume on the contrary that $\underline{\gamma}_{5r}(x) \geq q^{k}$. We take into account that $j \geq 5$ and use $(\text{D.4.4})$.
Hence, $\underline{\gamma}_{2^{8}r}(x) \geq q^{k}(2^{n+1}\operatorname{C}_{\gamma})^{-6} > q^{k-1}$. Since $2^{8}r > r_{k-1}(x)$ we get a contradiction with the maximality of $r_{k-1}(x)$.
On the other hand, $Q_{5r}(x) \subset 5Q$ and consequently $\underline{\gamma}_{5r}(x) \geq q^{k}$.
This gives a contradiction.

By \eqref{eqq.4.17} and Lemma \ref{Lm.subadditivity_best_approximation} it is easy to see that for each $Q \in \mathcal{G}^{k} \setminus \mathcal{D}_{0}(S)$
\begin{equation}
\label{eqq.4.18}
\mathcal{E}^{S}_{4 Q}[f] \lesssim \inf\limits_{x \in S \cap 4Q}\mathcal{E}^{S}_{k-1}[f](x).
\end{equation}
We recall definition of the family $\mathcal{G}^{k}$, Definition \ref{Def.Ahlfors_David} and take into account \eqref{eqq.weight_notation}, \eqref{eqq.weight_notation'}. This leads to
\begin{equation}
\notag
\sum\limits_{Q \in \mathcal{G}^{k} \setminus \mathcal{D}_{0}(S)}\frac{\gamma_{Q}}{l(Q)}\mathcal{E}^{S}_{4 Q}[f] \lesssim q^{k-1}\sum\limits_{Q \in \mathcal{G}^{k}\setminus \mathcal{D}_{0}(S)}\mathcal{H}^{n}(4Q \cap S)\inf\limits_{x \in S \cap 4Q}\mathcal{E}^{S}_{k-1}[f](x).
\end{equation}
Hence, using \eqref{eqq.forgotten_inclusion} and taking into account Proposition \ref{Prop.green_multiplicity} we deduce
\begin{equation}
\label{eqq.418}
\sum\limits_{k \in \mathbb{Z}}\sum\limits_{Q \in \mathcal{G}^{k}\setminus \mathcal{D}_{0}(S)}\frac{\gamma_{Q}}{l(Q)}\mathcal{E}^{S}_{4 Q}[f] \lesssim \sum\limits_{k \in \mathbb{Z}}q^{k-1}
\sum\limits_{Q \in \mathcal{G}^{k}}\int\limits_{4Q \cap S}\mathcal{E}^{S}_{k-1}[f](x)\,d\mathcal{H}^{n}(x) \lesssim \mathcal{BN}_{\gamma,q}[f].
\end{equation}

Finally, if $Q \in \mathcal{D}_{0}(S)$, then application of Remark \ref{Rem.different_averagings} leads to
\begin{equation}
\notag
\mathcal{E}^{S}_{4 Q}[f] \lesssim \fint\limits_{4Q \cap S}|f(y)|\,d\mathcal{H}^{n}(y).
\end{equation}
It remains to recall that $\mathcal{D}_{0}(S) \subset \mathcal{G}$, use item (2) of Lemma \ref{Lm.green_noneoverlapping} and Remark \ref{Rem.dyadic_lattice_multiplicity}.
We obtain
\begin{equation}
\label{eqq.419}
\sum\limits_{k \in \mathbb{Z}}\sum\limits_{Q \in \mathcal{G}^{k}\cap \mathcal{D}_{0}(S)}\frac{\gamma_{Q}}{l(Q)}\mathcal{E}^{S}_{4 Q}[f] \lesssim
\sum\limits_{Q \in \mathcal{D}_{0}(S)}\gamma_{Q}\int\limits_{4Q \cap S}|f(y)|\,d\mathcal{H}^{n}(y).
\end{equation}

Combining \eqref{eqq.418}, \eqref{eqq.419} and applying Proposition \ref{Prop.discretization_of_Lebesgue} we deduce \eqref{eqq.415}.

The proof is complete.
\end{proof}

\subsection{The keystone property}

It is the result below that allow to make a trick for constructions of nonlinear extension
operators. In fact, the root of this trick (of course in a more elementary manner) can be traced back to the pioneering work by Gagliardo \cite{Gal}.

\begin{Th}
\label{Th.nonlinear}
If $\mathcal{N}_{\gamma,q}[f] < +\infty$, then
\begin{equation}
\label{eqq.nonlinear}
\sum\limits_{Q \in \widetilde{\mathcal{D}}_{k}(S)}\underline{\gamma}_{Q}\mathcal{E}^{S}_{4Q}[f] \to +0, \qquad k \to +\infty.
\end{equation}
\end{Th}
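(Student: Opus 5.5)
The plan is to deduce \eqref{eqq.nonlinear} from the summability of a series, not by estimating each level separately. Put
\[
a_{k}[f]:=\sum\limits_{Q \in \widetilde{\mathcal{D}}_{k}(S)}\underline{\gamma}_{Q}\,\mathcal{E}^{S}_{4Q}[f], \qquad k \in \mathbb{N}_{0}.
\]
I would aim to show that $\sum_{k \geq 0}a_{k}[f] \lesssim \widetilde{\mathcal{BN}}_{\gamma,q}[f]+\widetilde{\mathcal{LN}}_{\gamma,4}[f]$. Propositions \ref{Prop.discretization_of_Lebesgue} and \ref{Prop.discretization_of_Lebesgue_2} bound the right-hand side by $C\mathcal{N}_{\gamma,q}[f]<+\infty$. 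Once the series converges, its terms tend to zero, which is exactly \eqref{eqq.nonlinear}.

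\textbf{Step 1: sort the cubes by green cubes.} By Lemma \ref{Lm.disjoint_iceberg} applied to $\mathcal{Q}=\mathcal{G}$ (regular by Theorem \ref{Th.green_is_regular}), every $Q \in \widetilde{\mathcal{D}}_{+}(S)$ lies in exactly one iceberg $\mathcal{IC}_{\mathcal{G}}(K)$ with $K \in \mathcal{G}^{m}$. For such $Q$ the value $\underline{\gamma}_{5Q}$ lies in $[q^{m},q^{m+1})$, up to the neighbour comparisons of Lemma \ref{Lm.oscillation_on_neighboring_cubes} and admissibility. Hence $\underline{\gamma}_{Q} \lesssim q^{m}$, and $q^{m}\approx \gamma_{K}/l(K)^{n+1}$.

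\textbf{Step 2: reduce to the green cube.} For $Q \subset K$ I would control $\mathcal{E}^{S}_{4Q}[f]$ by data at the scale of $K$. I would first use a chain of dyadic ancestors $Q=Q_{0}\subset Q_{1}\subset\dots\subset Q_{N}=K$ together with Lemma \ref{Lm.subadditivity_best_approximation}. I would then use $(\text{Por.2})$ and Ahlfors regularity to keep the comparison constants uniform. After this, $\sum_{k}a_{k}$ restricted to the iceberg of $K$ becomes a geometric-type sum over the generations inside $K$, times $q^{m}\mathcal{E}^{S}_{4K}[f]$. The cubes in $\mathcal{D}_{0}(S)$ are handled separately by $\widetilde{\mathcal{LN}}_{\gamma,4}$, exactly as in \eqref{eqq.419}.

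\textbf{Step 3: sum over green cubes.} Summing over $K \in \mathcal{G}^{m}$ and over $m$ uses the bounded overlap from Proposition \ref{Prop.green_multiplicity} and item (2) of Lemma \ref{Lm.green_noneoverlapping}. This should give a bound by $\widetilde{\mathcal{BN}}_{\gamma,q}[f]$, whose terms carry the factor $\gamma_{K}/l(K)$.

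\textbf{Main obstacle.} The hard point is the scale normalization in Step 2. The summand $\underline{\gamma}_{Q}\mathcal{E}^{S}_{4Q}[f]$ carries no factor $l(Q)^{n}$. Meanwhile the number of cubes of a fixed generation inside $K$ grows like $(l(K)/l(Q))^{n}$, and passing from $4K$ to $4Q$ in Lemma \ref{Lm.subadditivity_best_approximation} loses a factor $\mathcal{H}^{n}(4K\cap S)/\mathcal{H}^{n}(4Q\cap S)$. A naive count therefore does not produce a convergent geometric series.

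My first attempt would be to avoid comparing with $4K$ altogether. Instead I would compare $\mathcal{E}^{S}_{4Q}[f]$ with the pointwise stopping-time oscillations $\mathcal{E}_{m}[f](x)$, $x\in 4Q\cap S$, using the lower bound $r_{m}(x)\gtrsim l(Q)$ obtained as in \eqref{eqq.4.17}. I would then integrate in $x$ as in Step 4 of Proposition \ref{Prop.discretization_of_Besov_1}, so that the resulting sum is controlled directly by $\mathcal{BN}_{\gamma,q}[f]$.

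If the counting still fails, I would fall back to a weaker statement. That is, I would prove only that $a_{k}[f]$ is dominated by tails $\sum_{m\geq m(k)}(\cdots)$ of the convergent series defining $\mathcal{N}_{\gamma,q}[f]$, with $m(k)\to\infty$. This would give \eqref{eqq.nonlinear} by dominated convergence without requiring $\sum_{k}a_{k}<\infty$.
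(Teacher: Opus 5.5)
\paragraph{Your main strategy cannot work.} The plan rests on $\sum_{k\ge0}a_k[f]\lesssim\mathcal{N}_{\gamma,q}[f]$, and that bound is false, so better bookkeeping in Step 2 will not rescue it.

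\emph{The literal statement fails.} The normalization problem you flag is real. As written (averaged $\mathcal{E}^S_{4Q}$, no factor $l(Q)^n$), the theorem is false. Take $\gamma\equiv1$, $S=\mathbb{R}^n\times\{0\}$ and a smooth, compactly supported, nonconstant $f$. Then $\mathcal{N}_{\gamma,q}[f]\approx\|f\|_{L_1(S)}<+\infty$. On the other hand $\mathcal{E}^S_{4Q}[f]\approx l(Q)|\nabla f|$ on the $\approx l(Q)^{-n}$ cubes of generation $k$ meeting the support, so $a_k[f]$ is of order $2^{jk(n-1)}\|\nabla f\|_{L_1}$ and does not tend to $0$. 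The paper's proof really controls $\sum_{Q\in\widetilde{\mathcal{D}}_k(S)}\frac{\gamma_Q}{l(Q)}\mathcal{E}^S_{4Q}[f]\approx\sum_{Q\in\widetilde{\mathcal{D}}_k(S)}\underline{\gamma}_Q\,l(Q)^n\,\mathcal{E}^S_{4Q}[f]$ (see \eqref{eqq.4.24}). That is also the quantity the proof of Theorem \ref{Th.extension_main} produces.

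\emph{Summability fails even after correcting the normalization.} Keep $\gamma\equiv1$ and $S=\mathbb{R}^n\times\{0\}$, and let $f(x)=\|x\|^{-n}\log^{-2}(1/\|x\|)$ for $0<\|x\|<1/10$ and $f=0$ otherwise. Then $f\in L_1(S)$. For $Q\in\widetilde{\mathcal{D}}_k(S)$ containing $0$, one checks $\inf_c\int_{4Q\cap S}|f-c|\,d\mathcal{H}^n\gtrsim 1/\log(2^{jk})$, because a mass $\approx 1/\log(1/l(Q))$ sits in a tiny ball around $0$. Since $\sum_k 1/k=\infty$, the levels are not summable.

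\emph{Why this had to fail.} If summability held, the $f$-independent family $\widetilde{\mathcal{D}}_+(S)$ would do the job; it is regular, admissible and an extension family. Via Theorem \ref{Th.extension_main} and the arguments of Theorem \ref{Th.inverse_trace_estimate}, it would give a bounded linear extension operator for $\gamma\equiv1$. That contradicts Peetre's theorem quoted in the introduction. This is exactly why Theorem \ref{Th.traffic_light} picks an $f$-dependent sparse sequence $k_s$ in \eqref{eqq.red_cubes}. Your first fallback, which integrates stopping-time oscillations and sums over all generations, hits the same wall.

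\paragraph{Your second fallback covers only the easy half.} Domination by tails is how the paper treats two kinds of cubes of $\widetilde{\mathcal{D}}_k(S)$: those in the iceberg of a green cube $K\in\mathcal{G}^l$ with $|l|>k_\varepsilon$, and those meeting the complement of a large cube $Q_{N_\varepsilon}(0)$. These are compared with $K$ \emph{within the single generation $k$}, using bounded overlap of the cubes $4Q$, the best constant on $4K$, and $\underline{\gamma}_Q\lesssim\underline{\gamma}_K$. This yields a tail of $\widetilde{\mathcal{BN}}_{\gamma,q}[f]$, uniformly in $k$.

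The remaining cubes have level in $[-k_\varepsilon,k_\varepsilon]$ and lie inside $Q_{N_\varepsilon+1}(0)$. For them no tail of $\mathcal{N}_{\gamma,q}[f]$ can give smallness; for $\gamma\equiv1$ the functional only sees scale $1$.

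The missing idea is qualitative. For these cubes $\underline{\gamma}_Q$ is bounded by a constant depending on $\varepsilon$. Hence their contribution is $\lesssim_\varepsilon\int_{Q_{N_\varepsilon+3}(0)\cap S}\fint_{Q_{4\cdot 2^{-jk}}(y)\cap S}|f(y)-f(z)|\,d\mathcal{H}^n(z)\,d\mathcal{H}^n(y)$. This tends to $0$ as $k\to\infty$ by approximating $f$ with continuous functions in $L_1(\mathcal{H}^n\lfloor_{S\cap Q_R(0)})$ (Step 2 of the paper's proof, \eqref{eqq.4.22}). The convergence has no rate. That is why the result is a statement $\varlimsup_k\lesssim\varepsilon$ rather than a summability statement.
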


\begin{proof}
We fix $\varepsilon > 0$ and split the proof into several steps.

\textit{Step 1.} We fix $k_{\varepsilon} \in \mathbb{N}$ and then $N_{\varepsilon} \in \mathbb{N}$ so large that (we set $Q^{c}_{N_{\varepsilon}}(0):=\mathbb{R}^{n+1}\setminus Q_{N_{\varepsilon}}(0)$)
\begin{equation}
\label{eqq.4.20}
\sum\limits_{|l| > k_{\varepsilon}}\sum\limits_{Q \in \mathcal{G}^{l}}\frac{\gamma_{Q}}{l(Q)}\mathcal{E}^{S}_{4 Q}[f] + \sum\limits_{l=-k_{\varepsilon}}^{k_{\varepsilon}}\sum\limits_{\substack{Q \in \mathcal{G}^{l} \\ Q \cap Q^{c}_{N_{\varepsilon}}(0) \neq \emptyset}}\frac{\gamma_{Q}}{l(Q)}\mathcal{E}^{S}_{4Q}[f]
< \varepsilon.
\end{equation}

\textit{Step 2.} We claim that, for each $f \in L_{1}^{\rm loc}(\mathcal{H}^{n}\lfloor_{S})$,
\begin{equation}
\label{eqq.4.22}
\Sigma_{k}(\varepsilon):=\sum\limits_{\substack{Q \in \widetilde{\mathcal{D}}_{k}(S) \\ Q \subset Q_{N_{\varepsilon}+1}(0)}}\int\limits_{4Q \cap S}\bigl|f(y)-M^{S}_{4 Q}[f]\bigr|\,d\mathcal{H}^{n}(y) \to 0, \quad k \to \infty.
\end{equation}
To prove \eqref{eqq.4.22} we proceed as follows. Given $k \in \mathbb{N}_{0}$ and $Q \in \widetilde{\mathcal{D}}_{k}(S)$, we have $4Q \subset Q_{\frac{4}{2^{jk}}}(y)$ for every $y \in 4Q$. Consequently,
\begin{equation}
\notag
\bigl|f(y)-M^{S}_{4 Q}[f]\bigr| \lesssim \fint\limits_{Q_{\frac{4}{2^{jk}}}(y) \cap S}|f(z)-f(y)|\,d\mathcal{H}^{n}(z).
\end{equation}
Combining this observation with Remark \ref{Rem.dyadic_lattice_multiplicity} we obtain
\begin{equation}
\label{eqq.4.22'''}
\Sigma_{k}(\varepsilon) \lesssim
\int\limits_{Q_{N_{\varepsilon}+3}(0) \cap S}\fint\limits_{Q_{\frac{4}{2^{jk}}}(y) \cap S}|f(y)-f(z)|\,d\mathcal{H}^{n}(z)\,d\mathcal{H}^{n}(y).
\end{equation}
The right-hand side of \eqref{eqq.4.22'''} tends to zero as $k \to \infty$. This is clear for continuous functions. In the general
case one should use a standard trick based on approximations by continuous functions (one should use Proposition 3.3.49 from \cite{HKST} with $\mu = \mathcal{H}^{n}\lfloor_{S \cap Q_{R}(0)}$ with $R > N_{\varepsilon}+5$).

\textit{Step 3.} We recall Definition \ref{Def.covering_cube}. Given $k \in \mathbb{N}_{0}$, we define the family
\begin{equation}
\label{eqq.4.23}
\mathcal{F}_{k}(\varepsilon):=\{Q \in \widetilde{\mathcal{D}}_{k}(S): \overline{Q}_{\mathcal{G}} \in \mathcal{G}^{l} \\ \hbox{ with } l \in [-k_{\varepsilon},k_{\varepsilon}]\}, \quad  \mathcal{B}_{k}(\varepsilon):=\widetilde{\mathcal{D}}_{k}(S) \setminus \mathcal{F}_{k}(\varepsilon).
\end{equation}
Clearly, if $Q \in \widetilde{\mathcal{D}}_{k}(S)$, then $4Q \supset Q_{1}(x)$ for some $x \in S$. 
Hence, taking into account Definition \ref{Def.Ahlfors_David}, notation \eqref{eqq.weight_notation} and Remark \ref{Rem.different_averagings} by \eqref{eqq.4.22} we obtain
\begin{equation}
\begin{split}
\label{eqq.4.24}
&\sum\limits_{\substack{Q \in \mathcal{F}_{k}(\varepsilon) \\ Q \cap Q_{N_{\varepsilon}}(0)\neq \emptyset}}\frac{\gamma_{Q}}{l(Q)}\mathcal{E}^{S}_{4 Q}[f] \le \operatorname{C}_{\gamma}\sum\limits_{\substack{Q \in \mathcal{F}_{k}(\varepsilon) \\ Q \cap Q_{N_{\varepsilon}}(0)\neq \emptyset}}\underline{\gamma}_{Q}(l(Q))^{n}\mathcal{E}^{S}_{4 Q}[f]\\
&\le \frac{\operatorname{C}_{\gamma}}{C^{S}_{1}} \sum\limits_{\substack{Q \in \mathcal{F}_{k}(\varepsilon) \\ Q \subset Q_{N_{\varepsilon}+1}(0)}}\int\limits_{4Q \cap S}\bigl|f(y)-M^{S}_{4Q}[f]\bigr|\,d\mathcal{H}^{n}(y) \to 0, \quad k \to \infty.
\end{split}
\end{equation}

\textit{Step 4.} We set $\operatorname{c}(Q):=\operatorname{c}(4Q, \mathcal{H}^{n}\lfloor_{S},f)$ for brevity. By Remark \ref{Rem.the_best_constant}, for each cube $Q \in \mathcal{G}$,
\begin{equation}
\label{eqq.4.25}
\begin{split}
&\sum\limits_{Q' \in \mathcal{SH}_{\mathcal{G}}(Q)}\underline{\gamma}_{Q'}\int\limits_{4 Q' \cap S}|f(y)-\operatorname{c}(Q')|\,d\mathcal{H}^{n}(y)\\
&\lesssim \sum\limits_{Q' \in \mathcal{SH}_{\mathcal{G}}(Q)}\underline{\gamma}_{Q}\int\limits_{4 Q' \cap S}|f(y)-M^{S}_{4Q}[f]|\,d\mathcal{H}^{n}(y) \lesssim
\underline{\gamma}_{Q}\int\limits_{4 Q \cap S}|f(y)-M^{S}_{4Q}[f]|\,d\mathcal{H}^{n}(y).
\end{split}
\end{equation}
Hence, using Remark \ref{Rem.the_best_constant} and  \eqref{eqq.4.23} we deduce
\begin{equation}
\label{eqq.4.26}
\begin{split}
&\sum\limits_{\substack{Q' \in \mathcal{B}_{k}(\varepsilon)}}\frac{\gamma_{Q'}}{l(Q')}\mathcal{E}^{S}_{4 Q'}[f] \lesssim
\sum\limits_{\substack{Q' \in \mathcal{B}_{k}(\varepsilon) }}\underline{\gamma}_{Q'}\int\limits_{4 Q' \cap S}|f(y)-\operatorname{c}(Q')|\,d\mathcal{H}^{n}(y) \lesssim
\sum\limits_{|l| > k_{\varepsilon}}\sum\limits_{Q \in \mathcal{G}^{l}}\gamma_{Q}\mathcal{E}^{S}_{4Q}[f].
\end{split}
\end{equation}

\textit{Step 5.} It is clear that if $Q \cap Q^{c}_{N_{\varepsilon}}(0) \neq \emptyset$, then $\overline{Q}|_{\mathcal{Q}} \cap Q^{c}_{N_{\varepsilon}}(0) \neq \emptyset$. Hence, arguments
similar those used in \eqref{eqq.4.26} lead to
\begin{equation}
\label{eqq.4.27}
\sum\limits_{\substack{Q' \in \mathcal{F}_{k}(\varepsilon) \\ Q' \cap Q^{c}_{N_{\varepsilon}}(0)\neq \emptyset}}\gamma_{Q'}\mathcal{E}^{S}_{4 Q'}[f] \lesssim
\sum\limits_{l=-k_{\varepsilon}}^{k_{\varepsilon}}\sum\limits_{\substack{Q \in \mathcal{G}^{l} \\ Q \cap Q^{c}_{N_{\varepsilon}}(0) \neq \emptyset}}\gamma_{Q}\mathcal{E}^{S}_{4 Q}[f].
\end{equation}

\textit{Step 6.}
Collecting estimates \eqref{eqq.4.20},\eqref{eqq.4.24},\eqref{eqq.4.26},\eqref{eqq.4.27} we arrive at
\begin{equation}
\varlimsup\limits_{k \to \infty}\sum\limits_{Q \in \widetilde{\mathcal{D}}_{k}(S)}\underline{\gamma}_{Q}\mathcal{E}^{S}_{4Q}[f] \lesssim \varepsilon.
\end{equation}
Since $\varepsilon > 0$ can be chosen arbitrarily small, the theorem follows.

The proof is complete.
\end{proof}


\section{Extension operators}

The goal of this paragraph is to present a family of new extension operators associated to the regular families of cubes.
The driving ideas of that construction have some roots in the authors
previous investigations \cite{T1,T3}. However, those ideas are not applicable directly to the present context. Indeed, methods developed in \cite{T1}
were essential based on the linear structure of the set, whereas the methods in \cite{T3} can be successfully used only in the context of Sobolev $W_{p}^{1}(\mathbb{R}^{d})$-space with $p > 1$.
This forces us to make a mixture of the ideas from \cite{T1,T3} with new insights which allow to attack the case when $p=1$ and the set under consideration does not possess any smooth structure.
We strongly believe that the constructions  below will be useful in
many investigations related to the extensions of functions from nonsmooth sets.

Throughout this paragraph we fix the following data:

\begin{itemize}

\item[\((\text{D.5.1})\)] a number $n \in \mathbb{N}$ and a set $S \in \mathcal{ADR}^{n}(\mathbb{R}^{n+1})$;

\item[\((\text{D.5.2})\)] a parameter $j \geq \max\{\underline{j}(S,2),5\}$;

\item[\((\text{D.5.3})\)] a parameter $\epsilon = 2^{-(4+s)j}$ for some $s \in \mathbb{N}$;

\item[\((\text{D.5.4})\)] a family of cubes  $\mathcal{Q} \in \mathfrak{R}(S)$.

\end{itemize}

\subsection{Special partition of unity}
Let $\widetilde{\psi} \in C_{0}^{\infty}(\mathbb{R}^{n+1})$ be such that:

$(\widetilde{\psi}1)$ $\chi_{(1-\epsilon)Q_{0,0}}(x) \le \widetilde{\psi}(x) \le \chi_{(1+\epsilon)Q_{0,0}}(x)$ for all $x \in \mathbb{R}^{n+1}$ and furthermore $\widetilde{\psi}(x) \in (0,1)$ for all $x \in (1+\epsilon)\operatorname{int}Q_{0,0} \setminus (1-\epsilon)Q_{0,0}$;

$(\widetilde{\psi}2)$ $\sum_{m \in \mathbb{Z}^{n+1}}\widetilde{\psi}(x-m) = 1$ for all $x \in \mathbb{R}^{n+1}$.

It is easy to see from $(\text{D.5.3})$, $(\widetilde{\psi}1)$ and Remark \ref{Rem.dyadic_lattice_multiplicity} that
\begin{equation}
\label{eqq.multiplicity_psi}
\operatorname{MULT}(\{\operatorname{supp}\widetilde{\psi}_{Q}:Q \in \widetilde{\mathcal{D}}_{0}(S)\}) \le \operatorname{MULT}(\{(1+\epsilon)Q:Q \in \mathcal{D}_{0}\}) \le 3^{n+1}.
\end{equation}

\textbf{Important notation.} We recall \eqref{eqq.dyadic_S}. Given $k \in \mathbb{Z}$, if $Q \in \widetilde{\mathcal{D}}_{k}(S)$ is such that $Q=Q_{kj,m}$, then we put
$\widetilde{\psi}_{Q}(\cdot):=\widetilde{\psi}(\frac{\cdot-m}{2^{kj}})$ for brevity. In this case we have
\begin{equation}
\label{eqq.psi_gradient'}
\|\nabla \widetilde{\psi}_{Q}(x)\| \le \operatorname{C}_{\widetilde{\psi}}\sqrt{n}2^{kj} = \frac{\operatorname{C}_{\widetilde{\psi}}\sqrt{n}}{l(Q)} \quad \hbox{for all} \quad x \in \mathbb{R}^{n+1},
\end{equation}
where the constant $\operatorname{C}_{\widetilde{\psi}} > 0$ depends only on the concrete choice of the function $\widetilde{\psi}$ but does not depend on $k$ and $j$.

Note that the property $(\widetilde{\psi}2)$ is equivalent to
the validity, for each $k \in \mathbb{N}_{0}$, of the equality
\begin{equation}
\label{eqq.partition'}
\sum\limits_{Q \in \widetilde{\mathcal{D}}_{k}}\widetilde{\psi}_{Q}(x) = 1 \quad  \hbox{for all} \quad x  \in \mathbb{R}^{n+1}.
\end{equation}

Now we are going to introduce a special family of smooth functions that will play the role of the ``smooth Whitney-type decomposition''.
To this aim we introduce, for each $k \in \mathbb{N}_{0}$, an auxiliary function
\begin{equation}
\label{eqq.partition_g_k}
g_{k}(x):=\sum\limits_{Q \in \widetilde{\mathcal{D}}_{k}(S)}\widetilde{\psi}_{Q}(x), \quad x \in \mathbb{R}^{n+1}.
\end{equation}
Finally, given $k \in \mathbb{N}_{0}$ and $Q \in \widetilde{\mathcal{D}}_{k}(S)$, we set
\begin{equation}
\label{eqq.psi}
\psi_{Q}(x):=\widetilde{\psi}_{Q}(x)(1-g_{k+1}(x)), \quad x \in \mathbb{R}^{n+1}.
\end{equation}

\begin{Remark}
\label{Rem.massive_support}
It is clear from \eqref{eqq.partition_g_k}, \eqref{eqq.psi} and $(\widetilde{\psi}1)$ that if $Q \in \widetilde{\mathcal{D}}_{k}(S)$, then $\chi_{\Omega_{Q}} \le \psi_{Q}$, where
\begin{equation}
\notag
\Omega_{Q}:=(1-\epsilon)Q \setminus \bigcup\limits_{Q \in \widetilde{\mathcal{D}}_{k+1}(S)}(1+\epsilon)Q.
\end{equation}
Furthermore, by Proposition \ref{Prop.porous_property} and $(\text{D.5.2})$, we get
\begin{equation}
\notag
\mathcal{L}^{n+1}(\Omega_{Q}) \geq C\mathcal{L}^{n+1}(Q),
\end{equation}
where the constant $C > 0$ does not depend on $Q$.

In other words, for each $k \in \mathbb{N}_{0}$, the set of points, where each function $\psi_{Q} \equiv 1$, $Q \in \widetilde{\mathcal{D}}_{k}(S)$, is massive enough. It is this observation that will be indispensable tool in proving Theorem \ref{Th.ext_is_right_inverse}.
This informally justifies the way of introducing functions $\psi_{Q}$.
\end{Remark}

In the following assertion we collect basic properties of functions $\psi_{Q}$.

\begin{Prop}
\label{Prop.psi_properties}
The family $\{\psi_{Q}:Q\in \widetilde{\mathcal{D}}_{+}(S)\}$ has the following properties:

($\psi1$) If $k \in \mathbb{N}_{0}$ and $x \in \mathbb{R}^{n+1}$ are such that $\operatorname{dist}(x,S) \le 2^{-j(k+1)}(1+2\epsilon)$, then
$g_{k}(x)=1$;

($\psi2$) for each $k \in \mathbb{N}_{0}$,
\begin{equation}
\sum\limits_{l=k}^{\infty}\sum\limits_{Q \in \widetilde{\mathcal{D}}_{l}(S)}\psi_{Q}(x) = g_{k}(x), \quad x \in \mathbb{R}^{n+1};
\end{equation}

($\psi3$) if $\operatorname{supp}\psi_{Q} \cap \operatorname{supp}\psi_{Q'} \neq \emptyset$ for some $Q,Q' \in \widetilde{\mathcal{D}}_{+}(S)$, then $Q' \in \operatorname{n}_{S}(Q)$;

($\psi4$) for each $x \in \mathbb{R}^{n+1} \setminus S$ there exists $\delta_{x} > 0$ such that
\begin{equation}
\label{eqq.locallly_finite_covering}
\#\{Q \in \widetilde{\mathcal{D}}_{+}(S):\operatorname{supp}\psi_{Q} \cap Q_{\delta_{x}}(x) \neq \emptyset\} \le 3^{n+2};
\end{equation}

($\psi5$) There is a constant $\operatorname{C}_{\psi} > 0$ depending only on $\widetilde{\psi}$, $n$ and $j$ such that
\begin{equation}
\label{eqq.psi_gradient}
\|\nabla \psi_{Q}(x)\| \le \frac{\operatorname{C}_{\psi}}{l(Q)} \quad \hbox{for all} \quad x \in \mathbb{R}^{n+1}.
\end{equation}

\end{Prop}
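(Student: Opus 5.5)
The plan is to verify ($\psi1$)--($\psi5$) one at a time, directly from the support property $(\widetilde{\psi}1)$, the partition identity \eqref{eqq.partition'}, and the nesting of the lattices $\widetilde{\mathcal{D}}_{k}$.

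\emph{Property ($\psi1$).} Let $\operatorname{dist}(x,S)\le 2^{-j(k+1)}(1+2\epsilon)$. By \eqref{eqq.partition'} and $(\widetilde{\psi}1)$ it suffices to show that every $Q\in\widetilde{\mathcal{D}}_{k}$ with $\widetilde{\psi}_{Q}(x)\neq0$ already lies in $\widetilde{\mathcal{D}}_{k}(S)$. Such a $Q$ has $x\in(1+\epsilon)Q$. Pick $s\in S$ with $\|x-s\|\le 2^{-j(k+1)}(1+2\epsilon)=2^{-j}l(Q)(1+2\epsilon)$. Then
\[
\operatorname{dist}(s,Q)\le \tfrac{\epsilon}{2}l(Q)+2^{-j}(1+2\epsilon)l(Q).
\]
Since $\epsilon=2^{-(4+s)j}$, this is strictly less than $\tfrac{\underline{\theta}-1}{2}l(Q)=(2^{-j}+2^{-2j}-2^{-4j})l(Q)$. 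Hence $\underline{\theta}Q\cap S\neq\emptyset$, i.e. $Q\in\widetilde{\mathcal{D}}_{k}(S)$.

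\emph{Property ($\psi2$).} By \eqref{eqq.psi}, $\sum_{Q\in\widetilde{\mathcal{D}}_{l}(S)}\psi_{Q}=g_{l}(1-g_{l+1})$. I would first show that $g_{l}=1$ on $\operatorname{supp}g_{l+1}$. Indeed, a point of $\operatorname{supp}\widetilde{\psi}_{Q'}$ with $Q'\in\widetilde{\mathcal{D}}_{l+1}(S)$ lies within $2^{-j(l+1)}(\tfrac{\underline{\theta}-1}{2}+\tfrac{\epsilon}{2}+\tfrac{1}{2}\cdot\text{diam})$ of $S$, up to bookkeeping with the $\ell_\infty$ norm. This gives roughly $\operatorname{dist}\lesssim 2^{-j(l+1)}(1+2^{-j}+\dots)$, and I would check that this stays within the threshold of ($\psi1$) for index $l$. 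This is the delicate constant check: the margins $2^{-j}$, $2^{-2j}$, $2^{-4j}$ in $\theta,\underline{\theta}$ are designed exactly for it. If the crude distance bound fails, I would instead argue lattice-wise: a cube of $\widetilde{\mathcal{D}}_{l}$ whose $(1+\epsilon)$-enlargement meets $(1+\epsilon)Q'$ contains or is adjacent to the parent of $Q'$, and that parent's $\underline{\theta}$-enlargement meets $S$. With $g_{l}g_{l+1}=g_{l+1}$ in hand, we get $\sum_{Q\in\widetilde{\mathcal{D}}_{l}(S)}\psi_{Q}=g_{l}-g_{l+1}$. The sum over $l\ge k$ then telescopes to $g_{k}-\lim_{L}g_{L}(x)$. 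The limit vanishes for $x\notin S$, since eventually no cube of $\widetilde{\mathcal{D}}_{L}(S)$ has support near $x$. For $x\in S$ every $g_{L}(x)=1$, and there the identity is to be read on $\mathbb{R}^{n+1}\setminus S$ (or via the convention that this set has $\mathcal{L}^{n+1}$-measure zero by (Por.1)).

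\emph{Property ($\psi3$).} Let $Q\in\widetilde{\mathcal{D}}_{k}(S)$ and $Q'\in\widetilde{\mathcal{D}}_{l}(S)$ with $l\ge k+2$. By ($\psi1$)/($\psi2$)-type reasoning, $\operatorname{supp}\psi_{Q'}\subset\operatorname{supp}\widetilde{\psi}_{Q'}$, which lies in the region where $g_{k+1}\equiv1$, and there $\psi_{Q}=0$. Hence intersecting supports force $|l-k|\le1$, i.e. the side ratio lies in $[2^{-j},2^{j}]$. The supports meeting means $(1+\epsilon)Q\cap(1+\epsilon)Q'\neq\emptyset$. For dyadic cubes of these sizes this forces $Q\cap Q'\neq\emptyset$, except possibly in the thin $\epsilon$-collar. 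The collar would be handled either by noting that $\widetilde{\psi}$ vanishes at the boundary of $(1+\epsilon)Q$, or by accepting a slightly enlarged notion of neighbour. I expect this boundary case to be the main nuisance: two disjoint dyadic cubes at distance $<2\epsilon l$ must in fact touch, since dyadic cubes of comparable size are either adjacent or separated by at least $2^{-l j}\ge 2^{-j}l(Q)\gg\epsilon l(Q)$.

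\emph{Properties ($\psi4$) and ($\psi5$).} For $x\notin S$, set $\delta_{x}<\operatorname{dist}(x,S)/2$. Only cubes with $l\gtrsim\operatorname{dist}(x,S)$ can have support near $x$. By ($\psi3$) they occupy at most two consecutive levels, and each level contributes at most $3^{n+1}$ cubes by \eqref{eqq.multiplicity_psi}. This gives the bound $3^{n+2}$ after possibly shrinking $\delta_{x}$. For ($\psi5$), the product rule gives
\[
\nabla\psi_{Q}=(1-g_{k+1})\nabla\widetilde{\psi}_{Q}-\widetilde{\psi}_{Q}\nabla g_{k+1},
\]
and \eqref{eqq.psi_gradient'} together with \eqref{eqq.multiplicity_psi} yields $\|\nabla g_{k+1}\|\le 3^{n+1}\operatorname{C}_{\widetilde{\psi}}\sqrt{n}\,2^{j}/l(Q)$. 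This produces the constant $\operatorname{C}_{\psi}$ depending on $j$.
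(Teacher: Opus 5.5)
Your ($\psi1$) and ($\psi5$) are correct and coincide with the paper's arguments. The gap is the step you flag yourself in ($\psi2$): that $g_l\equiv1$ on $\operatorname{supp}g_{l+1}$, equivalently $g_l g_{l+1}=g_{l+1}$. You reuse it in ($\psi3$) with $l=k+1$ (``$\operatorname{supp}\widetilde{\psi}_{Q'}$ lies where $g_{k+1}\equiv1$'' for $Q'$ of level $k+2$), and your ($\psi4$) count of ``two consecutive levels'' rests on ($\psi3$). Neither of your arguments proves it. Write $\delta:=(\underline{\theta}-1)/2=2^{-j}+2^{-2j}-2^{-4j}$.

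The crude bound only gives $\operatorname{dist}(y,S)\le(1+\epsilon/2+\delta)\,l(Q')$ for $y\in(1+\epsilon)Q'$. This overshoots the radius $(1+2\epsilon)\,l(Q')$ covered by ($\psi1$) by about $2^{-j}l(Q')$, so it is not a matter of $\epsilon$-margins. Your lattice fallback only shows that the \emph{parent} of $Q'$ lies in $\widetilde{\mathcal{D}}_l(S)$, which says nothing about an adjacent level-$l$ cube $R$.

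The correct lattice estimate starts from the fact that $(1+\epsilon)R\cap(1+\epsilon)Q'\neq\emptyset$ forces $R\cap Q'\neq\emptyset$. From this you only get $\operatorname{dist}(S,R)\le(1+\delta)\,l(Q')=2^{-j}(1+\delta)\,l(R)$. So $R\in\widetilde{\mathcal{D}}_l(S)$ would require $2^{-j}(1+\delta)\le\delta$, i.e.\ $\delta\ge 2^{-j}/(1-2^{-j})$, and with the given $\underline{\theta}$ this fails by about $2^{-3j}$. The paper's proof takes the same route. It simply asserts that ($\psi1$) ``leads to'' $\sum_{Q\in\widetilde{\mathcal{D}}_k(S)}\psi_Q=g_k-g_{k+1}$, and argues similarly for ($\psi3$), so it does not supply the missing step either.

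In fact the step is false for the stated constants, so no argument can close this gap. Take $n=1$ and $S=\{x_1=a\}$ with $1+\delta<a\le 1+2^{-j}(1+\delta)$; this range is nonempty. Let $R=[0,1]^2$, $P=[1,2]\times[0,1]$, $Q'=[1,1+2^{-j}]\times[0,2^{-j}]$ and $x=(1,2^{-j-1})$.

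The only level-$0$ cubes whose $(1+\epsilon)$-enlargement contains $x$ are $R$ and $P$. At level $1$ they are $Q'$ and its left neighbour. Moreover $R\notin\widetilde{\mathcal{D}}_0(S)$, $P\in\widetilde{\mathcal{D}}_0(S)$ and $Q'\in\widetilde{\mathcal{D}}_1(S)$, while the left neighbour is not in $\widetilde{\mathcal{D}}_1(S)$. Also $g_m(x)=0$ for $m\ge2$. Hence $g_0(x)=1-\widetilde{\psi}_R(x)\in(0,1)$ and $g_1(x)=\widetilde{\psi}_{Q'}(x)\in(0,1)$. Therefore $\sum_{l\ge0}\sum_{Q\in\widetilde{\mathcal{D}}_l(S)}\psi_Q(x)=g_0(x)+g_1(x)\bigl(1-g_0(x)\bigr)>g_0(x)$, although $x\notin S$. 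Your observation that ($\psi2$) fails on $S$ itself is also right.

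Now shrink the whole configuration, $S$ included, by the factor $2^{-j}$, so that it sits inside $[0,1]^2\in\widetilde{\mathcal{D}}_0(S)$. This gives a point where $\psi_{[0,1]^2}$ and $\psi_{Q''}$ with $l(Q'')=2^{-2j}$ are both positive, contradicting ($\psi3$).

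A repair needs a larger margin in $\underline{\theta}$, e.g.\ $\delta\ge 2^{-j}/(1-2^{-j})$, with $\theta$ adjusted accordingly. With it, the lattice estimate above gives $g_l\equiv1$ on a neighbourhood of $\operatorname{supp}g_{l+1}$. Your telescoping for ($\psi2$) off $S$ and your arguments for ($\psi3$) and ($\psi4$) then go through.
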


\begin{proof}

To prove ($\psi1$) we fix an arbitrary $x \in \mathbb{R}^{n+1}$
such that $\operatorname{dist}(x,S) \le 2^{-j(k+1)}(1+2\epsilon)$. Hence, by ($\text{D.5.3}$) there is a point $x' \in S$ such that $\|x-x'\|_{\infty} \le 2^{-j(k+1)}(1+2^{-4j})$.
By \eqref{eqq.theta}, if $Q \in \widetilde{\mathcal{D}_{k}}$ is such that $x \in (1+\epsilon)Q$,
then $\underline{\theta}Q \ni x'$ and hence $Q \in \widetilde{\mathcal{D}_{k}}(S)$. This means that $Q \in \widetilde{\mathcal{D}_{k}}(S)$ for each $Q \in \widetilde{\mathcal{D}_{k}}$ with $\widetilde{\psi}_{Q}(x) > 0$.
This observation together with $(\widetilde{\psi}2)$ leads to the required equality
\begin{equation}
\notag
g_{k}(x)=\sum\limits_{Q \in \widetilde{\mathcal{D}}_{k}}\widetilde{\psi}_{Q}(x) = 1.
\end{equation}

To prove ($\psi2$) it is sufficient to note that ($\psi1$) just proved leads to
\begin{equation}
\notag
\sum\limits_{Q \in \widetilde{\mathcal{D}}_{k}(S)}\psi_{Q}(x) = g_{k}(x)-g_{k+1}(x), \quad x \in \mathbb{R}^{n+1}.
\end{equation}

By ($\psi1$) and \eqref{eqq.psi}, given $k \in \mathbb{N}_{0}$ and $Q \in \widetilde{\mathcal{D}}_{k}(S)$, we immediately have
\begin{equation}
\notag
\operatorname{dist}(\operatorname{supp}\psi_{Q},S) > 2^{-j(k+2)}(1+\epsilon).
\end{equation}
Hence, by ($\widetilde{\psi}1$) this implies
\begin{equation}
\label{eqq.locallly_finite_covering'}
\operatorname{dist}(\operatorname{supp}\psi_{Q},\operatorname{supp}\psi_{Q'}) > 0
\end{equation}
for each
$k' > k+1$ and $Q' \in \widetilde{\mathcal{D}}_{k'}(S)$.  Combining this fact with Definition \ref{Def.neighboring_cubes} we get ($\psi3$).

Now ($\psi4$) is an easy consequence of ($\psi3$) and \eqref{eqq.locallly_finite_covering'}.

Finally, ($\psi5$) follows from \eqref{eqq.psi_gradient'} and \eqref{eqq.psi}.

The proof is complete.
\end{proof}

Now we are ready to construct the \textit{keystone family} of smooth functions which form the skeleton for our extension operator.
We recall Definition \ref{Def.shadow_iceberg} and, for each $Q \in \mathcal{Q}$, we set
\begin{equation}
\label{eqq.Psi}
\Psi_{Q}(x):=\sum\limits_{Q' \in \mathcal{IC}_{\mathcal{Q}}(Q)}\psi_{Q'}(x), \quad x \in \mathbb{R}^{n+1}.
\end{equation}

\begin{Remark}
\label{Rem.loc_finite_2}
We claim that, for each $x \in \mathbb{R}^{n+1}$,
\begin{equation}
\label{eqq.Psi_multiplicity}
\#\{Q \in \widetilde{\mathcal{D}}_{+}(S):\operatorname{supp}\Psi_{Q} \cap Q_{\delta_{x}}(x) \neq \emptyset\} \le 3^{n+2},
\end{equation}
where $\delta_{x} > 0$ is the same as in ($\psi4$) of Proposition \ref{Prop.psi_properties}.
Indeed, if $\underline{x} \in \operatorname{supp}\Psi_{Q}$, then we necessarily have $\underline{x} \in \operatorname{supp}\psi_{Q'}$ for some $Q' \in \mathcal{IC}_{\mathcal{Q}}(Q)$.
Now \eqref{eqq.Psi_multiplicity} follows directly from \eqref{eqq.locallly_finite_covering}.
\end{Remark}

\subsection{Abstract extension operator}
We start recall \eqref{eqq.maximal_function} and introduce a keystone concept.
\begin{Def}
\label{Def.abstract_extension}
A map $\operatorname{Ext}_{\mathcal{Q}}:L_{1}^{\rm loc}(\mathcal{H}^{n}\lfloor_{S}) \to C^{\infty}(\mathbb{R}^{n+1} \setminus S)$
defined, for each $f \in L_{1}^{\rm loc}(\mathcal{H}^{n}\lfloor_{S})$, by the equality
\begin{equation}
\label{eqq.abstract_extension}
\operatorname{Ext}_{\mathcal{Q}}[f](x):=\sum\limits_{Q \in \mathcal{Q}}M^{S}_{\theta Q}[f]\Psi_{Q}(x), \quad x \in \mathbb{R}^{n+1}
\end{equation}
is said to be an extension map associated with $\mathcal{Q}$.
\end{Def}

\begin{Remark}
\label{Rem.extension_operator_well_defined}
The map $\operatorname{Ext}_{\mathcal{Q}}$ is well defined.
Indeed, using Remark \ref{Rem.loc_finite_2} and taking into account that
$\Psi_{Q} \in C^{\infty}(\mathbb{R}^{n+1})$, $\operatorname{supp}\Psi_{Q} \subset \mathbb{R}^{n+1} \setminus S$ for every $Q \in \mathcal{Q}$, we deduce that the summand in
the right hand side of \eqref{eqq.abstract_extension} defines a function from  $C^{\infty}(\mathbb{R}^{n+1} \setminus S)$.
Furthermore, the map $\operatorname{Ext}_{\mathcal{Q}}$ is linear.
\end{Remark}

The following concept will be crucial in analysis of the infinitesimal behavior of the functions $\operatorname{Ext}_{\mathcal{Q}}[f]$, $f \in L_{1}^{\rm loc}(\mathcal{H}^{n}\lfloor_{S})$.
We recall $(\text{IC.2})$ of Lemma \ref{Lm.disjoint_iceberg}.
\begin{Def}
\label{Def.contacting_family}
For each $Q_{1},Q_{2} \in \mathcal{Q}$, we define the contacting family
\begin{equation}
\label{eqq.contacting_family}
\mathcal{C}_{\mathcal{Q}}(Q_{1},Q_{2}):=\{Q' \in \mathcal{IC}_{\mathcal{Q}}(Q_{1}): \operatorname{n}_{S}(Q')\cap \mathcal{IC}_{\mathcal{Q}}(Q_{2})  \neq \emptyset\}.
\end{equation}
Furthermore, given $Q \in \mathcal{Q}$, we set
\begin{equation}
\label{eqq.contacting_family'}
\mathcal{C}_{\mathcal{Q}}(Q):=\bigcup\limits_{Q' \in \mathcal{Q}}\mathcal{C}_{\mathcal{Q}}(Q,Q').
\end{equation}
Finally, for each $Q \in \mathcal{Q}$, we define the family of cubes interacting with $Q$ with respect to $\mathcal{Q}$ by letting
\begin{equation}
\label{eqq.interacting_family}
\mathcal{I}_{\mathcal{Q}}(Q):=\{K \in \mathcal{Q}:\mathcal{C}_{\mathcal{Q}}(Q,K) \neq \emptyset\}.
\end{equation}
\end{Def}

\begin{Remark}
\label{Rem.interacting_cubes}
It is clear that $\mathcal{C}_{\mathcal{Q}}(Q_{1},Q_{2}) \neq \emptyset$ if and only if $\mathcal{C}_{\mathcal{Q}}(Q_{2},Q_{1}) \neq \emptyset$. Hence,
if $K \in \mathcal{I}_{\mathcal{Q}}(Q)$, then $Q \in \mathcal{I}_{\mathcal{Q}}(K)$.
\end{Remark}

Given $f \in L_{1}^{\rm loc}(\mathcal{H}^{n}\lfloor_{S})$ and $Q \in \mathcal{Q}$, we introduce the special set
\begin{equation}
\label{eqq.special_set}
E_{\mathcal{Q}}(f,Q):=\{x \in \operatorname{supp}\Psi_{Q}: \nabla\operatorname{Ext}_{\mathcal{Q}}[f](x) \neq 0\}.
\end{equation}
Furthermore, we put
\begin{equation}
\label{eqq.special_set_2}
E_{\mathcal{Q}}(f):=\bigcup\limits_{Q \in \mathcal{Q}}E_{\mathcal{Q}}(f,Q).
\end{equation}

The following assertion is a direct consequence of \eqref{eqq.interacting_family} and \eqref{eqq.special_set}.
\begin{Lm}
\label{Lm.nonzero_gradient}
For each $f\in L_{1}^{\rm loc}(\mathcal{H}^{n}\lfloor_{S})$ and $Q \in \mathcal{Q}$,
\begin{equation}
E_{\mathcal{Q}}(f,Q) \subset \bigcup\limits_{Q' \in \mathcal{C}_{\mathcal{Q}}(Q)}\operatorname{supp}\psi_{Q'}.
\end{equation}
\end{Lm}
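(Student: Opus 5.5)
The plan is to prove the contrapositive. Fix $x \in \operatorname{supp}\Psi_{Q}$ with $x \notin \bigcup_{Q' \in \mathcal{C}_{\mathcal{Q}}(Q)}\operatorname{supp}\psi_{Q'}$, and show that $\operatorname{Ext}_{\mathcal{Q}}[f]$ is constant near $x$. It will in fact equal $M^{S}_{\theta Q}[f]$ on a small cube around $x$, so $\nabla \operatorname{Ext}_{\mathcal{Q}}[f](x)=0$ and $x \notin E_{\mathcal{Q}}(f,Q)$.

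\emph{Step 1.} Note that $x \notin S$, since $\operatorname{supp}\Psi_{Q} \subset \mathbb{R}^{n+1}\setminus S$ by ($\psi1$) and \eqref{eqq.psi}. Take $\delta>0$ at most $\delta_{x}$ from ($\psi4$), and shrink it until $Q_{\delta}(x)$ meets only those $\operatorname{supp}\psi_{K}$, $K \in \widetilde{\mathcal{D}}_{+}(S)$, that actually contain $x$. This is possible because there are finitely many such $K$ near $x$ and their supports are closed. Set $\mathcal{K}_{x}:=\{K \in \widetilde{\mathcal{D}}_{+}(S): x \in \operatorname{supp}\psi_{K}\}$.

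\emph{Step 2.} Since $x \in \operatorname{supp}\Psi_{Q}$, the definition \eqref{eqq.Psi} and local finiteness give some $Q_{0} \in \mathcal{IC}_{\mathcal{Q}}(Q)\cap\mathcal{K}_{x}$. Now let $K \in \mathcal{K}_{x}$ be arbitrary. By ($\psi3$) we have $K \in \operatorname{n}_{S}(Q_{0})$. By (IC.1)--(IC.2) of Lemma \ref{Lm.disjoint_iceberg}, $K$ lies in exactly one iceberg $\mathcal{IC}_{\mathcal{Q}}(Q'')$. If $Q''\neq Q$, then $Q_{0} \in \mathcal{C}_{\mathcal{Q}}(Q,Q'')\subset\mathcal{C}_{\mathcal{Q}}(Q)$ by \eqref{eqq.contacting_family}, which gives $x \in \operatorname{supp}\psi_{Q_{0}}$ with $Q_{0}\in\mathcal{C}_{\mathcal{Q}}(Q)$, a contradiction. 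Hence every $K \in \mathcal{K}_{x}$ lies in $\mathcal{IC}_{\mathcal{Q}}(Q)$.

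\emph{Step 3.} On $Q_{\delta}(x)$ we therefore have $\Psi_{Q}=\sum_{K\in\mathcal{K}_{x}}\psi_{K}=\sum_{K\in\widetilde{\mathcal{D}}_{+}(S)}\psi_{K}$, and $\Psi_{Q''}\equiv 0$ for every $Q''\neq Q$. By ($\psi2$) with $k=0$, the full sum equals $g_{0}$. This gives
\[
\operatorname{Ext}_{\mathcal{Q}}[f]=M^{S}_{\theta Q}[f]\,g_{0} \quad\hbox{on } Q_{\delta}(x).
\]

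\emph{Step 4 (the main obstacle).} It remains to show that $g_{0}$ is locally constant at $x$, which is where the argument can break. I would argue that $g_{0}$ has nonzero gradient only near the outer boundary of $\bigcup_{Q\in\widetilde{\mathcal{D}}_{0}(S)}(1+\epsilon)Q$. There $\psi_{K}$ for $K\in\widetilde{\mathcal{D}}_{0}(S)$ overlaps $\widetilde{\psi}_{K'}$ for cubes $K'\in\widetilde{\mathcal{D}}_{0}\setminus\widetilde{\mathcal{D}}_{0}(S)$, which are not in the family. If $x$ is such a point, the statement as written could fail unless one uses $\Psi$ restricted appropriately. I would first try to check whether the paper intends $g_{0}$ to be absorbed, for example by treating $\mathcal{D}_{0}(S)$ cubes as contacting the ``exterior''.

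If that route fails, I would settle for the weaker conclusion that matters downstream: $\nabla\operatorname{Ext}_{\mathcal{Q}}[f]=M^{S}_{\theta Q}[f]\nabla g_{0}$ there. Away from that outer layer, $g_{0}\equiv1$ by ($\widetilde{\psi}2$), and the proof closes.
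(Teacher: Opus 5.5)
Your proposal does not prove the lemma. Step 4 ends with an unresolved obstacle and a fallback to a weaker claim, so no conclusion is reached.

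\textbf{The missing observation.} It is already at hand in your Step 2, where you only consider icebergs $\mathcal{IC}_{\mathcal{Q}}(Q'')$ with $Q''\neq Q$. But \eqref{eqq.contacting_family'} takes the union over \emph{all} $Q'\in\mathcal{Q}$, including $Q'=Q$. Every cube is neighboring for itself: $Q_{0}\cap Q_{0}\neq\emptyset$, the side ratio is $1$, and $Q_{0}\in\widetilde{\mathcal{D}}(S)$. Hence each $Q_{0}\in\mathcal{IC}_{\mathcal{Q}}(Q)$ satisfies $Q_{0}\in\operatorname{n}_{S}(Q_{0})\cap\mathcal{IC}_{\mathcal{Q}}(Q)$, that is, $\mathcal{C}_{\mathcal{Q}}(Q,Q)=\mathcal{IC}_{\mathcal{Q}}(Q)\subset\mathcal{C}_{\mathcal{Q}}(Q)$. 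The cube $Q_{0}\in\mathcal{IC}_{\mathcal{Q}}(Q)$ with $x\in\operatorname{supp}\psi_{Q_{0}}$ that you extract via ($\psi4$) at the start of Step 2 therefore already lies in $\mathcal{C}_{\mathcal{Q}}(Q)$. So your contrapositive hypothesis can never hold, and nothing about $g_{0}$ is needed. Taking gradients off $S$, you get $E_{\mathcal{Q}}(f,Q)\subset\bigcup_{Q'\in\mathcal{IC}_{\mathcal{Q}}(Q)}\operatorname{supp}\psi_{Q'}\subset\bigcup_{Q'\in\mathcal{C}_{\mathcal{Q}}(Q)}\operatorname{supp}\psi_{Q'}$. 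With the definitions as stated, this is why the lemma is a direct consequence of them, as the paper says; the statement as written cannot fail.

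\textbf{Your Step 4 worry.} It is real, but it concerns a stronger inclusion: the one where $\mathcal{C}_{\mathcal{Q}}(Q)$ is formed only over $Q'\in\mathcal{Q}\setminus\{Q\}$. The paper seems to have this reading in mind elsewhere, since the proof of Proposition \ref{Prop.main_geometric_ingredient} only treats two different cubes. That version is false at level $0$. Let $S$ be a small sphere centred at the centre of $K=[0,1]^{n+1}$, let $\mathcal{Q}=\{K\}$ (which is regular), and let $f\equiv1$. Near $\partial K$ the only active function is $\psi_{K}=\widetilde{\psi}_{K}$, so $\operatorname{Ext}_{\mathcal{Q}}[f]=\widetilde{\psi}_{K}$ there. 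Its gradient is nonzero somewhere in $(1+\epsilon)K\setminus(1-\epsilon)K$, while the restricted contacting family is empty. Thus your Steps 1--3 can only give the restricted inclusion at points near which $g_{0}\equiv1$, and the outer layer must be handled separately. In effect the paper does this via Lemma \ref{Lm.auxiliary}, and by estimating $\operatorname{I}_{Q}$ in Theorem \ref{Th.extension_main} only for $l(Q)<1$.

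\textbf{A smaller error in Step 1.} In the same example $\operatorname{supp}\Psi_{K}\supset S$. So your claim that $\operatorname{supp}\Psi_{Q}$ avoids $S$ is false for general regular families; it is harmless only because gradients are taken off $S$.

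Either way, you should have settled the question rather than hedged: a one-line proof for the literal statement, or a counterexample for the restricted one.
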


Now we establish the keystone proposition. In fact it is the main technical ingredient for the careful estimates of the gradient of extensions in the integral norms.
\begin{Prop}
\label{Prop.main_geometric_ingredient}
There is a constant $C=C(n,j,\theta) > 0$ such that, for each $Q_{1},Q_{2} \in \mathcal{Q}$,
\begin{equation}
\label{eqq.5.14''}
\sum\limits_{Q' \in \mathcal{C}_{\mathcal{Q}}(Q_{1},Q_{2})}(l(Q'))^{n} \le C (\min\{l(Q_{1}),l(Q_{2})\})^{n}.
\end{equation}
\end{Prop}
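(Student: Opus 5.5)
The plan is to reduce \eqref{eqq.5.14''} to a packing estimate for cubes of $\widetilde{\mathcal{D}}_{+}(S)$ that lie near the boundary of a single cube. We then use Ahlfors--David regularity to bound the total $n$-dimensional size of such a family.

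\emph{Step 1: the case $Q_{1}=Q_{2}$.} Every $Q' \in \mathcal{C}_{\mathcal{Q}}(Q_{1},Q_{2})$ lies in $\mathcal{IC}_{\mathcal{Q}}(Q_{1}) \subset \mathcal{B}(Q_{1})$. By Lemma \ref{Lm.shadow_nonoverlapping} this family is nonoverlapping, and $\theta Q' \cap S \neq \emptyset$. Since Remark \ref{Rem.massive_part} gives $\mathcal{H}^{n}(\theta Q' \cap S) \approx (l(Q'))^{n}$, we get
\[
\sum\limits_{Q'}(l(Q'))^{n} \lesssim \sum\limits_{Q'}\mathcal{H}^{n}(\theta Q'\cap S).
\]
The multiplicity of $\{\theta Q'\}$ for a nonoverlapping dyadic family is bounded, because $\theta < 2$ and a point lies in $\theta Q'$ for at most boundedly many $Q'$ of each generation. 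Counting generations is the delicate point here: nonoverlapping cubes of different sizes can still have overlapping dilates $\theta Q'$. I would handle this with the separation $\operatorname{dist}(\operatorname{supp}\psi_{Q'},S) \gtrsim l(Q')$ from the proof of Proposition \ref{Prop.psi_properties}, which implies that nested-in-space cubes of very different sizes cannot both meet a given point. The right-hand side is then bounded by $\mathcal{H}^{n}(2Q_{1} \cap S) \lesssim (l(Q_{1}))^{n}$ by \eqref{eqq.Ahlfors_regular}. The Ahlfors bound is stated only for $r \le 1$, but this suffices since $l(Q_{1}) \le 1$.

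\emph{Step 2: the case $Q_{1} \neq Q_{2}$, smaller cube.} By symmetry (Remark \ref{Rem.interacting_cubes}) and the same argument, it is enough to bound the sum by $C(l(Q_{2}))^{n}$ when $l(Q_{2}) \le l(Q_{1})$. Each $Q' \in \mathcal{C}_{\mathcal{Q}}(Q_{1},Q_{2})$ has a neighbor $Q'' \in \mathcal{IC}_{\mathcal{Q}}(Q_{2})$ with $Q' \cap Q'' \neq \emptyset$ and $l(Q') \approx l(Q'')$, with ratio at most $2^{j}$. Hence $Q' \subset 2^{j+2}Q''$, and $Q'' \subset Q_{2}$ gives $Q' \subset 3Q_{2}$. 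It then suffices to show that the cubes $Q'$ form a nonoverlapping subfamily of $\widetilde{\mathcal{D}}_{+}(S)$ contained in $3Q_{2}$. Nonoverlapping follows from Lemma \ref{Lm.shadow_nonoverlapping}, since they all lie in $\mathcal{IC}_{\mathcal{Q}}(Q_{1})$. Repeating the Step 1 packing argument inside $3Q_{2}$ gives
\[
\sum (l(Q'))^{n} \lesssim \mathcal{H}^{n}(4Q_{2}\cap S) \lesssim (l(Q_{2}))^{n}.
\]

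\emph{Main obstacle.} The hard part is the packing estimate: showing that $\sum (l(Q'))^{n} \lesssim \mathcal{H}^{n}(cK\cap S)$ for a nonoverlapping family $\{Q'\} \subset \widetilde{\mathcal{D}}_{+}(S)$ inside $cK$. Pure nonoverlap does not prevent one point of $S$ from lying in $\theta Q'$ for cubes of infinitely many scales. For example, take a chain of cubes shrinking toward $S$. Such a chain has total size $\sum (l(Q'))^{n}$ that is still geometrically summable at a fixed point, but it can be summed over many points.

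To control this I would use the structure of icebergs rather than bare nonoverlap. If $Q' \in \mathcal{IC}_{\mathcal{Q}}(Q_{1})$, then the only cubes of $\widetilde{\mathcal{D}}_{+}(S)$ are removed precisely those in boxes of shadow cubes. Combined with (SH.1)--(SH.2) of Proposition \ref{Prop.shadow_cubes_comparable}, this should show that near any point of $S$ the iceberg cubes touching $\mathcal{IC}_{\mathcal{Q}}(Q_{2})$ occupy only boundedly many consecutive generations. Lemma \ref{Lm.covering_multiplicity} then yields bounded multiplicity of $\{\theta Q'\}$. If this fails, the fallback is to sum generation by generation. Ahlfors regularity bounds generation $k$ by $\lesssim 2^{-jkn}\cdot\#$, and the geometric decay of scales below $l(Q_{2})$ then gives a constant depending on $j$ and $n$.
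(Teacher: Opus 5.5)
\paragraph{Verdict.} There is a genuine gap. Your packing argument on $S$ cannot work, for two reasons set out below, and the idea that actually closes the adjacent case is missing.

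\paragraph{Icebergs are not nonoverlapping, and Step 1 is false.} Your packing argument needs $\mathcal{IC}_{\mathcal{Q}}(Q_1)$ to be nonoverlapping, and it is not. An iceberg is $\mathcal{B}(Q_1)$ with the boxes of the shadow cubes removed. So it contains $Q_1$ together with all its descendants in $\widetilde{\mathcal{D}}(S)$ down to the shadow level, i.e.\ whole stacked generations. Read literally, Lemma \ref{Lm.shadow_nonoverlapping} is wrong for icebergs; what is nonoverlapping is the shadow $\mathcal{SH}_{\mathcal{Q}}(Q)$. Consequently a point of $S\cap Q_1$ lies in $\theta Q'$ for a cube $Q'$ of every generation present. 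Each fully present generation contributes at least a constant times $\mathcal{H}^n(S\cap Q_1)$.

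This disposes of your Step 1 entirely. Each cube is its own neighbour, so $\mathcal{C}_{\mathcal{Q}}(Q_1,Q_1)=\mathcal{IC}_{\mathcal{Q}}(Q_1)$. Icebergs can have unboundedly many generations: for instance $\mathcal{Q}=\widetilde{\mathcal{D}}_0(S)$ is regular and all its icebergs are full boxes. Hence the inequality is simply false for $Q_1=Q_2$. The paper proves it only for $Q_1\neq Q_2$. That is all Theorem \ref{Th.extension_main} uses, since there the term $K=Q$ carries the factor $|M^S_{\theta K}[f]-M^S_{\theta Q}[f]|=0$.

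\paragraph{The case $Q_1\neq Q_2$.} Take $l(Q_2)\le l(Q_1)$. The paper splits into two cases, and in neither is a packing on $S$ what does the work.

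\emph{Nested case} $Q_2\in\mathcal{SH}_{\mathcal{Q}}(Q_1)$. Here (SH.1), ultimately (R.2), forces $2^{-j}l(Q_2)\le l(Q')\le 2^{j}l(Q_2)$ for every $Q'\in\mathcal{C}_{\mathcal{Q}}(Q_1,Q_2)$. Hence $\mathcal{C}_{\mathcal{Q}}(Q_1,Q_2)\subset\operatorname{n}_S(Q_2)$, a family of bounded cardinality. What you need here is this size comparability, which you do not prove.

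\emph{Adjacent case} (disjoint interiors, common boundary). Here the contacting cubes genuinely run through unboundedly many generations along the common face. To see this, take $S$ a hyperplane crossing that face transversally and $\mathcal{Q}=\widetilde{\mathcal{D}}_0(S)$. So your hoped-for bound on the number of generations fails. Your fallback also has no source of decay: Ahlfors $n$-regularity in $\mathbb{R}^{n+1}$ permits $\approx(l(Q_2)/l(Q'))^{n}$ cubes of $\widetilde{\mathcal{D}}(S)$ of each generation near $Q_2$. That exactly cancels the factor $(l(Q'))^{n}$.

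\paragraph{The missing idea.} Charge $Q'$ to the face $\partial Q_2$ rather than to $S$. For $Q'\in\widetilde{\mathcal{D}}_k(S)$, the paper shows $(l(Q'))^{n}\lesssim\mathcal{H}^{n}(\operatorname{supp}\psi_{Q'}\cap\partial Q_2)$. The reason is that if $\operatorname{supp}\psi_{Q'}$ did not cross $\partial Q_2$ near the contact point, then $g_{k+1}\equiv 1$ on a $2^{-j-1}l(Q')$-neighbourhood of $\partial Q'\cap\partial Q_2$. In that case $\psi_{Q'}$ could meet no $\psi_{Q''}$ with $Q''\in\mathcal{IC}_{\mathcal{Q}}(Q_2)$.

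The supports $\operatorname{supp}\psi_{Q'}$, unlike the dilates $\theta Q'$, have uniformly bounded overlap across all generations by ($\psi3$)--($\psi4$). Summing therefore gives $\lesssim\mathcal{H}^{n}(\partial Q_2)\lesssim(l(Q_2))^{n}$.

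Your remark about $\operatorname{dist}(\operatorname{supp}\psi_{Q'},S)\gtrsim l(Q')$ points in this direction. But precisely because these supports avoid $S$, $\mathcal{H}^n\lfloor_S$ is the wrong measure to exploit it.
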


\begin{proof}
We fix two different cubes $Q_{1},Q_{2} \in \mathcal{Q}$ such that $\mathcal{C}_{\mathcal{Q}}(Q_{1},Q_{2}) \neq \emptyset$. Without loss of generality we assume that $l(Q_{2}) \le l(Q_{1})$.
There are two cases to be considered. In the first case $\operatorname{int}Q_{2} \cap \operatorname{int}Q_{1} = \emptyset$ but $\partial Q_{1} \cap \partial Q_{2} \neq \emptyset$.
In the second case $Q_{2} \subset \operatorname{int}Q_{1}$.

\textit{Step 1.} Consider the first case and fix $Q' \in \mathcal{C}_{\mathcal{Q}}(Q_{1},Q_{2})$. This implies existence of a cube
\begin{equation}
\label{eqq.Q_two_prime}
Q'' \in \mathcal{IC}_{\mathcal{Q}}(Q_{2}) \quad \hbox{s.t.} \quad \operatorname{supp}\psi_{Q'} \cap \operatorname{supp}\psi_{Q''} \neq \emptyset \quad \hbox{and} \quad Q'' \in \operatorname{n}_{S}(Q').
\end{equation}
We recall (\text{D.5.3}) and take into account that $\operatorname{supp}\psi_{Q} \subset \operatorname{supp}\widetilde{\psi}_{Q}$ for any $Q \in \widetilde{\mathcal{D}}_{+}(S)$.
This gives $Q' \cap Q'' \neq \emptyset$. At the same time $\operatorname{int}Q' \subset \operatorname{int}Q_{1}$
and $\operatorname{int}Q'' \subset \operatorname{int}Q_{2}$. As a result, we have
\begin{equation}
\label{eqq.521''}
\partial Q' \cap \partial Q'' \cap \partial Q_{1} \cap \partial Q_{2} \neq \emptyset.
\end{equation}

The crucial observation is that
\begin{equation}
\label{eqq.Hausdorff_measure_estimate}
(l(Q'))^{n} \lesssim \mathcal{H}^{n}(\operatorname{supp}\psi_{Q'} \cap \partial Q_{2}) \quad \hbox{for each} \quad Q' \in \mathcal{C}_{\mathcal{Q}}(Q_{1},Q_{2}).
\end{equation}
To prove \eqref{eqq.Hausdorff_measure_estimate} note that, if $Q' \in \widetilde{\mathcal{D}}_{k}(S)$ for some $k \in \mathbb{N}_{0}$, then there is a cube
\begin{equation}
\label{eqq.Q_underline}
\underline{Q} \in \widetilde{\mathcal{D}}_{k+1} \setminus \widetilde{\mathcal{D}}_{k+1}(S) \quad \hbox{s.t.} \quad \underline{Q} \cap \operatorname{supp}\widetilde{\psi}_{Q'} \cap \partial Q_{2} \neq \emptyset.
\end{equation}
Indeed, assume the contrary. Hence, for each $K \in \widetilde{\mathcal{D}}_{k+1}$ having a nonempty intersection with $\operatorname{supp}\widetilde{\psi}_{Q'} \cap \partial Q_{2}$ we
would necessarily get $K \in \widetilde{\mathcal{D}}_{k+1}(S)$. Note that if $K \cap \operatorname{supp}\widetilde{\psi}_{Q'} \neq \emptyset$ for $K \in \widetilde{\mathcal{D}}_{k+1}$, then
by $(\text{D.5.3})$ and $(\widetilde{\psi}1)$ we get $K \cap Q' \neq \emptyset$. Hence, taking into account \eqref{eqq.521''} we obtain $K \cap \partial Q' \neq \emptyset$. 
As a result,  by \eqref{eqq.partition_g_k} (we also use notation \eqref{eqq.neighborhood})
\begin{equation}
\notag
g_{k+1}(x) = 1 \quad \hbox{for all} \quad x \in U_{\delta_{k}}(\partial Q' \cap \partial Q_{2}),
\end{equation}
where $\delta_{k}=2^{-j-1}l(Q')$. By \eqref{eqq.psi} this implies that
\begin{equation}
\label{eqq.5.19'}
\operatorname{dist}(\operatorname{supp}\psi_{Q'},\partial Q_{2}) \geq 2^{-j-1}l(Q').
\end{equation}
Combining \eqref{eqq.5.19'} and  $(\text{D.5.3})$ with $(\psi3)$ of Proposition \ref{Prop.psi_properties} we get $\operatorname{supp}\psi_{Q'} \cap \operatorname{supp}\psi_{K} = \emptyset$ for any cube $K \in \mathcal{IC}_{\mathcal{Q}}(Q_{2})$ which contradicts to
\eqref{eqq.Q_two_prime}. By the right-hand side of \eqref{eqq.Q_underline} it follows easily that in fact $\partial \underline{Q} \cap \partial Q' \cap \partial Q_{2} \neq \emptyset$. Hence,
\begin{equation}
Q_{\delta_{\epsilon}}(\underline{x}) \cap \partial Q_{2} \subset (1+\epsilon)\underline{Q} \cap (1+\epsilon)Q' \quad \hbox{for some} \quad \underline{x} \in \partial\underline{Q} \cap \partial Q' \cap \partial Q_{2},
\end{equation}
where $\delta_{\epsilon}:=\epsilon l(\underline{Q})/4$.
By \eqref{eqq.partition'}, \eqref{eqq.partition_g_k} and \eqref{eqq.Q_underline} we obtain
\begin{equation}
\notag
g_{k+1}(x) < 1 \quad \hbox{for all} \quad x \in Q_{\delta_{\epsilon}}(\underline{x}) \cap \partial Q_{2}.
\end{equation}
Consequently, it follows from \eqref{eqq.psi} that $\psi_{Q'}(x) > 0$ for all $x \in Q_{\delta_{\epsilon}}(\underline{x}) \cap \partial Q_{2}$. This observation
together with the inequality $l(Q') \le 2^{j}l(Q_{2})$ proves \eqref{eqq.Hausdorff_measure_estimate}.

As a result, taking into account $(\psi4)$ of Proposition \ref{Prop.psi_properties} we deduce \eqref{eqq.5.14''} in this case, i.e.
\begin{equation}
\begin{split}
&\sum\limits_{Q' \in \mathcal{C}_{\mathcal{Q}}(Q_{1},Q_{2})}(l(Q'))^{n} \lesssim  \sum\limits_{Q' \in \mathcal{C}_{\mathcal{Q}}(Q_{1},Q_{2})}\mathcal{H}^{n}(\operatorname{supp}\psi_{Q'} \cap \partial Q_{2}) \lesssim  \mathcal{H}^{n}(\partial Q_{2}) \lesssim (l(Q_{2}))^{n}.
\end{split}
\end{equation}

\textit{Step 2.} In the second case we have $Q_{2} \in \mathcal{SH}_{\mathcal{Q}}(Q_{1})$. We claim that
\begin{equation}
\label{eqq.6.20}
2^{-j}l(Q_{2}) \le l(Q') \le 2^{j}l(Q_{2}) \quad \hbox{for every} \quad Q' \in \mathcal{C}_{\mathcal{Q}}(Q_{1},Q_{2}).
\end{equation}
The upper bound in \eqref{eqq.6.20} is an easy consequence of \eqref{eqq.contacting_family}.
To prove the lower bound assume on the contrary that there is $Q' \in \mathcal{C}_{\mathcal{Q}}(Q_{1},Q_{2})$ with $l(Q') < 2^{-j}l(Q_{2})$. By \eqref{eqq.contacting_family} this implies that
$Q' \in \widetilde{\mathcal{D}}_{+}(S)$,
$\partial Q' \cap \partial Q_{2} \neq \emptyset$ and $\operatorname{int}Q' \cap \operatorname{int}Q_{2} = \emptyset$. Since $Q' \in \mathcal{IC}_{\mathcal{Q}}(Q_{1})$, there
is a cube $K \in \mathcal{SH}_{\mathcal{Q}}(Q_{1})$ such that $K \subset Q'$. As a result, we get $l(K) < 2^{-j}l(Q_{2})$ and $\operatorname{dist}(K,Q_{2}) < 2^{-j}l(Q_{2})$.
This gives a contradiction with $(\text{SH.1})$ of Proposition \ref{Prop.shadow_cubes_comparable}.

Using \eqref{eqq.6.20} we obtain $\mathcal{C}_{\mathcal{Q}}(Q_{1},Q_{2}) \subset \operatorname{n}_{S}(Q_{2})$. Clearly, this gives \eqref{eqq.5.14''} in the case under consideration, i.e.
\begin{equation}
\notag
\sum\limits_{Q' \in \mathcal{C}_{\mathcal{Q}}(Q_{1},Q_{2})}(l(Q'))^{n} \le \sum\limits_{\operatorname{n}_{S}(Q_{2})}(l(Q'))^{n} \lesssim (\min\{l(Q_{1}),l(Q_{2})\})^{n}.
\end{equation}

The proof is complete.

\end{proof}

We say that a family $\mathcal{F} \subset \mathcal{Q}$ has \textit{depth} at most $N$, $N \in \mathbb{N}$, and write $\operatorname{depth}\mathcal{F} \le N$,
if, for every $Q_{1},Q_{2} \in \mathcal{F}$, the inclusion
$Q_{1} \subset Q_{2}$ implies $l(Q_{2}) \le 2^{jN}l(Q_{1})$.
\begin{Prop}
\label{Prop.interaction_multiplicity}
For each $Q \in \mathcal{Q}$,
\begin{equation}
\label{eqq.523}
\operatorname{MULT}(\{\theta K:K \in \mathcal{I}_{\mathcal{Q}}(Q) \hbox{ and } l(K) \le l(Q)\}) < +\infty.
\end{equation}
\end{Prop}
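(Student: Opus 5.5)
Fix $Q \in \mathcal{Q}$ and set $\mathcal{F}:=\{K \in \mathcal{I}_{\mathcal{Q}}(Q): l(K) \le l(Q)\}$. The plan is to show that cubes of $\mathcal{F}$ whose $\theta$-dilates overlap have comparable side lengths, with a ratio bounded by a constant depending only on $j$. Then Lemma \ref{Lm.covering_multiplicity}, applied with $c=\theta<2$ and a suitable $k^{\ast}$, gives the finite bound $(2k^{\ast}+1)3^{n+1}$.

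\emph{Step 1: describe the members of $\mathcal{F}$.} Let $K \in \mathcal{F}$ with $K \neq Q$. By Remark \ref{Rem.interacting_cubes} and Definition \ref{Def.contacting_family}, some $Q' \in \mathcal{IC}_{\mathcal{Q}}(Q)$ has a neighbour $Q'' \in \mathcal{IC}_{\mathcal{Q}}(K)$. In particular $Q' \cap Q'' \neq \emptyset$, and $l(Q'),l(Q'')$ agree up to the factor $2^{j}$. As in the proof of Proposition \ref{Prop.main_geometric_ingredient}, there are two possibilities:
\begin{itemize}
\item[(a)] $K \subset \operatorname{int} Q$. Then $K \in \mathcal{SH}_{\mathcal{Q}}(Q)$, since $Q''\subset K$ lies in no shadow cube of $K$ while $Q'$ lies in no shadow cube of $Q$.
\item[(b)] $\operatorname{int}K\cap\operatorname{int}Q=\emptyset$ and $\partial K\cap\partial Q\neq\emptyset$.
\end{itemize}
For case (b) I would additionally invoke (R.2) to show $l(K) \ge 2^{-j}l(Q)$. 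If $K$ were much smaller and touched $Q$, consider the dyadic cube containing $K$ of side $2^{-j}l(\overline{K}_{\mathcal{Q}})$ and argue via Lemma \ref{Lm.relative_cubes}. I am not confident this step closes, so I will keep a fallback (see the last paragraph).

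\emph{Step 2: handle case (a).} Proposition \ref{Prop.covering_multiplicity_shadow} already gives $\operatorname{MULT}(\{4K: K \in \mathcal{SH}_{\mathcal{Q}}(Q)\}) \le 5^{n+2}$. Since $\theta<4$, the same bound holds for the $\theta$-dilates.

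\emph{Step 3: handle case (b).} In the good case, every such $K$ satisfies $2^{-j}l(Q) \le l(K)\le l(Q)$. Each of these cubes meets $\partial Q$, so they lie in at most two dyadic generations at scale $\approx l(Q)$. Remark \ref{Rem.dyadic_lattice_multiplicity} then bounds their $\theta$-multiplicity by $2\cdot 3^{n+1}$. Adding the contribution of $Q$ itself, the three groups together have multiplicity at most $5^{n+2}+2\cdot3^{n+1}+1<\infty$.

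\emph{Main obstacle and fallback.} The hard part is the lower bound on $l(K)$ in case (b). If it fails, I would instead use the locality of the $\psi$'s. The cube $Q'$ has $l(Q')\ge 2^{-j}l(K)$, and for fixed $Q'$ the cubes $K$ with $\mathcal{IC}_{\mathcal{Q}}(K)\ni Q''$, $Q''\in\operatorname{n}_S(Q')$, are at most $\#\operatorname{n}_S(Q')\lesssim 1$, by disjointness (IC.2). Any point $x\in\theta K\cap\theta K'$ then forces $Q',Q'_{*}\in\mathcal{IC}_{\mathcal{Q}}(Q)$ to be comparable near $x$, because icebergs are nonoverlapping (Lemma \ref{Lm.shadow_nonoverlapping}). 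This bounds the number of relevant $Q'$ near $x$ locally, and hence the multiplicity. This only needs finiteness, which is all that \eqref{eqq.523} asserts.
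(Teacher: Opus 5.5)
Your plan (show that cubes of $\mathcal{F}$ whose $\theta$-dilates meet have side lengths comparable up to a fixed power of $2^{j}$, then apply Lemma \ref{Lm.covering_multiplicity}) is also the paper's plan. The way you try to get the comparability fails, however, because both structural claims are false.

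\emph{Claim (a) fails.} Let $S$ be a hyperplane, $Q\in\widetilde{\mathcal{D}}_{+}(S)$, $Q'\in\widetilde{\mathcal{D}}(S)$ a child of $Q$, and $\mathcal{Q}:=\widetilde{\mathcal{D}}_{+}(S)\setminus\{Q'\}$. One checks (R.1)--(R.3) directly (a relative equal to $Q'$ is covered by $Q$), and $Q'\in\mathcal{IC}_{\mathcal{Q}}(Q)$. Let $K^{\ast}$ be a child of $Q$ touching $Q'$, both away from $\partial Q$, and let $K$ be a child of $K^{\ast}$ touching $Q'$, with $S$ through the common face. Then $K\in\mathcal{IC}_{\mathcal{Q}}(K)\cap\operatorname{n}_{S}(Q')$, so $K\in\mathcal{I}_{\mathcal{Q}}(Q)$ and $K\subset\operatorname{int}Q$, yet $\overline{K}_{\mathcal{Q}}=K^{\ast}\neq Q$. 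The same shortcut appears in Step 2 of the proof of Proposition \ref{Prop.main_geometric_ingredient}, but the paper's proof of the present statement does not use it.

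\emph{The bound in (b) fails.} Place $Q'$ at $\partial Q$ and let $K$ be a cube of side $2^{-2j}l(Q)$ just outside $Q$ touching $Q'$. This gives a case-(b) member with $l(K)=2^{-2j}l(Q)$, not $\ge 2^{-j}l(Q)$.

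\emph{The fallback does not repair this.} Its starting point $l(Q')\ge 2^{-j}l(K)$ does not follow from the neighbour relation. That relation only gives $l(Q')\ge 2^{-j}l(Q'')$, and $Q''\in\mathcal{IC}_{\mathcal{Q}}(K)$ may be much smaller than $K$. The appeal to nonoverlapping icebergs is also unfounded: $\mathcal{IC}_{\mathcal{Q}}(Q)$ always contains $Q$ itself, and $Q'$ as well in the example above. More fundamentally, no step of the fallback invokes (R.2), and some such input is indispensable. Let $\mathcal{Q}$ consist of $\widetilde{\mathcal{D}}_{0}(S)$ together with nested cubes $K_{i}$ of side $2^{-ij}$, $i\ge 1$, lying outside a unit cube $Q$ and all containing a point $p\in\partial Q\cap S$. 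Then each $K_{i}$ interacts with $Q$, via $K_{i}\in\mathcal{IC}_{\mathcal{Q}}(K_{i})$ and the cube of $\mathcal{B}(Q)=\mathcal{IC}_{\mathcal{Q}}(Q)$ of side $l(K_{i})$ containing $p$. Every $\theta K_{i}$ contains $p$, so the multiplicity is infinite, while (R.1) and (R.3) hold and only (R.2) fails.

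\emph{The missing idea} is to play (R.2) against the iceberg property of the witness; this is the paper's depth argument.
\begin{itemize}
\item Suppose $Q_{1}\in\mathcal{IC}_{\mathcal{Q}}(Q)$ and $K_{2}\in\mathcal{Q}$ satisfy $2^{j}l(Q_{1})<l(K_{2})\le 2^{-2j}l(Q)$ and $\operatorname{dist}(K_{2},Q_{1})\le\frac{7}{4}l(K_{2})$. Lemma \ref{Lm.relative_cubes} gives a relative of $K_{2}$ containing $Q_{1}$; it meets $S$ because $Q_{1}$ does. Then (R.2) gives $K_{3}\in\mathcal{Q}$ with $K_{3}\supset Q_{1}$ and $l(K_{3})\le 2^{j}l(K_{2})<l(Q)$.
\item Hence $K_{3}\subsetneq Q$, so $Q_{1}$ lies in the box of a shadow cube of $Q$. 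This contradicts $Q_{1}\in\mathcal{IC}_{\mathcal{Q}}(Q)$.
\item Take $K\in\mathcal{I}_{\mathcal{Q}}(Q)$ with $l(K)\le 2^{-2j}l(Q)$ and witness $Q_{1}$, which touches $K$. The choice $K_{2}=K$ gives $2^{-j}l(K)\le l(Q_{1})\le 2^{j}l(K)$.
\item For a second such cube $K'$ with $\theta K\cap\theta K'\neq\emptyset$, the choice $K_{2}=K'$ gives $l(K')<2^{4j}l(K)$.
\item Lemma \ref{Lm.covering_multiplicity} with $k^{\ast}=3$ and $c=\theta$ then bounds the multiplicity of these small cubes.
\item The two top generations $l(K)\in\{l(Q),2^{-j}l(Q)\}$ are handled by Remark \ref{Rem.dyadic_lattice_multiplicity}.
\end{itemize}
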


\begin{proof}
It will be convenient to split the proof into several steps. We put
\begin{equation}
\label{eqq.524}
\mathcal{F}:=\{K \in \mathcal{I}_{\mathcal{Q}}(Q):l(K) \le 2^{-2j}l(Q)\}
\end{equation}

\textit{Step 1.} Since $\mathcal{I}_{\mathcal{Q}}(Q) \subset \widetilde{\mathcal{D}}_{+}(S)$, by \eqref{eqq.dyadic_S_2} and Lemma \ref{Lm.covering_multiplicity} it is sufficient to show that
\begin{equation}
\label{eqq.525}
\operatorname{MULT}(\{\theta K:K \in \mathcal{F}\}) < +\infty.
\end{equation}

\textit{Step 2.} We show that the depth of the family $\mathcal{F}$ is at most 2. Indeed, assume the contrary. Then, there are cubes $K_{1},K_{2} \in \mathcal{F}$ such that $K_{2} \supset K_{1}$
and $l(K_{2}) \geq 2^{3j}l(K_{1})$. There exists a cube $Q_{1} \in \mathcal{IC}_{\mathcal{Q}}(Q)$ such that $\operatorname{n}_{S}(Q_{1}) \cap \mathcal{IC}_{\mathcal{Q}}(K_{1}) \neq \emptyset$. This implies that $l(Q_{1}) \le 2^{j}l(K_{1})$.
Hence, $l(K_{2}) \geq 2^{2j}l(Q_{1})$. Since $K_{1} \cap Q_{1} \neq \emptyset$, there is 
$Q_{2} \in \mathcal{REL}_{S}(K_{2})$ containing $Q_{1}$ with $l(Q_{2}) > l(Q_{1})$. By Definition \ref{Def.regular_family} there is $Q_{3} \in \mathcal{Q}$ such that
$l(Q_{3}) \le 2^{j}l(K_{2}) \le 2^{-j}l(Q)$, $Q_{3} \supset Q_{1}$ and $l(Q_{3}) > l(Q_{1})$. This contradicts to Definition \ref{Def.shadow_iceberg}.

\textit{Step 3.} Now if $\theta K_{1} \cap \theta K_{2} \neq \emptyset$ and $l(K_{2}) \geq 2^{4j}l(K_{1})$ for some $K_{1},K_{2} \in \mathcal{F}$, then there is $Q_{1} \in \mathcal{REL}_{S}(K_{2})$ containing 
$Q_{1}$ such that $l(Q_{1}) \geq 2^{-j}l(K_{2})$.
Hence, there is $Q_{2} \supset Q_{1}$ such that $2^{3j}l(K_{1}) \le l(Q_{2}) \le 2^{j}l(K_{2})$. Hence, $\operatorname{depth}\mathcal{F} \geq 3$.
It remains to apply Lemma \ref{Lm.covering_multiplicity}.


\end{proof}

\subsection{Fine properties of the extension}
We start with a relatively simple observation.
Keeping in mind Definition \ref{Def.abstract_extension} and \eqref{eqq.maximal_function} we can rewrite \eqref{eqq.abstract_extension}
\begin{equation}
\label{eqq.abstract_extension'}
\operatorname{Ext}_{\mathcal{Q}}[f](x):=\sum\limits_{Q' \in \widetilde{\mathcal{D}}_{+}(S)}a_{Q'}\psi_{Q'}(x), \quad x \in \mathbb{R}^{n+1},
\end{equation}
where $a_{Q'}=M^{S}_{\theta Q}[f]$ provided that $Q' \in \mathcal{IC}_{\mathcal{Q}}(Q)$ for some $Q \in \mathcal{Q}$.

\begin{Lm}
\label{Lm.auxiliary}
There is a constant $C=C(j,n,\gamma) > 0$ such that, for each $f \in L_{1}^{\rm loc}(\mathcal{H}^{n}\lfloor_{S})$,
\begin{equation}
\label{eqq.5.23'}
\int\limits_{\mathbb{R}^{n+1} \setminus U_{\frac{1}{2^{2j}}}(S)}\gamma(x)\|\nabla \operatorname{Ext}_{\mathcal{Q}}[f](x)\|\,dx
\le C \mathcal{LN}_{\gamma}[f].
\end{equation}
\end{Lm}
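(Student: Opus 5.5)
The plan is to show that, away from the $2^{-2j}$-neighbourhood of $S$, only the cubes of the zeroth and first generations of $\widetilde{\mathcal{D}}_{+}(S)$ contribute to $\operatorname{Ext}_{\mathcal{Q}}[f]$. These cubes all have side length comparable to $1$, and the corresponding coefficients are averages of $f$ over pieces of $S$ of unit size, so the weighted gradient is controlled by the discretized Lebesgue functional of Proposition \ref{Prop.discretization_of_Lebesgue}.

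\textit{Step 1 (localization).} I start from the representation \eqref{eqq.abstract_extension'}. By ($\psi1$) of Proposition \ref{Prop.psi_properties}, for $Q'\in\widetilde{\mathcal{D}}_{l}(S)$ the function $\psi_{Q'}$ vanishes wherever $g_{l+1}=1$, and in particular wherever $\operatorname{dist}(x,S)\le 2^{-j(l+2)}(1+2\epsilon)$. Conversely, $\operatorname{supp}\psi_{Q'}\subset(1+\epsilon)Q'$, and $\underline{\theta}Q'\cap S\neq\emptyset$, so $\operatorname{dist}(\operatorname{supp}\psi_{Q'},S)\lesssim 2^{-jl}$. Hence for $x\notin U_{2^{-2j}}(S)$ only cubes with $l\in\{0,1\}$ (possibly $l\le 2$; the exact range is immaterial) have $\psi_{Q'}$ nonzero near $x$. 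Moreover, by ($\psi2$) and the definition of $g_{0}$, the sum $\sum_{Q'}\psi_{Q'}=g_{0}$ vanishes outside $\bigcup_{Q\in\widetilde{\mathcal{D}}_{0}(S)}(1+\epsilon)Q$. On this bounded region the gradient is therefore
\[
\nabla\operatorname{Ext}_{\mathcal{Q}}[f]=\sum_{l\le 2}\sum_{Q'\in\widetilde{\mathcal{D}}_{l}(S)}a_{Q'}\nabla\psi_{Q'} .
\]

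\textit{Step 2 (pointwise bound).} By ($\psi5$), $\|\nabla\psi_{Q'}\|\le C_{\psi}2^{jl}\lesssim 1$ for these cubes, and by ($\psi4$) at most $3^{n+2}$ of them act near each point. The coefficients satisfy $a_{Q'}=M^{S}_{\theta Q}[f]$ with $Q'\in\mathcal{IC}_{\mathcal{Q}}(Q)$, so $Q'\subset Q$. Since $l(Q')\ge 2^{-2j}$ and $l(Q)\le 1$, Remark \ref{Rem.massive_part} gives
\[
|a_{Q'}|\lesssim\int_{\theta Q\cap S}|f|\,d\mathcal{H}^{n}.
\]
Here $Q$ lies in $\widetilde{\mathcal{D}}_{0}(S)$, or in $\widetilde{\mathcal{D}}_{1}(S)$ when $Q=Q'$. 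In either case $\theta Q\subset 2\widehat{Q}$, where $\widehat{Q}\in\widetilde{\mathcal{D}}_{0}(S)$ is the generation-zero cube containing $Q'$. Consequently, for $x\in(1+\epsilon)\widehat{Q}$ we obtain
\[
\|\nabla\operatorname{Ext}_{\mathcal{Q}}[f](x)\|\lesssim\sum_{K}\int_{2K\cap S}|f|\,d\mathcal{H}^{n},
\]
where $K$ ranges over the at most $3^{n+1}$ cubes $K\in\widetilde{\mathcal{D}}_{0}(S)$ with $x\in(1+\epsilon)K$ (Remark \ref{Rem.dyadic_lattice_multiplicity}).

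\textit{Step 3 (integration).} I integrate against $\gamma$ over each $(1+\epsilon)K$, using
\[
\int_{(1+\epsilon)K}\gamma\lesssim\gamma_{K}
\]
by Lemma \ref{Lm.doubling}. Summing over $K$ with bounded multiplicity gives a bound of a constant times $\widetilde{\mathcal{LN}}_{\gamma,2}[f]$. Proposition \ref{Prop.discretization_of_Lebesgue} then yields $\widetilde{\mathcal{LN}}_{\gamma,2}[f]\approx\mathcal{LN}_{\gamma}[f]$, which is \eqref{eqq.5.23'}.

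The main obstacle is the bookkeeping in Step 1. I have to check precisely which generations survive outside $U_{2^{-2j}}(S)$, and confirm that for each such $Q'$ the owning cube $Q\in\mathcal{Q}$ with $Q'\in\mathcal{IC}_{\mathcal{Q}}(Q)$ has unit-comparable size, so that $\theta Q$ stays within a bounded dilate of the level-zero cube above $Q'$. This follows from $Q'\subset Q\subset$ some cube of $\mathcal{D}_{0}(S)\subset\mathcal{Q}$, via (R.1) and Lemma \ref{Lm.disjoint_iceberg}.
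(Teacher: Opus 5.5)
Your proposal is correct and follows essentially the same route as the paper. Outside $U_{2^{-2j}}(S)$ you restrict to generations $0,1,2$, bound $|a_{Q'}|$ by an integral of $|f|$ over a unit-scale dilate of the generation-zero cube containing $Q'$, integrate against $\gamma$ using Lemma \ref{Lm.doubling}, and finish with Proposition \ref{Prop.discretization_of_Lebesgue} (you take $c=2$ where the paper takes $c=\theta$).

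Your uniform bound $\theta Q\subset 2\widehat{Q}$, valid for any owner $Q$ with $Q'\subset Q\subset\widehat{Q}$, is actually cleaner than the paper's dichotomy ``$a_{Q'}=M^{S}_{\theta Q'}[f]$ or $M^{S}_{\theta K}[f]$''. That dichotomy does not literally cover generation-two cubes owned by generation-one cubes. One correction: drop your remark that only $l\in\{0,1\}$ survive, since supports of generation-two cubes can reach outside $U_{2^{-2j}}(S)$.
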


\begin{proof}
We split the proof int several steps.

\textit{Step 1.} Note that if $Q \in \widetilde{\mathcal{D}}_{k}(S)$ and $\operatorname{supp}\psi_{Q} \cap \mathbb{R}^{n+1} \setminus U_{\frac{1}{2^{2j}}}(S) \neq \emptyset$, then $k\in \{0,1,2\}$.
Indeed, if $k > 2$, then $l(Q) \le 2^{-3j}$, $\operatorname{supp}\psi_{Q} \subset (1+\epsilon)Q$ and $\underline{\theta}Q \cap S \neq \emptyset$. Hence, by \eqref{eqq.theta} and $(\text{D.5.2})$, $(\text{D.5.3})$
we would get $\operatorname{supp}\psi_{Q} \subset U_{\frac{1}{2^{2j}}}(S)$. As a result,
\begin{equation}
\label{eqq.5.24'}
\int\limits_{\mathbb{R}^{n+1} \setminus U_{\frac{1}{2^{2j}}}(S)}\gamma(x)\|\nabla \operatorname{Ext}_{\mathcal{Q}}[f]\|\,dx
\le \sum\limits_{k=0}^{2}\sum\limits_{Q \in \widetilde{\mathcal{D}}_{k}(S)}\int\limits_{(1+\epsilon)Q}\gamma(x)\|\nabla \operatorname{Ext}_{\mathcal{Q}}[f](x)\|\,dx.
\end{equation}

\textit{Step 2.} By \eqref{eqq.psi_gradient} and \eqref{eqq.abstract_extension'} (the corresponding constant depends only on $n,j$ and $\operatorname{C}_{\psi}$)
\begin{equation}
\label{eqq.5.25'}
\|\nabla \operatorname{Ext}_{\mathcal{Q}}[f](x)\| \lesssim  \sum\limits_{k=0}^{2}
\sum\limits_{Q' \in \widetilde{\mathcal{D}}_{k}(S)}\chi_{\operatorname{supp}\psi_{Q'}}(x)|a_{Q'}| \quad \hbox{for all} \quad x \in \mathbb{R}^{n+1} \setminus U_{\frac{1}{2^{2j}}}(S).
\end{equation}

\textit{Step 3.}
We recall that $\widetilde{\mathcal{D}}_{0}(S) \subset \mathcal{Q}$. Since $Q' \in \mathcal{IC}_{\mathcal{Q}}(Q')$ for every $Q' \in \mathcal{Q}$, we have
$a_{Q'} = M^{S}_{\theta Q'}[f]$ for every $Q' \in \widetilde{\mathcal{D}}_{0}(S)$. If $Q' \in \bigcup_{i=0}^{2}\widetilde{\mathcal{D}}_{i}(S)$, then either $a_{Q'} = M^{S}_{\theta Q'}[f]$, or
$a_{Q'} = M^{S}_{\theta K}[f]$
for the unique cube $K \in \widetilde{\mathcal{D}}_{0}(S)$ containing $Q'$. In the later case $\mathcal{H}^{n}(\theta Q' \cap S) \approx \mathcal{H}^{n}(\theta K \cap S)$. Hence, if $Q' \in \widetilde{\mathcal{D}}_{k}(S)$, $k \in \{0,1,2\}$, then taking into account Remark \ref{Rem.massive_part} we can estimate
\begin{equation}
\label{eqq.5.26'}
|a_{Q'}| \lesssim M^{S}_{\theta K}[|f|] \lesssim \int\limits_{\theta K \cap S}|f(x)|\,d\mathcal{H}^{n}(x),
\end{equation}
where $K \supset Q'$ (possibly $K=Q'$) and $K \in \widetilde{\mathcal{D}}_{0}(S)$.  As a result, given $K \in \widetilde{\mathcal{D}}_{0}(S)$,
\begin{equation}
\label{eqq.5.36'''}
\int\limits_{(1+\epsilon)K \setminus U_{\frac{1}{2^{2j}}}(S)}\gamma(x)\|\nabla \operatorname{Ext}_{\mathcal{Q}}[f](x)\|\,dx  \lesssim \gamma_{(1+\epsilon)K}\sum\limits_{\substack{K' \in \widetilde{\mathcal{D}}_{0}(S) \\
\theta K' \cap \theta K \neq \emptyset}}\int\limits_{\theta K' \cap S}|f(x)|\,d\mathcal{H}^{n}(x).
\end{equation} 

\textit{Step 4.} Combining \eqref{eqq.5.24'}, \eqref{eqq.5.36'''} and taking into account Lemma \ref{Lm.doubling} we deduce 
\begin{equation}
\begin{split}
\label{eqq.5.26'}
&\int\limits_{\mathbb{R}^{n+1} \setminus U_{\frac{1}{2^{2j}}}(S)}\gamma(x)\|\nabla \operatorname{Ext}_{\mathcal{Q}}[f]\|\,dx \lesssim \sum\limits_{K \in \widetilde{\mathcal{D}}_{0}(S)}\gamma_{K}\sum\limits_{\substack{K' \in \widetilde{\mathcal{D}}_{0}(S) \\
\theta K' \cap \theta K \neq \emptyset}}\int\limits_{\theta K' \cap S}|f(x)|\,d\mathcal{H}^{n}(x) \\
&\lesssim \sum\limits_{K \in \widetilde{\mathcal{D}}_{0}(S)}\gamma_{K}\int\limits_{\theta K \cap S}|f(x)|\,d\mathcal{H}^{n}(x). 
\end{split}
\end{equation}
Now \eqref{eqq.5.23'} follows from \eqref{eqq.5.26'} in combination with Proposition \ref{Prop.discretization_of_Lebesgue} applied with $c=\theta$.

The proof is complete.

\end{proof}

The following observation is an easy consequence of Definition \ref{Def.admissible_for_weight} and Definition \ref{Def.contacting_family}.
\begin{Lm}
\label{Lm.forgotten}
If $\mathcal{Q}$ is regular for $\gamma$, then $\underline{\gamma}_{K} \approx \underline{\gamma}_{Q}$ for each $Q \in \mathcal{Q}$
and every $K \in \mathcal{I}_{\mathcal{Q}}(Q)$. The corresponding constants depend only on $\operatorname{C}_{\gamma}$, $\operatorname{C}_{\operatorname{A}}$
and $n$.
\end{Lm}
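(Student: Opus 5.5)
The plan is to argue in two directions, both reducing to a comparison of essential infima along chains of comparable cubes via Lemma \ref{Lm.doubling}. We read "regular for $\gamma$" as: $\mathcal{Q}\in\mathfrak{R}(S)$ and $\mathcal{Q}$ is admissible for $\gamma$ in the sense of Definition \ref{Def.admissible_for_weight}. Fix $Q\in\mathcal{Q}$ and $K\in\mathcal{I}_{\mathcal{Q}}(Q)$. By Definition \ref{Def.contacting_family} there exist $Q'\in\mathcal{IC}_{\mathcal{Q}}(Q)$ and $K'\in\mathcal{IC}_{\mathcal{Q}}(K)$ with $K'\in\operatorname{n}_{S}(Q')$. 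By Remark \ref{Rem.interacting_cubes} the relation is symmetric, so it suffices to prove $\underline{\gamma}_{K}\lesssim\underline{\gamma}_{Q}$ for an arbitrary such pair; the reverse inequality follows by swapping the roles of $Q$ and $K$.

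\textbf{Step 1 (icebergs).} For every $Q\in\mathcal{Q}$ and every $Q'\in\mathcal{IC}_{\mathcal{Q}}(Q)$ I would show $\underline{\gamma}_{Q'}\approx\underline{\gamma}_{Q}$. The inequality $\underline{\gamma}_{Q}\le\underline{\gamma}_{Q'}$ is trivial since $Q'\subset Q$. For the other direction I claim $l(Q')\ge 2^{-2j}l(Q)$ (or a similar bounded ratio). Indeed, if $Q'$ were much smaller and $Q'\subset Q$, then Lemma \ref{Lm.relative_cubes} gives a relative $R\in\mathcal{REL}_{S}(Q)$ with $R\supset Q'$ and $l(R)=2^{-j}l(Q)$. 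Then (R.2) of Definition \ref{Def.regular_family} yields $K_{0}\in\mathcal{Q}\setminus\{\overline{Q}_{\mathcal{Q}}\}$ with $K_{0}\supset R$ and $l(K_{0})\le 2^{j}l(Q)$; dyadic nestedness then forces $K_{0}\subset Q$. Taking a minimal cube of $\mathcal{Q}$ strictly inside $Q$ and containing $Q'$, we get a cube of $\mathcal{SH}_{\mathcal{Q}}(Q)$ containing $Q'$. This contradicts $Q'\in\mathcal{IC}_{\mathcal{Q}}(Q)$ unless $l(Q')$ is comparable to $l(Q)$, i.e. $l(Q')\ge 2^{-j}l(K_{0})$ up to one more factor $2^{j}$. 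Once the sizes are comparable, iterating (D1) of Lemma \ref{Lm.doubling} a bounded number of times (at most $2j$) gives $\underline{\gamma}_{Q'}\le(2^{n+1}\operatorname{C}_{\gamma})^{2j}\underline{\gamma}_{Q}$. This requires $l(Q)\le 1$, which holds because $\mathcal{Q}\subset\widetilde{\mathcal{D}}_{+}(S)$.

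\textbf{Step 2 (neighbors).} Since $K'\in\operatorname{n}_{S}(Q')$, Lemma \ref{Lm.oscillation_on_neighboring_cubes} (or directly (D1)–(D2)) gives $\underline{\gamma}_{K'}\approx\underline{\gamma}_{Q'}$ with constants depending only on $n$, $j$ and $\operatorname{C}_{\gamma}$. Chaining this with Step 1 applied to both $Q$ and $K$ yields
\[
\underline{\gamma}_{K}\approx\underline{\gamma}_{K'}\approx\underline{\gamma}_{Q'}\approx\underline{\gamma}_{Q}.
\]

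\textbf{Main obstacle and role of admissibility.} The hard part is Step 1: showing that iceberg cubes have size comparable to their owner. The risk is that $\mathcal{IC}_{\mathcal{Q}}(Q)$ contains small cubes near $\partial Q$ that lie outside every shadow box. If the size-comparability argument above fails for such cubes, I would instead use admissibility. Passing from $Q'$ up to $Q$ or to $\overline{Q}_{\mathcal{Q}}$, the constant $\operatorname{C}_{\operatorname{A}}$ controls a jump of one generation in $\mathcal{Q}$. Together with Proposition \ref{Prop.main_geometric_ingredient} and the depth bounds from the proof of Proposition \ref{Prop.interaction_multiplicity}, this shows that $Q$ and $K$ are separated by at most a bounded number of covering steps, i.e. $K=Q$, $K=\overline{Q}_{\mathcal{Q}}$, $Q=\overline{K}_{\mathcal{Q}}$, or the two share a covering cube. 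In each of these configurations, Definition \ref{Def.admissible_for_weight} combined with monotonicity of essential infima under inclusion gives the two-sided comparison with constants depending only on $\operatorname{C}_{\gamma}$, $\operatorname{C}_{\operatorname{A}}$ and $n$.
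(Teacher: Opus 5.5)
There is a genuine gap in Step~1, and your whole chain $\underline{\gamma}_{K}\approx\underline{\gamma}_{K'}\approx\underline{\gamma}_{Q'}\approx\underline{\gamma}_{Q}$ rests on it. The claim that iceberg cubes have size comparable to their owner is false. So is the weaker claim $\underline{\gamma}_{Q'}\approx\underline{\gamma}_{Q}$ for $Q'\in\mathcal{IC}_{\mathcal{Q}}(Q)$.

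Take $\mathcal{Q}=\widetilde{\mathcal{D}}_{0}(S)$. It satisfies (R.1)--(R.3): in (R.2) the unit dyadic cube containing a relative does the job, and (R.3) is vacuous. It is vacuously admissible, since no cube has a covering cube. All shadows are empty, so $\mathcal{IC}_{\mathcal{Q}}(Q)=\mathcal{B}(Q)$ contains cubes of every side $2^{-jk}$. Now let $\gamma(x)=\|x-x_{0}\|^{-\alpha}$ with $x_{0}\in S$ and $0<\alpha<n+1$, which is an $A_{1}$ weight. The iceberg cubes $Q'\ni x_{0}$ of the unit cube $Q\ni x_{0}$ satisfy $\underline{\gamma}_{Q'}\ge (l(Q'))^{-\alpha}\to\infty$, while $\underline{\gamma}_{Q}\le 2^{\alpha}$. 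Icebergs of unbounded depth are in fact typical: $\mathcal{G}_{\gamma}(q)=\widetilde{\mathcal{D}}_{0}(S)$ when $\gamma\equiv1$, and the families of Theorem~\ref{Th.traffic_light} are built from sparse levels $\widetilde{\mathcal{D}}_{k_{s}}(S)$.

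The faulty inference is your use of (R.2). The cube $K_{0}$ it provides may be $Q$ itself, since $Q\in\mathcal{Q}\setminus\{\overline{Q}_{\mathcal{Q}}\}$, $Q\supset R$ and $l(Q)\le 2^{j}l(Q)$; so no cube of $\mathcal{Q}$ strictly inside $Q$ is produced. Moreover, a cube $R\subset Q$ is not a relative of $Q$ at all, because (Rel.2) fails with $K=Q$. Relatives of $Q$ are cubes lying just outside $Q$, which is how Lemma~\ref{Lm.relative_cubes} is used elsewhere in the paper. Consequently the witnesses $Q',K'$ can sit arbitrarily deep in the icebergs and carry no information about $\underline{\gamma}_{Q}$ or $\underline{\gamma}_{K}$. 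They can only be used to locate $K$ relative to $Q$.

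Your fallback goes in the right direction, and it is what the paper's one-line justification via Definitions~\ref{Def.admissible_for_weight} and~\ref{Def.contacting_family} amounts to: compare $Q$ and $K$ directly through a bounded number of covering steps, using monotonicity of $\underline{\gamma}$ under inclusion. As written, however, it is only asserted, and its list of configurations is incomplete. Two adjacent cubes of $\widetilde{\mathcal{D}}_{0}(S)$ interact in the example above, yet neither covers the other and they have no common covering cube. More generally, touching cubes with disjoint interiors and different covering cubes interact; this is the first case in the proof of Proposition~\ref{Prop.main_geometric_ingredient}. There admissibility says nothing, and you need (D1)--(D2) of Lemma~\ref{Lm.doubling} for cubes of comparable size.

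What actually bounds the number of steps is (R.2) applied to a relative lying just outside a cube, together with the fact that $Q'$ lies in no shadow box of $Q$.
\begin{itemize}
\item Nested case. Suppose $K\subsetneq K_{1}\in\mathcal{SH}_{\mathcal{Q}}(Q)$. Then $Q'$ lies outside $K_{1}$: it is not inside $K_{1}$ by the iceberg property, and it cannot contain $K_{1}$ by the size constraint in $\operatorname{n}_{S}$. It also touches $K'\subset K_{1}$. If $l(Q')<2^{-j}l(K_{1})$, the dyadic cube of side $2^{-j}l(K_{1})$ containing $Q'$ lies in $\mathcal{REL}_{S}(K_{1})$. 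Then (R.2) for $K_{1}$ gives a cube of $\mathcal{Q}\setminus\{Q\}$ of side at most $l(Q)$ containing $Q'$, i.e.\ a cube strictly inside $Q$, which is a contradiction. Hence $l(K)\ge 2^{-2j}l(K_{1})$, so $K$ is at most three covering steps below $Q$, and admissibility finishes.
\item Disjoint case. If $l(K)\ge 2^{-j}l(Q)$, then $K\in\operatorname{n}(Q)$ and doubling suffices. If $l(K)\le 2^{-2j}l(Q)$, apply (R.2) to the relative of $Q$ containing $K$. This gives $K_{2}\in\mathcal{Q}$ with $K_{2}\supset K$, and $K_{2}$ is a neighbor of $Q$; it cannot contain $Q$, since it is not $\overline{Q}_{\mathcal{Q}}$. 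The same device applied to each cube of $\mathcal{Q}$ strictly between $K$ and $K_{2}$ shows its side is either at most $2^{2j}l(K)$ or at least $2^{-j}l(Q)$, which bounds their number. Admissibility along this chain, plus doubling between $K_{2}$ and $Q$, gives the claim.
\end{itemize}
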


Now we are ready to state the main result of this section.
\begin{Th}
\label{Th.extension_main}
If $\mathcal{Q}$ is admissible for $\gamma$, then there is a constant $C=C(j,n,\gamma) > 0$ such that, for each $f \in L_{1}^{\rm loc}(\mathcal{H}^{n}\lfloor_{S})$,
\begin{equation}
\label{eqq.main_estimate}
\int\limits_{\mathbb{R}^{n+1}}\gamma(x)\|\nabla \operatorname{Ext}_{\mathcal{Q}}[f](x)\|\,dx \le C \Bigl(\mathcal{LN}_{\gamma}[f] + \sum\limits_{Q \in \mathcal{Q}}\underline{\gamma}_{Q}\mathcal{E}^{S}_{4Q}[f]\Bigr).
\end{equation}
\end{Th}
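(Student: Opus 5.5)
We split $\mathbb{R}^{n+1}\setminus S$ into the far region $\mathbb{R}^{n+1}\setminus U_{2^{-2j}}(S)$ and the near region $U_{2^{-2j}}(S)\setminus S$. On the far region Lemma \ref{Lm.auxiliary} already gives the bound $C\,\mathcal{LN}_{\gamma}[f]$. Since $\mathcal{H}^{n+1}(S)=0$ by (Por.1), only the near region remains to be treated.

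\emph{Step 1: pointwise localization of the gradient.} By \eqref{eqq.abstract_extension} and ($\psi2$), $\sum_{Q\in\mathcal{Q}}\Psi_{Q}=g_{0}$. By ($\psi1$), $g_{0}\equiv 1$ on $U_{2^{-2j}}(S)$ (after slight shrinking), so $\sum_{Q}\nabla\Psi_{Q}=0$ there. Hence, for $x\in\operatorname{supp}\Psi_{Q}$ in the near region,
\[
\nabla\operatorname{Ext}_{\mathcal{Q}}[f](x)=\sum_{K\in\mathcal{Q}}\bigl(M^{S}_{\theta K}[f]-M^{S}_{\theta Q}[f]\bigr)\nabla\Psi_{K}(x).
\]
The only nonzero terms are those with $K\in\mathcal{I}_{\mathcal{Q}}(Q)$, and $x$ lies in $\operatorname{supp}\psi_{Q'}$ with $Q'\in\mathcal{C}_{\mathcal{Q}}(Q,K)$ (Lemma \ref{Lm.nonzero_gradient}). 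Using ($\psi5$) and the local finiteness of Remark \ref{Rem.loc_finite_2}, we obtain
\[
\int_{U_{2^{-2j}}(S)}\gamma\|\nabla\operatorname{Ext}_{\mathcal{Q}}[f]\|\lesssim\sum_{Q\in\mathcal{Q}}\sum_{K\in\mathcal{I}_{\mathcal{Q}}(Q),\,l(K)\le l(Q)}|M^{S}_{\theta K}[f]-M^{S}_{\theta Q}[f]|\sum_{Q'\in\mathcal{C}_{\mathcal{Q}}(Q,K)\cup\mathcal{C}_{\mathcal{Q}}(K,Q)}\frac{\gamma_{(1+\epsilon)Q'}}{l(Q')}.
\]
Here the symmetry of Remark \ref{Rem.interacting_cubes} lets us restrict to $l(K)\le l(Q)$.

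\emph{Step 2: weights and geometry.} For $Q'$ in a contacting family we use $A_{1}^{\rm loc}$ and Lemma \ref{Lm.doubling} to write $\gamma_{(1+\epsilon)Q'}/l(Q')\lesssim\underline{\gamma}_{Q'}l(Q')^{n}$. Since $Q'\subset Q$ or $Q'\subset K$, we have $\underline{\gamma}_{Q'}\ge\underline{\gamma}_{Q}$, which runs the wrong way. To fix this we use that $Q'$ lies in an iceberg, so it is not contained in a shadow cube. I would then argue, via Lemma \ref{Lm.oscillation_on_neighboring_cubes} and the admissibility chain $\underline{\gamma}_{Q'}\le C\underline{\gamma}_{Q}$ from Lemma \ref{Lm.forgotten}, that $\underline{\gamma}_{Q'}\lesssim\underline{\gamma}_{Q}\approx\underline{\gamma}_{K}$. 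I expect this to be the main obstacle. Admissibility is only stated for a cube and its covering cube, so I would try to pass through the shadow cube of $Q$ nearest to $Q'$ and use the $q$-quantization of the green construction. Once this is in place, Proposition \ref{Prop.main_geometric_ingredient} bounds the inner sum by $\underline{\gamma}_{K}\,l(K)^{n}$.

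\emph{Step 3: differences of averages.} For $K\in\mathcal{I}_{\mathcal{Q}}(Q)$ with $l(K)\le l(Q)$, either $K$ is a neighbor or $K\in\mathcal{SH}_{\mathcal{Q}}(Q)$, and in both cases $\theta K\subset 4Q$. By Lemma \ref{Lm.subadditivity_best_approximation} and Remark \ref{Rem.massive_part},
\[
|M^{S}_{\theta K}[f]-M^{S}_{\theta Q}[f]|\lesssim(l(Q)/l(K))^{n}\mathcal{E}^{S}_{4Q}[f]\quad\text{or}\quad \lesssim\mathcal{E}^{S}_{4Q}[f]+\mathcal{E}^{S}_{4K}[f].
\]
For a more efficient estimate I would bound $l(K)^{n}|M_{\theta K}-M_{\theta Q}|\lesssim\int_{\theta K\cap S}|f-\operatorname{c}(4Q)|\,d\mathcal{H}^{n}$. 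Summing over $K$ then uses Proposition \ref{Prop.interaction_multiplicity}: bounded multiplicity of $\{\theta K\}$ makes these integrals sum to $\lesssim\int_{4Q\cap S}|f-\operatorname{c}(4Q)|\approx l(Q)^{n}\mathcal{E}^{S}_{4Q}[f]$. This gives $\sum_{Q}\underline{\gamma}_{Q}l(Q)^{n}\mathcal{E}^{S}_{4Q}[f]$. Note that \eqref{eqq.main_estimate} is then read with the $l(Q)^{n}$ normalization, which is harmless since the functional's scaling is set by $\gamma_{Q}/l(Q)\approx\underline{\gamma}_{Q}l(Q)^{n}$. Together with Lemma \ref{Lm.auxiliary}, this yields \eqref{eqq.main_estimate}.
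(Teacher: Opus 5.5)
\textbf{There is a genuine gap, and the theorem as stated cannot close it.} Your outline matches the paper's: the far region by Lemma \ref{Lm.auxiliary}, localization of $\nabla\operatorname{Ext}_{\mathcal{Q}}[f]$ to contacting families via $(\psi1)$, Proposition \ref{Prop.main_geometric_ingredient}, symmetrization, and $\theta K\subset 4Q$ with bounded overlap. The gap is the step you flag in Step 2.

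\emph{The missing bound.} To replace $\gamma_{(1+\epsilon)Q'}/l(Q')\approx\underline{\gamma}_{Q'}(l(Q'))^{n}$ by $\underline{\gamma}_{Q}(l(Q'))^{n}$, you need $\underline{\gamma}_{Q'}\lesssim\underline{\gamma}_{Q}$ for iceberg cubes $Q'\in\mathcal{IC}_{\mathcal{Q}}(Q)$, which can be arbitrarily small relative to $Q$. None of the tools you cite gives this:
\begin{itemize}
\item Lemma \ref{Lm.forgotten} only compares $Q$ with cubes $K\in\mathcal{I}_{\mathcal{Q}}(Q)$ of the family. The paper's proof invokes it at exactly this point, so the paper has the same hole.
\item Lemma \ref{Lm.oscillation_on_neighboring_cubes} only compares cubes of comparable size.
\item Admissibility only links $Q$ with $\overline{Q}_{\mathcal{Q}}$.
\end{itemize}

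\emph{The inequality is false for general admissible regular families.} Take:
\begin{itemize}
\item $n=1$ and $S=\mathbb{R}\times\{0\}$;
\item $\gamma(x)=\|x\|^{-\alpha}$ with $1\le\alpha<2$, which is an $A_{1}$ weight;
\item $\mathcal{Q}=\widetilde{\mathcal{D}}_{0}(S)\cup\widetilde{\mathcal{D}}_{1}(S)$. This family is regular, since a relative of $Q$ is either $Q$ itself or lies in its ancestor in $\widetilde{\mathcal{D}}_{1}(S)$. It is admissible by Lemma \ref{Lm.doubling}, since $l(\overline{Q}_{\mathcal{Q}})=2^{j}l(Q)$.
\end{itemize}
Let $Q=[-2^{-j},0]\times[0,2^{-j}]$ and $K=[0,2^{-j}]^{2}$. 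Their shadows are empty, so $\mathcal{IC}_{\mathcal{Q}}(Q)=\mathcal{B}(Q)$ and $\mathcal{IC}_{\mathcal{Q}}(K)=\mathcal{B}(K)$. Near the origin in the upper half-plane $\Psi_{Q}+\Psi_{K}=g_{0}=1$, hence $\nabla\operatorname{Ext}_{\mathcal{Q}}[f]=(M^{S}_{\theta K}[f]-M^{S}_{\theta Q}[f])\nabla\Psi_{K}$ there.

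For $x_{2}$ in the band $(2^{-j(k+1)},2^{-jk})$, away from $\epsilon$-margins, $\Psi_{K}(\cdot,x_{2})$ rises from $0$ to $1$ within $|x_{1}|\le\epsilon 2^{-jk}<x_{2}$. On that strip $\gamma(x)=x_{2}^{-\alpha}$. Take any continuous $f\ge0$, $f\not\equiv0$, supported in $[2^{-j-2},3\cdot 2^{-j-2}]\times\{0\}$. Then the left-hand side is
\[
\gtrsim\bigl|M^{S}_{\theta K}[f]-M^{S}_{\theta Q}[f]\bigr|\int_{0}^{2^{-j-2}}x_{2}^{-\alpha}\,dx_{2}=+\infty .
\]
The right-hand side is finite: it has finitely many nonzero terms, and $\gamma_{1}$ is locally bounded.

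\textbf{How to repair it.} Your hint, the $q$-quantization of the green construction, is the right mechanism, but it must enter as an extra hypothesis, e.g.\ $\mathcal{Q}\supset\mathcal{G}_{\gamma}(q)$. Both families in Theorem \ref{Th.traffic_light} satisfy this. Under it, let $\underline{\gamma}_{5Q}\in[q^{k},q^{k+1})$ and $Q'\in\mathcal{IC}_{\mathcal{Q}}(Q)$, and suppose $\underline{\gamma}_{5Q'}\ge q^{k+1}$.
\begin{itemize}
\item Then $Q'\in\widetilde{\mathcal{G}}^{m}$ for some $m\ge k+1$, so $Q'\subset G$ for a maximal $G\in\mathcal{G}^{m}\subset\mathcal{Q}$.
\item $G\supset Q$ is impossible, because $\underline{\gamma}_{5G}\le\underline{\gamma}_{5Q}<q^{m}$. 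Hence $G\subsetneq Q$.
\item Climbing through covering cubes from $G$ to $Q$ places $G$, and hence $Q'$, inside a cube of $\mathcal{SH}_{\mathcal{Q}}(Q)$. This contradicts $Q'\in\mathcal{IC}_{\mathcal{Q}}(Q)$.
\end{itemize}
Therefore $\underline{\gamma}_{Q'}\lesssim\underline{\gamma}_{5Q'}<q\,\underline{\gamma}_{5Q}\le q\,\underline{\gamma}_{Q}$ for every iceberg cube, which is exactly what your Step 2 needs.

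The same bound also yields Lemma \ref{Lm.forgotten}. For neighbouring $Q'\in\mathcal{IC}_{\mathcal{Q}}(Q)$ and $Q''\in\mathcal{IC}_{\mathcal{Q}}(K)$,
\[
\underline{\gamma}_{Q}\le\underline{\gamma}_{Q'}\lesssim\underline{\gamma}_{Q''}\lesssim q\,\underline{\gamma}_{K},
\]
and symmetrically. With this, the rest of your argument goes through, with $C$ now depending on $q$.

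\textbf{Two minor points.}
\begin{itemize}
\item An interacting $K$ with $l(K)\le l(Q)$ need not be a neighbour or a shadow cube; it can sit strictly inside a shadow cube. Still, $\theta K\subset 4Q$ holds because either $K\subset Q$ or $K$ touches $Q$.
\item Your bound carrying the factor $(l(Q))^{n}$ implies \eqref{eqq.main_estimate} directly, since $l(Q)\le 1$. No reinterpretation of the statement is needed.
\end{itemize}
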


\begin{proof}
To the end of the proof we fix an arbitrary $f \in L_{1}^{\rm loc}(\mathcal{H}^{n}\lfloor_{S})$ with $\mathcal{N}_{\gamma,q}[f] < +\infty$. In the case
$\mathcal{N}_{\gamma,q}[f] = +\infty$ inequality \eqref{eqq.main_estimate} is trivial.
It will be convenient to spit the rest of the proof into several natural steps.

\textit{Step 1.} By Lemma \ref{Lm.auxiliary} it is sufficient to establish existence of a constant $C > 0$ such that
\begin{equation}
\label{eqq.main_estimate'}
\operatorname{I}:=\int\limits_{U_{\frac{1}{2^{2j}}}(S)}\gamma(x)\|\nabla \operatorname{Ext}_{\mathcal{Q}}[f](x)\|\,dx \lesssim \sum\limits_{Q \in \mathcal{Q}}\underline{\gamma}_{Q}\mathcal{E}^{S}_{4Q}[f].
\end{equation}

\textit{Step 2.}
By $(\psi1)$ of Proposition \ref{Prop.psi_properties} it is easy to see that for each $Q \in \widetilde{\mathcal{D}}_{k}(S)$ with $k \geq 1$,
\begin{equation}
\notag
\sum_{k=0}^{\infty}\sum_{Q' \in \widetilde{\mathcal{D}}_{k}(S)}\nabla \psi_{Q'}(x)=0 \quad \hbox{for every} \quad x \in \operatorname{supp}\psi_{Q}.
\end{equation}
Hence, for each $Q \in \mathcal{Q}$, we get
\begin{equation}
\label{eqq.abstract_extension_derivative}
\nabla \operatorname{Ext}_{\mathcal{Q}}[f](x):=\sum\limits_{Q' \in \widetilde{\mathcal{D}}_{+}(S)}(a_{Q'}-M^{S}_{\theta Q}[f])\nabla \psi_{Q'}(x) \quad \hbox{for every} \quad x \in \operatorname{supp}\Psi_{Q}.
\end{equation}

\textit{Step 3.} Given $Q \in \mathcal{Q}$, we set
\begin{equation}
\notag
\operatorname{I}_{Q}:=\int\limits_{\operatorname{supp}\Psi_{Q}}\gamma(x)\|\nabla \operatorname{Ext}_{\mathcal{Q}}[f](x)\|\,dx.
\end{equation}
It follows directly from $(\text{Por}1)$ of Proposition \ref{Prop.porous_property} and $(\psi1)$ of Proposition \ref{Prop.psi_properties} that
\begin{equation}
\begin{split}
\operatorname{I} \le \sum\limits_{Q \in \mathcal{Q} \setminus \mathcal{D}_{0}(S)}\operatorname{I}_{Q}.\\
\end{split}
\end{equation}

\textit{Step 3.} Fix an arbitrary $Q \in \mathcal{Q}$ with $l(Q) < 1$ and estimate $\operatorname{I}_{Q}$. Using Lemma \ref{Lm.nonzero_gradient} we get
\begin{equation}
\label{eqq.5.22}
\operatorname{I}_{Q} \le \sum\limits_{Q' \in \mathcal{C}_{\mathcal{Q}}(Q)}
\int\limits_{\operatorname{supp}\psi_{Q'}}\gamma(x)\|\nabla \operatorname{Ext}_{\mathcal{Q}}[f](x)\|\,dx.
\end{equation}
Now we recall Definition \ref{Def.contacting_family}, formula \eqref{eqq.abstract_extension_derivative} and take into account $(\psi5)$ of Proposition \ref{Prop.psi_properties}.
This allows to continue estimate \eqref{eqq.5.22} to get (we also use \eqref{eqq.neighborin_cubes})
\begin{equation}
\begin{split}
\label{eqq.5.23}
&\operatorname{I}_{Q} \lesssim \sum\limits_{Q' \in \mathcal{C}_{\mathcal{Q}}(Q)}\sum\limits_{Q'' \in \operatorname{n}_{S}(Q') \setminus \mathcal{IC}_{\mathcal{Q}}(Q)}\frac{\gamma_{Q'}}{l(Q'')}|a_{Q''}-M^{S}_{\theta Q}[f]|\\
&\lesssim \sum\limits_{Q' \in \mathcal{C}_{\mathcal{Q}}(Q)}\frac{\gamma_{Q'}}{l(Q')}\sum\limits_{Q'' \in \operatorname{n}_{S}(Q') \setminus \mathcal{IC}_{\mathcal{Q}}(Q)}|a_{Q''}-M^{S}_{\theta Q}[f]|.
\end{split}
\end{equation}
Using Lemma \ref{Lm.doubling}, Proposition \ref{Prop.main_geometric_ingredient} and Lemma \ref{Lm.forgotten} we derive
\begin{equation}
\begin{split}
\label{eqq.5.23}
&\operatorname{I}_{Q}
\lesssim  \sum\limits_{Q' \in \mathcal{C}_{\mathcal{Q}}(Q)}(l(Q'))^{n}\sum\limits_{Q'' \in \operatorname{n}_{S}(Q') \setminus \mathcal{IC}_{\mathcal{Q}}(Q)}\underline{\gamma}_{Q}|a_{Q''}-M^{S}_{\theta Q}[f]|\\
&\lesssim \sum\limits_{K \in  \mathcal{I}_{\mathcal{Q}}(Q)}\underline{\gamma}_{Q}|M^{S}_{\theta K}[f]-M^{S}_{\theta Q}[f]|\sum\limits_{Q' \in \mathcal{C}_{\mathcal{Q}}(Q,K)}(l(Q'))^{n}\\
&\lesssim \sum\limits_{K \in \mathcal{I}_{\mathcal{Q}}(Q)}(\min\{l(K),l(Q)\})^{n}\underline{\gamma}_{Q}|M^{S}_{\theta K}[f]-M^{S}_{\theta Q}[f]|.
\end{split}
\end{equation}

\textit{Step 4.} Using \eqref{eqq.5.23} and combining Remark \ref{Rem.interacting_cubes} with Lemma \ref{Lm.forgotten} it is easy to see that
\begin{equation}
\label{eqq.538}
\begin{split}
&\sum\limits_{Q \in \mathcal{Q} \setminus \mathcal{D}_{0}(S)}\operatorname{I}_{Q} \lesssim C \sum\limits_{Q \in \mathcal{Q}}\sum\limits_{\substack{K \in \mathcal{I}_{\mathcal{Q}}(Q)
\\ l(K) \le l(Q)}}(l(K))^{n}\underline{\gamma}_{Q}|M^{S}_{\theta K}[f]-M^{S}_{\theta Q}[f]|.
\end{split}
\end{equation}
Note that $\theta K \subset 4Q$ provided that $l(K) \le l(Q)$. Hence, by Proposition \ref{Prop.interaction_multiplicity} and Remark \ref{Rem.massive_part}
\begin{equation}
\label{eqq.539}
\begin{split}
&\sum\limits_{\substack{K \in \mathcal{I}_{\mathcal{Q}}(Q)
\\ l(K) \le l(Q)}}(l(K))^{n}|M^{S}_{\theta K}[f]-M^{S}_{\theta Q}[f]|\\
&\lesssim \sum\limits_{\substack{K \in \mathcal{I}_{\mathcal{Q}}(Q)
\\ l(K) \le l(Q)}}\int\limits_{\theta K}\fint\limits_{\theta Q}|f(x)-f(y)|\,d\mathcal{H}^{n}(y)\,d\mathcal{H}^{n}(x) \lesssim (l(Q))^{n}\mathcal{A}^{S}_{4 Q}[f].
\end{split}
\end{equation}
Finally, combining \eqref{eqq.538}, \eqref{eqq.539} and using Remark \ref{Rem.different_averagings} we arrive at \eqref{eqq.main_estimate'}
\begin{equation}
\sum\limits_{Q \in \mathcal{Q} \setminus \mathcal{D}_{0}(S)}\operatorname{I}_{Q} \lesssim \sum\limits_{Q \in \mathcal{Q}}\underline{\gamma}_{Q}(l(Q))^{n}\mathcal{A}^{S}_{4 Q}[f]
 \lesssim \sum\limits_{Q \in \mathcal{Q}}\underline{\gamma}_{Q}\mathcal{E}^{S}_{4Q}[f].
\end{equation}

The proof is complete.

\end{proof}


\section{Proofs of the main results}
Throughout this paragraph we fix the following data:

\begin{itemize}

\item[\((\text{D.6.1})\)] a number $n \in \mathbb{N}$ and a weight $\gamma \in A_{1}^{\rm loc}(\mathbb{R}^{n+1})$;

\item[\((\text{D.6.2})\)] a set $S \in \mathcal{ADR}^{n}(\mathbb{R}^{n+1})$;

\item[\((\text{D.6.3})\)] a parameter $j \geq \max\{\underline{j}(S,2),5\}$;

\item[\((\text{D.6.4})\)] a parameter $q > \underline{q}:=(\operatorname{C}_{\gamma}2^{n+1})^{2j}$.

\end{itemize}

\subsection{The direct trace theorem}
In this section we provide an appropriate lower bound for the trace norm in terms of the functionals introduced in Section 4.

\begin{Prop}
\label{Prop.Lebesgue_estimate}
There is a constant $C > 0$ such that, for each $f \in W_{1}^{1}(\mathbb{R}^{n+1},\gamma)|_{S}$,
\begin{equation}
\label{eqq.Lebesgue_estimate}
\mathcal{LN}_{\gamma}[f] \le C\|f\|_{W_{1}^{1}(\mathbb{R}^{n+1},\gamma)|_{S}}.
\end{equation}
\end{Prop}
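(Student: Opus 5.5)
Fix $f \in W_{1}^{1}(\mathbb{R}^{n+1},\gamma)|_{S}$ and an arbitrary $F \in W_{1}^{1}(\mathbb{R}^{n+1},\gamma)$ with $F|_{S}=f$. It suffices to show $\mathcal{LN}_{\gamma}[f] \lesssim \|F\|_{W_{1}^{1}(\mathbb{R}^{n+1},\gamma)}$ with a constant independent of $F$, and then take the infimum over $F$ as in \eqref{eqq.2.23}. By Proposition \ref{Prop.discretization_of_Lebesgue} (with, say, $c=\theta$, or any fixed $c>\underline{\theta}$; I will take $c=4$ for convenience), it is enough to bound the discretized functional
\begin{equation}
\notag
\widetilde{\mathcal{LN}}_{\gamma,4}[f]=\sum\limits_{Q \in \widetilde{\mathcal{D}}_{0}(S)}\gamma_{Q}\int\limits_{4Q \cap S}|f(y)|\,d\mathcal{H}^{n}(y).
\end{equation}

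For each $Q \in \widetilde{\mathcal{D}}_{0}(S)=\mathcal{D}_{0}(S,\underline{\theta})$, the cube $4Q$ has side length $4$ and contains $Q_{1}(x)$ for some $x\in S$ (since $\underline{\theta}Q \cap S \neq \emptyset$), so by Definition \ref{Def.Ahlfors_David} $\mathcal{H}^{n}(4Q\cap S) \approx 1$. Hence Proposition \ref{Prop.Ziemer_estimate} applies to $4Q$ with $c_{1}=8$ and a fixed $c_{2}$, and the triangle inequality yields
\begin{equation}
\notag
\int\limits_{4Q \cap S}|f(y)|\,d\mathcal{H}^{n}(y) \lesssim \Bigl|\fint\limits_{4Q}F(z)\,dz\Bigr| + \int\limits_{4Q}\|\nabla F(z)\|\,dz \lesssim \int\limits_{4Q}\bigl(|F(z)|+\|\nabla F(z)\|\bigr)\,dz.
\end{equation}
Multiplying by $\gamma_{Q}$, I use the local $A_{1}$ condition in the form (D3) of Lemma \ref{Lm.doubling}: $\gamma_{Q} \le \gamma_{4Q} \lesssim \underline{\gamma}_{4Q} \le \gamma(z)$ for a.e. $z \in 4Q$, since $4Q$ has radius $2$ and $\operatorname{C}_{\gamma,2}<\infty$. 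Therefore
\begin{equation}
\notag
\gamma_{Q}\int\limits_{4Q \cap S}|f|\,d\mathcal{H}^{n} \lesssim \int\limits_{4Q}\gamma(z)\bigl(|F(z)|+\|\nabla F(z)\|\bigr)\,dz.
\end{equation}

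Summing over $Q \in \widetilde{\mathcal{D}}_{0}(S)\subset \mathcal{D}_{0}$ and using the bounded overlap $\operatorname{MULT}(\{4Q:Q\in\mathcal{D}_{0}\}) \le 6^{n+1}$ from Remark \ref{Rem.dyadic_lattice_multiplicity}, we obtain $\widetilde{\mathcal{LN}}_{\gamma,4}[f]\lesssim \|F\|_{W_{1}^{1}(\mathbb{R}^{n+1},\gamma)}$. Combining this with Proposition \ref{Prop.discretization_of_Lebesgue} and taking the infimum over all admissible $F$ gives \eqref{eqq.Lebesgue_estimate}.

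The only delicate point is applying Proposition \ref{Prop.Ziemer_estimate} with $F|_{S}$ being the sharp trace; this is exactly its content, and it requires the lower mass bound $\mathcal{H}^{n}(4Q\cap S)\gtrsim l(Q)^{n}$ checked above. Everything else is routine bookkeeping of constants depending only on $n$, $\operatorname{C}_{\gamma}$, $C^{S}_{1}$ and $C^{S}_{2}$.
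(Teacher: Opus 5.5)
Your proposal is correct and follows essentially the same route as the paper. Both arguments discretize $\mathcal{LN}_{\gamma}$ via Proposition \ref{Prop.discretization_of_Lebesgue}, apply Proposition \ref{Prop.Ziemer_estimate} on each enlarged cube $cQ$ with the triangle inequality against $\fint_{cQ}F$, move $\gamma_{Q}$ inside the integral using the local $A_{1}$ condition, and sum using bounded overlap; your version (fixing $c=4$ and explicitly checking the two-sided mass bound for $4Q\cap S$) is, if anything, cleaner than the paper's write-up, which has typos in its integration domains.
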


\begin{proof}
Fix $f \in W_{1}^{1}(\mathbb{R}^{n+1},\gamma)|_{S}$ and $F \in W_{1}^{1}(\mathbb{R}^{n+1},\gamma)$ with $F|_{S}=f$. 
Given $Q \in \widetilde{\mathcal{D}}_{0}$, we set $M_{cQ}[F]:=M^{\mathcal{L}^{n+1}}_{cQ}[F]$.
Taking into account Remark \ref{Rem.massive_part}  and using Proposition \ref{Prop.Ziemer_estimate} we get
\begin{equation}
\notag
\int\limits_{cQ \cap S}\Bigl|f(x)-M_{cQ}[F]\Bigr|\,d\mathcal{H}^{n}(x) \lesssim \int\limits_{cQ}\|\nabla F(y)\|\,dy.
\end{equation}
Hence, using Lemma \ref{Lm.doubling}, and taking into account Remark \ref{Rem.dyadic_lattice_multiplicity} we obtain
\begin{equation}
\begin{split}
&\widetilde{\mathcal{LN}}_{\gamma,c}[f] \lesssim \sum\limits_{Q \in \widetilde{\mathcal{D}}_{0}(S)}\gamma_{Q}\int\limits_{cQ \cap S}\Bigl|f(x)-M_{cQ}[F]\Bigr|\,d\mathcal{H}^{n}(x)
+\sum\limits_{Q \in \widetilde{\mathcal{D}}_{0}(S)}\gamma_{Q}\int\limits_{cQ \cap S}|F(y)|\,dy\\
&\lesssim \sum\limits_{Q \in \widetilde{\mathcal{D}}_{0}(S)}\underline{\gamma}_{cQ}\int\limits_{cQ \cap S}\|\nabla F(y)\|\,dy+\sum\limits_{Q \in \widetilde{\mathcal{D}}_{0}(S)}
\underline{\gamma}_{cQ}\int\limits_{cQ \cap S}|F(y)|\,dy\\
&\lesssim \sum\limits_{Q \in \widetilde{\mathcal{D}}_{0}(S)}\int\limits_{cQ \cap S}\gamma(y)\|\nabla F(y)\|\,dy+
\sum\limits_{Q \in \widetilde{\mathcal{D}}_{0}(S)}
\int\limits_{cQ \cap S}\gamma(y)|F(y)|\,dy \lesssim \|F\|_{W_{1}^{1}(\mathbb{R}^{n+1},\gamma)}.
\end{split}
\end{equation}
Since $f$ and $F$ with $F|_{S}=f$ were chosen arbitrarily, by \eqref{eqq.2.23} we obtain \eqref{eqq.Lebesgue_estimate}.
\end{proof}

Now we are ready to establish the main result of this paragraph

\begin{Prop}
\label{Prop.upper_green}
There is a constant $C > 0$
such that, for each $f \in W_{1}^{1}(\mathbb{R}^{n+1},\gamma)|_{S}$,
\begin{equation}
\label{eq.upper_green}
\mathcal{BN}_{\gamma,q}[f] \le C \|f\|_{W_{1}^{1}(\mathbb{R}^{n+1},\gamma)|_{S}}.
\end{equation}
\end{Prop}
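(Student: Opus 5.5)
We plan to reduce the estimate to the discrete functional $\widetilde{\mathcal{BN}}_{\gamma,q}$ and then bound each dyadic term by the weighted gradient of an arbitrary extension. Fix $f \in W_{1}^{1}(\mathbb{R}^{n+1},\gamma)|_{S}$ and any $F \in W_{1}^{1}(\mathbb{R}^{n+1},\gamma)$ with $F|_{S}=f$. By Proposition \ref{Prop.discretization_of_Besov_1} we have $\mathcal{BN}_{\gamma,q}[f] \lesssim \widetilde{\mathcal{BN}}_{\gamma,q}[f]$. It therefore suffices to prove
\begin{equation}
\notag
\sum\limits_{k \in \mathbb{Z}}\sum\limits_{Q \in \mathcal{G}^{k}}\frac{\gamma_{Q}}{l(Q)}\mathcal{E}^{S}_{4Q}[f] \lesssim \|F\|_{W_{1}^{1}(\mathbb{R}^{n+1},\gamma)}
\end{equation}
and then pass to the infimum over $F$ using \eqref{eqq.2.23}.

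For a single green cube $Q \in \mathcal{G}^{k}$ we note that $l(Q) \le 1$ and that $4Q$ contains a cube centered at a point of $S$ of side comparable to $l(Q)$, since $\underline{\theta}Q \cap S \neq \emptyset$. Hence $\mathcal{H}^{n}(4Q \cap S) \gtrsim (l(Q))^{n}$, and Proposition \ref{Prop.Ziemer_estimate} applies with $c_{1}=8$:
\begin{equation}
\notag
\mathcal{E}^{S}_{4Q}[f] \lesssim l(Q)\fint\limits_{4Q}\|\nabla F(y)\|\,dy .
\end{equation}
This gives $\frac{\gamma_{Q}}{l(Q)}\mathcal{E}^{S}_{4Q}[f] \lesssim \overline{\gamma}_{Q}\int_{4Q}\|\nabla F\|\,dy$. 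By the $A_{1}^{\rm loc}$ condition and Lemma \ref{Lm.doubling} ((D1)--(D3)), together with $Q \subset 4Q$, we get $\overline{\gamma}_{Q} \lesssim \underline{\gamma}_{4Q}$. Here we use that $4Q$ has side at most $4$, so (D3) is applicable. Consequently
\begin{equation}
\notag
\frac{\gamma_{Q}}{l(Q)}\mathcal{E}^{S}_{4Q}[f] \lesssim \int\limits_{4Q}\gamma(y)\|\nabla F(y)\|\,dy .
\end{equation}

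It remains to sum over all green cubes, which we organize as a bounded-multiplicity argument. For each fixed $k$, Proposition \ref{Prop.green_multiplicity} gives $\operatorname{MULT}(\{4Q:Q\in\mathcal{G}^{k}\}) \le 5^{n+2}$. However, there are infinitely many levels $k$, so we must also control the overlap of the families $\{4Q : Q \in \mathcal{G}^{k}\}$ across different $k$.

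This is the main obstacle. The naive sum $\sum_{k}\int_{\bigcup_{\mathcal{G}^{k}}4Q}\gamma\|\nabla F\|$ may overcount, because the same point can lie in $4Q$ for green cubes of many different levels. The plan is to exploit that $\underline{\gamma}_{5Q} \in [q^{k},q^{k+1})$ for $Q \in \mathcal{G}^{k}$, and to split $4Q$ into its part near $S$ and its "Whitney" part. We replace $\int_{4Q}$ by an integral over the massive annular region $\Omega_{Q}$ from Remark \ref{Rem.massive_support}. Concretely, we would refine the Ziemer-type estimate so that the gradient integral is taken over a truncated region $4Q \setminus U_{c\,2^{-j}l(Q)}(S)$ plus a tail controlled by smaller green cubes of the same level, and then telescope.

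If that refinement is too delicate, the fallback is to fix a point $y$ and count the green cubes with $y \in 4Q$. Each such cube satisfies $q^{k} \le \underline{\gamma}_{5Q} \le \gamma(y)$, and we keep the factor $\underline{\gamma}_{4Q} \approx q^{k}$ rather than bounding it by $\gamma(y)$. The weighted count $\sum_{k: q^{k}\le \gamma(y)} q^{k}$ is then a geometric series bounded by $\frac{q}{q-1}\gamma(y)$, since at each level only $5^{n+2}$ cubes contain $y$. Thus
\begin{equation}
\notag
\sum_{k}\sum_{Q\in\mathcal{G}^{k}}\underline{\gamma}_{4Q}\int_{4Q}\|\nabla F\| \lesssim 5^{n+2}\int\limits_{\mathbb{R}^{n+1}}\Bigl(\sum_{k:\,q^{k}\le\gamma(y)}q^{k+1}\Bigr)\|\nabla F(y)\|\,dy \lesssim \int\limits_{\mathbb{R}^{n+1}}\gamma(y)\|\nabla F(y)\|\,dy .
\end{equation}
This geometric-sum argument is the route we would try first, since it uses only $\underline{\gamma}_{4Q}\lesssim q^{k+1}$ (via (D1) and $\underline{\gamma}_{5Q}<q^{k+1}$) and $q^{k}\le\underline{\gamma}_{5Q}\le\gamma(y)$ for a.e. $y\in 4Q$.
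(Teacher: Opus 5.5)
Your fallback argument is correct and is essentially the paper's proof. You reduce to $\widetilde{\mathcal{BN}}_{\gamma,q}$ by Proposition~\ref{Prop.discretization_of_Besov_1}, apply Proposition~\ref{Prop.Ziemer_estimate} on $4Q$ to bound the term of $Q\in\mathcal{G}^{k}$ by $\lesssim q^{k}\int_{4Q}\|\nabla F\|\,dy$, and sum using the per-level multiplicity bound, $\gamma\ge q^{k}$ a.e.\ on $4Q$, and $\sum_{k:\,q^{k}\le\gamma(y)}q^{k}\lesssim\gamma(y)$. The speculative refinement via $\Omega_{Q}$ is not needed, and your pointwise count is a cleaner way of writing the paper's summation over the sets $U_{j}\setminus U_{j+1}$, where $U_{k}=\bigcup_{Q\in\mathcal{G}^{k}}4Q$.
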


\begin{proof}
Fix arbitrary $f \in W_{1}^{1}(\mathbb{R}^{n+1},\gamma)|_{S}$ and $F \in W_{1}^{1}(\mathbb{R}^{n+1},\gamma)$ with $F|_{S}=f$.
Applying Proposition \ref{Prop.discretization_of_Besov_1} and then Proposition \ref{Prop.Ziemer_estimate} we get
\begin{equation}
\label{eqq.3.7}
\mathcal{BN}_{\gamma,q}[f] \lesssim \sum\limits_{k \in \mathbb{Z}}q^{k}\sum\limits_{Q \in \mathcal{G}^{k}}(l(Q))^{n}\mathcal{E}^{S}_{4 Q}[f] \lesssim
\sum\limits_{k \in \mathbb{Z}}q^{k}\sum\limits_{Q \in \mathcal{G}^{k}}\int\limits_{4 Q}\|\nabla F(y)\|\,dy.
\end{equation}
Given $k \in \mathbb{Z}$, we set $U_{k}:=\bigcup_{Q \in \mathcal{G}^{k}}4 Q$.
By Proposition \ref{Prop.green_multiplicity} and the inequality $q > \underline{q} > 1$, 
\begin{equation}
\label{eqq.3.8}
\begin{split}
&\sum\limits_{k \in \mathbb{Z}}q^{k}\sum\limits_{Q \in \mathcal{G}^{k}}\int\limits_{4 Q}\|\nabla F(y)\|\,dy \lesssim \sum\limits_{k \in \mathbb{Z}}q^{k}\int\limits_{U_{k}}\|\nabla F(y)\|\,dy \\
&\lesssim \sum\limits_{j \in \mathbb{Z}}\Bigl(\sum\limits_{k \le j}q^{k}\Bigr)\int\limits_{U_{j}\setminus U_{j+1}}\|\nabla F(y)\|\,dy \lesssim \sum\limits_{j \in \mathbb{Z}}q^{j}\int\limits_{U_{j}\setminus U_{j+1}}\|\nabla F(y)\|\,dy\\
&\lesssim \sum\limits_{j \in \mathbb{Z}}\int\limits_{U_{j}\setminus U_{j+1}}\gamma(y)\|\nabla F(y)\|\,dy \lesssim \int\limits_{\mathbb{R}^{n+1}}\gamma(y)\|\nabla F(y)\|\,dy.
\end{split}
\end{equation}
Combining \eqref{eqq.3.7} and \eqref{eqq.3.8} we complete the proof.
\end{proof}

\subsection{A nonlinear algorithm}
Now we should construct a special regular for $S$ family of dyadic cubes.
After that we apply an abstract machinery developed
in Section 5 to that family. Unfortunately, requirements
$(\text{R.1})$--$(\text{R.3})$ in Definition \ref{Def.regular_family} are insufficient
for the operator \eqref{eqq.abstract_extension} to be the right inverse for the trace operator.
It is the only place, where nonlinearity, i.e., dependence on $f$ comes into play. To make the corresponding
informal explanations mathematically correct we introduce the following concept.

\begin{Def}
\label{Def.extension_family}
We say that $\mathcal{Q} \in \mathfrak{R}(S)$ is an extension family if for each $(x,r) \in S \times (0,1]$
there is a cube $Q \in \mathcal{Q}$ with $l(Q) \le r$ such that $Q \ni x$.
\end{Def}

Now we are ready to establish the main result of this section.
\begin{Th}
\label{Th.traffic_light}
There is $\underline{q} > 1$ such that for each $q > \underline{q}$ the following holds:
\begin{itemize}
\item[\((1)\)]  if $\varlimsup_{r \to +0}\gamma_{r}(x) = +\infty$ for all $x \in S$, then $\mathcal{G}_{\gamma}(q)$ is an admissible for $\gamma$ regular for $S$ extension family;

\item[\((2)\)] if $\varlimsup_{r \to +0}\gamma_{r}(x) < +\infty$, then, for each $f$ satisfying $\mathcal{N}_{\gamma,q}[f] < +\infty$, there is an admissible for $\gamma$ regular for $S$ extension family $\mathcal{Q}_{\gamma}(q,f) \supset \mathcal{G}_{\gamma}(q)$ such that
  \begin{equation}
  \label{eqq.forgotten_inequality}
  \sum\limits_{Q \in \mathcal{Q}_{\gamma}(q,f)\setminus \mathcal{G}_{\gamma}(q)}\mathcal{E}_{4Q}[f] \le \mathcal{N}_{\gamma,q}[f].
  \end{equation}  
\end{itemize}
\end{Th}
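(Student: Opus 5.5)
We take $\underline{q}=(\operatorname{C}_{\gamma}2^{n+1})^{2j}$ as in (D.6.4). By Theorem \ref{Th.green_is_regular}, $\mathcal{G}=\mathcal{G}_{\gamma}(q)$ is regular for $S$ and admissible for $\gamma$, so in both items the work is to get the extension property of Definition \ref{Def.extension_family}, plus the estimate \eqref{eqq.forgotten_inequality} in item (2).

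\emph{Item (1).} Fix $x\in S$ and $r\in(0,1]$. Since $\underline{\gamma}_{r}(x)$ is comparable to $\overline{\gamma}_{r}(x)$ by the $A_{1}^{\rm loc}$ condition, the hypothesis gives $\varlimsup_{r\to+0}\underline{\gamma}_{r}(x)=+\infty$. Take the smallest-side-length dyadic cube $Q\in\widetilde{\mathcal{D}}_{+}(S)$ with $x\in Q$ and $l(Q)\le r$; denote this chain by $Q_{i}\ni x$, $l(Q_{i})=2^{-ij}$. Since $\underline{\gamma}_{5Q_{i}}$ is monotone in $i$ and, by Lemma \ref{Lm.doubling} and (D.4.4), grows by at most a factor $<q$ between consecutive generations, the values $\underline{\gamma}_{5Q_{i}}$ cannot jump over a whole interval $[q^{k},q^{k+1})$. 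As they tend to $+\infty$, for every large $k$ some $Q_{i}$ with $i\ge i_{0}$ lies in $\widetilde{\mathcal{G}}^{k}$. Its maximal ancestor in $\widetilde{\mathcal{G}}^{k}$ is a green cube containing $x$. It remains to ensure this ancestor has side at most $r$. For that we choose $k$ with $q^{k}>\underline{\gamma}_{5Q_{i_{0}}}\cdot q$. Then every cube in $\widetilde{\mathcal{G}}^{k}$ containing $Q_{i}$ has $\underline{\gamma}_{5\cdot}\ge q^{k}$ and so is strictly smaller than $Q_{i_{0}}$. Hence $\mathcal{G}$ is an extension family.

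\emph{Item (2).} Here we add cubes depending on $f$.
\begin{itemize}
\item[(a)] By Theorem \ref{Th.nonlinear}, $\sigma_{k}:=\sum_{Q\in\widetilde{\mathcal{D}}_{k}(S)}\underline{\gamma}_{Q}\mathcal{E}^{S}_{4Q}[f]\to0$. Choose a strictly increasing sequence $k_{1}<k_{2}<\dots$ with $\sum_{m}\sigma_{k_{m}}\le \min\{1,\underline{\gamma}\}\,\mathcal{N}_{\gamma,q}[f]$ in a normalized sense.
\item[(b)] The raw layers are not usable as they stand. The intended inequality \eqref{eqq.forgotten_inequality} is presumably weighted by $\underline{\gamma}_{Q}$ (as needed in Theorem \ref{Th.extension_main}), and if $\mathcal{N}_{\gamma,q}[f]=0$ then $f=0$ and there is nothing to prove. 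The layers $\widetilde{\mathcal{D}}_{k_{m}}(S)$ also need not form a regular family, since (R.2) can fail.
\item[(c)] We therefore close each layer up by a \emph{regularization}: set $\mathcal{Q}_{m}$ equal to the smallest regular family containing $\widetilde{\mathcal{D}}_{k_{m}}(S)$. Concretely, add all $\widetilde{\mathcal{D}}_{k}(S)$ for $k\le k_{m}$ near the layer? That is too many. Instead, for each new cube add its ancestors at generations $k_{m}-1,\dots$ only until reaching a cube of $\mathcal{G}$, and iterate (R.2) by adding the relevant relatives at the next generation.
\item[(d)] Then set $\mathcal{Q}_{\gamma}(q,f):=\mathcal{G}\cup\bigcup_{m}\mathcal{Q}_{m}$, which is regular by Proposition \ref{Prop.union_of_regular_families}.
\end{itemize}

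Admissibility of the union should follow because each added cube's covering cube is either its dyadic ancestor at most $2^{j}$ times larger, where Lemma \ref{Lm.oscillation_on_neighboring_cubes} bounds the oscillation, or a green cube in whose green range the weight is comparable. The extension property holds because the layers $k_{m}\to\infty$ contain cubes through every point of $S$.

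The \textbf{main obstacle} is the regularization in step (c): keeping control of the added sum. The plan is to show that the cubes added to close up the layer $k_{m}$ have multiplicity bounded per generation and lie within a bounded number of generations of $k_{m}$ (using the depth argument of Proposition \ref{Prop.interaction_multiplicity}). Lemma \ref{Lm.subadditivity_best_approximation} then controls their $\underline{\gamma}_{Q}\mathcal{E}^{S}_{4Q}[f]$ by a constant times $\sigma_{k_{m}}+\sigma_{k_{m}-1}$. Choosing $k_{m}$ so that this constant times the sum is at most $\mathcal{N}_{\gamma,q}[f]$ gives \eqref{eqq.forgotten_inequality}.
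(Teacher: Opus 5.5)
Your item (1) is correct and essentially follows the paper; you are in fact more careful than the paper about obtaining $l(Q)\le r$. Note that the hypothesis must be read as $\varlimsup_{r\to+0}\underline{\gamma}_{r}(x)=+\infty$, since literally $\gamma_{r}(x)=\int_{Q_{r}(x)}\gamma\to0$.

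Item (2) has a genuine gap, and it starts with the misdiagnosis in (b). The union $\widetilde{\mathcal{R}}:=\bigcup_{s\ge0}\widetilde{\mathcal{D}}_{k_{s}}(S)$ of \emph{full} layers, with $k_{0}=0$, is already regular, so no regularization is needed.
\begin{itemize}
\item[(R.1)] This holds because $k_{0}=0$.
\item[(R.3)] The ancestor of $Q\in\widetilde{\mathcal{D}}_{k_{s}}(S)$ in generation $k_{s-1}$ lies in $\widetilde{\mathcal{D}}_{k_{s-1}}(S)$, since $\underline{\theta}Q\subset\underline{\theta}P$ whenever $Q\subset P$.
\item[(R.2)] This is trivial. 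A relative $Q'$ of $Q$ satisfies $5Q'\subset 5Q$, hence $l(Q')\le l(Q)$. The ancestor $K$ of $Q'$ with $l(K)=l(Q)$ lies in $\widetilde{\mathcal{D}}_{k_{s}}(S)$, and it differs from $\overline{Q}_{\mathcal{Q}}$ because $l(\overline{Q}_{\mathcal{Q}})>l(Q)$.
\end{itemize}
Then $\mathcal{G}\cup\widetilde{\mathcal{R}}$ is regular by Proposition \ref{Prop.union_of_regular_families}. The non-green cubes contribute at most $\sum_{s\ge1}\sigma_{k_{s}}$, which is what the paper uses.

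The regularization you propose in (c) instead destroys \eqref{eqq.forgotten_inequality}. Take $\gamma\equiv1$, a case covered by item (2). Then $\mathcal{G}=\widetilde{\mathcal{D}}_{0}(S)$, so adding ancestors ``until reaching a cube of $\mathcal{G}$'' adds every generation $1,\dots,k_{m}-1$. That is exactly the option you yourself rejected as too many. Over all $m$ the added sum becomes $\sum_{k\ge1}\sigma_{k}$, a $B^{0}_{1,1}$-type quantity that can be infinite. Meanwhile, in this case $\mathcal{N}_{\gamma,q}[f]\approx\int_{S}|f|\,d\mathcal{H}^{n}$. Sparse layers are the whole point of the Gagliardo-type trick behind Theorem \ref{Th.nonlinear}. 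This example also contradicts your announced, unproved claim that the added cubes stay within a bounded number of generations of $k_{m}$. Moreover, Lemma \ref{Lm.subadditivity_best_approximation} cannot bound $\mathcal{E}^{S}_{4P}[f]$ for a large ancestor $P$ by oscillations on much smaller cubes.

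Finally, your admissibility argument does not fit the correct family. In $\mathcal{G}\cup\widetilde{\mathcal{R}}$ the covering cube of a red cube in layer $k_{s}$ may be a red cube in layer $k_{s-1}$, arbitrarily many generations up, so Lemma \ref{Lm.oscillation_on_neighboring_cubes} does not apply. What you need is the colour argument:
\begin{itemize}
\item[(i)] $\underline{\gamma}_{5P}$ is nonincreasing along the dyadic chain from $Q$ to $\overline{Q}_{\mathcal{Q}}$.
\item[(ii)] It changes by a factor less than $q$ between consecutive generations.
\item[(iii)] The maximal cube of $\widetilde{\mathcal{G}}^{a}$ containing $Q$ is green, so it cannot lie strictly between $Q$ and $\overline{Q}_{\mathcal{Q}}$.
\end{itemize}
Together these force $\underline{\gamma}_{5Q}$ and $\underline{\gamma}_{5\overline{Q}_{\mathcal{Q}}}$ into the same or adjacent intervals $[q^{k},q^{k+1})$.
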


\begin{proof}
Note that $\varlimsup_{r \to +0}\gamma_{r}(x) = +\infty$ for all $x \in S$ if and only if $S^{k}=S$ for all $k \in \mathbb{Z}$.

To prove $(1)$ we put $\mathcal{G}:=\mathcal{G}_{\gamma}(q)$. This family is regular for $S$ by Theorem \ref{Th.green_is_regular}. Note that if $Q \in \mathcal{G}^{k}$ for some $k \in \mathbb{Z}$, then $K=\overline{Q}_{\mathcal{G}} \in \mathcal{G}^{k-1}$. Hence, $\underline{\gamma}_{5Q} \in [q^{k},q^{k+1})$ and $\underline{\gamma}_{5K} \in [q^{k-1},q^{k})$. This proves that $\mathcal{G}$ satisfies
the requirement of Definition \ref{Def.admissible_for_weight} with $\operatorname{C} = q^{2}$.
It remains to show that $\mathcal{G}$ is an extension family. We fix a point $\underline{x} \in S$ and a cube $\underline{Q} \in \mathcal{D}_{0}(S)$ containing $\underline{x}$.
Let $\underline{k}$ be such that $\underline{Q} \in \mathcal{G}^{\underline{k}}$. Since $S^{k}=S$ for all $k \in \mathbb{Z}$, there is a sequence of cubes $\{\underline{Q}_{k}\}_{k \geq \underline{k}}$ such that $\underline{Q}_{k} \in \mathcal{G}^{k}$
and $\underline{x} \in \underline{Q}_{k}$ for every $k \geq \underline{k}$. This proves the required property.

To prove $(2)$ we recall Theorem \ref{Th.nonlinear} and fix an arbitrary strictly increasing sequence $\{k_{s}\}_{s=0}^{\infty} \subset \mathbb{N}_{0}$ such that
$k_{0}=0$ and
\begin{equation}
\label{eqq.red_cubes}
\sum\limits_{Q \in \widetilde{\mathcal{D}}_{k_{s}}(S)}\gamma_{Q}\mathcal{E}^{S}_{4 Q}[f] \le 2^{-s}\mathcal{N}_{\gamma,q}[f] \quad \hbox{for every} \quad s \in \mathbb{N}_{0}.
\end{equation}
We put $\widetilde{\mathcal{R}}^{s}_{\gamma}(q,f):=\widetilde{\mathcal{D}}_{k_{s}}(S)$, $s \in \mathbb{N}_{0}$, and set $\widetilde{\mathcal{R}}_{\gamma}(q,f):=\bigcup_{s \in \mathbb{N}_{0}}\widetilde{\mathcal{R}}^{s}_{\gamma}(q,f)$.
It is to verify that the family $\widetilde{\mathcal{R}}_{\gamma}(q,f)$ is regular for $S$. Furthermore, by the construction $\widetilde{\mathcal{R}}_{\gamma}(q,f)$ is an extension family.
Now we put $\mathcal{Q}_{\gamma}(q,f):=\mathcal{G}_{\gamma}(q) \cup \widetilde{\mathcal{R}}_{\gamma}(q,f)$. By Proposition \ref{Prop.union_of_regular_families} $\mathcal{Q}_{\gamma}(q,f)$ is regular for $S$.
Furthermore, by the construction it is an extension family and \eqref{eqq.forgotten_inequality} follows from \eqref{eqq.red_cubes}. 

It remains to verify that $\mathcal{Q}_{\gamma}(q,f)$ is admissible for $\gamma$. We set $\mathcal{R}_{\gamma}(q,f):=\widetilde{\mathcal{R}}_{\gamma}(q,f)\setminus \mathcal{G}_{\gamma}(q)$ and paint each cube in $\mathcal{R}_{\gamma}(q,f)$ red. One should consider two cases. In the first case both cubes $Q$ and $K=\overline{Q}_{\mathcal{Q}}$ are of the same color.
The case when both cubes are green follows from the arguments made above for the family $\mathcal{G}$.
If both cubes are red, then there is $k^{\ast} \in \mathbb{Z}$ such that $\underline{\gamma}_{5K},\underline{\gamma}_{5Q} \in [q^{k^{\ast}},q^{k^{\ast}+1})$
and the required inequality follows. If $K$ is red and $Q$ is green, then $\underline{\gamma}_{5K} \in [q^{k^{\ast}},q^{k^{\ast}+1})$ and 
$\underline{\gamma}_{5Q} \in [q^{k^{\ast}+1},q^{k^{\ast}+2})$ for some $k^{\ast} \in \mathbb{Z}$ because
there are no green cubes that are contained in $K$ and contain $Q$. By similar arguments, if $K$ is green and $Q$ is red, then $\underline{\gamma}_{5K},\underline{\gamma}_{5Q} \in [q^{k^{\ast}},q^{k^{\ast}+1})$
for some $k^{\ast} \in \mathbb{Z}$.

The proof is complete.
\end{proof}

\subsection{The inverse trace theorem}
We start with the result showing that an abstract extension being associated with a regular extension families becomes a ``true'' extension.
\begin{Th}
\label{Th.ext_is_right_inverse}
Let $f \in L_{1}^{\rm loc}(\mathcal{H}^{n}\lfloor_{S})$ and let $\mathcal{Q} \in \mathfrak{R}(S)$ be an extension family. If $F:=\operatorname{Ext}_{\mathcal{Q}}[f]$, then 
$F \in L^{\rm loc}_{1}(\mathbb{R}^{n+1})$
and
\begin{equation}
\label{eqq.6.6''}
\operatorname{J}_{r}(x):=\fint\limits_{Q_{r}(x)}|f(x)-F(y)|\,dy \to 0, \quad r \to +0 \quad \hbox{for $\mathcal{H}^{n}$-a.e.  $x \in S$}.
\end{equation}
\end{Th}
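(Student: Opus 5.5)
We will show that $F=\operatorname{Ext}_{\mathcal{Q}}[f]$ is locally integrable and that its averages over small cubes centred at $\mathcal{H}^{n}$-almost every $x\in S$ converge to $f(x)$.

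\emph{Local integrability.} By (Por.1) of Proposition \ref{Prop.porous_property}, $S$ is $\mathcal{L}^{n+1}$-null, so $F$ is defined a.e. By \eqref{eqq.abstract_extension'}, $F$ is a locally finite combination $\sum a_{Q'}\psi_{Q'}$. Moreover, by ($\psi2$) the $\psi_{Q'}$ sum to at most $1$, and at points of $\operatorname{supp}\psi_{Q'}$ only neighbouring cubes contribute, by ($\psi3$). Hence
\begin{equation}
\notag
|F(y)|\le \max\{|a_{Q'}|: y\in \operatorname{supp}\psi_{Q'}\}.
\end{equation}
Each coefficient $a_{Q'}$ equals $M^{S}_{\theta Q}[f]$ for the unique $Q\in\mathcal{Q}$ with $Q'\in\mathcal{IC}_{\mathcal{Q}}(Q)$, and $Q'\subset Q$.

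We first dispose of $\mathcal{L}^{n+1}$-integrability on a bounded region $Q_{R}(0)$. The cubes $Q'\in\widetilde{\mathcal{D}}_{k}(S)$ have total volume on $Q_{R}(0)$ of order $2^{-jk}$, by Ahlfors regularity and a counting argument. The coefficients satisfy $|a_{Q'}|\lesssim l(Q)^{-n}\int_{\theta Q\cap S}|f|$. Summing $\int|F|$ over layers and grouping by the iceberg owner $Q$, the contribution of $Q$ is at most
\begin{equation}
\notag
\sum_{Q'\subset Q} l(Q')^{n+1}\,|a_{Q'}|\lesssim l(Q)\int_{\theta Q\cap S}|f|\,d\mathcal{H}^{n}.
\end{equation}
This uses $\sum_{Q'\in\widetilde{\mathcal{D}}_{m}(S),\,Q'\subset Q}l(Q')^{n+1}\lesssim 2^{-jm}l(Q)^{n}$. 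Finally, $\sum_{Q\in\mathcal{Q}\cap\widetilde{\mathcal{D}}_{k}(S)}l(Q)\int_{\theta Q\cap S}|f|\lesssim 2^{-jk}\int_{S\cap Q_{R+2}(0)}|f|$ by bounded multiplicity. Summing over $k$ gives $\int_{Q_{R}(0)}|F|\lesssim \int_{S\cap Q_{R+2}(0)}|f|\,d\mathcal{H}^{n}<\infty$.

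\emph{Convergence of averages.} Fix $x\in S$ which is an $\mathcal{H}^{n}\lfloor_{S}$-Lebesgue point of $f$; almost every point qualifies, since $\mathcal{H}^{n}\lfloor_{S}$ is doubling. Given small $r$, we use the extension-family property to pick $Q_{x}\in\mathcal{Q}$ with $x\in Q_{x}$ and $l(Q_{x})\le r$. We may choose the largest such cube with $l(Q_{x})\le r$; by (R.2)--(R.3), and since $\overline{Q}$ can be obtained stepwise, consecutive cubes of $\mathcal{Q}$ containing $x$ have comparable sizes up to a $2^{j}$-type factor. The key claim is that every $Q'\in\widetilde{\mathcal{D}}_{+}(S)$ with $\operatorname{supp}\psi_{Q'}\cap Q_{r}(x)\ne\emptyset$ belongs to the iceberg of some $Q\in\mathcal{Q}$ such that $\theta Q\subset Q_{Cr}(x)$ and $l(Q)\gtrsim$ (distance-type scale). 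This applies to all such $Q'$ except those lying in icebergs of cubes much larger than $r$.

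The main obstacle is controlling the large cubes. If $Q'$ with $l(Q')\lesssim r$ lay in $\mathcal{IC}_{\mathcal{Q}}(Q)$ with $l(Q)\gg r$, then near $x$ there would be a relative cube of $Q$ not covered by a smaller member of $\mathcal{Q}$. This contradicts (R.2) applied repeatedly together with the existence of $Q_{x}$. Thus the coefficients of those $Q'$ are averages $M^{S}_{\theta Q}[f]$ over cubes with $\theta Q\subset Q_{Cr}(x)$ and $\mathcal{H}^{n}(\theta Q\cap S)\approx l(Q)^{n}$. The geometric fact we rely on is that the $\psi_{Q'}$ with $l(Q')\ge c\,r$ have comparable coefficients.

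We then estimate
\begin{equation}
\notag
\operatorname{J}_{r}(x)\le\fint_{Q_{r}(x)}\sum_{Q'}\psi_{Q'}(y)\,|f(x)-a_{Q'}|\,dy .
\end{equation}
The total $\mathcal{L}^{n+1}$-mass of cubes with $l(Q)=2^{-jm}r$ meeting $Q_{r}(x)$ is at most $2^{-jm}r^{n+1}$. Hence the right-hand side is bounded, up to constants, by
\begin{equation}
\notag
\sum_{m\ge0}2^{-jm}\sup_{\rho\le C2^{-jm}r}\fint_{Q_{\rho}(x)\cap S}|f-f(x)|\,d\mathcal{H}^{n},
\end{equation}
where the average-versus-point differences are controlled through Ahlfors regularity, with $\mathcal{H}^{n}(\theta Q\cap S)\approx l(Q)^{n}$ by Remark \ref{Rem.massive_part}. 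This bound tends to $0$ as $r\to+0$ by dominated convergence for series together with the Lebesgue point property.
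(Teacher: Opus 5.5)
\textbf{Verdict: there is a genuine gap in the convergence part.}

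Your local integrability argument is sound. Grouping $\sum_{Q'}|a_{Q'}|\psi_{Q'}$ by iceberg owners and using $\mathcal{H}^{n}(\theta Q\cap S)\approx l(Q)^{n}$ gives a clean, direct bound; the paper instead deduces $F\in L_{1}(Q_{2}(x))$ from its small-scale estimate.

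The gap is the claim that every $Q'$ with $\operatorname{supp}\psi_{Q'}\cap Q_{r}(x)\neq\emptyset$ lies in $\mathcal{IC}_{\mathcal{Q}}(Q)$ for an owner with $l(Q)\lesssim r$. The same goes for the supporting assertion that consecutive members of $\mathcal{Q}$ containing $x$ have sizes comparable up to $2^{j}$. Neither follows from (R.1)--(R.3).
\begin{itemize}
\item Condition (R.2) only bounds the sizes of members of $\mathcal{Q}$ covering relatives of $Q$ from \emph{above} by $2^{j}l(Q)$. For the relative $Q$ itself one may take $K=Q$.
\item Nothing prevents a member $Q$ from having an iceberg that spans many generations.
\item The extension-family property only supplies arbitrarily small members containing $x$. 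It gives no control on the size ratios between consecutive ones.
\end{itemize}
Concretely, take $\mathcal{Q}=\bigcup_{s\ge0}\widetilde{\mathcal{D}}_{k_{s}}(S)$ with $k_{0}=0$ and $k_{s+1}-k_{s}\to\infty$. This is a regular extension family, and the red families $\widetilde{\mathcal{R}}_{\gamma}(q,f)$ of Theorem \ref{Th.traffic_light} have exactly this form, so the theorem must cover it. For $2^{-jk_{s+1}}\ll r\ll 2^{-jk_{s}}$, all of $Q_{r}(x)$ outside a thin neighbourhood of $S$ is covered by functions $\psi_{Q'}$ whose owners lie in $\widetilde{\mathcal{D}}_{k_{s}}(S)$. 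There $F$ is a convex combination of averages $M^{S}_{\theta Q}[f]$ with $l(Q)=2^{-jk_{s}}\gg r$. So no bound by averages of $|f-f(x)|$ over $Q_{\rho}(x)\cap S$ with $\rho\lesssim r$ can hold.

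\textbf{The missing idea: qualitative localization only.} What is true, and what the paper uses, is a qualitative statement. Using the extension-family property and (SH.1) of Proposition \ref{Prop.shadow_cubes_comparable}, the paper shows two things:
\begin{itemize}
\item the shadow of each fixed $Q\in\mathcal{Q}$ has side lengths bounded away from $0$;
\item hence $\operatorname{dist}(\operatorname{supp}\Psi_{Q},S)>0$ for every $Q\in\mathcal{Q}$.
\end{itemize}
Each level is locally finite, so the coarsest level $\underline{k}_{r}(x)$ of an owner $Q$ with $\operatorname{supp}\Psi_{Q}\cap Q_{r}(x)\neq\emptyset$ tends to $+\infty$. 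That is, $R(x,r):=2^{-j\underline{k}_{r}(x)}\to0$ as $r\to+0$.

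Write $\mathcal{J}_{\rho}(x):=\fint_{Q_{\rho}(x)\cap S}|f(x)-f(y)|\,d\mathcal{H}^{n}(y)$ and split the owners into two groups.
\begin{itemize}
\item \emph{Owners with $l(Q)\le r$.} These satisfy $\theta Q\subset Q_{3r}(x)$. Bounded overlap of the $\theta Q$ at each level gives a contribution $\lesssim\sum_{2^{-kj}\le r}2^{-kj}r^{-1}\mathcal{J}_{3r}(x)\lesssim\mathcal{J}_{3r}(x)$. A small owner need not be centred near $x$, so this is the correct bound, not an average over $Q_{C2^{-jm}r}(x)$ as you wrote.
\item \emph{Owners with $r<l(Q)\le R(x,r)$.} These satisfy $\theta Q\subset Q_{3l(Q)}(x)$ and $\mathcal{H}^{n}(\theta Q\cap S)\approx l(Q)^{n}$. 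Hence $|M^{S}_{\theta Q}[f]-f(x)|\lesssim\sup_{\rho\le 3R(x,r)}\mathcal{J}_{\rho}(x)$. The bounded overlap of the sets $\operatorname{supp}\Psi_{Q}$ (Remark \ref{Rem.loc_finite_2}) keeps their total relative volume in $Q_{r}(x)$ bounded.
\end{itemize}
Both contributions tend to $0$. The second comes with no rate in $r$, but none is needed.
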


\begin{proof}
We split the proof into several steps.

\textit{Step 1.} Since $f \in L_{1}^{\rm loc}(\mathcal{H}^{n}\lfloor_{S})$, there is a set $E_{f} \subset S$ such that $\mathcal{H}^{n}(E_{f}) = 0$
and each  point $x \in S \setminus E_{f}$ is a Lebesgue point of $f$, i.e., $\lim_{r \to +0}\mathcal{J}_{r}(x)=0$ (see Corollary 1 in Section 1.7 in \cite{Evans}), where we set
\begin{equation}
\label{eqq.612''}
\mathcal{J}_{r}(x):=\fint\limits_{S \cap Q_{r}(x)}|f(x)-f(y)|\,d\mathcal{H}^{n}(y), \quad (x,r) \in S \times (0,2].
\end{equation}

\textit{Step 2.} Given $x \in S \setminus E_{f}$ and $r \in (0,2]$, we recall notation \eqref{eqq.dyadic_S}, \eqref{eqq.dyadic_S_2} and put
$$
\mathcal{F}^{k}_{r}(x):=\{Q \in \widetilde{\mathcal{D}}_{k}(S) \cap \mathcal{Q}:\operatorname{supp}\Psi_{Q} \cap Q_{r}(x) \neq \emptyset\}.
$$
Furthermore, we set $\operatorname{T}_{r}(x):=\{k \in \mathbb{Z}:\mathcal{F}^{k}_{r}(x) \neq \emptyset\}$ and split this index set into small and big parts, i.e. $\operatorname{T}^{s}_{r}(x):=\{k \in \operatorname{T}_{r}(x):2^{-kj} \le r\}$
and $\operatorname{T}^{b}_{r}(x):=\{k \in \operatorname{T}_{r}(x):2^{-kj} > r\}$.

\textit{Step 3.} Note that $S \cap Q$ is compact for each $Q \in \mathcal{Q}$. Hence, given $Q \in \mathcal{Q}$, we have
\begin{equation}
\label{eqq.inf_is_positive}
\inf\{l(Q'):Q' \in \mathcal{SH}_{\mathcal{Q}}(Q)\} > 0.
\end{equation}
Indeed, otherwise there is a sequence of points $\{x_{s}\} \subset \underline{\theta}Q \cap S$ and
cubes $\{Q_{s}\}$ such that $\underline{\theta}Q_{s} \ni x_{s}$ and $l(Q_{s}) \to 0$, $s \to \infty$. Since $Q \cap S$ is closed and $Q_{s} \subset Q$ for all $s \in \mathbb{N}$, 
this gives existence of $\underline{x} \in S \cap Q$ and a subsequence
$\{x_{s_{m}}\}$ converging to $\underline{x}$. Since $\mathcal{Q}$ is an extension family, there is $\underline{Q} \in \mathcal{Q}$ such that $\underline{Q} \ni \underline{x}$.
At the same time $\operatorname{dist}(\underline{Q},Q_{s_{m}}) \to 0$, $m \to \infty$ and $\lim_{m \to \infty}l(Q_{s_{m}})=0$. By $(\text{SH.1})$ of Proposition \ref{Prop.shadow_cubes_comparable}
we get a contradiction. This proves \eqref{eqq.inf_is_positive}. As a result,
\begin{equation}
\label{eqq.distance_positive}
\operatorname{dist}(\operatorname{supp}\Psi_{Q},S) > 0 \quad \text{for each} \quad Q \in \mathcal{Q}.
\end{equation}
Given $x \in S \setminus E_{f}$ and $r \in (0,1]$, we put  $\underline{k}_{r}(x):=\min\{k \in \mathbb{Z}: k \in \operatorname{T}_{r}(x)\}$. By \eqref{eqq.distance_positive}
\begin{equation}
\label{eqq.key_limit}
\lim\limits_{r \to +0}\underline{k}_{r}(x) = +\infty.
\end{equation}

By \eqref{eqq.key_limit}, given $x \in S \setminus E_{f}$ and $\varepsilon > 0$ we can find $\delta(\epsilon) > 0$ such that for each $r \in (0,\delta(\varepsilon))$
\begin{equation}
\label{eqq.J_key_estimate}
\mathcal{J}_{R}(x) < \frac{\varepsilon}{2} \quad \text{for every} \quad R \in (0,3R(x,r)),
\end{equation}
where $R(x,r):=2^{-j\underline{k}_{r}(x)}$.

\textit{Step 4.} Given $x \in S \setminus E_{f}$ and $r \in (0,1]$, we put $E_{Q}:=\operatorname{supp}\Psi_{Q} \cap Q_{r}(x)$ for every $Q \in \mathcal{Q}$. Using \eqref{eqq.abstract_extension} we deduce
\begin{equation}
\label{eqq.613''}
|f(x)-F(y)| \le \sum\limits_{Q \in \mathcal{Q}}\chi_{E_{Q}}(y)|f(x)-M^{S}_{\theta Q}[f]|.
\end{equation}
We use \eqref{eqq.613''} and take into account $(\text{Por.1})$ of Proposition \ref{Prop.porous_property}. This gives
\begin{equation}
\label{eqq.J_estimate}
\operatorname{J}_{r}(x) \le \sum\limits_{k \in \operatorname{T}_{r}(x)}\sum\limits_{Q \in \mathcal{F}^{k}_{r}(x)}\frac{\mathcal{L}^{n+1}(E_{Q})}{(2r)^{n+1}}\fint\limits_{\theta Q \cap S}|f(x)-f(y)|\,d\mathcal{H}^{n}(y).
\end{equation}

\textit{Step 5.} Given $x \in S \setminus E_{f}$, if $k \in \operatorname{T}^{s}_{r}(x)$, then $\theta Q \subset Q_{3r}(x)$ for every $Q \in \mathcal{F}^{k}_{r}(x)$.
Furthermore, $\mathcal{L}^{n+1}(E_{Q}) \le (l(Q))^{n+1}$ for each $Q \in \mathcal{Q}$. As a result, using \eqref{eqq.J_key_estimate} and taking into account definition of the
family $\mathcal{F}^{k}_{r}(x)$ and Remark \ref{Rem.dyadic_lattice_multiplicity} we obtain
\begin{equation}
\label{eqq.614''}
\begin{split}
&\Sigma^{s}_{r}(x):=\sum\limits_{k \in \operatorname{T}^{s}_{r}(x)}\sum\limits_{Q \in \mathcal{F}^{k}_{r}(x)}\frac{\mathcal{L}^{n+1}(E_{Q})}{(2r)^{n+1}}\fint\limits_{\theta Q \cap S}|f(x)-f(y)|\,d\mathcal{H}^{n}(y)\\
&\lesssim \sum\limits_{k \in \operatorname{T}^{s}_{r}(x)}\frac{1}{r2^{kj}}\mathcal{J}_{3r}(x) \lesssim \mathcal{J}_{3r}(x) \lesssim \frac{\varepsilon}{2} \quad \text{for every} \quad r \in (0,\delta(\varepsilon)).
\end{split}
\end{equation}

\textit{Step 6.}
Given $x \in S \setminus E_{f}$, if $k \in \operatorname{T}^{b}_{r}(x)$, then $\theta Q \subset K(Q) \subset Q_{3R(x,r)}(x)$ for every $Q \in \mathcal{F}^{k}_{r}(x)$.
Here $K(Q)$ is the cube centered in $x$ with side length $3l(Q)$. Hence, using \eqref{eqq.J_key_estimate} and taking into account Remark \ref{Rem.loc_finite_2} and obvious inclusions $E_{Q} \subset Q_{r}(x)$, $Q \in \mathcal{Q}$, we arrive at
\begin{equation}
\label{eqq.621''}
\begin{split}
&\Sigma^{b}_{r}(x):=\sum\limits_{k \in \operatorname{T}^{b}_{r}(x)}\sum\limits_{Q \in \mathcal{F}^{k}_{r}(x)}\frac{\mathcal{L}^{n+1}(E_{Q})}{(2r)^{n+1}}\fint\limits_{\theta Q \cap S}|f(x)-f(y)|\,d\mathcal{H}^{n}(y)\\
& \lesssim \frac{\varepsilon}{2}\sum\limits_{k \in \operatorname{T}^{b}_{r}(x)}\sum\limits_{Q \in \mathcal{F}^{k}_{r}(x)}\frac{\mathcal{L}^{n+1}(E_{Q})}{(2r)^{n+1}} \lesssim \frac{\varepsilon}{2} 
\quad \text{for every} \quad r \in (0,\delta(\varepsilon)).
\end{split}
\end{equation}

\textit{Step 7.} Combine \eqref{eqq.J_estimate}, \eqref{eqq.614''}, \eqref{eqq.621''}. Since $\varepsilon > 0$ was arbitrary, we deduce \eqref{eqq.6.6''}.
Finally, it remains to show that $F \in L_{1}^{\rm loc}(\mathbb{R}^{n+1})$. It is sufficient to verify that $F \in L_{1}(Q)$ for every $Q \in \mathcal{D}_{0}$.
It is obvious if $Q \cap S = \emptyset$ because $F \in C^{\infty}(\mathbb{R}^{n+1}\setminus S)$. If $Q \cap S \neq \emptyset$, then $\mathcal{H}^{n}(2Q \cap S) > 0$
and hence, there is $x \in 2Q \cap S$ such that $f(x) \in \mathbb{R}$ and $2Q \subset Q_{2}(x)$. Taking into account that $\operatorname{T}_{2}(x)=\operatorname{T}^{s}_{2}(x)$
using estimates made in \eqref{eqq.614''} we get $\operatorname{J}_{2}(x) \lesssim \mathcal{J}_{6}(x) < +\infty$. Hence, $F \in L_{1}(Q)$.

The proof is complete.
\end{proof}

In order to make next steps we need to have a some sort of removability result for Sobolev spaces. Probably, this is a folklore. Since it is difficult to present
a precise reference, we supply the result with a detailed proof. 
\begin{Prop}
\label{Prop.removability}
Let $F \in L_{1}^{\rm loc}(\mathbb{R}^{n+1}) \cap C^{\infty}(\mathbb{R}^{n} \setminus S)$ be such that $F \in W_{1}^{1}(\operatorname{int}Q \setminus S)$ for every cube $Q$ in $\mathbb{R}^{n+1}$. 
Let $S_{F}$ be the intersection of the set of all $\mathcal{L}^{n+1}$-Lebesgue points of $F$ and the set of all $\mathcal{H}^{n}\lfloor_{S}$-Lebesgue points of $F$, respectively. 
If $\mathcal{H}^{n}(S \setminus S_{F})=0$, then $F \in W_{1}^{1,\rm loc}(\mathbb{R}^{n+1})$.
\end{Prop}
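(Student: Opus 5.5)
We argue locally: fix an open cube $Q_{0}$ and show that the distributional gradient of $F$ on $\operatorname{int}Q_{0}$ equals the pointwise gradient $G:=\nabla F$ on $\operatorname{int}Q_{0}\setminus S$, extended by zero on $S$. This is legitimate because $\mathcal{L}^{n+1}(S)=0$ by $(\text{Por.1})$, and $G\in L_{1}(Q_{0})$ by the assumption $F\in W_{1}^{1}(\operatorname{int}Q\setminus S)$. Thus it suffices to prove, for each $\varphi\in C_{0}^{\infty}(\operatorname{int}Q_{0})$ and each coordinate $i$,
\begin{equation}
\notag
\int F\,\partial_{i}\varphi\,dy=-\int G_{i}\varphi\,dy .
\end{equation}
We reduce to lines in the spirit of the ACL characterization, using the Eilenberg inequality as suggested in the acknowledgements. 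Fix $i$ and write $y=(y',t)$ with $t$ the $i$-th coordinate. By the coarea/Eilenberg inequality applied to the orthogonal projection $\pi_{i}$ onto $\{t=0\}$, which is $1$-Lipschitz,
\begin{equation}
\notag
\int^{\ast}\mathcal{H}^{0}\bigl((S\setminus S_{F})\cap\pi_{i}^{-1}(y')\bigr)\,d\mathcal{L}^{n}(y')\le C\,\mathcal{H}^{n}(S\setminus S_{F})=0 .
\end{equation}
Hence, for $\mathcal{L}^{n}$-a.e.\ $y'$, the line $\ell_{y'}=\pi_{i}^{-1}(y')$ meets $S$ only at points of $S_{F}$. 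Moreover, $\int\mathcal{H}^{0}(S\cap Q_{0}\cap\ell_{y'})\,dy'\lesssim\mathcal{H}^{n}(S\cap Q_{0})<\infty$, so for a.e.\ $y'$ the set $S\cap Q_{0}\cap\ell_{y'}$ is finite.

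Fix such a generic $y'$. On each complementary open interval of $\ell_{y'}\cap Q_{0}\setminus S$, the function $t\mapsto F(y',t)$ is smooth, and by Fubini $\partial_{i}F(y',\cdot)\in L_{1}$ along the line for a.e.\ $y'$. Consequently $F(y',\cdot)$ has one-sided limits at each of the finitely many points $t_{1}<\dots<t_{m}$ of $S$ on the line. Integrating by parts on each interval gives
\begin{equation}
\notag
\int F\,\partial_{i}\varphi\,dt=-\int G_{i}\varphi\,dt+\sum_{l}\varphi(t_{l})\bigl(F(y',t_{l}^{+})-F(y',t_{l}^{-})\bigr).
\end{equation}
Integrating in $y'$, the claim reduces to showing that for a.e.\ $y'$ all jumps vanish, that is, $F(y',t_{l}^{+})=F(y',t_{l}^{-})$.

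This jump-vanishing is the main obstacle: the Lebesgue-point hypothesis concerns full-dimensional averages, whereas the jump is a one-dimensional trace. We would argue by contradiction. Suppose the set $A$ of pairs $(y',t_{l})$ with a jump of size larger than $\eta>0$ projects onto a set of positive $\mathcal{L}^{n}$-measure. Then $\mathcal{H}^{n}(A)>0$, again by the Eilenberg/projection inequality in the reverse direction, since $\mathcal{L}^{n}(\pi_{i}(A))\le\mathcal{H}^{n}(A)$. Pick a point $x\in A$ that is an $\mathcal{H}^{n}\lfloor_{A}$-density point, a full Lebesgue point of $F$ (value $\overline{F}(x)$), and a density point of the set of $y'$ where the one-dimensional quantity $\int_{0}^{r}|\partial_{i}F(y',\cdot)|$ is small.

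Next, compare $F$ on the cylinders above and below $x$. For most $y'$ near $\pi_{i}(x)$, the averages of $F$ over the upper half $Q_{r}^{+}(x)$ and the lower half $Q_{r}^{-}(x)$ approximate $F(y',t^{\pm})$ up to $\int|\partial_{i}F|$ along the segments, and this error is $o(r^{n+1})$ after integration by absolute continuity of $\int|G|$. This forces the half-cube averages to differ by roughly $\eta$. On the other hand, since $x$ is a full Lebesgue point, both half-averages tend to $\overline{F}(x)$, which is a contradiction.

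The delicate part is ensuring that the segments from the points of $A$ to the top and bottom of $Q_{r}(x)$ avoid further points of $S$. The natural way to achieve this is to use the finiteness of $S\cap\ell_{y'}$ together with the upper Ahlfors bound, which shows that the proportion of lines through $Q_{r}(x)$ meeting $S$ more than once near $x$ is small for density points. We expect this to require some care; if it fails, we would instead pass to the precise representative and use the $\mathcal{H}^{n}\lfloor_{S}$-Lebesgue property to match both one-sided traces with $\overline{F}$.
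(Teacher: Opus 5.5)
Your reduction is sound and parallels Steps 1--2 of the paper. The Eilenberg inequality gives, for a.e.\ $y'$, finitely many points of $S\cap Q_0$ on $\ell_{y'}$, all in $S_F$. One-sided limits then exist, and everything reduces to showing that the jumps vanish for a.e.\ $y'$. But that step is the whole content of the proposition, and you leave it open.

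\textbf{(i) Segments may meet $S$ again.} To link $F(y',t_a^{\pm})$ at a jump point $a=(y',t_a)$ with averages of $F$ near $x$, the vertical segments from $a$ to the top and bottom of $Q_{2r}(x)$ must contain no other point of $S$ and carry small $\int|\partial_iF|$. This is needed for a set of $y'$ of measure $\gtrsim r^n$. You only expect it. Finiteness of $S\cap\ell_{y'}$ gives each $a$ a positive isolation radius, but with no uniformity in $y'$. The Ahlfors bound, through Eilenberg, only yields $\int\#(S\cap Q_{2r}(x)\cap\ell_{y'})\,dy'\lesssim r^n$, which allows a fixed proportion of lines to meet $S$ twice in $Q_{2r}(x)$.

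\textbf{(ii) Wrong notion of density point.} An $\mathcal{H}^n\lfloor_A$-density point need not satisfy $\mathcal{L}^n(\pi_i(A\cap Q_r(x)))\gtrsim r^n$. Part of $A$ may have negligible projection, and jump points over $y'$ near $\pi_i(x)$ may lie vertically far from $x$. So the lower bound $\eta r^n$ on which your contradiction rests is not secured. You need a density point of the measure $E\mapsto\mathcal{L}^n(\pi_i(E\cap A))$, which is $\lesssim\mathcal{H}^n\lfloor_S$.

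\textbf{(iii) Error term and signs.} The error must be $o(r^n)$, not $o(r^{n+1})$, and absolute continuity of $\int|G|$ does not give this. Jumps of opposite signs can cancel in signed half-cube averages. Also, $Q_r^+(x)$ lies partly below $a$ when $t_a$ exceeds the $i$-th coordinate of $x$.

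The paper never routes the two one-sided limits through long segments; it compares each with the value at the point itself. Take $i=n+1$, $x=(x',x_{n+1})\in S$ near a base point $\underline{x}$, and $G_r(x)=\fint_{x_{n+1}-r}^{x_{n+1}+r}|F(x',t)-F(x)|\,dt$. The triangle inequality gives $G_r(x)\le 2\fint_{\underline{x}_{n+1}-2r}^{\underline{x}_{n+1}+2r}|F(x',t)-F(\underline{x})|\,dt+|F(\underline{x})-F(x)|$. After integrating over the projected set, the first term is $o(r^n)$ by the $\mathcal{L}^{n+1}$-Lebesgue property and the second by the $\mathcal{H}^n\lfloor_S$-Lebesgue property at $\underline{x}$. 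Hence $\varliminf_{r\to+0}G_r(x)=0$ on a.e.\ line, and since $x$ is isolated on its line, both one-sided limits equal $F(x)$. This is your unexecuted fallback, and it is where the $\mathcal{H}^n\lfloor_S$ hypothesis, never used in your main route, disposes of obstacle (i).

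Your route can also be repaired. Exhaust $A$ by the increasing sets $A_m$ of jump points $a$ with $\ell_{\pi_i(a)}\cap S\cap Q_{1/m}(a)=\{a\}$ and $\int_{\ell_{\pi_i(a)}\cap Q_{1/m}(a)}|\partial_iF|<\eta/8$. Fix $m$ with $\mathcal{L}^n(\pi_i(A_m))>0$ and choose $x$ of positive projected density for $A_m$. For $r<1/(4m)$, integrate $|F-\overline{F}(x)|$ over the slabs at vertical distance between $r$ and $2r$ above and below $x$. As written, however, the decisive step is missing.
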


\begin{proof}
By the well known characterization of Sobolev spaces (see Theorem 2.1.4 in \cite{Zi}) it is sufficient to show that, for each $i \in \{1,...,n+1\}$, the restrictions of $F$ to almost all lines parallel to the $i$-th coordinate axis are locally absolutely continuous. 
We split the proof of this fact into several steps.

\textit{Step 1.} Without loss of generality we may assume that $i=n+1$, identify $\mathbb{R}^{n}$ with a hyperplane
$\mathbb{R}^{n} \times \{0\}$ in $\mathbb{R}^{n+1}$ and write each $x \in \mathbb{R}^{n+1}$ as a pair $x=(x',x_{n+1})$ where $x'$ is the projection
of $x$ to $\mathbb{R}^{n}$. The symbol $\Pi$ denotes the corresponding projection map. Given $x \in \mathbb{R}^{n+1}$, by $L(x)$ we denote the line passing through $x$ parallel to the $(n+1)$-th axis. 

\textit{Step 2.} According to the Eilenberg inequality \cite{Haj} there is 
$S' \subset S_{F}$ with $\mathcal{H}^{n}(S_{F} \setminus S') = 0$ such that $\#(L(y) \cap S_{F}) < +\infty$ for every $y \in S'$.
By the Fubini theorem there is  $S'' \subset S'$ such that $\mathcal{H}^{n}(S' \setminus S'') = 0$
and the pointwise restriction $F|_{L(x)}$ of $F$ to $L(x)$ belongs to $W_{1}^{1, \rm loc}(L(x) \setminus S_{F})$ for every $x \in S''$.
Finally, given $x \in S''$ and $r > 0$, we put
\begin{equation}
\notag
G_{r}(x):=\fint\limits_{x_{n+1}-r}^{x_{n+1}+r}|F(x',t)-F(x',x_{n+1})|\,dt.
\end{equation}

\textit{Step 3.}  We will also assume that $\mathcal{H}^{n}(\Pi(S'')) > 0$, because otherwise
there is nothing to prove.  
We claim that there is a set $\underline{E} \subset \Pi(S'')$ such that $\mathcal{H}^{n}(\Pi(S'') \setminus \underline{E}) = 0$
and
\begin{equation}
\label{eqq.618'}
\varliminf\limits_{r \to +0}G_{r}(x) = 0 \quad \text{for each} \quad x \in \underline{S}:=\Pi^{-1}(\underline{E}).
\end{equation}
Assume the contrary. Then, there is a set $E \subset \Pi(S'')$ with $\mathcal{H}^{n}(E) > 0$ and there is a parameter 
$\kappa > 0$ such that for each $x' \in E$ there is $x \in \Pi^{-1}(x')$ satisfying $\varliminf_{r \to +0}G_{r}(x) \geq 2\kappa$.
We put $S(E):=\{x \in \Pi^{-1}(E): \varliminf_{r \to +0}G_{r}(x) \geq 2\kappa\}$. Given $\delta > 0$, let $E_{\delta} \subset E$ be the set of all
$x' \in E$ for each of which there is $x \in S(E)$ satisfying $G_{r}(x) \geq \kappa$ for all $r \in (0,\delta)$. Clearly, 
$\mathcal{H}^{n}(E_{\delta^{\ast}}) > 0$ for some $\delta^{\ast} > 0$.
Let $S_{\delta^{\ast}}$ be the corresponding subset of $S(E)$. Since $\Pi(S_{\delta^{\ast}})=E_{\delta^{\ast}}$, we have 
$\mathcal{H}^{n}(S_{\delta^{\ast}}) \geq \mathcal{H}^{n}(E_{\delta^{\ast}}) > 0$.

\textit{Step 4.}
By Theorem 1, section 1.7.1 in \cite{Evans} there is 
$\underline{x} \in S_{\delta^{\ast}}$ such that $\underline{x}$ is a density point of $S_{\delta^{\ast}}$ and $\underline{x}'$ is a density point of $E_{\delta^{\ast}}$.
Hence, taking into account \eqref{eqq.Ahlfors_regular} we can decrease $\delta^{\ast} > 0$ if necessary and get, for each $r \in (0,\delta^{\ast})$, (below $Q_{r}(\underline{x}')$ means the cube in $\mathbb{R}^{n}$ centered
in $\underline{x}'$ with side length $2r$)
\begin{equation}
\begin{split}
\label{eqq.620'}
&(2r)^{n} \geq \mathcal{H}^{n}(Q_{r}(\underline{x}') \cap E_{\delta^{\ast}}) \geq \frac{(2r)^{n}}{2} \geq \frac{1}{2C^{S}_{2}}\mathcal{H}^{n}(Q_{r}(\underline{x}) \cap S_{\delta^{\ast}})\\ 
&\geq \frac{1}{4C^{S}_{2}}\mathcal{H}^{n}(Q_{r}(\underline{x}) \cap S) 
\geq \frac{C^{S}_{1}}{4C^{S}_{2}}r^{n}. 
\end{split}
\end{equation}
Given $x=(x',x_{n+1}) \in Q_{r}(\underline{x}) \cap S$, we clearly have
\begin{equation}
\label{eqq.621'}
\fint\limits_{x_{n+1}-r}^{x_{n+1}+r}|F(x',t)-F(x',x_{n+1})|\,dt \le 2\fint\limits_{\underline{x}_{n+1}-2r}^{\underline{x}_{n+1}+2r}|F(x',t)-F(\underline{x})|\,dt + |F(\underline{x})-F(x)|.
\end{equation}
Hence, combining \eqref{eqq.620'}, \eqref{eqq.621'} and using the Fubini theorem, we deduce for all $r \in (0,\delta^{\ast})$, 
\begin{equation}
\label{eqq.622'}
\begin{split}
&\kappa \mathcal{H}^{n}(Q_{r}(\underline{x}) \cap S_{\delta^{\ast}}) \le 2\kappa C^{S}_{2}\mathcal{H}^{n}(Q_{r}(\underline{x}') \cap E_{\delta^{\ast}})\\
&\lesssim \int\limits_{Q_{r}(\underline{x}') \cap E_{\delta^{\ast}}}\fint\limits_{\underline{x}_{n+1}-2r}^{\underline{x}_{n+1}+2r}|F(x',t)-F(\underline{x})|\,dt\,d\mathcal{H}^{n}(x')+\int\limits_{Q_{2r}(\underline{x})\cap S}|F(\underline{x})-F(x)|\,d\mathcal{H}^{n}(x)\\
&\lesssim r^{n}\Bigl(\fint\limits_{Q_{2r}(\underline{x})}|F(y)-F(\underline{x})|\,dy+\fint\limits_{Q_{2r}(\underline{x})\cap S}|F(\underline{x})-F(x)|\,d\mathcal{H}^{n}(x)\Bigr).
\end{split}
\end{equation}
Since $\underline{x}$ is an $\mathcal{L}^{n+1}$-Lebesgue point of $F$ and  
simultaneously $\underline{x}$ is an $\mathcal{H}^{n}$-Lebesgue point of $F$, taking small enough $r \in (0,\delta^{\ast})$ in \eqref{eqq.622'} 
and using \eqref{eqq.620'} we arrive at a contradiction.

\textit{Step 5.}
Fix an arbitrary point $\underline{x} \in \underline{S}$. Let $\{a_{i}(\underline{x})\}_{i=1}^{N(\underline{x})}$, $N(\underline{x}) \in \mathbb{N}$, be the set  $L(\underline{x}) \cap S_{F}$ taken in the natural order.
Since $F$ is smooth and Sobolev on $L(\underline{x}) \setminus \{a_{i}(\underline{x})\}_{i=1}^{N(\underline{x})}$, there exist one sided limits $f(a_{i}(\underline{x})\pm 0) \in \mathbb{R}$ for each $i \in \{1,...N\}$.
By \eqref{eqq.618'} it is easy to see that
\begin{equation}
\label{eqq.one_sided_limits}
f(a_{i}(\underline{x})-0)=f(a_{i}(\underline{x}) + 0) \quad \text{for each} \quad i \in \{1,...,N(\underline{x})\}.
\end{equation}

The proof is complete.

\end{proof}

Now we prove the so called inverse trace theorem, i.e. we estimate the trace norm from above in terms of the special functionals introduced
earlier. We recall Theorem \ref{Th.traffic_light} and put $\mathcal{Q}:=\mathcal{G}_{\gamma}(q)$ if $\lim_{r \to +0}\underline{\gamma}_{r}(x)=+\infty$ for all $x \in S$. 
If $\lim_{r \to +0}\underline{\gamma}_{r}(x) < +\infty$ for some $x \in S$ we put $\mathcal{Q}:=\mathcal{Q}_{\gamma}(q,f)$.
\begin{Th}
\label{Th.inverse_trace_estimate}
Let $f \in \mathfrak{B}(S)$ be such that $\mathcal{N}_{\gamma,q}[f] < +\infty$. 
Then $F=\operatorname{Ext}_{\mathcal{Q}}[f] \in W_{1}^{1}(\mathbb{R}^{n+1},\gamma)$ and
there is a constant $C > 0$ such that 
\begin{equation}
\label{eqq.inverse_Lebesgue_estimate}
\|F\|_{W_{1}^{1}(\mathbb{R}^{n+1},\gamma)} \le C \mathcal{N}_{\gamma,q}[f] \quad \text{for every} \quad f \in L_{1}^{\rm loc}(\mathcal{H}^{n}\lfloor_{S}).
\end{equation}
\end{Th}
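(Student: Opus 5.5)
The estimate splits into three parts: the gradient term, the zero-order term, and the verification that $F$ is a genuine Sobolev function whose sharp trace equals $f$.

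\textbf{Gradient term.} By Theorem \ref{Th.traffic_light}, $\mathcal{Q}$ is an admissible for $\gamma$, regular for $S$ extension family. Theorem \ref{Th.extension_main} therefore applies and gives
\begin{equation}
\notag
\int\limits_{\mathbb{R}^{n+1}}\gamma(x)\|\nabla F(x)\|\,dx \lesssim \mathcal{LN}_{\gamma}[f]+\sum\limits_{Q \in \mathcal{Q}}\underline{\gamma}_{Q}\mathcal{E}^{S}_{4Q}[f].
\end{equation}
Here $\nabla F$ is understood pointwise on $\mathbb{R}^{n+1}\setminus S$, which suffices once we know $F \in W^{1,\rm loc}_{1}$, since $\mathcal{L}^{n+1}(S)=0$ by (Por.1). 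I split the sum over $\mathcal{Q}$ into green cubes and the remaining (red) cubes. For $Q \in \mathcal{G}^{k}$ we have $\underline{\gamma}_{Q}\le \overline{\gamma}_{Q}=\gamma_{Q}/l(Q)^{n+1}$ and Remark \ref{Rem.massive_part} applies, so after normalizing by $l(Q)^{n}$ the green part is dominated by $\widetilde{\mathcal{BN}}_{\gamma,q}[f]$. That quantity is bounded by $C\mathcal{N}_{\gamma,q}[f]$ by Proposition \ref{Prop.discretization_of_Lebesgue_2}. For the red cubes I use \eqref{eqq.forgotten_inequality}, read together with \eqref{eqq.red_cubes}, which is the normalized version with weights $\gamma_{Q}\ge \underline{\gamma}_{Q}l(Q)^{n+1}$. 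Summing over $s$ gives at most $2\mathcal{N}_{\gamma,q}[f]$. In case (1) of Theorem \ref{Th.traffic_light} there are no red cubes at all.

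\textbf{Zero-order term.} The task is to bound $\int\gamma|F|$. Away from $S$ I would argue as in Lemma \ref{Lm.auxiliary}. Each point lies in the support of boundedly many $\psi_{Q'}$ by $(\psi4)$, so $|F(x)|\le\sum_{Q'}|a_{Q'}|\psi_{Q'}(x)$. Cubes of generation $k\ge 1$ are supported near $S$, and the large cubes are handled exactly as in \eqref{eqq.5.26'}, giving $\lesssim \widetilde{\mathcal{LN}}_{\gamma,\theta}[f]\approx\mathcal{LN}_{\gamma}[f]$. For small cubes, $|a_{Q'}|=|M^{S}_{\theta K}[f]|$ with $K\in\mathcal{Q}$, $K\supset Q'$, and $l(Q')\le l(K)\le 1$. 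The bound $\gamma_{(1+\epsilon)Q'}|a_{Q'}|\lesssim l(Q')^{n+1}\overline{\gamma}_{Q'}M^{S}_{\theta K}[|f|]$ is crude. Instead I would write $|a_{Q'}|\le |M^{S}_{\theta K_0}[f]| + |a_{Q'}-M^{S}_{\theta K_0}[f]|$, where $K_0\in\mathcal{D}_0(S)$ contains $Q'$. For the first term I sum $\gamma$ over $K_0$, which gives $\lesssim\gamma_{K_0}M^{S}_{\theta K_0}[|f|]$ and is controlled by $\mathcal{LN}$. For the second term I telescope along the chain of covering cubes from $K$ up to $K_0$, each step costing $\mathcal{E}^{S}_{4\cdot}[f]$. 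Since each cube of the chain carries the extra factor $l(\cdot)\le1$ in volume against $l(\cdot)^{n}$ in $\mathcal{E}$, the resulting sum is again dominated by $\sum_{Q\in\mathcal{Q}}\underline{\gamma}_{Q}\mathcal{E}^{S}_{4Q}[f]$, with admissibility controlling the weight along the chain. A simpler alternative, which I would try first, is a weighted Poincaré-type bound on each cube of $\mathcal{D}_0$: $\int_{K_0}\gamma|F|\lesssim \gamma_{K_0}|M^S_{\theta K_0}f|+\int_{3K_0}\gamma\|\nabla F\|$. I would prove it via Lemma \ref{Lm.doubling}(D4) applied to the representation $|F(x)-c|\lesssim\int\|\nabla F(y)\|\|x-y\|^{-n}dy$, using $\mathcal{H}^n(\theta K_0\cap S)\approx1$ and Proposition \ref{Prop.Ziemer_estimate} to compare $c$ with the trace average. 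This reduces the zero-order term to the gradient term plus $\mathcal{LN}_\gamma[f]$.

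\textbf{Sobolev membership and trace.} Theorem \ref{Th.ext_is_right_inverse} gives $F\in L^{\rm loc}_1$, and it also shows that $\mathcal{H}^n$-a.e. $x\in S$ is an $\mathcal{L}^{n+1}$-Lebesgue point of $F$ with value $f(x)$. Moreover $F\in C^\infty(\mathbb{R}^{n+1}\setminus S)$, and it lies in $W^1_1(\operatorname{int}Q\setminus S)$ by the gradient bound together with $\gamma\gtrsim\underline{\gamma}_{Q}>0$ on bounded cubes. Redefining $F=f$ on $S$ (a null set), $\mathcal{H}^n$-a.e. point of $S$ is also an $\mathcal{H}^n\lfloor_S$-Lebesgue point, so Proposition \ref{Prop.removability} yields $F\in W^{1,\rm loc}_1$. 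The sharp representative then coincides with $f$ $\mathcal{H}^n$-a.e. on $S$, so $F|_S=f$.

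I expect the zero-order estimate to be the main obstacle, since the paper's machinery is built for the gradient. I would settle it through the local weighted Poincaré route described above.
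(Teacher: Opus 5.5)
Your proposal follows the paper's proof in its overall structure:
\begin{itemize}
\item the gradient bound comes from Theorem \ref{Th.extension_main}, with the green cubes absorbed into $\widetilde{\mathcal{BN}}_{\gamma,q}[f]$ via Proposition \ref{Prop.discretization_of_Lebesgue_2} and the red cubes controlled by the choice of $k_s$;
\item membership in $W^{1,\rm loc}_{1}$ comes from Theorem \ref{Th.ext_is_right_inverse} and Proposition \ref{Prop.removability}; your explicit check that $F\in W^{1}_{1}(\operatorname{int}Q\setminus S)$ fills a step the paper leaves implicit;
\item the zero-order term is handled by the Gilbarg--Trudinger potential estimate combined with (D4), which is how the paper bounds $\operatorname{J}_{1}$ and $\operatorname{J}_{2,1}$.
\end{itemize}
The real divergence is how you anchor the constant. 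The paper compares $\fint_{Q_1(x)}F$ with $\fint_{\Omega_Q}F$, where $\Omega_Q$ is the massive set of Remark \ref{Rem.massive_support}. On $\Omega_Q$ we have $\psi_Q\equiv 1$, hence $F\equiv M^{S}_{\theta Q}[f]$ exactly. So $\operatorname{J}_{2,2}\lesssim\mathcal{LN}_{\gamma}[f]$ follows from the structure of the extension alone, without $F|_S=f$ or Proposition \ref{Prop.Ziemer_estimate}. Your anchoring through Proposition \ref{Prop.Ziemer_estimate} also works, because the trace identity is available by then without circularity. However, it only applies to $K_0\in\widetilde{\mathcal D}_0(S)$, where $\mathcal{H}^{n}(\theta K_0\cap S)\approx 1$. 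For $K_0\in\mathcal D_0\setminus\widetilde{\mathcal D}_0(S)$ you need a separate anchor. One checks that every $\psi_{Q'}$, $Q'\in\widetilde{\mathcal D}_+(S)$, vanishes on $(1-\epsilon)K_0$, since a cube $Q'\subset K_0$ would force $\underline{\theta}K_0\cap S\neq\emptyset$. So $F\equiv 0$ there and you can anchor to $0$, as the paper implicitly does. The telescoping route you mention first is not needed.

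There is one bookkeeping correction in the red-cube step. What the proof of Theorem \ref{Th.extension_main} actually controls is $\sum_{Q\in\mathcal Q}\underline{\gamma}_Q(l(Q))^{n}\mathcal{E}^S_{4Q}[f]\approx\sum_{Q\in\mathcal{Q}}\frac{\gamma_Q}{l(Q)}\mathcal{E}^S_{4Q}[f]$, and you use exactly this normalization for the green cubes. A bound on $\sum\gamma_Q\mathcal{E}^S_{4Q}[f]$, that is, with weights $\approx\underline{\gamma}_Q(l(Q))^{n+1}$, is weaker by the factor $l(Q)=2^{-jk_s}$. So the inequality $\gamma_Q\ge\underline{\gamma}_Q(l(Q))^{n+1}$ points the wrong way for your purpose and does not close the estimate. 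The fix is to choose $k_s$ so that $\sum_{Q\in\widetilde{\mathcal D}_{k_s}(S)}\frac{\gamma_Q}{l(Q)}\mathcal{E}^S_{4Q}[f]\le 2^{-s}\mathcal{N}_{\gamma,q}[f]$. This is what Steps 3--5 of the proof of Theorem \ref{Th.nonlinear} really give, since they essentially work with $\frac{\gamma_Q}{l(Q)}$. With this choice the red cubes contribute $\lesssim\mathcal{N}_{\gamma,q}[f]$. The paper's printed \eqref{eqq.red_cubes} and \eqref{eqq.forgotten_inequality} contain the same normalization slip, which is presumably where yours came from.
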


\begin{proof} 
We recall that $\mathcal{L}^{n+1}(S)=0$ due to $(\text{Por.1})$ Proposition \ref{Prop.porous_property}. 
Hence, combining Proposition \ref{Prop.discretization_of_Lebesgue_2}, Theorem \ref{Th.extension_main}, and 
taking into account construction of the family $\mathcal{Q}$ together with \eqref{eqq.forgotten_inequality} we deduce 
\begin{equation}
\label{eqq.6.25''}
\int\limits_{\mathbb{R}^{n+1}}\gamma(x)\|\nabla F(x)\|\,dx \lesssim \mathcal{N}_{\gamma,q}[f].
\end{equation} 
Applying Theorem \ref{Th.ext_is_right_inverse} in combination with Proposition \ref{Prop.removability} we conclude that $F \in W^{1, \rm loc}_{1}(\mathbb{R}^{n+1})$. 
By \eqref{eqq.6.25''} and the definition of the space $W_{1}^{1}(\mathbb{R}^{n+1},\gamma)$ it remains to show that
\begin{equation}
\label{eqq.inverse_Lebesgue_estimate'}
\operatorname{J}:=\int\limits_{\mathbb{R}^{n+1}}\gamma(x)|F(x)|\,dx \lesssim \mathcal{N}_{\gamma,q}[f].
\end{equation}
We clearly have
\begin{equation}
\operatorname{J} \le \int\limits_{\mathbb{R}^{n+1}}\gamma(x)\Bigl|F(x)-\fint\limits_{Q_{1}(x)}F(y)\,dy\Bigr|\,dx+\int\limits_{\mathbb{R}^{n+1}}\gamma(x)\Bigl|\fint\limits_{Q_{1}(x)}F(y)\,dy\Bigr|\,dx
=:\operatorname{J}_{1}+\operatorname{J}_{2}.
\end{equation}
Since $F\in W^{1, \rm loc}_{1}(\mathbb{R}^{n+1})$ we can use Lemma 7.16 from \cite{Gilbarg}, change
the order of integration and take into account $(\text{D}4)$ in Lemma \ref{Lm.doubling}. This gives
\begin{equation}
\begin{split}
&\operatorname{J}_{1} \lesssim
\int\limits_{\mathbb{R}^{n+1}}\gamma(x)\int\limits_{Q_{1}(x)}\frac{\|\nabla F(y)\|}{\|x-y\|^{n}}\,dy\,dx\\
&\lesssim \int\limits_{\mathbb{R}^{n+1}}\|\nabla F(y)\|\int\limits_{Q_{1}(y)}\frac{\gamma(x)}{\|x-y\|^{n}}\,dx \lesssim \int\limits_{\mathbb{R}^{n+1}}\gamma(y)\|\nabla F(y)\|\,dy.
\end{split}
\end{equation}
Keeping in mind $(\text{Por.2})$ in Proposition \ref{Prop.porous_property} we can estimate
\begin{equation}
\begin{split}
\label{eqq.inverse_Lebesgue_estimate'}
&\operatorname{J}_{2} \le \sum\limits_{Q \in \mathcal{D}_{0}}\int\limits_{Q}\gamma(x)\Bigl|\fint\limits_{Q_{1}(x)}F(y)\,dy\Bigr|\,dx \lesssim \sum\limits_{Q \in \mathcal{D}_{0}}\int\limits_{ Q}\gamma(y)\Bigl|\fint\limits_{Q_{1}(x)}F(y)\,dy-\fint\limits_{\Omega_{Q}}F(z)\,dz\Bigr|\,dx\\
&+\sum\limits_{Q \in \mathcal{D}_{0}}\underline{\gamma}_{Q}\fint\limits_{\Omega_{Q}}|F(z)|\,dz=:\operatorname{J}_{2,1}+\operatorname{J}_{2,2}.
\end{split}
\end{equation}
Given $Q \in \mathcal{D}_{0}$, $Q_{1}(x) \subset 3Q$ for each $x \in Q$. Using Lemma \ref{Lm.doubling} 
it is easy to deduce $\underline{\gamma}_{Q} \lesssim \underline{\gamma}_{3Q}$. Combination of Remark \ref{Rem.different_averagings}, Proposition \ref{Prop.Ziemer_estimate} and 
\eqref{eqq.6.25''} leads to
\begin{equation}
\begin{split}
&\operatorname{J}_{2,1} \lesssim \sum\limits_{Q \in \mathcal{D}_{0}}\underline{\gamma}_{Q}\fint\limits_{3Q}\fint\limits_{3Q}|F(y)-F(x)|\,dy\,dz\\ 
&\lesssim 
\sum\limits_{Q \in \mathcal{D}_{0}}\underline{\gamma}_{3Q}\int\limits_{3Q}\|\nabla F(y)\|\,dy \lesssim \int\limits_{\mathbb{R}^{n+1}}\gamma(y)\|\nabla F(y)\|\,dy \lesssim \mathcal{N}_{\gamma,q}[f].
\end{split}
\end{equation}
Finally, since $\fint_{\Omega}|F(z)|\,dz = 0$ for $Q \in \mathcal{D}_{0} \setminus \widetilde{\mathcal{D}}_{0}(S)$, application of Proposition \ref{Prop.discretization_of_Lebesgue} gives
\begin{equation}
\begin{split}
&\operatorname{J}_{2,2} \lesssim \sum\limits_{Q \in \widetilde{\mathcal{D}}_{0}(S)}\underline{\gamma}_{Q}\fint\limits_{\theta Q \cap S}|f(x)|\,d\mathcal{H}^{n}(x) \lesssim \mathcal{LN}_{\gamma}[f].
\end{split}
\end{equation}
Collecting all the above estimates we complete the proof.
\end{proof}

\subsection{Final arguments}
Now we are ready to prove the two main results of this paper formulated in the introduction.
We recall $(\text{D.6.1})-(\text{D.6.4})$.

\textit{Proof of Theorems \ref{Th.first_main} and \ref{Th.Michal's_problem}.} If $f \in W_{1}^{1}(\mathbb{R}^{n+1},\gamma)|_{S}$, then 
by Propositions \ref{Prop.Lebesgue_estimate} and \ref{Prop.upper_green} we have $\mathcal{N}_{\gamma,q}[f] < +\infty$. 
Conversely, assume that $f\in \mathfrak{B}(S)$ is such that $\mathcal{N}_{\gamma,q}[f] < +\infty$. We recall Theorem \ref{Th.traffic_light}.
Let $\mathcal{Q}:=\mathcal{G}_{\gamma}(q)$ in the case $\varlimsup_{r \to +0}\underline{\gamma}_{r}(x) = +\infty$ for all $x \in S$
and $\mathcal{Q}:=\mathcal{Q}_{\gamma}(q,f)$ otherwise.  According to Theorem \ref{Th.inverse_trace_estimate} 
we have $F:=\operatorname{Ext}_{\mathcal{Q}} \in W_{1}^{1}(\mathbb{R}^{n+1},\gamma)$. 
By Theorem \ref{Th.ext_is_right_inverse} we have $F|_{S}=f$. Consequently, $f \in W_{1}^{1}(\mathbb{R}^{n+1},\gamma)|_{S}$. 
Furthermore, Propositions  \ref{Prop.Lebesgue_estimate}, \ref{Prop.upper_green} and Theorem \ref{Th.inverse_trace_estimate} 
allow to deduce that $\|f\|_{W_{1}^{1}(\mathbb{R}^{n+1},\gamma)|_{S}} \approx \mathcal{N}_{\gamma,q}[f]$, the equivalence constants being independent on $f$. 
This proves (1) in Theorem \ref{Th.first_main}.

Consider a mapping $W_{1}^{1}(\mathbb{R}^{n+1},\gamma)|_{S} \ni f \to \operatorname{Ext}_{\mathcal{Q}} \in W_{1}^{1}(\mathbb{R}^{n+1},\gamma)$ 
and denote it by the symbol $\operatorname{Ext}_{S,\gamma}$. By Theorem \ref{Th.ext_is_right_inverse} this map is an extension operator. 
In the case when $\varlimsup_{r \to +0}\underline{\gamma}_{r}(x) < +\infty$ for some $x \in S$ 
the map $\operatorname{Ext}_{S,\gamma}$ is nonlinear because the family $\mathcal{Q}$ depends on $f$. If $\varlimsup_{r \to +0}\underline{\gamma}_{r}(x) = +\infty$ for all $x \in S$, then
the family $\mathcal{Q}$ depends only on the geometry of $S$ and behaviour of $\gamma$. Hence, the family $\{\Psi_{Q}\}_{Q \in \mathcal{Q}}$ does not depend on $f$.
Furthermore, the map $M^{S}_{\theta Q}:L^{\rm loc}_{1}(\mathcal{H}^{n}\lfloor_{S}) \to \mathbb{R}$ is linear for each $Q \in \mathcal{Q}$. Now the linearity 
of $\operatorname{Ext}_{S,\gamma}$ is a direct consequence of \eqref{eqq.abstract_extension} and Remark \ref{Rem.extension_operator_well_defined}.

The proof is complete.

\hfill$\Box$

\end{document}